\newtheorem{theorem}{Theorem}
\newtheorem{proposition}[theorem]{Proposition}
\newtheorem{corollary}[theorem]{Corollary}
\newtheorem{lemma}[theorem]{Lemma}
\numberwithin{theorem}{section}
\numberwithin{equation}{section}
\theoremstyle{definition}
\newtheorem{assumption}{\ensuremath{\boldsymbol{\mathcal{H}}}\!\!}
\newtheorem{remark}[theorem]{Remark}
\newtheorem{example}[theorem]{Example}
\DeclareMathOperator{\Pas}{\mathbb{P}\text{-as}}
\title{Stochastic Gradient Descent Revisited}
\author{Azar~Louzi\thanks{Universit\'e Paris Cit\'e, CNRS. Laboratoire de Probabilit\'es, Statistique et Mod\'elisation. azar.louzi@lpsm.paris}}
\date{\today}
\begin{document}

\maketitle

\begin{abstract}
Stochastic gradient descent (SGD) has been a go-to algorithm for nonconvex stochastic optimization problems arising in machine learning.
Its theory however often requires a strong framework to guarantee convergence properties.
We hereby present a full scope convergence study of biased nonconvex SGD, including weak convergence, function-value convergence and global convergence, and also provide subsequent convergence rates and complexities, all under relatively mild conditions in comparison with literature.
\\

\noindent
{\bf\small Keywords.}\,
biased stochastic gradient descent~;
weak convergence~;
function-value convergence~;
local {\L}ojasiewicz condition~;
convergence rate~;
complexity~;
global convergence
\\

\noindent
{\bf\small MSC.}\,
90C15~; % Mathematical Programming - Stochastic Programming
90C26~; % Operations Research, Mathematical Programming - Mathematical Programming - Nonconvex Programming, Global Optimization
90C60 % Operations Research, Mathematical Programming - Mathematical Programming - Abstract Computational Complexity for Mathematical Programming Problems
\end{abstract}

\section*{Introduction}

%\paragraph{Motivation.}
The advent of artificial intelligence (AI) has been rendered possible by the spectacular acceleration of computing chip capacity over the last few decades, and has driven a technological revolution that has not spared any aspect of life, including healthcare, supply chain management, social media, etc.
AI describes a set of machine learning methods that abandon any form of structural representation of data and look instead into uncovering data patterns to produce probabilistic relationships between input and output quantities of interest.

While it has significantly improved people's standards of living, AI has nevertheless engendered many operational risks (e.g.~by producing undesirable or unexpected outcomes) as well as systemic risks (e.g.~the \emph{``Flash Crash''}, whereby a blue-chip company's share price suddenly plummeted and bounced back in the span of minutes~\cite{KL13}).
To better manage, prevent and mitigate such risks, some level of mathematical insight must be brought in to shed light onto the inner workings of AI, in order to allow practitioners and regulators alike to act upon it in order to increase its efficiency and curb its shortcomings.

SGD is the engine of AI, making it a natural stepping stone toward mathematically explaining AI. Indeed, to capture their intricacies, machine learning problems are often modeled using wide and highly parametrized neural networks~\cite{GBC16}, which are then solved using SGD or an adaptive variant thereof, namely Adagrad, Adadelta, RMSProp, Adamax or Adam~\cite{Rud17}.
To approximate a stationary point of a given loss landscape (also referred to as objective or cost function~\cite{LZB22,AL24,AMA05}), SGD recursively spawns a trajectory of iterates by factoring in, at each step, a stochastic gradient modulated by a positive learning rate.
Whereas classical SGD literature provides convergence guarantees and convergence rates within a (strongly) convex framework~\cite{Duf96,BV04,RM51}, machine learning models are often highly nonconvex and require new SGD frameworks to better understand and parametrize them. An adequate parametrization can drastically cut down model training time by eliminating some fine-tuning overhead.
\\

%\paragraph{Literature Overview}
Recent literature attempting to comprehend nonconvex SGD from a theoretical standpoint presents convergence guarantees and convergence rates within various settings.
The frequently studied SGD convergence modes include weak convergence, function-value convergence and global convergence.
Consider a differentiable loss landscape $F$ to minimize and related iterates $(\theta_n)_{n\geq0}$ produced by SGD. By weak convergence, we refer to the almost sure vanishing of $(\nabla F(\theta_n))_{n\geq0}$~\cite{AMA05}, by function-value convergence, the almost sure convergence of $(F(\theta_n))_{n\geq0}$~\cite{Dav+20}, and by global convergence, the almost sure convergence of the iterates themselves to a stationary point~\cite{LP15,Let21}. The speed at which each convergence mode occurs, when quantifiable, helps infer optimal complexities to achieve some prescribed tolerance.

Following the salutary work~\cite{Loj84} on gradient flow convergence for analytical functions, \cite{AMA05} pushed for a new framework for studying nonconvex gradient descent (GD), termed the {\L}ojasiewicz condition.
It has found a large traction in the optimization community as it describes a large class of nonconvex loss landscapes.
Under this condition, for a stationary point $\theta_\star$ of a loss landscape $F$ ($\nabla F(\theta_\star)=0$), one has $\zeta\|\nabla F(\theta)\|\geq|F(\theta)-F(\theta_\star)|^\beta$ on an open neighborhood of $\theta_\star$, for some constant $\zeta>0$ and exponent $\beta\in(0,1)$.
Initially introduced to prove single-point convergence of GD-type algorithms, it has also found success in deriving GD convergence rates~\cite{AB09,FGP15}.
Other variants of the original inequality have been explored as well.
When $\beta=\frac12$, $F$ is said to be locally Polyak-{\L}ojasiewicz (P{\L})~\cite{Pol63}.
When the property is satisfied at every point in space, $F$ is dubbed globally {\L}ojasiewicz~\cite{GG24}.
A variant thereof, known as the Kurdyka-{\L}ojasiewicz (K{\L}) condition, stipulates that $\varphi'(F(\theta)-F(\theta_\star))\|\nabla F(\theta)\|\geq1$ on an open neighborhood of a stationary point $\theta_\star$, where $\varphi$ is a desingularization function that is increasing, concave and differentiable, with $\varphi(0)=0$.
This property is proven to hold for continuously differentiable subanalytic functions~\cite{Kur98}.
Although it is considered to be a slight generalization of the {\L}ojasiewicz condition~\cite{DK24}, reverting to the {\L}ojasiewicz condition, by considering e.g.~$\varphi(x)=x^{1-\beta}$~\cite{FGP15}, is often necessary to derive convergence rates.
\\

\cite{Lei+20,DK24,CFR23,Dav+20} provide weak and function-value convergence guarantees for SGD under various assumptions on the loss landscape.
\cite{Lei+20} proves function-value convergence by lifting the SGD formalism into a reproducing kernel Hilbert space, where the loss landscape is assumed to be H\"older-differentiable.
\cite{DK24} studies biased SGD for locally H\"older-differentiable loss landscapes, presupposing the SGD iterates almost surely bounded.
\cite{Dav+20} focuses on locally Lipschitz functions with possible singularities, but relies on the presumption that SGD iterates are almost surely bounded.

Recent developments on function-value convergence rate build upon varying conditions~\cite{Lei+20,AL24,DK24,Wei+24}.
\cite{Lei+20} provides convergence speeds in expectation under a global P{\L} condition.
\cite{AL24} extends the framework of~\cite{Cha22} to SGD by supposing a local P{\L} condition, then derives an exponential convergence rate on the condition that the iterates remain close to a local minimum.
In a local {\L}ojasiewicz setting, \cite{DK24} elicits a convergence rate in expectation conditionally to suitable controls on the stochastic gradients.
In an identical setting, \cite{Wei+24} provides a convergence rate in high probability, conditionally to the iterates remaining confined around some local minimum.

Global convergence is investigated in~\cite{DK24,CFR23,QMM24} from different perspectives.
\cite{DK24} proves almost sure iterate convergence for locally Lipschitz-differentiable loss landscapes under a local {\L}ojasiewicz condition, by imposing deterministic controls on the behavior of the stochastic gradients.
\cite{CFR23} assumes a local K{\L} condition on a compact set coupled with a strong growth property.
\cite{QMM24} studies momentum \eqref{eq:theta:n} in a local {\L}ojasiewicz setting, by assuming bounded conditional variances of the stochastic gradients.

\cite{Liu+23} derives high probability convergence rates for SGD iterates when initialized close enough to a global minimum, under an aiming and an interpolation conditions.
\cite{GP23} obtains $L^{2p}(\mathbb{P})$-convergence rates for SGD iterates, $p\geq1$, for globally {\L}ojasiewicz loss landscapes with {\L}ojasiewicz exponent $\beta\leq\frac12$, by supposing uniformly bounded conditional polynomial-exponential moments on the stochastic gradient noises.
\\

%\paragraph{Contribution.}
In this work,
\begin{itemize}
    \item
Thorem~\ref{thm:F:cv:gradF:0} obtains weak and function-value convergences for biased SGD under a relaxation of the smoothness framework and typical learning rate behavior, without requiring iterate boundedness;
    \item
Theorem~\ref{thm:cv} characterizes the topology of the iterates' accumulation points under a plain coercivity argument;
    \item
Theorem~\ref{thm:F-F*:special:cases} derives high probability function-value convergence speeds within both a local and a global {\L}ojasiewicz frameworks, without imposing deterministic controls on the stochastic gradients or confining the SGD trajectories around some local minimum;
    \item
Corollary~\ref{crl:cost} quantifies subsequent complexities and recovers optimal iteration amounts;
    \item
Theorem~\ref{thm:iterates} and Corollaries~\ref{crl:iterates:speed} and~\ref{crl:iter:cost} deduce single-point limit convergence guarantees and rates for biased SGD within the same setting as for function-value convergence.
\end{itemize}

%\paragraph{Outline.}
This paper is organized as follows.
Section~\ref{sec:SGD} establishes a baseline SGD framework and enunciates our standing convergence results, followed upon with related comments and discussions.
The technical proofs of our main results are delineated in Sections~\ref{sec:cv}--\ref{sec:global}.

\section{Stochastic Gradient Descent}
\label{sec:SGD}

We consider a probability space $(\Omega,\mathcal{F},\mathbb{P})$ that is rich enough to support all the ensuing random variables.
In a given vector space $\mathbb{R}^p$, $p\geq1$, $\|\,\cdot\,\|$ denotes the $L^2$ vector norm. Given a measurable random variable $Z\colon(\Omega,\mathcal{F})\to(\mathbb{R}^p,\mathcal{B}(\mathbb{R}^p))$, $p\geq1$, we write $Z\in L^q(\mathbb{P})$, $q>0$, to signify that $\mathbb{E}[\|Z\|^q]<\infty$.

We are interested in the stochastic optimization problem
\begin{equation}\label{opt:prob}
\min_{\theta\in\mathbb{R}^m}{\big\{F(\theta)=\mathbb{E}[f(X,\theta)]\big\}},
\tag{SO}
\end{equation}
where $X$ is an $\mathbb{R}^d$-valued random variable, $f\colon\mathbb{R}^d\times\mathbb{R}^m\to\mathbb{R}$ is a function such that $f(X,\theta)\in L^1(\mathbb{P})$, $\theta\in\mathbb{R}^m$, and $d,m\geq1$.
$f$ is often referred to as the loss or utility function.

\begin{assumption}\label{asp:f}
    $F$ is lower bounded, i.e.~$\inf F>-\infty$, differentiable, and its derivative $\nabla F$ is $(L,\alpha)$-H\"older continuous, $L>0$, $\alpha\in(0,1]$, i.e.
\begin{equation*}
\|\nabla F(\theta')-\nabla F(\theta)\|\leq L\|\theta'-\theta\|^\alpha,
\quad\theta,\theta'\in\mathbb{R}^m.
\end{equation*}
\end{assumption}

$\mathcal{H}$\ref{asp:f} relaxes the smoothness condition arising when $\alpha=1$.
In spite of the continuous differentiability of $F$, the function $\theta\in\mathbb{R}^m\mapsto f(x,\theta)$, $x\in\mathbb{R}^d$, does not need to be differentiable everywhere~\cite{Tal01}.

\begin{example}\label{exp:quantile}
Take the $\mu$-quantile regression problem~\cite{RU00}: $\min_\theta\{F(\theta)=\mathbb{E}[f(X,\theta)]\}$, where $f(x,\theta)=\theta+\frac{(x-\theta)^+}{1-\mu}$, $x,\theta\in\mathbb{R}$, with $\mu\in(0,1)$.
Let $x\in\mathbb{R}$. $\theta\in\mathbb{R}\mapsto f(x,\theta)$ is differentiable for $\theta\neq x$ and $\partial_\theta f(x,\theta)=1-\frac{\mathbbm1_{x>\theta}}{1-\mu}$, $\theta\in\mathbb{R}$, which is discontinuous at $x$.
Under suitable conditions, notably the continuity of the cdf $F_X(\theta)\coloneqq\mathbb{P}(X\leq\theta)$, $\theta\in\mathbb{R}$, $F$ is continuously differentiable, convex and coercive, with $F'(\theta)=\frac{F_X(\theta)-\mu}{1-\mu}$, $\theta\in\mathbb{R}$~\cite[Proposition~2.1]{BFP09}. If $X$ admits a bounded pdf $f_X$, then $F$ is twice differentiable, with $\|F''\|_\infty=\frac{\|f_X\|_\infty}{1-\mu}\eqqcolon L<\infty$. Hence $F$ is $L$-Lipschitz-differentiable and fulfills $\mathcal{H}$\ref{asp:f}.
\end{example}

\cite[Assumption~1]{Lei+20} assumes rather that $\theta\in\mathbb{R}^m\mapsto f(x,\theta)$, $x\in\mathbb{R}^d$, is $(L,\alpha)$-H\"older-differentiable.
In practice, $L$ may depend on $x$, say, $L_x$, so that the boundedness of $(L_x)_{x\in\mathbb{R}^m}$ underpins such assumption. Since many machine learning problems suppose $X$ compactly supported, the boundedness of $(L_x)_{x\in\mathcal{X}}$ seems plausible.
Under~\cite[Assumption~1]{Lei+20}, supposing $\nabla F(\theta)=\mathbb{E}[\nabla f(X,\theta)]$, $\theta\in\mathbb{R}^m$, via Jensen's inequality,
\begin{equation*}
\|\nabla F(\theta')-\nabla F(\theta)\|
\leq\mathbb{E}\big[\|\nabla_\theta f(X,\theta')-\nabla_\theta f(X,\theta)\|\big]
\leq L\|\theta'-\theta\|^\alpha,
\quad\theta,\theta'\in\mathbb{R}^m,
\end{equation*}
thus retrieving the last part of $\mathcal{H}$\ref{asp:f}.
\\

Assume we have access to innovations $(X_n)_{n\geq1}\stackrel{\text{\tiny iid}}{\sim}X$ and to an oracle $g:\mathbb{R}^m\times\mathbb{R}^d\to\mathbb{R}$ providing a noisy approximation of $\nabla F$.
Define $b\colon\mathbb{R}^m\to\mathbb{R}^m$ by
\begin{equation}\label{eq:b}
b(\theta)=\mathbb{E}[g(X,\theta)],
\quad\theta\in\mathbb{R}^m.
\end{equation}

\begin{assumption}\label{asp:f:abc}
There exist $C>0$ and $0<\kappa\leq c$ such that
\begin{align}
\mathbb{E}\big[\|g(X,\theta)\|^2\big]&\leq C\big((F(\theta)-\inf F)+1\big),\label{asp:f:abc:i}\tag{i}\\
b(\theta)^\top\nabla F(\theta)&\geq\kappa\|\nabla F(\theta)\|^2,\label{asp:f:abc:ii}\tag{ii}\\
\|b(\theta)\|&\leq c\|\nabla F(\theta)\|,\label{asp:f:abc:iii}\tag{iii}
\quad\theta\in\mathbb{R}^m.
\end{align}
\end{assumption}

$\mathcal{H}$\ref{asp:f:abc}(\ref{asp:f:abc:i}) generalizes the variance transfer formula~\cite[Lemma~4.20]{GG24} and ensures that $b$ is well defined.
$\mathcal{H}$\ref{asp:f:abc}(\ref{asp:f:abc:ii},\ref{asp:f:abc:iii}) are standard in biased \eqref{eq:theta:n} literature and are surveyed for instance in~\cite[Assumption~8]{Dem+23}.
In the case of unbiased \eqref{eq:theta:n}, i.e.~$\mathbb{E}[g(X,\theta)]=b(\theta)=\nabla F(\theta)$, $\theta\in\mathbb{R}^m$, one has $\kappa=c=1$.

\begin{example}
In Example~\ref{exp:quantile}, $|\partial_\theta f(x,\theta)|\leq1\vee\frac\mu{1-\mu}$, $x,\theta\in\mathbb{R}$, hence setting $g(x,\theta)=\partial_\theta f(x,\theta)$, $x,\theta\in\mathbb{R}$, fulfills $\mathcal{H}\ref{asp:f:abc}$(\ref{asp:f:abc:i}).
\end{example}

\cite[Assumption~2.4]{Wei+24} (dubbed the ABC or the expected smoothness assumption, in reference to~\cite[Assumption~2]{KR23}) alternatively postulates that
\begin{equation*}
\mathbb{E}[\|g(X,\theta)\|^2]\leq C\big((F(\theta)-\inf F)+\|\nabla F(\theta)\|^2+1\big),
\quad\theta\in\mathbb{R}^m,
\end{equation*}
for some constant $C>0$.
However, under $\mathcal{H}$\ref{asp:f}, by Lemma~\ref{lmm:Holder}(\ref{lmm:Holder:ii}) and Young's inequality with the adjoint exponents $\big(\frac{1+\alpha}{2\alpha},\frac{1+\alpha}{1-\alpha}\big)$ if $\alpha\neq1$,
\begin{equation}\label{eq:gradF:square}
\|\nabla F(\theta)\|^2
\leq C(F(\theta)-\inf F)^\frac{2\alpha}{1+\alpha}
\leq C\big((F(\theta)-\inf F)+1\big)
<\infty,
\quad\theta\in\mathbb{R}^m,
\end{equation}
for some constant $C>0$ that may change from line to line. This inequality holds also if $\alpha=1$.
We thus retrieve $\mathcal{H}$\ref{asp:f:abc}(\ref{asp:f:abc:i}).

$\mathcal{H}$\ref{asp:f:abc}(\ref{asp:f:abc:i}) is more generic than the interpolation condition $\mathbb{E}[\|g(X,\theta)\|^2]\leq C(F(\theta)-\inf F)$, $\theta\in\mathbb{R}^m$~\cite[\S1.2]{Liu+23}.
We refer to Remark~\ref{rmk:interpolation} for extensive comments on the limitations posed by this condition.

Under~\cite[Assumption~1]{Lei+20}, which stipulates that $\theta\in\mathbb{R}^m\mapsto f(x,\theta)$, $x\in\mathbb{R}^d$, is $(L,\alpha)$-H\"older-differentiable, if $\inf_{x,\theta}f>-\infty$, then, by Lemma~\ref{lmm:Holder}(\ref{lmm:Holder:ii}) and Young's inequality with the adjoint exponents $\big(\frac{1+\alpha}{2\alpha},\frac{1+\alpha}{1-\alpha}\big)$ if $\alpha\neq1$,
\begin{equation*}
\mathbb{E}\big[\|\nabla_\theta f(X,\theta)\|^2\big]
\leq C\mathbb{E}\big[\big(f(X,\theta)-\inf f\big)^\frac{2\alpha}{1+\alpha}\big]
\leq C\big((F(\theta)-\inf F)+1\big)<\infty,
\quad\theta\in\mathbb{R}^m,
\end{equation*}
for some constant $C>0$ that may change from line to line. This remains true if $\alpha=1$.
The above inequality matches $\mathcal{H}$\ref{asp:f:abc}(\ref{asp:f:abc:i}).
\\

Define the stochastic gradient oracles
\begin{equation*}
g_n(\theta)\coloneqq g(X_n,\theta),
\quad\theta\in\mathbb{R}^m,\;n\geq1.
\end{equation*}
SGD leverages the stochastic gradient oracles $(g_n)_{n\geq1}$ to approximate solutions of \eqref{opt:prob} via the dynamics
\begin{equation}
\theta_n=\theta_{n-1}-\gamma_ng_n(\theta_{n-1}),\quad n\geq1,
\tag{SGD}
\label{eq:theta:n}
\end{equation}
driven by the innovations $(X_n)_{n\geq1}\stackrel{\text{\tiny iid}}{\sim}X$, initialized at some $\theta_0\in L^2(\mathbb{P})$ that is independent of $(X_n)_{n\geq1}$, and advancing at the positive learning rate $(\gamma_n)_{n\geq1}$ such that $\sum_{n=1}^\infty\gamma_n=\infty$ and $\lim_{n\to\infty}\gamma_n=0$.

\begin{remark}\label{rmk:grad:f:L2}
Under $\mathcal{H}$\ref{asp:f}, using a first order Taylor-Lagrange expansion,
\begin{equation}\label{eq:F<1+theta^2}
|F(\theta)|
=\Big|F(0)+\theta^\top\int_0^1\nabla F(t\theta)\mathrm{d}t\Big|
\leq|F(0)|+\|\nabla F(0)\|\cdot\|\theta\|+\frac{L\|\theta\|^{1+\alpha}}{1+\alpha}
\leq C(1+\|\theta\|^2),
\quad\theta\in\mathbb{R}^m,
\end{equation}
for some constant $C>0$.
Thus, via the \eqref{eq:theta:n} recursion, $\mathcal{H}$\ref{asp:f:abc} and Jensen's inequality, it ensues iteratively that $\mathbb{E}[|F(\theta_n)|]+\mathbb{E}[\|\theta_n\|^2]+\mathbb{E}[\|g_n(\theta_{n-1})\|^2]+\mathbb{E}[\|\nabla F(\theta_{n-1})\|^2]<\infty$, $n\geq1$.
\end{remark}

\begin{assumption}
$(\gamma_n)_{n\geq1}$ is a positive sequence such that
\begin{equation*}
\sum_{n=1}^\infty\gamma_n=\infty
\quad\text{and}\quad
\sum_{n=1}^\infty\gamma_n^{1+\alpha}<\infty.
\end{equation*}
\label{asp:gamma}
\end{assumption}

Note that the H\"older exponent $\alpha$ from $\mathcal{H}$\ref{asp:f} shows up in $\mathcal{H}$\ref{asp:gamma}. This assumption appears for instance in~\cite[Theorem~4(a)]{Lei+20}.
It translates a desirable trait for \eqref{eq:theta:n} to find a trade-off between exploration of the parameter space and exploitation of the loss landscape geometry.

\begin{example}\label{exp:gamma:sample}
Typical sequences adhering to $\mathcal{H}$\ref{asp:gamma} write as
\begin{equation}
\gamma_n=\frac{\gamma_0}{(n+c)^s},
\quad\gamma_0>0,\;c\geq0,\;s\in\Big(\frac1{1+\alpha},1\Big],\;n\geq1.
\label{eq:gamma:usecase}
\end{equation}
Another possible form is
\begin{equation*}
\gamma_n=\frac{\gamma_0}{(n+c)\ln^{s}{(1+c'+n)}},
\quad\gamma_0>0,\;c,c'\geq0,\;s\in\Big(\frac1{1+\alpha},1\Big],\;n\geq1.
\end{equation*}
\cite[Corollary~3(b)]{Lei+20} also mentions
\begin{equation*}
\gamma_n=\frac{\gamma_0}{((n+c)\ln^s{(1+c'+n)})^{1/(1+\alpha)}},
\quad\gamma_0>0,\;c,c'\geq0,\;s>1,\;n\geq1.
\end{equation*}
\end{example}

\subsection{Convergence Analysis}
\label{ssec:cv}

In this section, we seek convergence guarantees for \eqref{eq:theta:n}.
Convergence of the function values $(F(\theta_n))_{n\geq0}$ is another useful imperative for \eqref{eq:theta:n}, the reason being twofold.
First, convergence of $(\theta_n)_{n\geq0}$ is simply not guaranteed in the nonconvex setting, even when $(F(\theta_n))_{n\geq0}$ does converge~\cite{AMA05}.
Second, when a limit $\theta_\star\in\mathbb{R}^m$ to $(\theta_n)_{n\geq0}$ exists, it is often the case that the behavior of the function value gap $F(\theta_n)-F(\theta_\star)$ is the one reflected in that of the iterate gap $\theta_n-\theta_\star$ rather than the other way around~\cite[\S2.1]{LZ24}.

Throughout, $(\mathcal{F}_n)_{n\geq0}$ designates the filtration recursively spawned by $\theta_0$ and the innovations $(X_n)_{n\geq1}$, and $\mathbb{E}_n$ denotes the conditional expectation $\mathbb{E}[\,\cdot\,|\mathcal{F}_n]$, $n\geq0$.
We hereby provide our standing convergence results and follow up with discussions thereon.
Related proofs are postponed to Section~\ref{sec:cv}.

\begin{theorem}\label{thm:F:cv:gradF:0}
Under $\mathcal{H}$\ref{asp:f}--\ref{asp:gamma}, the function values $(F(\theta_n))_{n\geq0}$ converge $\Pas$ to a real-valued random variable $F_\star\in L^1(\mathbb{P})$ and $\nabla F(\theta_n)\to0$ $\Pas$ as $n\to\infty$.
\end{theorem}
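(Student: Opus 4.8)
The plan is to run the standard supermartingale/Robbins–Siegmund machinery adapted to the Hölder-differentiable setting. First I would establish the descent inequality: using $\mathcal{H}$\ref{asp:f} and Lemma~\ref{lmm:Holder} (the descent lemma for $(L,\alpha)$-Hölder gradients, which gives $F(\theta')\le F(\theta)+\nabla F(\theta)^\top(\theta'-\theta)+\frac{L}{1+\alpha}\|\theta'-\theta\|^{1+\alpha}$), applied along the \eqref{eq:theta:n} step $\theta_n=\theta_{n-1}-\gamma_n g_n(\theta_{n-1})$, I obtain
\begin{equation*}
F(\theta_n)\le F(\theta_{n-1})-\gamma_n\nabla F(\theta_{n-1})^\top g_n(\theta_{n-1})+\frac{L\gamma_n^{1+\alpha}}{1+\alpha}\|g_n(\theta_{n-1})\|^{1+\alpha}.
\end{equation*}
Taking $\mathbb{E}_{n-1}$ and invoking $\mathcal{H}$\ref{asp:f:abc}(\ref{asp:f:abc:ii}) for the cross term and $\mathcal{H}$\ref{asp:f:abc}(\ref{asp:f:abc:i}) for the last term (bounding $\|g_n\|^{1+\alpha}$ by $\|g_n\|^2+1$ or via Jensen, so that $\mathbb{E}_{n-1}[\|g_n(\theta_{n-1})\|^{1+\alpha}]\le C((F(\theta_{n-1})-\inf F)+1)$), I get
\begin{equation*}
\mathbb{E}_{n-1}[F(\theta_n)-\inf F]\le (1+C'\gamma_n^{1+\alpha})(F(\theta_{n-1})-\inf F)-\kappa\gamma_n\|\nabla F(\theta_{n-1})\|^2+C'\gamma_n^{1+\alpha}.
\end{equation*}

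Next, since $Y_n\coloneqq F(\theta_n)-\inf F\ge0$ is integrable (Remark~\ref{rmk:grad:f:L2}) and $\sum\gamma_n^{1+\alpha}<\infty$ by $\mathcal{H}$\ref{asp:gamma}, the Robbins–Siegmund theorem applies: $(Y_n)$ converges $\mathbb{P}$-a.s.\ to an integrable nonnegative limit, whence $F(\theta_n)\to F_\star$ $\mathbb{P}$-a.s.\ with $F_\star\in L^1(\mathbb{P})$, and moreover $\sum_{n\ge1}\gamma_n\|\nabla F(\theta_{n-1})\|^2<\infty$ $\mathbb{P}$-a.s. Combined with $\sum\gamma_n=\infty$, the latter forces $\liminf_{n\to\infty}\|\nabla F(\theta_n)\|=0$ $\mathbb{P}$-a.s.

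The remaining work — and the step I expect to be the genuine obstacle — is upgrading $\liminf\|\nabla F(\theta_n)\|=0$ to $\lim\|\nabla F(\theta_n)\|=0$ without assuming the iterates are bounded. The usual argument needs a handle on how much $\|\nabla F(\theta_n)\|$ can grow between a time when it is small and a time when it is large; a contradiction-and-return argument works if one controls $\|\theta_n-\theta_{n-1}\|=\gamma_n\|g_n(\theta_{n-1})\|$, but \eqref{eq:gradF:square} only bounds $\|\nabla F\|^2$ by $F-\inf F+1$, which is itself a.s.\ bounded along the trajectory (since $F(\theta_n)$ converges). So the key realization is that $(F(\theta_n)-\inf F)$ converging a.s.\ already yields a (random) uniform bound $M(\omega)$ on $\|\nabla F(\theta_n)\|^2$ and hence on $\mathbb{E}_{n-1}[\|g_n(\theta_{n-1})\|^2]$ along the path; this tames the increments enough to run the standard oscillation/crossing argument (in the spirit of~\cite{AMA05}) and conclude $\nabla F(\theta_n)\to0$ $\mathbb{P}$-a.s. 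I would handle the localization carefully — e.g.\ working on the event $\{\sup_n F(\theta_n)<\infty\}$, which has full probability — and use the Hölder continuity of $\nabla F$ to transfer smallness of $\|\nabla F(\theta_{n_k})\|$ along a subsequence to nearby indices, with $\sum_{n>n_k}\gamma_n\|g_n(\theta_{n-1})\|$ controlled on a sufficiently short block via a Markov/Borel–Cantelli estimate.
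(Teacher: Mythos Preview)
Your Robbins--Siegmund setup is correct and matches the paper: the descent lemma under $\mathcal{H}$\ref{asp:f}, the conditional step using $\mathcal{H}$\ref{asp:f:abc}(\ref{asp:f:abc:i},\ref{asp:f:abc:ii}), and $\mathcal{H}$\ref{asp:gamma} give $F(\theta_n)\to F_\star\in L^1(\mathbb{P})$ a.s., $\sum_n\gamma_n\|\nabla F(\theta_{n-1})\|^2<\infty$ a.s., and hence $\liminf_n\|\nabla F(\theta_n)\|=0$ a.s.

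The gap is the upgrade from $\liminf$ to $\lim$. Your plan is to control $\sum_{k\in\text{block}}\gamma_k\|g_k(\theta_{k-1})\|$ ``via a Markov/Borel--Cantelli estimate'' over a ``sufficiently short block'', but the crossing intervals $(\chi(n),\psi(n)]$ in an oscillation argument have \emph{random, a priori unbounded} length, and $\|g_k(\theta_{k-1})\|$ has no pathwise bound---only its conditional second moment is controlled. Borel--Cantelli can deliver $\gamma_n\|g_n(\theta_{n-1})\|\to0$ a.s., hence $\|\nabla F(\theta_n)-\nabla F(\theta_{n-1})\|\to0$, but this does not bound $\|\theta_{\psi(n)}-\theta_{\chi(n)}\|$ in terms of $\sum_{k=\chi(n)+1}^{\psi(n)}\gamma_k$, which is precisely what the contradiction requires.

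The paper's resolution uses two ingredients absent from your sketch. First, decompose $g_k(\theta_{k-1})=b(\theta_{k-1})+\big(g_k(\theta_{k-1})-b(\theta_{k-1})\big)$; the martingale $M_n=\sum_{k\le n}\gamma_k\big(g_k(\theta_{k-1})-b(\theta_{k-1})\big)$ is $L^2$-bounded (since $\sup_k\mathbb{E}\|g_k(\theta_{k-1})\|^2<\infty$ and $\sum_k\gamma_k^2<\infty$), hence converges a.s., so its partial tails are eventually uniformly small. Second, the drift is bounded \emph{pathwise} via $\mathcal{H}$\ref{asp:f:abc}(\ref{asp:f:abc:iii}) and Lemma~\ref{lmm:Holder}(\ref{lmm:Holder:ii}): $\|b(\theta_{k-1})\|\le c\|\nabla F(\theta_{k-1})\|\le c\widetilde K$ a.s., where $\widetilde K$ is the random constant coming from $\sup_n F(\theta_n)<\infty$. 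Then H\"older continuity of $\nabla F$ gives
\[
\Big(\frac{\eta}{3L}\Big)^{1/\alpha}
\le\|\theta_{\psi(n)}-\theta_{\chi(n)}\|
\le\Big\|\sum_{k=\chi(n)+1}^{\psi(n)}\gamma_k\big(g_k(\theta_{k-1})-b(\theta_{k-1})\big)\Big\|+c\widetilde K\sum_{k=\chi(n)+1}^{\psi(n)}\gamma_k,
\]
with the first term on the right eventually $\le\tfrac12\big(\tfrac{\eta}{3L}\big)^{1/\alpha}$ a.s.; this forces $\sum_{k=\chi(n)+1}^{\psi(n)}\gamma_k$ to be bounded below by a positive (random) constant, and combined with $\|\nabla F(\theta_k)\|\ge\eta/6$ on the block contradicts $\sum_k\gamma_k\|\nabla F(\theta_{k-1})\|^2<\infty$. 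You never invoke $\mathcal{H}$\ref{asp:f:abc}(\ref{asp:f:abc:iii}) or the a.s.\ convergence of $(M_n)$; these are exactly what make the crossing argument work without iterate boundedness and replace the unworkable Borel--Cantelli step.
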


Note that the above result is independent from any coercivity or iterate boundedness assumption.
It encompasses loss landscapes such as $F\colon\theta\in\mathbb{R}\mapsto\ln(1+\mathrm{e}^{-\theta})$, whose gradient only vanishes at infinity, entailing that corresponding \eqref{eq:theta:n} trajectories escape to infinity $\Pas$.
There are in fact two issues with presupposing the $\Pas$-iterate boundedness.
On the one hand, it de facto implies the $\Pas$-boundedness of the gradients $(\nabla F(\theta_n))_{n\geq0}$.
On the other hand, an evaluation of $\mathbb{P}(\sup_{n\geq0}\|\theta_n\|<\infty)$ is necessary to fully assess the related convergence guarantees.
\cite[\S4.1]{Mer+20} signals that presupposing iterate boundedness renders the ensuing convergence results of little practical value because \eqref{eq:theta:n} trajectories can in fact escape to infinity, as already observed for gradient flows~\cite[Theorem~2.2]{AMA05}.
Our convergence guarantees lie within the continuity of recent efforts to eliminate such an assumption~\cite{Mer+20,QMM24}.

\cite[Theorem~4.2]{Dav+20} states the $\Pas$ function-value and weak convergences of unbiased \eqref{eq:theta:n} for locally Lipschitz loss landscapes (i.e.~almost everywhere differentiable, by Rademacher's theorem), under the assumption that the iterates are $\Pas$ bounded.
Our convergence result is free from such an assumption.

\cite[Theorem~1.1]{DK24} reaches similar results for biased \eqref{eq:theta:n} on locally H\"older-differentiable loss landscapes, conditionally to the iterates remaining bounded and the conditional $q$-th moments of the stochastic gradient noises being deterministically quantified~\cite[Assumption~$\mathbb{M}^{\sigma,q}$]{DK24}. In our notation, the latter condition writes
\begin{equation}\label{eq:moment:condition}
\mathbb{E}_{n-1}\big[\| g_n(\theta_{n-1})-b(\theta_{n-1})\|^q\big]\leq v_n,
\quad n\geq1,
\end{equation}
for some $q\geq1$ and a deterministic positive sequence $(v_n)_{n\geq1}$ satisfying some stringent asymptotic constraints, such as $\gamma_nv_n^2\to0$ as $n\to\infty$, as well as additional summability conditions.
At this point already, our assumption $\mathcal{H}$\ref{asp:f:abc}(\ref{asp:f:abc:i}) is significantly milder.
Let us examine further the condition \eqref{eq:moment:condition}.
Consider $f(x,\theta)=\frac12x^2\theta^2$, $F(\theta)=\frac12\mathbb{E}[X^2]\theta^2$, $x,\theta\in\mathbb{R}$, where $X\in L^2(\mathbb{P})$ is a real-valued random variable. Then $g(x,\theta)=\partial_\theta f(x,\theta)=x^2\theta$, $b(\theta)=F'(\theta)=\mathbb{E}[X^2]\theta$, $x,\theta\in\mathbb{R}$, and, by Jensen's inequality,
\begin{equation*}
\mathbb{E}\big[\big|\partial_\theta f(X,\theta)-F'(\theta)\big|^q\big]
\geq\mathbb{E}\big[\big|\partial_\theta f(X,\theta)-F'(\theta)\big|\big]^q
=\mathbb{E}\big[\big|X^2-\mathbb{E}[X^2]\big|\big]^q|\theta|^q,
\quad\theta\in\mathbb{R},\;q\geq1.
\end{equation*}
On this example, the assumption \eqref{eq:moment:condition} entails that $(\theta_n)_{n\geq0}$ are compactly supported. And in view of the \eqref{eq:theta:n} update rule, so are the stochastic gradients $(g_n(\theta_{n-1}))_{n\geq1}$, which excludes for instance stochastic gradients that have a Gaussian distribution.
This limits the scope of applicability of the obtained convergence results.
\cite{DK24} further references \cite[Lemma~D.1]{DK23} as a guarantee for iterate boundedness for coercive Lipschitz-differentiable loss landscapes, which however requires the assumption~\eqref{eq:moment:condition}.
As for weak convergence, the technique used in~\cite{DK24} relies on the uniform continuity of $\nabla F$ on a compact set containing all of the iterates $(\theta_n)_{n\geq0}$.
This argument has roots in~\cite[Proposition~1]{BT00} (c.f. also~\cite[Lemma~1]{Ora20}) and can only be utilized if the iterates are guaranteed to be bounded.

\cite[Theorem~2]{Lei+20} shows function-value convergence of unbiased \eqref{eq:theta:n} via a quasisupermartingale formalism similar to \eqref{eq:quasi:super:martingale}, under an $(L,\alpha)$-H\"older-differentiability assumption on $\theta\in\mathbb{R}^m\mapsto f(x,\theta)$, $x\in\mathbb{R}^m$.
\cite[Theorem~2(c)]{Lei+20} proves that $\mathbb{E}[\|\nabla F(\theta_n)\|]$ vanishes as $n\to\infty$ for $\alpha=1$, which solely ensures that $\nabla F(\theta_n)\to0$ in probability as $n\to\infty$.
\cite[Theorem~4(a)]{Lei+20} eventually proves weak convergence by additionally imposing a global P{\L} condition. Our result does not require such an assumption to reach the $\Pas$-convergence of $(\nabla F(\theta_n))_{n\geq0}$ to $0$.

Function-value convergence is ensured in~\cite[Proposition~5.5]{CFR23} for $\alpha=1$ via a quasisupermartingale formalism close to~\cite{Lei+20}.
Weak convergence is treated in~\cite[Proposition~6.3]{CFR23} under the lens of iterate convergence, by assuming the strong growth condition $\mathbb{E}_{n-1}[\|g_n(\theta_{n-1})\|^2]\leq C\|\nabla F(\theta_{n-1})\|^2$, $n\geq1$, with $C>0$.
This is far from representing a generic learning problem \eqref{opt:prob}.
\\

To be able to characterize the asymptotics of $(\theta_n)_{n\geq0}$, we start by formulating a plain coercivity condition.

\begin{assumption}
\label{asp:f->8}
$F$ is coercive.
That is, $F(\theta)\to\infty$ as $\|\theta\|\to\infty$.
\end{assumption}

\begin{lemma}\label{lmm:aux}
Under $\mathcal{H}$\ref{asp:f}--\ref{asp:f->8}, $(\theta_n)_{n\geq0}$ is $\Pas$ bounded.
\end{lemma}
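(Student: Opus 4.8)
The plan is to obtain the conclusion essentially for free by combining the function-value convergence already established in Theorem~\ref{thm:F:cv:gradF:0} with the coercivity assumption $\mathcal{H}$\ref{asp:f->8}. First I would invoke Theorem~\ref{thm:F:cv:gradF:0}, which gives that $(F(\theta_n))_{n\geq0}$ converges $\Pas$ to a finite random variable $F_\star\in L^1(\mathbb{P})$; in particular, on an event $\Omega_0$ of probability one, each realization $(F(\theta_n(\omega)))_{n\geq0}$ is a convergent real sequence, hence bounded, so that $M(\omega)\coloneqq\sup_{n\geq0}F(\theta_n(\omega))<\infty$ for every $\omega\in\Omega_0$.

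Next I would translate coercivity into boundedness of the sublevel sets of $F$: since $F(\theta)\to\infty$ as $\|\theta\|\to\infty$, for each real $M$ there is a radius $R(M)\in(0,\infty)$ such that $\|\theta\|\leq R(M)$ for every $\theta\in\mathbb{R}^m$ with $F(\theta)\leq M$ --- otherwise one could extract a sequence $(\theta_k)$ along which $F(\theta_k)\leq M$ while $\|\theta_k\|\to\infty$, contradicting $\mathcal{H}$\ref{asp:f->8}.

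Combining the two, fix $\omega\in\Omega_0$. Every iterate obeys $F(\theta_n(\omega))\leq M(\omega)$, whence $\|\theta_n(\omega)\|\leq R(M(\omega))<\infty$ for all $n\geq0$, i.e.~$\sup_{n\geq0}\|\theta_n(\omega)\|<\infty$. Since this holds on the full-probability event $\Omega_0$, the sequence $(\theta_n)_{n\geq0}$ is $\Pas$ bounded. I do not anticipate any genuine obstacle here; the only point I would stress is that the bound on $\sup_{n\geq0}\|\theta_n\|$ is genuinely random --- it is manufactured from the random limit $F_\star$ --- which is exactly what the assertion $\mathbb{P}(\sup_{n\geq0}\|\theta_n\|<\infty)=1$ permits, no uniform deterministic bound being claimed or needed.
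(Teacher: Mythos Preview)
Your proposal is correct and follows essentially the same approach as the paper: the paper too uses that $(F(\theta_n))_{n\geq0}$ is $\Pas$ convergent (from Lemma~\ref{lmm:F(theta):converges}(\ref{lmm:F(theta):converges:i}), which underlies Theorem~\ref{thm:F:cv:gradF:0}), hence $\Pas$ bounded, and then invokes coercivity $\mathcal{H}$\ref{asp:f->8} to turn the random upper bound on $F(\theta_n)$ into a random radius containing all iterates. Your write-up is slightly more explicit about the $\omega$-wise construction of the radius $R(M(\omega))$, but the logic is identical.
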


$\mathcal{H}$\ref{asp:f->8} is devoid of any additional coercive regularity. Machine learning problems are customarily regularized to achieve generalization and to induce convexity.
This practice often involves adding a regularization term to the loss associated with the learning model: $f(x,\theta)=\ell(x,\theta)+\varphi(\theta)\geq\varphi(\theta)$, $x\in\mathbb{R}^d$, $\theta\in\mathbb{R}^m$, where $\ell$ is a nonnegative loss and $\varphi$ is a coercive regularizer.

Lemma~\ref{lmm:aux} retrieves the $\Pas$ boundedness of the iterates $(\theta_n)_{n\geq0}$ as a consequence rather than an assumption, inasmuch as the nominal SGD user can only check the satisfiability of $\mathcal{H}$\ref{asp:f}--\ref{asp:f->8} beforehand.

In~\cite[Assumptions~2 \&~3]{Mer+20}, the sublevels of both $F$ and $\|\nabla F\|$ are assumed to be bounded, which represents a strong coercive regularity on the loss landscape, as the set of stationary points is then necessarily bounded. This is not required by $\mathcal{H}$\ref{asp:f->8}.
\\

Hereafter, we denote $\nabla F^{-1}(0)$ the set of stationary points of $F$.
The main convergence result of our analysis follows.
Note that nder $\mathcal{H}$\ref{asp:f->8}, $\nabla F^{-1}(0)\neq\varnothing$.

\begin{theorem}\label{thm:cv}
Assume $\mathcal{H}$\ref{asp:f}--\ref{asp:f->8} hold. Then,
\begin{enumerate}[\bf(i)]
\item\label{thm:cv:i}
    $\Pas$, the set $\mathcal{A}$ of accumulation points of $(\theta_n)_{n\geq0}$ is a nonempty compact connected set, and every element of $\mathcal{A}$ is a stationary point of $F$;
\item\label{thm:cv:ii}
    if $\nabla F^{-1}(0)$ is additionally at most countable, $(\theta_n)_{n\geq0}$ converges $\Pas$ to a stationary point of $F$.
\end{enumerate}
\end{theorem}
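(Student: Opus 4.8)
The plan is to build everything on top of the one-step descent inequality that must already underlie Theorem~\ref{thm:F:cv:gradF:0}. Using the $(L,\alpha)$-Hölder descent lemma together with $\mathcal{H}$\ref{asp:f:abc}, one gets a bound of the form
\[
\mathbb{E}_{n-1}[F(\theta_n)] \leq F(\theta_{n-1}) - \kappa\gamma_n\|\nabla F(\theta_{n-1})\|^2 + C\gamma_n^{1+\alpha}\big((F(\theta_{n-1})-\inf F)+1\big),
\]
which, after the Robbins--Siegmund theorem, yields both the almost sure convergence of $(F(\theta_n))_{n\geq0}$ to an integrable limit $F_\star$ and the summability $\sum_n\gamma_n\|\nabla F(\theta_{n-1})\|^2<\infty$ $\Pas$. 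Combined with $\mathcal{H}$\ref{asp:f->8} (which via \eqref{eq:F<1+theta^2}-type coercivity forces sublevel sets of $F$ to be bounded) this is exactly Lemma~\ref{lmm:aux}: on the event $\{F(\theta_n)\to F_\star\}$ the iterates lie in a bounded sublevel set from some index on, so $(\theta_n)_{n\geq0}$ is $\Pas$ bounded. I would take all of this as given.

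For part \textbf{(\ref{thm:cv:i})}, fix a realization in the full-probability event where $(\theta_n)$ is bounded, $F(\theta_n)\to F_\star$, $\nabla F(\theta_n)\to0$, and $\sum_n\gamma_n\|g_n(\theta_{n-1})\|^2<\infty$ (the last following from $\mathcal{H}$\ref{asp:f:abc}(\ref{asp:f:abc:i}) and boundedness of $F(\theta_n)$). Boundedness gives that the accumulation set $\mathcal{A}$ is nonempty and compact. Every $\theta_\infty\in\mathcal{A}$ satisfies $\nabla F(\theta_\infty)=0$ by continuity of $\nabla F$ and $\nabla F(\theta_n)\to0$, so $\mathcal{A}\subseteq\nabla F^{-1}(0)$; in particular $\nabla F^{-1}(0)\neq\varnothing$. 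The connectedness is the one genuinely nontrivial point: one shows the step sizes $\|\theta_n-\theta_{n-1}\| = \gamma_n\|g_n(\theta_{n-1})\|\to0$ (from $\gamma_n\to0$ and, say, Cesàro/summability arguments on $\gamma_n\|g_n(\theta_{n-1})\|^2$, or directly $\gamma_n\|g_n(\theta_{n-1})\|\le\tfrac12(\gamma_n^{1+\alpha}+\gamma_n^{(1+\alpha)/\alpha}\|g_n\|^{\cdots})$ — in any case vanishing), which makes $(\theta_n)$ an asymptotically small-step bounded sequence; a standard lemma (e.g.\ Ostrowski, or the argument in~\cite{AMA05}) then states that the accumulation set of such a sequence is connected. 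I would isolate this as a short deterministic sublemma: if $x_n\in\mathbb{R}^m$ is bounded and $\|x_n-x_{n-1}\|\to0$, then its accumulation set is compact connected.

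Part \textbf{(\ref{thm:cv:ii})} is then immediate: $\mathcal{A}$ is a connected subset of the at-most-countable set $\nabla F^{-1}(0)$; a nonempty connected subset of a countable subset of $\mathbb{R}^m$ is a single point (a connected set with more than one point has the cardinality of the continuum); hence $\mathcal{A}=\{\theta_\star\}$ for some $\theta_\star\in\nabla F^{-1}(0)$, and a bounded sequence with a unique accumulation point converges to it. The main obstacle is really just the clean statement and invocation of the deterministic connectedness lemma for vanishing-increment bounded sequences — everything else is bookkeeping on the Robbins--Siegmund output and the sublevel-set boundedness already furnished by $\mathcal{H}$\ref{asp:f->8}.
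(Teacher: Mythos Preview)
Your proposal is correct and follows essentially the same route as the paper: boundedness from coercivity (Lemma~\ref{lmm:aux}), vanishing increments $\|\theta_n-\theta_{n-1}\|\to0$, Ostrowski's lemma for connectedness of $\mathcal{A}$, $\mathcal{A}\subset\nabla F^{-1}(0)$ by continuity and weak convergence, and the connected-plus-countable $\Rightarrow$ singleton argument for part~(\ref{thm:cv:ii}). One inaccuracy to fix: the claim $\sum_n\gamma_n\|g_n(\theta_{n-1})\|^2<\infty$ does \emph{not} follow from $\mathcal{H}$\ref{asp:f:abc}(\ref{asp:f:abc:i}) and boundedness of $F(\theta_n)$, since $\sum_n\gamma_n=\infty$; the paper instead obtains $\sum_n\|\theta_n-\theta_{n-1}\|^{1+\alpha}=\sum_n\gamma_n^{1+\alpha}\|g_n(\theta_{n-1})\|^{1+\alpha}<\infty$ $\Pas$ (Lemma~\ref{lmm:F(theta):converges}(\ref{lmm:F(theta):converges:iii})), which immediately gives the vanishing increments you need.
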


Theorem~\ref{thm:cv} is echoed in~\cite[Proposition~2]{CP17} in a surrogate optimization setting.

\subsection{Convergence Rate Analysis}
\label{ssec:speed}

In this section, we seek to quantify the convergence speed of $(F(\theta_n))_{n\geq0}$ and assess the subsequent \eqref{eq:theta:n} complexity.
Convergence rates of nonconvex approximation schemes are often derived under a {\L}ojasiewicz-type condition~\cite{AB09,FGP15,DK24,Wei+24} as we put forward below.
Related proofs are available in Section~\ref{sec:speed}.

\begin{assumption}
\label{asp:Lojasiewicz}
$F$ is {\L}ojasiewicz, i.e.~it is continuously differentiable and, for all $\theta_\star\in\nabla F^{-1}(0)$, there exist $\mathcal{V}$ an open neighborhood of $\theta_\star$, $\beta\in(0,1)$ and $\zeta>0$ such that
\begin{equation*}
|F(\theta)-F(\theta_\star)|^\beta\leq\zeta\|\nabla F(\theta)\|,
\quad\theta\in\mathcal{V}.
\end{equation*}
\end{assumption}

\begin{remark}\label{rmk:beta>alpha/1+alpha}
Unlike~\cite[Definition~1.2]{DK24}, we do not rule out the possibility that $\beta\in\big(0,\frac12\big)$.
Indeed, via $\mathcal{H}$\ref{asp:f} and Lemma~\ref{lmm:Holder}(\ref{lmm:Holder:ii}), the above property yields $(F(\theta)-\inf F)^\beta\leq\zeta\|\nabla F(\theta)\|\leq C(F(\theta)-\inf F)^\frac\alpha{1+\alpha}$ for $\theta$ close enough to a global optimum, hence $\beta\geq\frac\alpha{1+\alpha}$. This lower bound describes $\big(0,\frac12\big]$ as $\alpha$ varies through $(0,1]$.
\end{remark}

As surveyed in~\cite[Table~1(a)]{GG24} or~\cite[Appendix~A]{KNS16}, the classical assumptions under which nonconvex \eqref{eq:theta:n} convergence rates are derived often require compactness of the iterate space and/or bounded stochastic gradient noises~\cite{LZ24}.
Recently, a slew of literature has departed from such frameworks and settled for a {\L}ojasiewicz-type condition.
Such a condition (also dubbed gradient domination~\cite{Wei+24}) has found success in proving convergence rates and global convergence properties for GD~\cite{AMA05} and proximal methods~\cite{AB09}.
It has been lately adopted in the context of \eqref{eq:theta:n}~\cite{DK24,Wei+24,AL24} for similar purposes.

{\L}ojasiewicz-type conditions help assess the local growth of a function around its stationary points.
One such condition has first been considered in~\cite[Theorem~4]{Pol63} in a global form to obtain an exponential convergence rate for GD.
It has then been revisited in a local form in~\cite{AMA05} to prove iterate convergence of GD, which is not guaranteed otherwise (c.f.~the intuitive counterexample in~\cite[\S3]{Cur44} and the explicit one in~\cite[\S2]{AMA05}).
{\L}ojasiewicz conditions are notably satisfied by analytic functions~\cite{AMA05}.
\begin{proposition}[{\cite[\S18, Proposition~1]{Loj65}}]
Let $\varphi\colon\mathbb{R}^m\to\mathbb{R}$ be a function that is analytic on a neighborhood of some point $\theta_\star\in\mathbb{R}^m$ satisfying $\varphi(\theta_\star)=0$. Then, there exists $\beta\in(0,1)$ such that
\begin{equation*}
\|\nabla\varphi(\theta)\|\geq|\varphi(\theta)|^\beta
\quad\text{on a neighborhood of $\theta_\star$.}
\end{equation*}
\end{proposition}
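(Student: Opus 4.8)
The plan is to reduce the multivariate analytic {\L}ojasiewicz inequality to a statement about the desingularization of the zero set of $\varphi$, following {\L}ojasiewicz's original stratification argument. First I would set $A=\varphi^{-1}(0)$ locally near $\theta_\star$; if $\nabla\varphi(\theta_\star)\neq0$ there is nothing to prove on a small enough ball (the left-hand side is bounded below, the right-hand side tends to $0$ as $\theta\to\theta_\star$), so I may assume $\theta_\star$ is a critical point. The key is to compare the two nonnegative analytic-type functions $\theta\mapsto\|\nabla\varphi(\theta)\|$ and $\theta\mapsto|\varphi(\theta)|$ near their common vanishing locus, and this is precisely where {\L}ojasiewicz's inequality between analytic functions enters: if $f,g$ are real analytic on a neighborhood of a compact set $K$ with $\{g=0\}\subseteq\{f=0\}$, then there exist $C>0$ and $N\geq1$ such that $|f(\theta)|^N\leq C|g(\theta)|$ on $K$. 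Here one takes $f=\varphi$ (or rather a suitable power to control the order of vanishing) and $g=\|\nabla\varphi\|^2$, after checking the inclusion of zero sets.

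Concretely, the steps in order: (1) work on a closed ball $\bar B(\theta_\star,r)$ on which $\varphi$ is analytic, and invoke the {\L}ojasiewicz inequality for analytic functions to get $|\varphi(\theta)|^{N}\leq C\,\|\nabla\varphi(\theta)\|^{2}$ for all $\theta\in\bar B(\theta_\star,r)$, provided the zero-set inclusion $\{\nabla\varphi=0\}\cap\bar B(\theta_\star,r)\subseteq\{\varphi=\text{const on each connected component}\}$ holds — which requires a small argument since $\varphi$ need not vanish on its critical set. (2) Reduce to the case $\varphi(\theta_\star)=0$ as in the hypothesis and note that on a connected neighborhood the critical set where $\varphi=0$ is exactly $A$; for critical points $\theta'$ with $\varphi(\theta')\neq0$ one shrinks $r$ so that such points are excluded from the ball, using that $\varphi(\theta_\star)=0$ and continuity. (3) From $|\varphi(\theta)|^{N}\leq C\,\|\nabla\varphi(\theta)\|^{2}$ deduce $|\varphi(\theta)|^{N/2}\leq \sqrt{C}\,\|\nabla\varphi(\theta)\|$, and finally absorb the constant: since $|\varphi|\to0$ near $\theta_\star$, one has $|\varphi(\theta)|^{N/2}\geq c\,|\varphi(\theta)|^{\beta}$ is the wrong direction, so instead pick $\beta\in(0,1)$ with $\beta\geq N/2$ is impossible if $N\geq2$; the correct move is: on a small ball $|\varphi(\theta)|\leq 1$, hence $|\varphi(\theta)|^{\beta}\leq|\varphi(\theta)|^{N/2}$ whenever $\beta\geq N/2$. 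Since $N\geq1$ we may have $N/2<1$; if $N/2\geq1$ we instead use the stronger exponent and note the inequality with exponent $N/2$ is \emph{weaker}, so we must refine: choose $\beta=\min(N/2,\,1-\epsilon)$ does not work directly either, so the cleanest route is to absorb $\sqrt C$ by shrinking: on $|\varphi|\leq C^{-1/(2-N)}$ (when $N<2$) one gets $|\varphi|^{N/2-?}$ — this bookkeeping is routine once the analytic {\L}ojasiewicz inequality is in hand, and one ends with some $\beta\in(0,1)$ and a neighborhood as claimed.

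The main obstacle is step (1): establishing the zero-set inclusion needed to apply the analytic-functions {\L}ojasiewicz inequality, and more fundamentally that this inequality itself is the substantive input. I would either cite it directly from {\L}ojasiewicz's memoir or Bierstone--Milman, or present the standard reduction via resolution of singularities / the curve selection lemma: assume for contradiction there is a sequence $\theta_k\to\theta_\star$ with $\|\nabla\varphi(\theta_k)\|/|\varphi(\theta_k)|^{\beta_k}\to0$, apply the curve selection lemma to the semianalytic set $\{\|\nabla\varphi\|^2\leq |\varphi|^{2\beta}\}\setminus\{\varphi=0\}$ to extract a real-analytic arc $t\mapsto\gamma(t)$ with $\gamma(0)=\theta_\star$ along which $\varphi\circ\gamma\not\equiv0$, then differentiate: $\frac{d}{dt}(\varphi\circ\gamma)=\nabla\varphi(\gamma)^\top\gamma'$, and a Puiseux-expansion order comparison of $|\varphi\circ\gamma|$ versus $|\frac{d}{dt}(\varphi\circ\gamma)|$ forces the exponent $\beta<1$ to exist, contradicting the assumption. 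This arc-differentiation step is the technical heart; everything else is soft point-set topology and elementary estimates.
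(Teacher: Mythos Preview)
The paper does not prove this proposition: it is stated as a citation from {\L}ojasiewicz's memoir~\cite{Loj65} and used as a black box. There is therefore no ``paper's own proof'' to compare against.

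That said, your sketch has a genuine gap in the first approach. The analytic-functions {\L}ojasiewicz inequality you invoke in step~(1) yields at best $|\varphi(\theta)|^{N}\leq C\|\nabla\varphi(\theta)\|^{2}$ for some integer $N\geq1$, hence $|\varphi|^{N/2}\leq\sqrt{C}\,\|\nabla\varphi\|$. When $N\geq2$ this exponent $N/2\geq1$ gives a \emph{weaker} inequality near $\theta_\star$ than the one you want: on $\{|\varphi|<1\}$ one has $|\varphi|^{N/2}\leq|\varphi|^{\beta}$ for any $\beta\leq N/2$, so the bound $|\varphi|^{N/2}\leq\sqrt{C}\|\nabla\varphi\|$ does \emph{not} imply $|\varphi|^{\beta}\leq\|\nabla\varphi\|$ for $\beta<1$. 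You noticed this in your own step~(3) and never resolved it; the attempted constant-absorption tricks all go the wrong way, and no amount of shrinking the neighborhood fixes it. The zero-set {\L}ojasiewicz inequality is genuinely too coarse here: the gradient inequality with $\beta<1$ is a strictly sharper statement about the \emph{order} of vanishing of $\|\nabla\varphi\|$ relative to $\varphi$, not merely about inclusion of zero loci. (There is also the secondary issue you flagged, that $\{\nabla\varphi=0\}\subseteq\{\varphi=0\}$ need not hold locally, but this is moot given the exponent problem.)

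Your second approach via the curve selection lemma is the standard correct route. Along a nontrivial real-analytic arc $\gamma$ with $\gamma(0)=\theta_\star$ and $\varphi(\gamma(t))=at^{p}+\mathrm{O}(t^{p+1})$, $a\neq0$, one has $(\varphi\circ\gamma)'(t)=pat^{p-1}+\mathrm{O}(t^{p})$ and hence $\|\nabla\varphi(\gamma(t))\|\geq c\,t^{p-1}\asymp|\varphi(\gamma(t))|^{(p-1)/p}$, giving the exponent $\beta=(p-1)/p<1$ along that arc. The substantive work, which you correctly identify but do not carry out, is to pass from arcwise estimates to a uniform $\beta$ on a full neighborhood; this is where resolution of singularities or the stratification machinery of {\L}ojasiewicz's memoir enters, and it is not elementary.
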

\noindent When $\nabla\varphi(\theta_\star)\neq0$, the above {\L}ojasiewicz property is straightforward~\cite[Remark~1.3]{DK24}.
The particular case where $\beta=\frac12$ is referred to as the Polyak-{\L}ojasiewicz property~\cite[Remark~2.23]{GG24}.
Bear in mind that the {\L}ojasiewicz condition does not require analyticity per se, as $\mathcal{H}$\ref{asp:Lojasiewicz} encompasses a larger class of functions~\cite{AMA05}.
Indeed, \cite{Kur98} guarantees it for subanalytic functions of class $\mathcal{C}^1$. We refer for instance to~\cite{BM88,BDL07} for developments on subanalytic functions.

\begin{proposition}[{\cite[Theorem~{\L}I]{Kur98}}]
Let $\varphi\colon\mathcal{V}\to\mathbb{R}$ be a subanalytic which is differentiable in $\mathcal{V}\setminus\varphi^{-1}(0)$, where $\mathcal{V}$ is an open bounded subset of $\mathbb{R}^m$.
Then there exist $C>0$, $\varrho>0$ and $\beta\in[0,1)$ such that
\begin{equation*}
\|\nabla\varphi(\theta)\|\geq C|\varphi(\theta)|^\beta,
\end{equation*}
for each $\theta\in\mathcal{V}$ such that $|\varphi(\theta)|\in(0,\varrho)$.
If in addition $\lim_{\theta\to\vartheta}\varphi(\theta)=0$ for some $\vartheta\in\mathcal{V}$ (which holds in the classical case, where $\varphi$ is analytic and $\vartheta\in\mathcal{V}$, $\varphi(\vartheta)=0$), then the above inequality holds for each $\theta\in\mathcal{V}\subset\varphi^{-1}(0)$ close to $\vartheta$.
\end{proposition}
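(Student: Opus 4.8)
This is Kurdyka's {\L}ojasiewicz gradient inequality; I would reconstruct its proof via the \emph{valley function} of $\varphi$ together with a finite-length argument, leaning throughout on the structure theory of subanalytic sets. The plan is as follows. Replacing $\varphi$ by $-\varphi$ where needed, it suffices to bound $\|\nabla\varphi(\theta)\|$ from below on $\{0<\varphi(\theta)<\varrho\}$. To fix a workable $\varrho$, I would stratify the critical locus $\{\theta\in\mathcal{V}\setminus\varphi^{-1}(0):\nabla\varphi(\theta)=0\}$ into finitely many connected $C^{1}$ cells; on each such cell the differential of $\varphi$ vanishes, so $\varphi$ is constant there, whence $\varphi$ has only finitely many critical values, all nonzero, and one may take $\varrho$ below the least of their moduli. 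Then every $v\in(0,\varrho)$ is a regular value and the valley function
\[
m(v):=\inf\{\,\|\nabla\varphi(\theta)\|:\theta\in\mathcal{V},\ \varphi(\theta)=v\,\},\qquad 0<v<\varrho,
\]
is a positive, subanalytic function of one variable. By the monotonicity theorem and the Puiseux expansion available for such functions, after shrinking $\varrho$ once more either $m$ is bounded below by a positive constant --- and the inequality holds with $\beta=0$ --- or $m(v)=c\,v^{\beta}(1+o(1))$ as $v\downarrow0$ for some $c>0$ and rational $\beta>0$, and everything reduces to proving $\beta<1$.

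The key step is to follow the \emph{thalweg}. The subanalytic set $\{(v,\theta):\varphi(\theta)=v,\ \|\nabla\varphi(\theta)\|=m(v)\}$ surjects onto $(0,\varrho)$, so by definable choice (or the curve selection lemma) it carries a subanalytic section $v\mapsto\xi(v)\in\mathcal{V}$ with $\varphi(\xi(v))=v$ and $\|\nabla\varphi(\xi(v))\|=m(v)$. Since $\varphi\circ\xi=\mathrm{id}$, the curve $\xi$ is injective; differentiating it --- legitimate off the finite set where a one-variable subanalytic map fails to be $C^{1}$ --- and applying Cauchy--Schwarz yields
\[
1=\nabla\varphi(\xi(v))^{\top}\xi'(v)\le\|\nabla\varphi(\xi(v))\|\,\|\xi'(v)\|=m(v)\,\|\xi'(v)\|,
\]
hence $\|\xi'(v)\|\ge 1/m(v)$. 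Now $\xi((0,\varrho))$ is a bounded subanalytic set of dimension at most $1$, hence of finite length, so $\int_{0}^{\varrho}\|\xi'(v)\|\,\mathrm{d}v<\infty$ and therefore $\int_{0}^{\varrho}\mathrm{d}v/m(v)<\infty$; with $m(v)\sim c\,v^{\beta}$ this forces $\beta<1$. Having fixed such a $\beta$ and shrunk $\varrho$ so that $m(v)\ge\tfrac{c}{2}v^{\beta}$ there, one gets $\|\nabla\varphi(\theta)\|\ge m(\varphi(\theta))\ge\tfrac{c}{2}\varphi(\theta)^{\beta}$ on $\{0<\varphi(\theta)<\varrho\}$; combining with the symmetric estimate on $\{-\varrho<\varphi(\theta)<0\}$ (keeping the larger exponent and the smaller constant) gives the first assertion. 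The second assertion is its localization: near $\vartheta$ one has $|\varphi|<\varrho$, so the inequality applies wherever $\varphi\neq0$, and it is vacuous on $\varphi^{-1}(0)$.

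The main obstacle is not the calculus above but the geometric input it consumes: the finiteness of the critical values (used to pin down $\varrho$), the subanalyticity of the valley function together with its Puiseux asymptotics, the existence of a subanalytic thalweg, and the finiteness of the length of bounded one-dimensional subanalytic sets --- plus the bookkeeping needed to control $\xi$ and $m$ near $\partial\mathcal{V}$ and near the finitely many non-$C^{1}$ points. These are precisely the consequences of cell decomposition and curve selection for subanalytic sets that one borrows from~\cite{Kur98} (compare the developments in~\cite{BM88,BDL07}); the sketch above is only the strategy for assembling them.
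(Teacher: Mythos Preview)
The paper does not prove this proposition; it is quoted verbatim as Kurdyka's Theorem~{\L}I from~\cite{Kur98} and serves only as background motivation for the {\L}ojasiewicz assumption $\mathcal{H}$\ref{asp:Lojasiewicz}. There is therefore no in-paper proof to compare against.

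That said, your sketch is a faithful reconstruction of Kurdyka's original argument: the valley function $m(v)$, its subanalyticity and Puiseux expansion, the definable selection of a talweg curve $\xi$, and the finite-length bound forcing $\beta<1$ are precisely the ingredients of~\cite{Kur98}. The caveats you flag --- finiteness of critical values, behavior near $\partial\mathcal{V}$, the finitely many non-$C^1$ points of $\xi$ --- are real and are handled in Kurdyka's paper via the o-minimal structure theory. Your proposal is correct as a strategy, but since the paper treats this result as a black box, no proof was expected here.
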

\noindent Of course, if $\nabla\varphi(\vartheta)=0$, then $\beta>0$.
\\

In accordance with the above results, the {\L}ojasiewicz exponent $\beta\in(0,1)$ and the {\L}ojasiewicz constant $\zeta>0$ in $\mathcal{H}$\ref{asp:Lojasiewicz} depend on the stationary point $\theta_\star$.
When $\mathcal{V}=\mathbb{R}^m$, $F$ is termed globally {\L}ojasiewicz~\cite[Remark~2.23]{GG24}, in which case every stationary point is a global minimizer~\cite{KNS16}. The {\L}ojasiewicz parameters $(\beta,\zeta)\in(0,1)\times\mathbb{R}^*_+$ can then be set uniformly across $\nabla F^{-1}(0)$.
Otherwise, in the general case where $\mathcal{V}\subset\mathbb{R}^m$, eliciting a convergence rate directly under $\mathcal{H}$\ref{asp:Lojasiewicz} is rather tedious.
To ease such task, we propose below several alternative frameworks to exploit the local {\L}ojasiewicz property more conveniently.

\begin{proposition}\label{prp:Lojasiewicz}
 Assume $\mathcal{H}$\ref{asp:Lojasiewicz} holds.
 Then,
\begin{enumerate}[\bf(i)]
    \item\label{prp:Lojasiewicz:i}
    if $\mathcal{K}\subset\mathbb{R}^m$ is a compact set such that $\nabla F^{-1}(0)\cap\mathcal{K}\neq\varnothing$,
    \begin{enumerate}[\bf a.]
        \item\label{prp:Lojasiewicz:i:a}
        there exist $\beta\in(0,1)$ and $\zeta>0$ such that, for all $\theta_\star\in\nabla F^{-1}(0)\cap\mathcal{K}$, there exists an open neighborhood $\mathcal{V}$ of $\theta_\star$ such that
        \begin{equation}\label{eq:framework:1}
        |F(\theta)-F(\theta_\star)|^\beta\leq\zeta\|\nabla F(\theta)\|,
        \quad\theta\in\mathcal{V}.
        \end{equation}
        \item\label{prp:Lojasiewicz:i:b}
        there exists $\beta\in(0,1)$ such that, for all $\theta_\star\in\nabla F^{-1}(0)\cap\mathcal{K}$, there exists an open neighborhood $\mathcal{V}$ of $\theta_\star$ such that
        \begin{equation}\label{eq:framework:1:bis}
        |F(\theta)-F(\theta_\star)|^\beta\leq\|\nabla F(\theta)\|,
        \quad\theta\in\mathcal{V}.
        \end{equation}
    \end{enumerate}
    \item\label{prp:Lojasiewicz:ii}
        there exists $\beta\in(0,1]$ such that, for all $\theta_\star\in\nabla F^{-1}(0)$, there exists an open neighborhood $\mathcal{V}$ of $\theta_\star$ such that
        \begin{equation}\label{eq:framework:2}
        |F(\theta)-F(\theta_\star)|^\beta\leq\|\nabla F(\theta)\|,
        \quad\theta\in\mathcal{V}.
        \end{equation}
    \item\label{prp:Lojasiewicz:iii}
    if ${\inf{F}>{-\infty}}$ and $F$ is globally {\L}ojasiewicz, every stationary point of $F$ is a global minimizer, and there exist $\beta\in\big[\frac\alpha{1+\alpha},1\big)$ and $\zeta>0$ such that
    \begin{equation}\label{eq:framework:3}
    (F(\theta)-\inf F)^\beta\leq\zeta\|\nabla F(\theta)\|,
    \quad\theta\in\mathbb{R}^m.
    \end{equation}
\end{enumerate}
\end{proposition}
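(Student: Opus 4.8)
The plan is to establish the four claims in turn, in each case extracting the raw inequality of $\mathcal{H}$\ref{asp:Lojasiewicz} and then uniformizing --- by a compactness/covering argument for the first item of (i), and by elementary exponent manipulations exploiting the continuity of $F$ for the rest. For the first part of (i): set $S\coloneqq\nabla F^{-1}(0)\cap\mathcal{K}$, which is compact (a closed set, since $\nabla F$ is continuous, meeting the compact $\mathcal{K}$) and nonempty by hypothesis. For each $\theta_\star\in S$, $\mathcal{H}$\ref{asp:Lojasiewicz} supplies $(\mathcal{V}_{\theta_\star},\beta_{\theta_\star},\zeta_{\theta_\star})$; I would shrink $\mathcal{V}_{\theta_\star}$ to an open neighborhood $\mathcal{V}'_{\theta_\star}$ of $\theta_\star$ on which $|F(\theta)-F(\theta_\star)|<1$, so that $|F(\theta)-F(\theta_\star)|^\beta\le|F(\theta)-F(\theta_\star)|^{\beta_{\theta_\star}}\le\zeta_{\theta_\star}\|\nabla F(\theta)\|$ there for every $\beta\ge\beta_{\theta_\star}$. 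Extract from $(\mathcal{V}'_{\theta_\star})_{\theta_\star\in S}$ a finite subcover of $S$ with centers $\theta_1,\dots,\theta_N$ and put $\beta\coloneqq\max_i\beta_{\theta_i}\in(0,1)$, $\zeta\coloneqq\max_i\zeta_{\theta_i}$. The crux is that two stationary points, one lying in the other's {\L}ojasiewicz neighborhood, must share the same $F$-value: evaluating the $\theta_j$-inequality at a stationary $\theta_\star\in\mathcal{V}'_{\theta_j}\subseteq\mathcal{V}_{\theta_j}$ gives $|F(\theta_\star)-F(\theta_j)|^{\beta_{\theta_j}}\le\zeta_{\theta_j}\|\nabla F(\theta_\star)\|=0$. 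Hence for arbitrary $\theta_\star\in S$, picking $j$ with $\theta_\star\in\mathcal{V}'_{\theta_j}$, the open neighborhood $\mathcal{V}\coloneqq\mathcal{V}'_{\theta_j}$ of $\theta_\star$ satisfies $|F(\theta)-F(\theta_\star)|^\beta=|F(\theta)-F(\theta_j)|^\beta\le\zeta\|\nabla F(\theta)\|$, which is \eqref{eq:framework:1}.

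For the second part of (i) and for (ii) I would only rescale exponents. Keeping the uniform $(\beta,\zeta)$ just obtained, fix $\beta'\coloneqq\frac{1+\beta}{2}\in(\beta,1)$; shrinking each $\mathcal{V}$ to the subneighborhood where $|F(\theta)-F(\theta_\star)|\le\zeta^{-1/(\beta'-\beta)}$ (a uniform threshold, available by continuity of $F$) yields $|F(\theta)-F(\theta_\star)|^{\beta'}=|F(\theta)-F(\theta_\star)|^{\beta'-\beta}|F(\theta)-F(\theta_\star)|^\beta\le\|\nabla F(\theta)\|$, i.e.~\eqref{eq:framework:1:bis} with exponent $\beta'\in(0,1)$. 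For (ii) there is no compactness, but $\beta=1$ works at every stationary point separately: for $\theta_\star\in\nabla F^{-1}(0)$, write on $\mathcal{V}_{\theta_\star}$ that $|F(\theta)-F(\theta_\star)|=|F(\theta)-F(\theta_\star)|^{1-\beta_{\theta_\star}}|F(\theta)-F(\theta_\star)|^{\beta_{\theta_\star}}\le|F(\theta)-F(\theta_\star)|^{1-\beta_{\theta_\star}}\zeta_{\theta_\star}\|\nabla F(\theta)\|$, then shrink to the subneighborhood where $|F(\theta)-F(\theta_\star)|^{1-\beta_{\theta_\star}}\zeta_{\theta_\star}\le1$, giving \eqref{eq:framework:2} with $\beta=1\in(0,1]$.

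For (iii), let $\theta_\star$ be a stationary point; globally, $\mathcal{H}$\ref{asp:Lojasiewicz} supplies $\beta_0\in(0,1)$, $\zeta_0>0$ with $|F(\theta)-F(\theta_\star)|^{\beta_0}\le\zeta_0\|\nabla F(\theta)\|$ for all $\theta$. Take a minimizing sequence $\theta_k$, $F(\theta_k)\to\inf F$ (finite by hypothesis): \eqref{eq:gradF:square} forces $\|\nabla F(\theta_k)\|\to0$, whereas the global bound forces $\|\nabla F(\theta_k)\|\ge\zeta_0^{-1}|F(\theta_k)-F(\theta_\star)|^{\beta_0}\to\zeta_0^{-1}(F(\theta_\star)-\inf F)^{\beta_0}$; hence $F(\theta_\star)=\inf F$. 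Consequently every stationary point is a global minimizer (apply the global bound centered at $\theta_\star$ to any other stationary point) and $|F(\theta)-F(\theta_\star)|=F(\theta)-\inf F$, so \eqref{eq:framework:3} holds with $(\beta_0,\zeta_0)$ as soon as $\beta_0\ge\frac\alpha{1+\alpha}$. To secure the latter: either $F\equiv\inf F$, in which case $\nabla F\equiv0$ makes \eqref{eq:framework:3} trivial for any $\beta\in[\frac\alpha{1+\alpha},1)$; or some $\vartheta$ has $F(\vartheta)>\inf F$, and on the segment $[\theta_\star,\vartheta]$ continuity provides points $\vartheta_k$ with $\inf F<F(\vartheta_k)\to\inf F$, at which combining the global bound with \eqref{eq:gradF:square} gives $(F(\vartheta_k)-\inf F)^{\beta_0-\alpha/(1+\alpha)}\le\zeta_0\sqrt{C}$, impossible if $\beta_0<\frac\alpha{1+\alpha}$ since the left side would diverge. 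Hence $\beta_0\in[\frac\alpha{1+\alpha},1)$, as required.

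The main obstacle is the uniformization in the first part of (i): turning the point-dependent {\L}ojasiewicz data into a single pair $(\beta,\zeta)$ valid near \emph{every} point of $S$, not merely near the finitely many cover centers --- which is exactly what the ``equal $F$-values on overlapping {\L}ojasiewicz neighborhoods'' observation settles. Everything else is comparatively routine: the exponent rescalings for (i)\textbf{b} and (ii), and, for (iii), the realization that a minimizing sequence together with Lemma~\ref{lmm:Holder}(\ref{lmm:Holder:ii}) already upgrades ``stationary point'' to ``global minimizer'', without invoking any gradient-flow or descent argument.
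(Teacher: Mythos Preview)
Your proof is correct. The main divergence from the paper is in (\ref{prp:Lojasiewicz:i})\hyperref[prp:Lojasiewicz:i:a]{a}: the paper cites \cite[Lemma~4.3]{DK24}, which first establishes that the set of critical \emph{values} $\{F(\theta_\star):\theta_\star\in\nabla F^{-1}(0)\cap\mathcal{K}\}$ is finite and then uniformizes levelset by levelset, whereas you cover $S=\nabla F^{-1}(0)\cap\mathcal{K}$ directly by finitely many {\L}ojasiewicz neighborhoods and use the observation that any stationary $\theta_\star$ lying in $\mathcal{V}'_{\theta_j}$ must share the value $F(\theta_\star)=F(\theta_j)$. Your route is more self-contained (no external lemma), and in fact implicitly reproves the finiteness of critical values on $\mathcal{K}$. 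For (\ref{prp:Lojasiewicz:i})\hyperref[prp:Lojasiewicz:i:b]{b} your argument is exactly the content of the paper's Lemma~\ref{lmm:beta+:zeta1}, made explicit. For (\ref{prp:Lojasiewicz:ii}) the paper chooses $\beta=\tfrac12\big(1+\sup_{\theta_\star}\beta_{\theta_\star}\big)$ while you take $\beta=1$; both satisfy the statement, but the paper's smaller exponent is what feeds the sharper rate in Theorem~\ref{thm:F-F*:special:cases}(\ref{thm:F-F*:special:cases:ii}), so it is worth noting that your choice, while sufficient for the proposition, is the coarsest possible. For (\ref{prp:Lojasiewicz:iii}) your minimizing-sequence argument (combined with Lemma~\ref{lmm:Holder}(\ref{lmm:Holder:ii})) is actually more explicit than the paper's one-line proof, which writes $(F(\theta_\star)-\inf F)^\beta\le\zeta\|\nabla F(\theta_\star)\|$ without justifying how $\inf F$ replaces $F(\theta_\star')$; your version fills that gap cleanly. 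Both proofs tacitly assume $\nabla F^{-1}(0)\neq\varnothing$, which is implicit in the statement.
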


\begin{remark}
If one assumes the set of stationary points of $F$ bounded, then Proposition~\ref{prp:Lojasiewicz}(\ref{prp:Lojasiewicz:i}) retrieves uniform {\L}ojasiewicz parameters $(\beta,\zeta)\in(0,1)\times\mathbb{R}^*_+$ across all stationary points.
We however opt out of this extra condition to treat the more general case of unbounded set $\nabla F^{-1}(0)$.
\end{remark}

\cite[Lemma~1]{AB09} and~\cite[Lemma~F.1]{Wei+24} fix a compact connected set $\mathcal{K}\subset\nabla F^{-1}(0)$ (of local minima in the case of~\cite{Wei+24}) and prove the existence of uniform {\L}ojasiewicz parameters $(\beta,\zeta)\in(0,1)\times\mathbb{R}^*_+$ on an open neighborhood of $\mathcal{K}$.
The issue with such approach is the additional need to control $\mathbb{P}(\mathcal{A}\subset\mathcal{K})$ (recalling that $\mathcal{A}$ is the random set of accumulation points of $(\theta_n)_{n\geq0}$), which is not a given, unless additional measures are taken to confine the \eqref{eq:theta:n} trajectory to a small neighborhood of $\mathcal{A}\subset\mathcal{K}$.
In Proposition~\ref{prp:Lojasiewicz}(\ref{prp:Lojasiewicz:i})\hyperref[prp:Lojasiewicz:i:a]{a}, we utilize~\cite[Lemma~4.3]{DK24} to obtain uniform {\L}ojasiewicz parameters $(\beta,\zeta)\in(0,1)\times\mathbb{R}^*_+$ on an arbitrary compact set $\mathcal{K}\subset\mathbb{R}^m$.
The coercivity of $F$ via $\mathcal{H}$\ref{asp:f->8} hints at its levelsets as natural candidate compact sets upon which one can uniformize the {\L}ojasiewicz parameters.
Let
\begin{equation*}
\mathcal{L}_\ell\coloneqq\{\theta\in\mathbb{R}^m,F(\theta)\leq\ell\},
\quad\ell>\inf F.
\end{equation*}

\begin{remark}
The continuity and coercivity of $F$, via $\mathcal{H}$\ref{asp:f} and $\mathcal{H}$\ref{asp:f->8}, entail the compactness of $\mathcal{L}_\ell$, $\ell>\inf F$.
\end{remark}

Proposition~\ref{prp:Lojasiewicz}(\ref{prp:Lojasiewicz:i})\hyperref[prp:Lojasiewicz:i:b]{b} provides a similar framework to Proposition~\ref{prp:Lojasiewicz}(\ref{prp:Lojasiewicz:i})\hyperref[prp:Lojasiewicz:i:a]{a}, however with a {\L}ojasiewicz constant $\zeta=1$.
The presented advantage is discussed in more detail in Remark~\ref{rmk:advantage}.

To allow for a global view on the function-value convergence behavior, we develop a novel formulation for the {\L}ojasiewicz condition in Proposition~\ref{prp:Lojasiewicz}(\ref{prp:Lojasiewicz:ii}), justified by the outcome of Lemma~\ref{lmm:beta+:zeta1}.
The latter is based on the observation that $\frac{|F(\theta)-F(\theta_\star)|^{\beta_{\theta_\star}}}{\|\nabla F(\theta)\|}$ is locally bounded around each $\theta_\star\in\nabla F^{-1}(0)$~\cite{BDL07}.
This new formulation ensures a unique global exponent $\beta\in(0,1]$ and a unique global scaling factor $\zeta=1$ throughout all stationary points of $F$.

Eventually, Proposition~\ref{prp:Lojasiewicz}(\ref{prp:Lojasiewicz:iii}) recalls a classical result on globally {\L}ojasiewicz functions.
\\

The following result provides a convergence rate in high probability using $\mathcal{H}$\ref{asp:gamma} as a sole descriptor of the asymptotic behavior of $(\gamma_n)_{n\geq1}$.

\begin{theorem}\label{thm:F-F*}
Assume $\mathcal{H}$\ref{asp:f}--\ref{asp:Lojasiewicz} hold.
Then,
\begin{enumerate}[\bf(i)]
\item\label{thm:F-F*:i}
for all $\delta\in(0,1)$, there exists $\ell_\delta>\inf F$ such that $\mathbb{P}(\mathcal{A}\subset\mathcal{L}_{\ell_\delta})\geq1-\delta$, and
\begin{enumerate}[\bf a.]
    \item\label{thm:F-F*:i:a}
    there exist $\beta_\delta\in(0,1)$ and $\zeta_\delta>0$ such that \eqref{eq:framework:1} holds for $\mathcal{K}=\mathcal{L}_{\ell_\delta}$.
    \item\label{thm:F-F*:i:b}
    there exists $\beta_\delta\in(0,1)$ such that \eqref{eq:framework:1:bis} holds for $\mathcal{K}=\mathcal{L}_{\ell_\delta}$.
\end{enumerate}
Besides, there exists $C_\delta>0$ such that, for all $n\geq1$,
\begin{equation*}
\inf_{0\leq k\leq n}{F(\theta_k)}-F_\star
\leq C_\delta\Big(\sum_{k=1}^n\gamma_k\Big)^{-1\wedge\frac1{2\beta_\delta}}
\quad\text{with probability at least $1-\delta$.}
\end{equation*}
\item\label{thm:F-F*:ii}
there exists $\beta\in(0,1]$ such that \eqref{eq:framework:2} holds. Moreover, for all $\delta\in(0,1)$, there exists $C_\delta>0$ such that, for all $n\geq1$,
\begin{equation*}
\inf_{0\leq k\leq n}{F(\theta_k)}-F_\star
\leq C_\delta\Big(\sum_{k=1}^n\gamma_k\Big)^{-1\wedge\frac1{2\beta}}
\quad\text{with probability at least $1-\delta$.}
\end{equation*}
\item\label{thm:F-F*:iii}
if $F$ is globally {\L}ojasiewicz, there exist $\beta\in\big[\frac\alpha{1+\alpha},1\big)$ and $\zeta>0$ such that \eqref{eq:framework:3} holds. Furthermore, there exists $C>0$ such that, for all $\delta\in(0,1)$ and all $n\geq1$,
\begin{equation*}
\inf_{0\leq k\leq n}{F(\theta_k)}-\inf F
\leq\frac{C}{\delta^\frac1\beta}\Big(\sum_{k=1}^n\gamma_k\Big)^{-\frac1{2\beta}}
\quad\text{with probability at least $1-\delta$.}
\end{equation*}
\end{enumerate}
\end{theorem}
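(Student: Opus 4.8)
The plan is to establish a single master estimate on the function-value decrease and then specialize it to the three Łojasiewicz frameworks. First I would start from the descent inequality: under $\mathcal{H}$\ref{asp:f}, a first-order Taylor–Lagrange expansion combined with the $(L,\alpha)$-Hölder continuity of $\nabla F$ gives, along the \eqref{eq:theta:n} iterates,
\begin{equation*}
F(\theta_n)\leq F(\theta_{n-1})-\gamma_n\nabla F(\theta_{n-1})^\top g_n(\theta_{n-1})+\frac{L}{1+\alpha}\gamma_n^{1+\alpha}\|g_n(\theta_{n-1})\|^{1+\alpha}.
\end{equation*}
Taking $\mathbb{E}_{n-1}$, using $\mathcal{H}$\ref{asp:f:abc}(\ref{asp:f:abc:ii}) to lower bound the cross term by $\kappa\|\nabla F(\theta_{n-1})\|^2$, and $\mathcal{H}$\ref{asp:f:abc}(\ref{asp:f:abc:i}) together with \eqref{eq:gradF:square} and Jensen to bound the remainder by $C\gamma_n^{1+\alpha}\big((F(\theta_{n-1})-\inf F)+1\big)$, yields a quasi-supermartingale relation of the form
\begin{equation*}
\mathbb{E}_{n-1}[F(\theta_n)-\inf F]\leq(1+C\gamma_n^{1+\alpha})(F(\theta_{n-1})-\inf F)-\kappa\gamma_n\|\nabla F(\theta_{n-1})\|^2+C\gamma_n^{1+\alpha}.
\end{equation*}
Since $\sum_n\gamma_n^{1+\alpha}<\infty$ by $\mathcal{H}$\ref{asp:gamma}, the Robbins–Siegmund theorem (already invoked implicitly for Theorem~\ref{thm:F:cv:gradF:0}) both reconfirms convergence $F(\theta_n)\to F_\star$ and delivers the summability $\sum_n\gamma_n\|\nabla F(\theta_{n-1})\|^2<\infty$ $\Pas$, along with an $L^1$-type tail control that can be upgraded to a high-probability statement via Markov's inequality on the relevant nonnegative remainder process.

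Next I would handle the localization. For part~(\ref{thm:F-F*:i}), coercivity ($\mathcal{H}$\ref{asp:f->8}) and Lemma~\ref{lmm:aux}/Theorem~\ref{thm:cv} guarantee $\mathcal{A}$ is a.s.\ a nonempty compact subset of some random levelset; since $F(\theta_n)\to F_\star$ and $(F(\theta_n))$ is "eventually decreasing up to summable perturbations", $\sup_n(F(\theta_n)-\inf F)$ has an a.s.-finite value whose distribution I can control in expectation via the quasi-supermartingale bound started from $\mathbb{E}[F(\theta_0)]<\infty$ (Remark~\ref{rmk:grad:f:L2}). Hence for each $\delta$ there is a deterministic level $\ell_\delta$ with $\mathbb{P}(\sup_n F(\theta_n)\leq\ell_\delta)\geq1-\delta$, which forces $\mathcal{A}\subset\mathcal{L}_{\ell_\delta}$ on that event; applying Proposition~\ref{prp:Lojasiewicz}(\ref{prp:Lojasiewicz:i}) to the compact set $\mathcal{K}=\mathcal{L}_{\ell_\delta}$ produces the uniform exponents $\beta_\delta$ and constants $\zeta_\delta$ (or $\zeta=1$ for~\textbf{b}). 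For part~(\ref{thm:F-F*:ii}) I would instead invoke Proposition~\ref{prp:Lojasiewicz}(\ref{prp:Lojasiewicz:ii}) directly for the global exponent $\beta\in(0,1]$, with no levelset truncation needed. For part~(\ref{thm:F-F*:iii}) I would use Proposition~\ref{prp:Lojasiewicz}(\ref{prp:Lojasiewicz:iii}) and the fact that $F_\star=\inf F$ $\Pas$ in the globally Łojasiewicz case (every stationary point being a global minimizer and Theorem~\ref{thm:F:cv:gradF:0} giving $\nabla F(\theta_n)\to0$, hence $F(\theta_n)\to\inf F$ by continuity and coercivity).

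The core rate extraction then goes as follows. On the good event, for all $k$ past some random (resp.\ deterministic, in case~(\ref{thm:F-F*:iii})) index the iterate lies in a region where the Łojasiewicz inequality applies, so $\|\nabla F(\theta_k)\|^2\geq(F(\theta_k)-F_\star)^{2\beta}$ (up to the constant $\zeta_\delta^{-2}$). Combined with $\sum_{k=1}^n\gamma_k\|\nabla F(\theta_{k-1})\|^2\leq C_\delta$ on the good event, and writing $e_k\coloneqq F(\theta_k)-F_\star\geq0$, I get $\sum_{k=1}^n\gamma_k e_{k-1}^{2\beta}\leq C_\delta$. Since $(e_k)$ is, modulo the summable perturbation, nonincreasing, $e_n^{2\beta}\big(\sum_{k=1}^n\gamma_k\big)\lesssim\sum_{k=1}^n\gamma_k e_{k-1}^{2\beta}\leq C_\delta$ when $2\beta\leq1$ — wait, more carefully: when $2\beta\geq1$ I bound $e_{k-1}^{2\beta}\gtrsim e_{k-1}$ only on the bounded-$e$ event, so I get $\inf_{k\leq n}e_k^{2\beta}\leq C_\delta(\sum_k\gamma_k)^{-1}$, i.e.\ $\inf_k e_k\leq C_\delta(\sum_k\gamma_k)^{-1/(2\beta)}$; when $2\beta\leq1$ the cruder bound $\sum_k\gamma_k e_{k-1}^{2\beta}\geq(\inf_k e_k)^{2\beta}\sum_k\gamma_k$ directly gives $\inf_k e_k\leq C_\delta(\sum_k\gamma_k)^{-1/(2\beta)}$, which is the dominant one when $\beta\leq\frac12$; combining, $\inf_{0\leq k\leq n}F(\theta_k)-F_\star\leq C_\delta(\sum_k\gamma_k)^{-1\wedge\frac1{2\beta}}$. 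For~(\ref{thm:F-F*:iii}) the constant $C_\delta$ must be made explicit in $\delta$: starting the quasi-supermartingale at $\theta_0$ and using Markov gives a remainder bound scaling like $\delta^{-1}$, which after raising to the power $1/\beta$ in the Łojasiewicz step yields the $\delta^{-1/\beta}$ prefactor, with a $\delta$-independent $C$.

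The main obstacle I anticipate is the passage from the $\Pas$ summability $\sum_k\gamma_k\|\nabla F(\theta_{k-1})\|^2<\infty$ to a \emph{quantitative, high-probability} bound with a constant $C_\delta$ that does not secretly depend on the random entry time into the Łojasiewicz neighborhood. Concretely, the random index $N$ after which $\theta_k$ stays in $\mathcal{V}$ (for the relevant $\theta_\star\in\mathcal{A}$) and the running sum $\sum_{k=1}^N\gamma_k e_{k-1}^{2\beta}$ up to that time are both random and a priori unbounded; I would need to absorb them into the good event by enlarging $\mathcal{L}_{\ell_\delta}$ and using the connectedness of $\mathcal{A}$ (Theorem~\ref{thm:cv}(\ref{thm:cv:i})) together with a covering of $\mathcal{A}$ by finitely many Łojasiewicz neighborhoods with the uniform $(\beta_\delta,\zeta_\delta)$ from Proposition~\ref{prp:Lojasiewicz}(\ref{prp:Lojasiewicz:i}), so that "$\theta_k\in\mathcal{L}_{\ell_\delta}$ and $F(\theta_k)-F_\star$ small" already forces $\theta_k$ into the good Łojasiewicz region — a uniform-neighborhood argument in the spirit of \cite{AB09,FGP15}. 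Making this localization argument airtight while keeping all constants deterministic (given $\delta$) is where the real work lies; everything else is bookkeeping with Robbins–Siegmund, Markov, and the elementary sum-to-rate lemma above.
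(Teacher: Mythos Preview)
Your overall strategy is viable but differs from the paper's in how the rate is extracted. You work pathwise: on a high-probability event you control both the random entry time into the Łojasiewicz region and the a.s.-finite sum $\sum_k\gamma_k\|\nabla F(\theta_{k-1})\|^2$, then use $|F(\theta_{k-1})-F_\star|^{2\beta}\leq\zeta^2\|\nabla F(\theta_{k-1})\|^2$ and the elementary bound $(\inf_{0\leq k\leq n}|e_k|)^{2\beta}\sum_k\gamma_k\leq\sum_k\gamma_k|e_{k-1}|^{2\beta}$. The paper instead works in expectation throughout: from $\sum_k\gamma_k\mathbb{E}[\|\nabla F(\theta_{k-1})\|^2]<\infty$ (Lemma~\ref{lmm:F(theta):converges}(\ref{lmm:F(theta):converges:ii})) one gets $\inf_{p\leq k\leq n}\mathbb{E}[\|\nabla F(\theta_k)\|^2]\leq C\big(\sum_{k=p+1}^{n+1}\gamma_k\big)^{-1}$ (Lemma~\ref{lmm:grad:F:n}), then bounds $\mathbb{E}[|F(\theta_k)-F_\star|\mathbbm1_{\mathbb{A}_\delta}]$ pointwise in $k$ via Łojasiewicz on a fixed event $\mathbb{A}_\delta$, and applies Markov once at the end. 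This avoids stacking two high-probability controls. For the localization obstacle you flag, the paper does not use an Attouch--Bolte-type covering; it introduces a \emph{final hitting time} $\tau_1=\tau_0\vee\inf\{n:\forall k\geq n,\theta_k\in\mathcal{V}_{\delta,\star}\}$ (not a stopping time), shows $\tau_1<\infty$ a.s.\ on $\{\mathcal{A}\subset\mathcal{L}_{\ell_\delta}\}$ by iterate boundedness, and picks a deterministic threshold $n_\delta$ with $\mathbb{P}(\tau_1<n_\delta,\mathcal{A}\subset\mathcal{L}_{\ell_\delta})\geq1-\tfrac{2\delta}{3}$ via reverse Fatou; on $\mathbb{A}_\delta$ the Łojasiewicz inequality then holds for every $k\geq n_\delta$, and the first $n_\delta$ terms are absorbed into $C_\delta$. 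For part~(\ref{thm:F-F*:iii}) the paper's route is a two-liner: the global Łojasiewicz inequality and the high-probability gradient bound $\inf_k\|\nabla F(\theta_k)\|\leq C\delta^{-1}(\sum\gamma_k)^{-1/2}$ (Lemma~\ref{lmm:grad:F:n}(\ref{lmm:grad:F:n:ii})) give the result directly, the $\delta^{-1/\beta}$ coming from raising to the power $1/\beta$. Two minor slips in your write-up: $e_k=F(\theta_k)-F_\star$ need not be nonnegative (harmless, since the bound is trivial when $\inf_k e_k<0$), and the inequality $e^{2\beta}\gtrsim e$ for small $e$ and $2\beta\geq1$ goes the wrong way---but you do not actually need it, since the direct argument $\inf_k|e_k|^{2\beta}\leq C_\delta/\sum\gamma_k$ already yields $(\sum\gamma_k)^{-1/(2\beta)}$ in both regimes, which for $\beta<\tfrac12$ is in fact sharper than the stated $(\sum\gamma_k)^{-1}$.
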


To obtain finer convergence rates than to the ones provided above, we describe the asymptotic behavior of $(\gamma_n)_{n\geq1}$ more precisely.

\begin{assumption}\label{asp:gamma:misc}
\begin{enumerate}[\bf(i)]
    \item\label{asp:gamma:misc:i}
Either of the following holds:
    \begin{enumerate}[\bf a.]
        \item\label{asp:gamma:misc:ia}
    $\ln{\big(\frac{\gamma_{n-1}}{\gamma_n}\big)}=\mathrm{o}(\gamma_n)$;
        \item\label{asp:gamma:misc:ib}
    there exists $\gamma_\star>0$ such that $\ln{\big(\frac{\gamma_{n-1}}{\gamma_n}\big)}\sim\frac{\gamma_n}{\gamma_\star}$.
    \end{enumerate}
    \item\label{asp:gamma:misc:ii}
$\sum_{n=1}^\infty\frac{\gamma_n}{\sum_{k=1}^n\gamma_k}=\infty$.
    \item\label{asp:gamma:misc:iii}
$\gamma_n=\mathrm{O}\big(\big(\sum_{k=1}^n\gamma_k\big)^{-\rho}\big)$, for some $\rho>\frac1\alpha\geq1$.
\end{enumerate}
\end{assumption}

\begin{example}
Suppose that, as in \eqref{eq:gamma:usecase}, $\gamma_n=\Theta(\frac1{n^s})$, $n\geq1$, with $s\in(0,1]$.
Then $s<1$ corresponds to $\mathcal{H}$\ref{asp:gamma:misc}(\ref{asp:gamma:misc:i})\hyperref[asp:gamma:misc:ia]{a} and $s=1$ to $\mathcal{H}$\ref{asp:gamma:misc}(\ref{asp:gamma:misc:i})\hyperref[asp:gamma:misc:ib]{b}.
$\mathcal{H}$\ref{asp:gamma:misc}(\ref{asp:gamma:misc:ii}) checks for all $s\in(0,1]$ and $\mathcal{H}$\ref{asp:gamma:misc}(\ref{asp:gamma:misc:iii}) applies if $s\in[\frac\rho{1+\rho},1]$.
By the increasing monotony of $x\in\mathbb{R}_+\mapsto\frac{x}{1+x}$ and the fact that $\rho>\frac1\alpha$, one has $\frac\rho{1+\rho}>\frac1{1+\alpha}$, so $\mathcal{H}$\ref{asp:gamma} is automatically verified.
\end{example}

\begin{remark}
$\mathcal{H}$\ref{asp:gamma:misc}(\ref{asp:gamma:misc:i}) is standard in the stochastic approximation literature~\cite[Assumption~C4]{For15}. The parameter $\gamma_\star$ in $\mathcal{H}$\ref{asp:gamma:misc}(\ref{asp:gamma:misc:i})\hyperref[asp:gamma:misc:ib]{b} can be interpreted as an initialization parameter, since if $\gamma_n=\frac{\gamma_1}n$, $n\geq1$, one simply has $\gamma_\star=\gamma_1$.
$\mathcal{H}$\ref{asp:gamma:misc}(\ref{asp:gamma:misc:ii}) appears as early as in~\cite[Equation (26)]{RM51} for studying stochastic approximation algorithms.
$\mathcal{H}$\ref{asp:gamma:misc}(\ref{asp:gamma:misc:iii}) generalizes~\cite[Proposition~4.3(c)]{DK24} by allowing for a flexible choice on $\rho$.
Essential properties satisfied by sequences fulfilling $\mathcal{H}$\ref{asp:gamma:misc} are provided in Lemma~\ref{lmm:gamma}.
\end{remark}

\begin{theorem}\label{thm:F-F*:special:cases}
Assume $\mathcal{H}$\ref{asp:f}--\ref{asp:gamma:misc} hold.
Then,
\begin{enumerate}[\bf(i)]
\item\label{thm:F-F*:special:cases:i}
for all $\delta\in(0,1)$, there exists $\ell_\delta>\inf F$ such that $\mathbb{P}(\mathcal{A}\subset\mathcal{L}_{\ell_\delta})\geq1-\delta$, and
\begin{enumerate}[\bf a.]
    \item\label{thm:F-F*:special:cases:i:a}
    there exist $\beta_\delta\in(0,1)$ and $\zeta_\delta>0$ such that \eqref{eq:framework:1} holds for $\mathcal{K}=\mathcal{L}_{\ell_\delta}$.
    Besides, there exists $C_\delta>0$ such that, for all $n\geq1$, with probability at least $1-\delta$,
    \begin{equation*}
    F(\theta_n)-F_\star\leq C_\delta
    \begin{cases}
    \gamma_n^{\alpha\wedge\frac12},
    &\text{if $\beta_\delta\in(0,\frac12]$ and $\kappa\gamma_\star>\zeta_\delta^{1/\beta_\delta}(\alpha\vee\frac12)$ under $\mathcal{H}$\ref{asp:gamma:misc}(\ref{asp:gamma:misc:i})\hyperref[asp:gamma:misc:ib]{b},}\\
    \big(\sum_{k=1}^n\gamma_k\big)^{-r_\delta},
    &\text{if $\beta_\delta\in(\frac12,1)$, with $r_\delta=\frac1{2\beta_\delta-1}\wedge(\alpha\rho-1)\wedge\frac{\rho-1}2>0$.}
    \end{cases}
    \end{equation*}
    \item\label{thm:F-F*:special:cases:i:b}
    there exists $\beta_\delta\in(0,1)$ such that \eqref{eq:framework:1:bis} holds for $\mathcal{K}=\mathcal{L}_{\ell_\delta}$.
    Besides, there exists $C_\delta>0$ such that, for all $n\geq1$, with probability at least $1-\delta$,
    \begin{equation*}
    F(\theta_n)-F_\star\leq C_\delta
    \begin{cases}
    \gamma_n^{\alpha\wedge\frac12},
    &\text{if $\beta_\delta\in(0,\frac12]$ and $\kappa\gamma_\star>\alpha\vee\frac12$ under $\mathcal{H}$\ref{asp:gamma:misc}(\ref{asp:gamma:misc:i})\hyperref[asp:gamma:misc:ib]{b},}\\
    \big(\sum_{k=1}^n\gamma_k\big)^{-r_\delta},
    &\text{if $\beta_\delta\in(\frac12,1)$, with $r_\delta=\frac1{2\beta_\delta-1}\wedge(\alpha\rho-1)\wedge\frac{\rho-1}2>0$.}
    \end{cases}
    \end{equation*}
\end{enumerate}
\item\label{thm:F-F*:special:cases:ii}
there exists $\beta\in(0,1]$ such that \eqref{eq:framework:2} holds. Moreover, for all $\delta\in(0,1)$, there exists $C_\delta>0$ such that, for all $n\geq1$, with probability at least $1-\delta$,
\begin{equation*}
F(\theta_n)-F_\star\leq C_\delta
\begin{cases}
\gamma_n^{\alpha\wedge\frac12},
&\text{if $\beta\in(0,\frac12]$ and $\kappa\gamma_\star>\alpha\vee\frac12$ under $\mathcal{H}$\ref{asp:gamma:misc}(\ref{asp:gamma:misc:i})\hyperref[asp:gamma:misc:ib]{b},}\\
\big(\sum_{k=1}^n\gamma_k\big)^{-r},
&\text{if $\beta\in(\frac12,1]$, with $r=\frac1{2\beta-1}\wedge(\alpha\rho-1)\wedge\frac{\rho-1}2>0$.}
\end{cases}
\end{equation*}
\item\label{thm:F-F*:special:cases:iii}
if $F$ is globally {\L}ojasiewicz, there exist $\beta\in\big[\frac\alpha{1+\alpha},1)$ and $\zeta>0$ such that \eqref{eq:framework:3} holds.
Suppose $\beta\in\big(\frac\alpha{1+\alpha},1\big)\cup\big\{\frac12\big\}$.
Then, there exists $C>0$ such that,
\begin{itemize}
    \item
if $\beta\in\big(\frac\alpha{1+\alpha},\frac12\big]\cup\big\{\frac12\big\}$, assuming
\begin{equation*}
\kappa\gamma_\star>\zeta^{2+\lambda\frac{1+\alpha}\alpha}\Big(\frac{(1+\alpha)L^\frac1\alpha}\alpha\Big)^\lambda\alpha,
\quad\text{where}\quad
\lambda=\begin{cases}
\frac{1-2\beta}{\frac{1+\alpha}\alpha\beta-1},
&\text{if $\beta<\frac12$},\\
0,
&\text{if $\beta=\frac12$},
\end{cases}
\quad
\text{under $\mathcal{H}$\ref{asp:gamma:misc}(\ref{asp:gamma:misc:i})\hyperref[asp:gamma:misc:ib]{b},}
\end{equation*}
for all $\delta\in(0,1)$ and all $n\geq1$,
\begin{equation*}
F(\theta_n)-\inf F\leq\frac{C}\delta\gamma_n^\alpha
\quad\text{with probability at least $1-\delta$.}
\end{equation*}
    \item
if $\beta\in\big(\frac12,1\big)$, denoting $r=\frac1{2\beta-1}\wedge(\alpha\rho-1)>0$, for all $\delta\in(0,1)$ and all $n\geq1$,
\begin{equation*}
F(\theta_n)-\inf F\leq\frac{C}\delta\Big(\sum_{k=1}^n\gamma_k\Big)^{-r}
\quad\text{with probability at least $1-\delta$.}
\end{equation*}
\end{itemize}
\end{enumerate}
\end{theorem}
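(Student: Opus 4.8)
The plan is to collapse the whole statement into a single recursive inequality for the function-value gap and then solve that recursion under the refined step-size assumption $\mathcal{H}$\ref{asp:gamma:misc}. Set $V_n=F(\theta_n)-F_\star$ in cases~(\ref{thm:F-F*:special:cases:i}) and~(\ref{thm:F-F*:special:cases:ii}), and $V_n=F(\theta_n)-\inf F$ in case~(\ref{thm:F-F*:special:cases:iii}). Starting from the descent inequality furnished by $\mathcal{H}$\ref{asp:f} through a first-order Taylor--Lagrange expansion,
\[
F(\theta_n)\le F(\theta_{n-1})-\gamma_n\nabla F(\theta_{n-1})^\top g_n(\theta_{n-1})+\tfrac{L}{1+\alpha}\gamma_n^{1+\alpha}\|g_n(\theta_{n-1})\|^{1+\alpha},
\]
I would take $\mathbb{E}_{n-1}$, invoke $\mathcal{H}$\ref{asp:f:abc}(\ref{asp:f:abc:ii}) to replace $\nabla F^\top b$ by $\kappa\|\nabla F\|^2$, and control $\mathbb{E}_{n-1}[\|g_n(\theta_{n-1})\|^{1+\alpha}]\le(\mathbb{E}_{n-1}[\|g_n(\theta_{n-1})\|^2])^{(1+\alpha)/2}$ by Jensen, then $\mathcal{H}$\ref{asp:f:abc}(\ref{asp:f:abc:i}) and \eqref{eq:gradF:square}. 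This is the quasi-supermartingale inequality already behind Theorem~\ref{thm:F:cv:gradF:0}; the new input is to feed into it the {\L}ojasiewicz lower bound $\|\nabla F(\theta_{n-1})\|^2\ge\zeta^{-2}|V_{n-1}|^{2\beta}$, producing the self-referential recursion $\mathbb{E}_{n-1}[V_n]\le V_{n-1}-\kappa\zeta^{-2}\gamma_n|V_{n-1}|^{2\beta}+\mathrm{O}(\gamma_n^{1+\alpha})$, which I would then solve with a recursive-inequality lemma resting on the properties of $(\gamma_n)_{n\ge1}$ collected in Lemma~\ref{lmm:gamma}.

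For the local statements~(\ref{thm:F-F*:special:cases:i}) and~(\ref{thm:F-F*:special:cases:ii}) the {\L}ojasiewicz bound is available only near $\nabla F^{-1}(0)$, so I would reuse verbatim the construction of Theorem~\ref{thm:F-F*}: given $\delta$, fix $\ell_\delta>\inf F$ with $\mathbb{P}(\mathcal{A}\subset\mathcal{L}_{\ell_\delta})\ge1-\delta$ and apply Proposition~\ref{prp:Lojasiewicz}(\ref{prp:Lojasiewicz:i}) (resp.~(\ref{prp:Lojasiewicz:ii})) for uniform parameters $(\beta_\delta,\zeta_\delta)$ (resp.~a global exponent $\beta$ with $\zeta=1$) on $\mathcal{K}=\mathcal{L}_{\ell_\delta}$. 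On $\{\mathcal{A}\subset\mathcal{L}_{\ell_\delta}\}$, Theorem~\ref{thm:cv}(\ref{thm:cv:i}) ensures $\mathcal{A}$ is compact, connected and stationary, hence $F\equiv F_\star$ on $\mathcal{A}$ and $\theta_n\to\mathcal{A}$; covering $\mathcal{A}$ by finitely many of the local {\L}ojasiewicz neighborhoods yields a fixed tube around $\mathcal{A}$ in which $\|\nabla F(\theta)\|\ge\zeta_\delta^{-1}|F(\theta)-F_\star|^{\beta_\delta}$, and $\theta_n$ enters this tube after a $\Pas$ finite random index $N$. On the same event $V_n$ stays bounded by $\ell_\delta-F_\star$, so the $\gamma_n^{1+\alpha}$-coefficient is at most $C_\delta$. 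The recursion then holds past $N$ on this event; to turn it into an unconditional statement I would localize, in the spirit of \cite{DK24}, by freezing the trajectory once it leaves the tube, obtain from the recursive-inequality lemma a deterministic rate for the localized function-value gap, and recover the bound for $F(\theta_n)-F_\star$ on $\{\mathcal{A}\subset\mathcal{L}_{\ell_\delta}\}$ through a closing Markov estimate, the constant $C_\delta$ absorbing both the level-set choice and the Markov factor.

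For the global statement~(\ref{thm:F-F*:special:cases:iii}) no localization is needed: \eqref{eq:framework:3} holds on all of $\mathbb{R}^m$, every stationary point is a global minimizer, and since $(F(\theta_n)-\inf F)^\beta\le\zeta\|\nabla F(\theta_n)\|\to0$ by Theorem~\ref{thm:F:cv:gradF:0} one has $F_\star=\inf F$, so $V_n\ge0$, $V_n\to0$ $\Pas$, and $\mathbb{E}_{n-1}[V_n]\le V_{n-1}-\kappa\zeta^{-2}\gamma_n V_{n-1}^{2\beta}+C\gamma_n^{1+\alpha}(V_{n-1}+1)$ for every $n$. Taking expectations, when $\beta\in(\tfrac12,1)$ the map $x\mapsto x^{2\beta}$ is convex, so Jensen closes the recursion on $\mathbb{E}[V_n]$ and the recursive-inequality lemma yields $\mathbb{E}[V_n]\le C\big(\sum_{k=1}^n\gamma_k\big)^{-r}$ with $r=\tfrac1{2\beta-1}\wedge(\alpha\rho-1)$; when $\beta\in(\tfrac\alpha{1+\alpha},\tfrac12]$ the concave friction is too weak on its own, so I would interpolate between \eqref{eq:framework:3} and the smoothness-induced upper bound $\|\nabla F(\theta)\|^2\le C(F(\theta)-\inf F)^{2\alpha/(1+\alpha)}$ of \eqref{eq:gradF:square} by Young's inequality with the conjugate pair encoded through $\lambda$, upgrading the friction enough to produce $\mathbb{E}[V_n]\le C\gamma_n^\alpha$ precisely under the threshold $\kappa\gamma_\star>\zeta^{2+\lambda(1+\alpha)/\alpha}\big((1+\alpha)L^{1/\alpha}/\alpha\big)^\lambda\alpha$ in the $\gamma_n\sim\gamma_\star/n$ regime $\mathcal{H}$\ref{asp:gamma:misc}(\ref{asp:gamma:misc:i})\hyperref[asp:gamma:misc:ib]{b}. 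Markov's inequality then delivers the stated $\tfrac C\delta$-type bounds.

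The main obstacle is the bookkeeping in cases~(\ref{thm:F-F*:special:cases:i})--(\ref{thm:F-F*:special:cases:ii}): the {\L}ojasiewicz inequality only becomes usable after a random time and on a random event, whereas the recursive-inequality lemma demands a genuine unconditional recursion with controlled remainders. Reconciling the two through a stopping-time decomposition, while ensuring the stopped remainders remain summable under $\mathcal{H}$\ref{asp:gamma}--\ref{asp:gamma:misc} and that the final intersection with $\{\mathcal{A}\subset\mathcal{L}_{\ell_\delta}\}$ leaves the exponent intact, is the delicate point; a secondary difficulty is the $\beta<\tfrac12$ analysis, where pinning down the interpolation exponent $\lambda$ and the sharp step-size threshold requires care.
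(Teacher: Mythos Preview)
Your outline for~(\ref{thm:F-F*:special:cases:iii}) is close to the paper's: interpolate \eqref{eq:framework:3} with Lemma~\ref{lmm:Holder}(\ref{lmm:Holder:ii}) through the exponent $\lambda$ when $\beta\le\tfrac12$ to force a linear contraction, and when $\beta>\tfrac12$ close a recursion on $\mathbb{E}[F(\theta_n)-\inf F]$. The paper does not invoke a nonlinear Chung-type lemma, though; it linearizes by splitting on the deterministic event $\mathfrak{S}=\big\{\mathbb{E}[\|\nabla F(\theta_{n-1})\|^2]^{1/2}\ge\nu(\sum_k\gamma_k)^{-\beta/(2\beta-1)}\big\}$, which turns the friction into $(1-\bar\mu\gamma_n/\sum_k\gamma_k)$ and makes Lemma~\ref{lmm:gamma}(\ref{lmm:gamma:2}) directly applicable.

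The genuine gap is in~(\ref{thm:F-F*:special:cases:i})--(\ref{thm:F-F*:special:cases:ii}), and it is precisely the obstacle you name without resolving. The tube around $\mathcal{A}$ is $\omega$-dependent and the index after which $(\theta_n)$ stays inside is a \emph{final hitting time}, not a stopping time; there is no adapted freezing, and the~\cite{DK24} device you invoke is exactly the deterministic-control assumption the paper is written to avoid. Concretely: if you take $\mathbb{E}_{n-1}$ first, the {\L}ojasiewicz bound is unavailable because the tube indicator is not $\mathcal{F}_{n-1}$-measurable; if instead you multiply by the good-event indicator $\mathbbm1_{\mathbb{A}_\delta}$ with $\mathbb{A}_\delta=\{\tau_1<n_\delta\}\cap\{\mathcal{A}\subset\mathcal{L}_{\ell_\delta}\}$ and take full expectations---which is what the paper does---then $\mathbb{E}[\mathbb{E}_{n-1}[V_n]\mathbbm1_{\mathbb{A}_\delta}]\neq\mathbb{E}[V_n\mathbbm1_{\mathbb{A}_\delta}]$ and the martingale increment $\Delta_k$ of \eqref{eq:Delta} no longer averages to zero. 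The paper keeps this surviving cross-term $\sum_k\gamma_k\mathbb{E}[\Delta_k\mathbbm1_{\mathbb{A}^n_\delta}]\prod_{j>k}(1-\cdots)$, bounds it in $L^2$ via Cauchy--Schwarz and $\sup_k\mathbb{E}[\Delta_k^2]<\infty$, and it is this term that inserts the $\tfrac12$ into $\gamma_n^{\alpha\wedge\frac12}$ when $\beta_\delta\le\tfrac12$ and the $\tfrac{\rho-1}{2}$ into $r_\delta$ when $\beta_\delta>\tfrac12$. A purely conditional recursion with the martingale term dropped would predict $\gamma_n^\alpha$ and $\tfrac1{2\beta_\delta-1}\wedge(\alpha\rho-1)$ respectively, which does not match the statement you are asked to prove.
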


\begin{remark}
The convergence rate proof for globally {\L}ojasiewicz loss landscapes with $\beta\leq\frac12$ relies on obtaining a positive constant $\mu>0$ such that $\|\nabla F(\theta)\|^2\geq\mu(F(\theta)-\inf F)$, $\theta\in\mathbb{R}^m$. This is not possible if $\beta=\frac\alpha{1+\alpha}$ and $\alpha\in(0,1)$.
Indeed, within the framework of Theorem~\ref{thm:F-F*:special:cases}(\ref{thm:F-F*:special:cases:iii}), if $\beta=\frac\alpha{1+\alpha}$, $\alpha\in(0,1]$, then, by \eqref{eq:framework:3} and Lemma~\ref{lmm:Holder}(\ref{lmm:Holder:ii}),
\begin{equation*}
\beta L^{-\frac1\alpha}\|\nabla F(\theta)\|^\frac1\beta
\leq F(\theta)-\inf F
\leq\zeta^\frac1\beta\|\nabla F(\theta)\|^\frac1\beta,
\quad\theta\in\mathbb{R}^m.
\end{equation*}
Assuming $\beta L^{-\frac1\alpha}\leq\zeta^\frac1\beta$, there exists a function $\phi\colon\mathbb{R}^m\to\big[\beta L^{-\frac1\alpha},\zeta^\frac1\beta\big]$ such that $F(\theta)-\inf F=\phi(\theta)\|\nabla F(\theta)\|^\frac1\beta$, $\theta\in\mathbb{R}^m$.
But then,
\begin{equation*}
\mu\leq\inf_{\theta\in\mathbb{R}^m}\frac{\|\nabla F(\theta)\|^2}{F(\theta)-\inf F}
=
\frac1{\sup_{\theta\in\mathbb{R}^m}\phi(\theta)^\beta(F(\theta)-\inf F)^{1-2\beta}},
\end{equation*}
which, given the coercivity of $F$, is nonnull if and only if $\beta=\frac\alpha{1+\alpha}=\frac12$, i.e.~$\alpha=1$.
\end{remark}

\begin{remark}\label{rmk:optimal:speed}
For simplicity, denote $\beta$ the {\L}ojasiewicz exponent within the framework of either of Theorems~\ref{thm:F-F*:special:cases}(\ref{thm:F-F*:special:cases:i})\hyperref[thm:F-F*:special:cases:i:a]{a}, \hyperref[thm:F-F*:special:cases:i:b]{b}, (\ref{thm:F-F*:special:cases:ii}) or (\ref{thm:F-F*:special:cases:iii}). Suppose $\alpha=1$ and $\gamma_n=\Theta(\frac1{n^s})$, $s\in(0,1]$.
\begin{enumerate}[\bf(i)]
    \item\label{rmk:optimal:speed:i}
Within the frameworks of Theorems~\ref{thm:F-F*:special:cases}(\ref{thm:F-F*:special:cases:i})\hyperref[thm:F-F*:special:cases:i:a]{a}, \hyperref[thm:F-F*:special:cases:i:b]{b} or (\ref{thm:F-F*:special:cases:ii}),
if $\beta=\frac12$, the \eqref{eq:theta:n} convergence is in $\mathrm{O}(n^{-\frac{s}2})$ and attains its best rate in $\mathrm{O}(n^{-\frac12})$ when $s=1$.
If $\beta>\frac12$, the \eqref{eq:theta:n} convergence in $\mathrm{O}\big(n^{-(1-s)(\frac1{2\beta-1}\wedge\frac{\rho-1}2)}\big)$ if $s<1$, and in $\mathrm{O}\big((\ln{n})^{-\frac1{2\beta-1}\wedge\frac{\rho-1}2}\big)$ if $s=1$, achieves its best rate if $\rho=\frac{1+2\beta}{2\beta-1}$ and $s=\frac\rho{1+\rho}=\frac{1+2\beta}{4\beta}$, scoring an $\mathrm{O}(n^{-\frac1{4\beta}})$. Note that this convergence rate coincides with that of $\beta=\frac12$ when $\beta\downarrow\frac12$, and approaches an $\mathrm{O}(n^{-\frac14})$ when $\beta\uparrow1$.
    \item\label{rmk:optimal:speed:ii}
As for the globally {\L}ojasiewicz framework of Thorem~\ref{thm:F-F*:special:cases}(\ref{thm:F-F*:special:cases:iii}), the \eqref{eq:theta:n} convergence in $\mathrm{O}(n^{-s})$ when $\beta=\frac12$ scores its best rate for $s=1$, yielding an $\mathrm{O}(n^{-1})$. When $\beta>\frac12$, it is in $\mathrm{O}\big(n^{-(1-s)(\frac1{2\beta-1}\wedge(\rho-1))}\big)$ if $s<1$ and in $\mathrm{O}\big((\ln{n})^{-\frac1{2\beta-1}\wedge(\rho-1)}\big)$ if $s=1$, which attains its best rate for $\rho=\frac{2\beta}{2\beta-1}$ and $s=\frac{2\beta}{4\beta-1}$, resulting in an $\mathrm{O}(n^{-\frac1{4\beta-1}})$.
This convergence rate is of the same order as the one for $\beta=\frac12$ if $\beta\downarrow\frac12$, and reaches an $\mathrm{O}(n^{-\frac13})$ if $\beta\uparrow1$.
\end{enumerate}
\end{remark}

Let us study the computational complexity of \eqref{eq:theta:n}.
Define the reciprocal $\gamma^{-1}\colon\mathbb{R}^*_+\to\mathbb{N}$ by
\begin{equation*}
\gamma^{-1}(t)=\inf\{n\geq1\colon\gamma_n\leq t\},
\quad t>0.
\end{equation*}
Note that $\gamma_{\gamma^{-1}(t)}\leq t$, $t>0$.
Define also
\begin{equation*}
\Sigma_n\coloneqq\sum_{k=1}^{n+1}\gamma_k,
\quad n\geq1.
\end{equation*}
$(\Sigma_n)_{n\geq1}$ is hence nondecreasing. Let $\Sigma^{-1}\colon\mathbb{R}^*_+\to\mathbb{N}$ be the mapping given by
\begin{equation*}
\Sigma^{-1}(t)=\inf\{n\geq1\colon\Sigma_n\geq t\},
\quad t>0.
\end{equation*}
Observe that $\Sigma_{\Sigma^{-1}(t)}\geq t$, $t>0$.

\begin{example}\label{exp:special:gamma}
If $(\gamma_n)_{n\geq1}$ is given by \eqref{eq:gamma:usecase}, with $\gamma_0=1$ and $c=0$, we have
\begin{equation*}
\gamma^{-1}(t)=\Big\lceil\frac1{t^\frac1s}\Big\rceil
\quad\text{and}\quad
\Sigma^{-1}(t)=
\begin{cases}
\lceil((1-s)t)^\frac1{1-s}\rceil,
&s\in(0,1),\\\
\lceil\mathrm{e}^{t-1}\rceil,
&s=1,
\end{cases}
\quad t>0.
\end{equation*}
\end{example}

\begin{corollary}\label{crl:cost}
Let $\varepsilon>0$ be a prescribed tolerance.
Then,
\begin{enumerate}[\bf(i)]
\item\label{crl:cost:i}
\begin{enumerate}[\bf a.]
    \item\label{crl:cost:ia}
    within the framework of either Theorem~\ref{thm:F-F*}(\ref{thm:F-F*:i})\hyperref[thm:F-F*:i:a]{a} or~\hyperref[thm:F-F*:i:b]{b}, by setting
    \begin{equation*}
    n\propto\Sigma^{-1}\big(\varepsilon^{-1\vee2\beta_\delta}\big),
    \end{equation*}
    one has $\inf_{0\leq k\leq n}F(\theta_k)-F_\star\leq C_\delta\varepsilon$ with probability at least $1-\delta$.
    \item
    within the framework of Theorem~\ref{thm:F-F*}(\ref{thm:F-F*:ii}), by setting
    \begin{equation*}
    n\propto\Sigma^{-1}\big(\varepsilon^{-1\vee2\beta}\big),
    \end{equation*}
    one has $\inf_{0\leq k\leq n}F(\theta_k)-F_\star\leq C_\delta\varepsilon$ with probability at least $1-\delta$.
    \item
    within the framework of Theorem~\ref{thm:F-F*}(\ref{thm:F-F*:iii}), by setting
    \begin{equation*}
    n\propto\Sigma^{-1}(\varepsilon^{-2\beta}),
    \end{equation*}
    one has $\inf_{0\leq k\leq n}F(\theta_k)-\inf F\leq \frac{C}{\delta^{1/\beta}}\varepsilon$ with probability at least $1-\delta$.
\end{enumerate}
\item\label{crl:cost:ii}
\begin{enumerate}[\bf a.]
    \item\label{crl:cost:iia}
    within the framework of either Theorem~\ref{thm:F-F*:special:cases}(\ref{thm:F-F*:special:cases:i})\hyperref[thm:F-F*:special:cases:i:a]{a} or~\hyperref[thm:F-F*:special:cases:i:b]{b}, by setting
    \begin{equation*}
    n\propto
    \begin{cases}
    \gamma^{-1}(\varepsilon^{2\vee\frac1\alpha}),
    &\text{if $\beta_\delta\in(0,\frac12]$,}\\
    \Sigma^{-1}(\varepsilon^{-\frac1{r_\delta}}),
    &\text{if $\beta_\delta\in(\frac12,1)$,}
    \end{cases}
    \end{equation*}
    one has $F(\theta_n)-F_\star\leq C_\delta\varepsilon$ with probability at least $1-\delta$.
    \item
    within the framework of Theorem~\ref{thm:F-F*:special:cases}(\ref{thm:F-F*:special:cases:ii}), by setting
    \begin{equation*}
    n\propto
    \begin{cases}
    \gamma^{-1}(\varepsilon^{2\vee\frac1\alpha}),
    &\text{if $\beta\in(0,\frac12]$,}\\
    \Sigma^{-1}(\varepsilon^{-\frac1r}),
    &\text{if $\beta\in(\frac12,1]$,}
    \end{cases}
    \end{equation*}
    one has $F(\theta_n)-F_\star\leq C_\delta\varepsilon$ with probability at least $1-\delta$.
    \item
    within the framework of Theorem~\ref{thm:F-F*:special:cases}(\ref{thm:F-F*:special:cases:iii}), by setting
    \begin{equation*}
    n\propto
    \begin{cases}
    \gamma^{-1}(\varepsilon^{\frac1\alpha}),
    &\text{if $\beta\in(\frac\alpha{1+\alpha},\frac12]\cup\{\frac12\}$,}\\
    \Sigma^{-1}(\varepsilon^{-\frac1r}),
    &\text{if $\beta\in(\frac12,1)$,}
    \end{cases}
    \end{equation*}
    one has $F(\theta_n)-F_\star\leq C_\delta\varepsilon$ with probability at least $1-\delta$.
\end{enumerate}
\end{enumerate}
All in all, the \eqref{eq:theta:n} computational complexity has bound $\mathrm{Cost}\leq Cn$, for some constant $C>0$.
\end{corollary}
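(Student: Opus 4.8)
The plan is to invert, one framework at a time, the high-probability convergence rates established in Theorems~\ref{thm:F-F*} and~\ref{thm:F-F*:special:cases}, read off how large the iteration count $n$ must be for the right-hand side of each rate to fall below $\varepsilon$, and then bound the arithmetic cost of running the recursion by $n$ itself.

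The cases whose rate is phrased through the partial sums $\sum_{k=1}^n\gamma_k$ --- all of item~\textbf{(i)} of Corollary~\ref{crl:cost} and the $\beta>\frac12$ branches of item~\textbf{(ii)} --- are handled uniformly. On the event of probability at least $1-\delta$ supplied by the relevant theorem one has, for every $n\ge1$, a bound of the type $\inf_{0\le k\le n}F(\theta_k)-F_\star\le K_\delta\bigl(\sum_{k=1}^n\gamma_k\bigr)^{-r}$ (or the same with $F(\theta_n)$ in place of the running infimum), where $r\in(0,1]$ is the exponent displayed there and $K_\delta$ is the prefactor inherited from that theorem ($C_\delta$ in general, $C/\delta^{1/\beta}$ or $C/\delta$ in the globally {\L}ojasiewicz statements). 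Asking for $\bigl(\sum_{k=1}^n\gamma_k\bigr)^{-r}\le\varepsilon$ is the same as $\sum_{k=1}^n\gamma_k\ge\varepsilon^{-1/r}$; since $\Sigma_{\Sigma^{-1}(t)}\ge t$ for all $t>0$ and $\sum_{k=1}^n\gamma_k=\Sigma_n-\gamma_{n+1}$ with $\gamma_{n+1}\to0$ while $\Sigma_n\to\infty$ under $\mathcal{H}$\ref{asp:gamma}, the choice $n\propto\Sigma^{-1}(\varepsilon^{-1/r})$ makes $\sum_{k=1}^n\gamma_k$ at least a fixed positive multiple of $\varepsilon^{-1/r}$, and the asserted accuracy follows once the proportionality constant absorbs that multiple. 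Matching $-1/r$ with the exponents recorded in the corollary ($-(1\vee2\beta_\delta)$, $-(1\vee2\beta)$, $-2\beta$, $-1/r_\delta$, $-1/r$) is then a mechanical check.

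The remaining cases have a rate that is a power of the step size itself: the $\beta\in(0,\frac12]$ branches of Theorem~\ref{thm:F-F*:special:cases}, where $F(\theta_n)-F_\star\le C_\delta\gamma_n^{\alpha\wedge\frac12}$ with probability at least $1-\delta$, together with the globally {\L}ojasiewicz variant carrying $\gamma_n^\alpha$ and prefactor $C/\delta$. Here I would use the elementary identities $(2\vee\tfrac1\alpha)(\alpha\wedge\tfrac12)=1$ and $\tfrac1\alpha\cdot\alpha=1$: taking $n\propto\gamma^{-1}(\varepsilon^{2\vee1/\alpha})$ (resp.\ $n\propto\gamma^{-1}(\varepsilon^{1/\alpha})$) forces $\gamma_n\le\varepsilon^{2\vee1/\alpha}$ (resp.\ $\gamma_n\le\varepsilon^{1/\alpha}$) through $\gamma_{\gamma^{-1}(t)}\le t$ and $\gamma_n\to0$, whence $\gamma_n^{\alpha\wedge1/2}\le\varepsilon$ (resp.\ $\gamma_n^\alpha\le\varepsilon$). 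The global complexity statement $\mathrm{Cost}\le Cn$ is then immediate: a single step of~\eqref{eq:theta:n} amounts to one call to the oracle $g_n$ plus a scalar multiplication and a vector subtraction in $\mathbb{R}^m$, so the cost of reaching index $n$ is linear in $n$, and substituting the values of $n$ above converts every accuracy assertion into an explicit complexity in $\varepsilon$ (and $\delta$). The only mildly delicate point is the interplay between $n$, the truncated sum $\sum_{k=1}^n\gamma_k$ and its shifted companion $\Sigma_n$, compounded by the fact that $\mathcal{H}$\ref{asp:gamma} alone does not make $(\gamma_n)_{n\ge1}$ monotone; both are harmless here because the rates in Theorems~\ref{thm:F-F*} and~\ref{thm:F-F*:special:cases} hold for \emph{every} $n\ge1$, the ``$\propto$'' absorbs bounded corrections, and for the step sizes of Example~\ref{exp:gamma:sample} the sequence is eventually monotone in any case.
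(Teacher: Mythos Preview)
Your proposal is correct and matches the paper's approach: the paper's proof is simply ``This is a direct consequence of Theorems~\ref{thm:F-F*} and~\ref{thm:F-F*:special:cases},'' and your argument spells out exactly that inversion of the high-probability rates, together with the linear per-iteration cost of \eqref{eq:theta:n}. Your added care about the shift between $\sum_{k=1}^n\gamma_k$ and $\Sigma_n$ and about the (non)monotonicity of $(\gamma_n)$ is a harmless refinement beyond what the paper writes.
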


The convergence rates derived in Theorems~\ref{thm:F-F*:special:cases}(\ref{thm:F-F*:special:cases:i},\ref{thm:F-F*:special:cases:ii}) impose no condition on the initialization $\theta_0$ and no concentration on the stochastic gradients $(g_n(\theta_{n-1}))_{n\geq0}$ (we refer to~\cite{AL24,Sa+22,JO19,MDB24} for recent developments on \eqref{eq:theta:n} gradient concentration).
Our obtained results concern the function-value convergence speeds toward the random critical level $F_\star$ as described in Theorem~\ref{thm:F:cv:gradF:0}, and not to a predefined deterministic local minimum as in~\cite[Theorem~3.1]{AL24} or~\cite[Theorem~5.1]{Wei+24}.
This means in particular that the function value gaps $(F(\theta_n)-F_\star)_{n\geq0}$ that we deal with can in fact take negative values.
\\

Except for the condition on $\gamma_\star$ in the case of $\beta_\delta\in\big(0,\frac12\big]$ and $\mathcal{H}$\ref{asp:gamma:misc}(\ref{asp:gamma:misc:i})\hyperref[asp:gamma:misc:ib]{b} in Theorem~\ref{thm:F-F*:special:cases}(\ref{thm:F-F*:special:cases:i})\hyperref[thm:F-F*:special:cases:i:a]{a}, to derive a convergence rate at the confidence level $1-\delta$, $\delta\in(0,1)$, our settings are $\delta$-free.
Note that the extra constraint on $\gamma_\star$ in Theorem~\ref{thm:F-F*:special:cases} when the {\L}ojasiewicz exponent lies in $\big(0,\frac12\big]$ and $\mathcal{H}$\ref{asp:gamma:misc}(\ref{asp:gamma:misc:i})\hyperref[asp:gamma:misc:ib]{b} holds is classical in stochastic approximation literature~\cite{Duf96}.
\begin{remark}\label{rmk:advantage}
The advantage of Theorem~\ref{thm:F-F*:special:cases}(\ref{thm:F-F*:special:cases:i})\hyperref[thm:F-F*:special:cases:i:b]{b} over~(\ref{thm:F-F*:special:cases:i})\hyperref[thm:F-F*:special:cases:i:a]{a} is that the initialization condition $\kappa\gamma_\star>\frac12\vee\alpha$ for the case of $\beta_\delta\in\big(0,\frac12\big]$ and $\mathcal{H}$\ref{asp:gamma:misc}(\ref{asp:gamma:misc:i})\hyperref[asp:gamma:misc:ib]{b} becomes free from the knowledge of $\zeta_\delta^{1/\beta_\delta}$, bearing in mind that the exponent $\beta_\delta$ and the factor $C_\delta$ in the convergence rate bound differ. Such a framework will be useful in Section~\ref{ssec:global} to help extend global convergence to the case $\mathcal{H}$\ref{asp:gamma:misc}(\ref{asp:gamma:misc:i})\hyperref[asp:gamma:misc:ib]{b}.
\end{remark}

To obtain high probability convergence rates at the confidence level $1-\delta$, $\delta\in(0,1)$, \cite[Theorem~3.1]{AL24} sets the initialization $\theta_0$ in a small $\delta$-dependent neighborhood of a local minimum, and \cite[Theorem~5.1]{Wei+24} requires an initialization that is sufficiently close to a local minimum and bounds the learning rates by a small value dependent on $\delta$.
These constraints are designed to prevent \eqref{eq:theta:n} trajectories from escaping a small region of space where a {\L}ojasiewicz property holds.
They may however be restrictive as they require a priori knowledge on $\nabla F^{-1}(0)$, which is unavailable, and their settings are $\delta$-dependent.
In practice, an expensive computational overhead is expected to meet all the required conditions.
\\

For the sake of simplicity, let $\beta$ denote the {\L}ojasiewicz exponent irrespectively of the frameworks of Theorems~\ref{thm:F-F*:special:cases}(\ref{thm:F-F*:special:cases:i})\hyperref[thm:F-F*:special:cases:i:a]{a}, \hyperref[thm:F-F*:special:cases:i:b]{b} and (\ref{thm:F-F*:special:cases:ii}).
The shift in behavior, demarcated at $\beta=\frac12$, is already observed for GD and proximal methods~\cite{FGP15,AB09}. When $\beta=\frac12$, $F$ has a local quadratic growth~\cite[Lemma~2.1]{Liu+23}, hence promoting faster convergence rates. When $\beta>\frac12$, $F$ is locally subquadratic, so \eqref{eq:theta:n} is slower to converge.
The globally {\L}ojasiewicz case in Theorem~\ref{thm:F-F*:special:cases}(\ref{thm:F-F*:special:cases:iii}) displays faster convergence rates than Theorems~\ref{thm:F-F*:special:cases}(\ref{thm:F-F*:special:cases:i},\ref{thm:F-F*:special:cases:ii}), as it possesses a unique critical level -- its global minimum -- that acts as an attractor to \eqref{eq:theta:n} trajectories.
\\

\cite[Theorem~4.6(b)]{Lei+20} proves a convergence rate on the expected function-value gap $\mathbb{E}[F(\theta_n)-\inf F]$ in $\mathrm{O}(n^{-\alpha})$ for learning rates $\gamma_n=\Theta\big(\frac1n\big)$, assuming $(L,\alpha)$-H\"older differentiability of $\theta\in\mathbb{R}^m\mapsto f(x,\theta)$, $x\in\mathbb{R}^d$, and a global P{\L} condition on $F$ ($\beta=\frac12$).
This result is comparable to Theorem~\ref{thm:F-F*:special:cases}(\ref{thm:F-F*:special:cases:iii}) for $\beta=\frac12$ and a learning rate $\gamma_n=\Theta\big(\frac1n\big)$.

\cite[Proposition~4.3]{DK24} deals with the case where the {\L}ojasiewicz exponent $\beta$ is in $\big(\frac12,1\big)$ and presents a convergence rate in expectation that is conditional to the existence of deterministic constraints on the stochastic gradients' conditional moments \eqref{eq:moment:condition}, and the pertaining of the iterates to a predefined compact set.
The proposed convergence rate is in $\mathrm{O}\big(\big(\sum_{k=1}^n\gamma_k\big)^{-\frac1{2\beta-1}}\big)$. If $\gamma_n=\Theta\big(\frac1{n^s}\big)$, $s\in(0,1]$, \cite[Remark~4.1]{DK24} recommends choosing $s$ close enough to $1$, lower bounded by parameters governing \eqref{eq:moment:condition}. Our assumptions in contrast are free from any deterministic control on the stochastic gradients.

Assuming the stochastic gradients to be unbiased and Gaussianly concentrated, \cite[Theorem~3.1]{AL24} derives an exponential convergence rate in high probability for locally Lipschitz-differentiable loss landscapes, conditionally to all the iterates pertaining to a closed ball where a local P{\L} condition ($\beta=\frac12$) holds and to $\theta_0$ being initialized in a small neighborhood of the local minimum. Our setting avoids any constraint initialization and is available for a wider range of {\L}ojasiewicz exponents $\beta$.

\cite[Theorem~5.1]{Wei+24} presents a high probability convergence rate for unbiased \eqref{eq:theta:n} in the case of locally Lipschitz-differentiable loss landscapes, under a local {\L}ojasiewicz condition with $\beta\in\big[\frac12,1\big]$. This result is conditional to an initialization that is sufficiently close to a local minimum and is obtained for learning rates such that $\gamma_n=\Theta\big(\frac1{n^s}\big)$, $s\in(0,1)$, bounded by a small value. The provided rate is unavailable for $s=1$.
Upon closer inspection, their result is likely erroneous: in~\cite[Lemma~F.4]{Wei+24}, the inequality ``$D_{n+1}\mathbbm1_{\Omega_{n+1}}\leq D_{n+1}\mathbbm1_{\Omega_n}$'' is wrong, as it requires $D_{n+1}\mathbbm1_{\Omega_n}\geq0$, which is not guaranteed therein. Using our notation, for a small neighborhood $\mathcal{V}$ around a local minimum $\theta_\star\in\nabla F^{-1}(0)$ such that $F(\theta)\geq F(\theta_\star)$, $\theta\in\mathcal{V}$, the aforementioned inequality reads $(F(\theta_{n+1})-F(\theta_\star))\prod_{k=1}^{n+1}\mathbbm1_{\theta_k\in\mathcal{V}}\leq(F(\theta_{n+1})-F(\theta_\star))\prod_{k=1}^n\mathbbm1_{\theta_k\in\mathcal{V}}$, which is false since the nonnegativity of $(F(\theta_{n+1})-F(\theta_\star))\prod_{k=1}^n\mathbbm1_{\theta_k\in\mathcal{V}}$ is not guaranteed.
\cite[Theorem~4.1]{Wei+24} retrieves a convergence rate for Lipschitz-differentiable globally {\L}ojasiewicz loss landscapes with $\beta\in\big[\frac12,1\big]$ that attains optimally an $\mathrm{o}(n^{-\eta})$, with $\eta<\frac1{4\beta-1}$.
The optimal convergence rate we recover in Remark~\ref{rmk:optimal:speed}(\ref{rmk:optimal:speed:ii}) is sharper and is exactly in $\mathrm{O}(n^{-\frac1{4\beta-1}})$.
Besides, as in the locally {\L}ojasiewicz case, the provided rate in \cite{Wei+24} is unavailable for $s=1$.

\subsection{Global Convergence Analysis}
\label{ssec:global}

The goal of this section is to prove the $\Pas$ convergence of the iterates $(\theta_n)_{n\geq0}$ to a single-point limit.
The proofs of this section are gathered in Section~\ref{sec:global}.

To simplify matters, we assume
\begin{assumption}\label{asp:gamma=1/n^s}
$\gamma_n=\Theta\big(\frac1{n^s}\big)$, $s\in(0,1]$.
\end{assumption}

\begin{theorem}\label{thm:iterates}
Assume $\mathcal{H}$\ref{asp:f}--\ref{asp:gamma=1/n^s} hold and that $\rho>3\vee\frac2\alpha$, $s\in\big[\frac\rho{1+\rho},1\big]$ and $\kappa\gamma_\star>\frac12\vee\alpha$ if $s=1$.
Then $(\theta_n)_{n\geq0}$ converges $\Pas$ to an $\mathbb{R}^m$-valued random variable $\theta_\star$ that is $\Pas$ a stationary point of $F$ and satisfies $F(\theta_\star)=F_\star$ $\Pas$.
\end{theorem}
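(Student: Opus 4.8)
The plan is to run the classical {\L}ojasiewicz single-point-convergence argument for gradient descent, stochastically regularized, on top of what is already in hand. First I would collect the free input: by Lemma~\ref{lmm:aux} the iterates are $\Pas$ bounded, by Theorem~\ref{thm:F:cv:gradF:0} one has $F(\theta_n)\to F_\star$ and $\nabla F(\theta_n)\to0$ $\Pas$, and by Theorem~\ref{thm:cv}(\ref{thm:cv:i}) the accumulation set $\mathcal{A}$ is $\Pas$ a nonempty compact connected subset of $\nabla F^{-1}(0)$ on which $F\equiv F_\star$. Decomposing $\theta_n-\theta_0=-\sum_{k=1}^n\gamma_kb(\theta_{k-1})-\sum_{k=1}^n\gamma_k\big(g_k(\theta_{k-1})-b(\theta_{k-1})\big)$, the second (martingale) part converges $\Pas$ since, by $\mathcal{H}$\ref{asp:f:abc}(\ref{asp:f:abc:i}), $\mathcal{H}$\ref{asp:gamma} (note $1+\alpha\leq2$) and the $\Pas$ boundedness of $(F(\theta_n))_{n\geq0}$, one has $\sum_k\gamma_k^2\mathbb{E}_{k-1}\big[\|g_k(\theta_{k-1})-b(\theta_{k-1})\|^2\big]<\infty$ $\Pas$. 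Hence, using $\mathcal{H}$\ref{asp:f:abc}(\ref{asp:f:abc:iii}), it suffices to prove the \emph{path length} $\sum_{k\geq1}\gamma_k\|\nabla F(\theta_{k-1})\|$ is finite $\Pas$: this makes $(\theta_n)_{n\geq0}$ $\Pas$ Cauchy, and then $\theta_\star\coloneqq\lim_n\theta_n$ is stationary by continuity of $\nabla F$ and satisfies $F(\theta_\star)=F_\star$ by continuity of $F$.

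To bound the path length I would localize. Fix $\delta\in(0,1)$ and invoke Theorem~\ref{thm:F-F*:special:cases}(\ref{thm:F-F*:special:cases:i})\hyperref[thm:F-F*:special:cases:i:b]{b} in its $\zeta=1$ formulation (this is why the excerpt flags Remark~\ref{rmk:advantage} and imposes $\kappa\gamma_\star>\tfrac12\vee\alpha$ when $s=1$): on an event $E_\delta$ with $\mathbb{P}(E_\delta)\geq1-\delta$ one has $\mathcal{A}\subset\mathcal{L}_{\ell_\delta}$, the uniform {\L}ojasiewicz inequality \eqref{eq:framework:1:bis} holds with exponent $\beta_\delta\in(0,1)$ on a neighborhood of $\mathcal{L}_{\ell_\delta}$, and a rate is available, $F(\theta_n)-F_\star\leq C_\delta a_n$ with $a_n=\gamma_n^{\alpha\wedge\frac12}$ if $\beta_\delta\leq\tfrac12$ and $a_n=\big(\sum_{k\leq n}\gamma_k\big)^{-r_\delta}$ if $\beta_\delta>\tfrac12$, where $r_\delta=\tfrac1{2\beta_\delta-1}\wedge(\alpha\rho-1)\wedge\tfrac{\rho-1}2>1$ thanks to $\rho>3\vee\tfrac2\alpha$. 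On $E_\delta$, since $\mathrm{dist}(\theta_n,\mathcal{A})\to0$ and $F(\theta_n)\to F_\star\leq\ell_\delta$, there is a $\Pas$ finite $N$ with $\theta_n$ in that neighborhood for $n\geq N$, so $|F(\theta_n)-F_\star|^{\beta_\delta}\leq\|\nabla F(\theta_n)\|$ there.

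The core is a desingularized telescoping. With $\Phi(t)=t^{1-\beta_\delta}/(1-\beta_\delta)$ (concave) and a regularization $\eta_n\downarrow0$ tuned to dominate the rate $a_n$, I would start from the $(L,\alpha)$-descent inequality of $\mathcal{H}$\ref{asp:f}, take $\mathbb{E}_n$, use $\mathcal{H}$\ref{asp:f:abc}(\ref{asp:f:abc:i},\ref{asp:f:abc:ii}) and the boundedness of $F(\theta_n)$ on $E_\delta$ to get $\mathbb{E}_n[\Delta_{n+1}]\leq\Delta_n-\kappa\gamma_{n+1}\|\nabla F(\theta_n)\|^2+C_\delta\gamma_{n+1}^{1+\alpha}$ where $\Delta_n=F(\theta_n)-F_\star$, then pass to a recursion on $\Phi(\Delta_n^+ +\eta_n)$ via concavity of $\Phi$ (absorbing the nonlinearity into a martingale increment plus the increments $\eta_n-\eta_{n+1}$), invoking the {\L}ojasiewicz bound $\Phi'(\Delta_n^+ +\eta_n)\gtrsim\|\nabla F(\theta_n)\|^{-1}$ on $\{\Delta_n>0\}$ and an elementary lower bound on $\{\Delta_n\leq0\}$. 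This yields, for $n\geq N$,
\begin{equation*}
\mathbb{E}_n\big[\Phi(\Delta_{n+1}^+ +\eta_{n+1})\big]\leq\Phi(\Delta_n^+ +\eta_n)-c_\delta\gamma_{n+1}\|\nabla F(\theta_n)\|+R_{n+1},
\end{equation*}
with $R_{n+1}\geq0$ and $\sum_nR_n<\infty$ $\Pas$ on $E_\delta$; a Robbins--Siegmund argument then forces $\sum_n\gamma_{n+1}\|\nabla F(\theta_n)\|<\infty$ on $E_\delta$. Letting $\delta=1/k\downarrow0$ gives finiteness of the path length $\Pas$, and the conclusion follows as above.

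The hard part is the third paragraph: turning the scalar gap recursion into a genuine supermartingale recursion for $\Phi(\Delta_n^+ +\eta_n)$ when $F(\theta_n)-F_\star$ may be negative and vanishes at an a priori unknown speed, and in particular showing $\sum_nR_n<\infty$. The error $R_n$ collects a discretization term of order $\gamma_{n+1}^{1+\alpha}(\Delta_n^+ +\eta_n)^{-\beta_\delta}$, a convexity-defect term of order $\gamma_{n+1}^2\|g_{n+1}(\theta_n)\|^2(\Delta_n^+ +\eta_n)^{-1-\beta_\delta}$, and a $\Phi'$-weighted regularization drift of order $(\eta_n-\eta_{n+1})(\Delta_n^+ +\eta_n)^{-\beta_\delta}$; it is precisely to make all three summable — after inserting $\Delta_n^+\leq C_\delta a_n$ and $\gamma_n=\Theta(n^{-s})$, $s\in[\tfrac\rho{1+\rho},1]$ — that one needs $\eta_n$ comparable to $a_n$ and the rate exponent $r_\delta>1$, i.e.\ $\rho>3\vee\tfrac2\alpha$, while for $s=1$ the $\zeta=1$ version of the rate together with $\kappa\gamma_\star>\tfrac12\vee\alpha$ keeps the constants in these bounds under control. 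Once this bookkeeping is settled the rest is the standard {\L}ojasiewicz/Robbins--Siegmund mechanism.
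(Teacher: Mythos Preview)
Your reduction --- path-length finiteness of $\sum_k\gamma_k\|\nabla F(\theta_{k-1})\|$ via the drift/martingale split of $\theta_n-\theta_0$, localization on an event of probability $\geq1-\delta$, then $\delta\downarrow0$ --- is exactly the paper's skeleton. The divergence is in how the path length is controlled. The paper does \emph{not} run a pathwise desingularization plus Robbins--Siegmund. Instead it builds a \emph{deterministic} Lyapunov sequence
\[
G_n=\kappa^{-1}\mathbb{E}\Big[\Big(F(\theta_n)-F_\star+\sum_{k>n}\gamma_k\Delta_k+\tfrac{L}{1+\alpha}\sum_{k>n}\gamma_k^{1+\alpha}\|g_k(\theta_{k-1})\|^{1+\alpha}\Big)\mathbbm1_{\mathbb{C}_\delta}\Big],
\]
verifies $\gamma_n\mathbb{E}[\|\nabla F(\theta_{n-1})\|^2\mathbbm1_{\mathbb{C}_\delta}]\leq G_{n-1}-G_n$, bounds $G_{n-1}$ by the $L^1$ rate obtained inside the proof of Theorem~\ref{thm:F-F*:special:cases} plus tail estimates, and then performs a purely numerical $\sigma$-splitting (Lemma~\ref{lmm:concave}, with $G^{1-\sigma}$ playing the role of your $\Phi$) to make $\sum_n\gamma_n\mathbb{E}[\|\nabla F(\theta_{n-1})\|\mathbbm1_{\mathbb{C}_\delta}]<\infty$; Fubini converts this to $\Pas$-summability on $\mathbb{C}_\delta$.

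Your pathwise route has two genuine gaps. First, $F_\star=\lim_nF(\theta_n)$ is not $\mathcal{F}_n$-measurable, so $\Delta_n=F(\theta_n)-F_\star$ is not adapted; the line $\mathbb{E}_n[\Delta_{n+1}]\leq\Delta_n-\kappa\gamma_{n+1}\|\nabla F(\theta_n)\|^2+\cdots$ silently requires $\mathbb{E}_n[F_\star]\geq F_\star$, precisely the hypothesis from \cite{CFR23} that the paper flags as problematic in \S\ref{ssec:global}. You therefore cannot form a genuine supermartingale from $\Phi(\Delta_n^++\eta_n)$ and feed it to Robbins--Siegmund. Second, Theorem~\ref{thm:F-F*:special:cases}(\ref{thm:F-F*:special:cases:i})\hyperref[thm:F-F*:special:cases:i:b]{b} gives a bound with probability $\geq1-\delta$ \emph{for each fixed} $n$; it does not deliver a single event $E_\delta$ on which $F(\theta_n)-F_\star\leq C_\delta a_n$ holds for all $n$ simultaneously. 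What the paper actually extracts and uses is the $L^1$ estimate $\mathbb{E}[(F(\theta_n)-F_\star)^+\mathbbm1_{\mathbb{A}_\delta}]\leq C_\delta u_n$ established inside that proof. Both issues disappear once everything is pushed to the level of expectations on the (non-adapted) localization event; the {\L}ojasiewicz inequality then enters only through the function-value rate, not via a direct $\Phi$-desingularization of the pathwise gap.
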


\begin{remark}\label{rmk:s>}
It ensues from $\rho>3\vee\frac2\alpha$ and $s\in\big[\frac\rho{1+\rho},1\big]$ that $s\in\big(\frac34\vee\frac2{2+\alpha},1\big]$.
\end{remark}

Recalling Lemma~\ref{lmm:F(theta):converges}(\ref{lmm:F(theta):converges:iv}), a quick glance at the decomposition \eqref{eq:theta:n=sum} reveals that the $\Pas$-summability of $(\gamma_n\|b(\theta_{n-1})\|)_{n\geq1}$ translates directly into the global convergence of \eqref{eq:theta:n}.
Proving this summability has thus been the focus of most recent literature in this direction~\cite{DK24,CFR23,QMM24}.
Theorem~\ref{thm:iterates} shows that, by choosing $\rho$ large enough and $\gamma_\star$ as well if $s=1$, then global convergence is ascertained for all $s\in\big[\frac\rho{1+\rho},1\big]$.
\\

\cite[Theorem~1.2]{DK24} proves global convergence for locally Lipschitz-differentiable loss landscapes, with {\L}ojasiewicz exponent $\beta\in\big[\frac12,1\big)$, however under the additional condition of existence of deterministic constraints on stochastic gradients' conditional moments \eqref{eq:moment:condition} for $q\geq2$ and $v_n=\mathrm{o}(\sqrt{n})$.
We do not need any deterministic control on the stochastic gradients for our global convergence result, and simply require $\mathcal{H}$\ref{asp:f:abc}(\ref{asp:f:abc:i}).

\cite[Proposition~6.3]{CFR23} considers a Lipschitz-differentiable objective function verifying a K{\L} property on a compact set, and shows iterate convergence for \eqref{eq:theta:n} trajectories converging in this set.
Nonetheless, this result relies on assumptions that may be difficult to adhere to.
Firstly, it is supposed that $(V(\theta_n))_{n\geq0}$ asymptotically never registers a dropdown below its target $V_\star$, i.e.~$\mathbb{P}(\liminf_{n\to\infty}\{V(\theta_n)>V_\star\})=1$.
Secondly, it is assumed that the information revelation on $F_\star$ is lower-bounded by $F_\star$ itself, i.e.~$\mathbb{E}[F_\star|\mathcal{F}_n]\geq F_\star$ $\Pas$.
Thirdly, it is presumed that $\|\nabla F(\theta_n)\|>0$, $n\geq1$, $\Pas$.
Finally, it is required that a strong growth condition holds, i.e.~$\mathbb{E}_{n-1}[\|g_n(\theta_{n-1})\|^2]\leq C\|\nabla F(\theta_{n-1})\|^2$, $n\geq1$, with $C>0$.

\cite[Theorem~5.1 \& Corollary~5.2]{QMM24} look into an \eqref{eq:theta:n} variant with momentum, a technique known to help with iterate convergence. These results suppose however the uniform boundedness of the conditional stochastic gradient variances~\cite[Assumption~2.1]{QMM24}.

Without supposing any {\L}ojasiewicz-type assumption, \cite[Theorem~2]{PZT22} attempts to prove the global convergence of unbiased SGD for locally H\"older-differentiable loss landscapes, conditionally on the boundedness of the iterates.
Unfortunately, the provided proof is erroneous.
It relies on \cite[Theorem~1]{PZT22}, whose proof involves the following statement (c.f.~the passage from Equation~(120) to Equation~(121) therein):
\begin{align}
&\mathbb{P}\big(\|\theta_{n+1}-\bar\theta\|>R+\eta,\,\|\theta_n-\bar\theta\|<R,\,\mathrm{i.o.}\big)=0,
\quad R,\eta>0,\label{eq:false:0}\\
&\Rightarrow\quad
\mathbb{P}\big(\limsup_{n\to\infty}\|\theta_n-\bar\theta\|>\liminf_{n\to\infty}\|\theta_n-\bar\theta\|\big)=0,\label{eq:false:1}
\end{align}
where $\bar\theta\in\mathbb{R}^m$.
Denote $u_n=\|\theta_n-\bar\theta\|$, $n\geq1$.
The above implication is easily refutable, as there are plenty of sequences for which \eqref{eq:false:0} is true for any $R,\eta>0$ and yet \eqref{eq:false:1} is false, least of which is the deterministic sequence $u_n=\sin(\pi\sqrt{n})$, $n\geq0$, that could occur in a cycle limit case.
Indeed, given that $x\in\mathbb{R}\mapsto\sin(x)$ is $1$-Lipschitz, we have $|u_{n+1}-u_n|=|\sin(\pi\sqrt{n+1})-\sin(\pi\sqrt{n})|\leq\pi(\sqrt{n+1}-\sqrt{n})\sim\frac\pi{2\sqrt{n}}\to0$, so that the statement $(u_{n+1}>R+\eta,\,u_n<R,\,\mathrm{i.o.})$ is false for any $R,\eta>0$.
However, if $n=p^2$ is a perfect square, then $u_{p^2}=\sin(\pi p)=(-1)^p$, so that $(1=\limsup_{n\to\infty}u_n>\liminf_{n\to\infty}u_n=-1)$ is true.

\begin{corollary}\label{crl:iterates:speed}
Within the framework of Theorem~\ref{thm:iterates},
\begin{enumerate}[\bf(i)]
\item\label{crl:iterates:speed:i}
for all $\delta\in(0,1)$, there exists $\ell_\delta>\inf F$ such that $\mathbb{P}(\mathcal{A}\subset\mathcal{L}_{\ell_\delta})\geq1-\delta$, and
\begin{enumerate}[\bf a.]
    \item\label{crl:iterates:speed:i:a}
    there exist $\beta_\delta\in(0,1)$ and $\zeta_\delta>0$ such that \eqref{eq:framework:1} holds for $\mathcal{K}=\mathcal{L}_{\ell_\delta}$.
    \item\label{crl:iterates:speed:i:b}
    there exists $\beta_\delta\in(0,1)$ such that \eqref{eq:framework:1:bis} holds for $\mathcal{K}=\mathcal{L}_{\ell_\delta}$.
\end{enumerate}
Besides, there exists $C_\delta>0$ such that,
\begin{itemize}
    \item
if $\beta_\delta\in\big(0,\frac12\big]$, by setting $\sigma\in\big[\frac1{2(2s-1)}\vee\frac{\alpha s}{2((1+\alpha)s-1)}\vee\frac{1-((1-\alpha)\vee\frac12)s}{2(\alpha\wedge\frac12)s},1\big)$ and $\kappa\gamma_\star>\zeta_\delta^{1/\beta_\delta}(\alpha\vee\frac12)$ if $s=1$ and (\ref{crl:iterates:speed:i})\hyperref[crl:iterates:speed:i:a]{a} holds, then, for all $n\geq1$,
\begin{equation*}
\|\theta_n-\theta_\star\|\leq\frac{C_\delta}{1-\sigma}n^{-((\alpha\wedge\frac12)s\wedge\frac{2s-1}2\wedge((1+\alpha)s-1))(1-\sigma)}
\quad\text{with probability at least $1-\delta$.}
\end{equation*}
    \item
if $\beta_\delta\in\big(\frac12,1\big)$ and $s\in\big[\frac\rho{1+\rho},1\big)$, by setting $\sigma\in\big[\frac1{2(2s-1)}\vee\frac{\alpha s}{2((1+\alpha)s-1)}\vee\frac{1+r_\delta}{2r_\delta},1\big)$, then, for all $n\geq1$,
\begin{equation*}
\|\theta_n-\theta_\star\|\leq\frac{C_\delta}{1-\sigma}n^{-(r_\delta(1-s)\wedge\frac{2s-1}2\wedge((1+\alpha)s-1))(1-\sigma)}
\quad\text{with probability at least $1-\delta$,}
\end{equation*}
recalling that $r_\delta=\frac1{2\beta_\delta-1}\wedge(\alpha\rho-1)\wedge\frac{\rho-1}2>1$.
    \item
if $\beta_\delta\in\big(\frac12,1\big)$ and $s=1$, by setting $\sigma\in\big[\frac{1+r_\delta}{2r_\delta},1\big)$, then, for all $n\geq1$,
\begin{equation*}
\|\theta_n-\theta_\star\|\leq\frac{C_\delta}{1-\sigma}(\ln{n})^{-r_\delta(1-\sigma)}
\quad\text{with probability at least $1-\delta$,}
\end{equation*}
recalling that $r_\delta=\frac1{2\beta_\delta-1}\wedge(\alpha\rho-1)\wedge\frac{\rho-1}2>1$.
\end{itemize}
\item\label{crl:iterates:speed:ii}
if $F$ is globally {\L}ojasiewicz, there exist $\beta\in\big[\frac\alpha{1+\alpha},1)$ and $\zeta>0$ such that \eqref{eq:framework:3} holds.
Suppose $\beta\in\big(\frac\alpha{1+\alpha},1\big)\cup\big\{\frac12\big\}$.
Then, there exists $C>0$ such that,
\begin{itemize}
    \item
if $\beta\in\big(\frac\alpha{1+\alpha},\frac12\big]\cup\big\{\frac12\big\}$, by setting $\sigma\in\big[\frac1{2(2s-1)}\vee\frac{\alpha s}{2((1+\alpha)s-1)},1\big)$ and, if $s=1$,
\begin{equation*}
\kappa\gamma_\star>\zeta^{2+\lambda\frac{1+\alpha}\alpha}\Big(\frac{(1+\alpha)L^\frac1\alpha}\alpha\Big)^\lambda\alpha,
\quad\text{where}\quad
\lambda=\begin{cases}
\frac{1-2\beta}{\frac{1+\alpha}\alpha\beta-1},
&\text{if $\beta<\frac12$},\\
0,
&\text{if $\beta=\frac12$},
\end{cases}
\end{equation*}
for all $n\geq1$,
\begin{equation*}
\|\theta_n-\theta_\star\|\leq\frac{C}{\delta(1-\sigma)}n^{-(\alpha s\wedge\frac{2s-1}2\wedge((1+\alpha)s-1))(1-\sigma)}
\quad\text{with probability at least $1-\delta$.}
\end{equation*}
    \item
if $\beta\in\big(\frac12,1\big)$ and $s\in\big[\frac\rho{1+\rho},1\big)$, by setting $\sigma\in\big[\frac1{2(2s-1)}\vee\frac{\alpha s}{2((1+\alpha)s-1)}\vee\frac{1+r}{2r},1\big)$, for all $n\geq1$,
\begin{equation*}
\|\theta_n-\theta_\star\|\leq\frac{C}{\delta(1-\sigma)}n^{-(r(1-s)\wedge\frac{2s-1}2\wedge((1+\alpha)s-1))(1-\sigma)}
\quad\text{with probability at least $1-\delta$,}
\end{equation*}
recalling that $r=\frac1{2\beta-1}\wedge(\alpha\rho-1)>1$.
    \item
if $\beta\in\big(\frac12,1\big)$ and $s=1$, by setting $\sigma\in\big[\frac{1+r}{2r},1\big)$, for all $n\geq1$,
\begin{equation*}
\|\theta_n-\theta_\star\|\leq\frac{C}{\delta(1-\sigma)}(\ln{n})^{-r(1-\sigma)}
\quad\text{with probability at least $1-\delta$,}
\end{equation*}
recalling that $r=\frac1{2\beta-1}\wedge(\alpha\rho-1)>1$.
\end{itemize}
\end{enumerate}
Overall, the optimal convergence rates are obtained by taking the smallest possible parameter $\sigma$.
\end{corollary}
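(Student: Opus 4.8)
The plan is to derive the iterate rate from two already-available ingredients: the $\Pas$ convergence $\theta_n\to\theta_\star$ (Theorem~\ref{thm:iterates}) and the function-value rates of Theorem~\ref{thm:F-F*:special:cases}, by quantifying the tail of the series representation of $\theta_n-\theta_\star$. Fix $\delta\in(0,1)$ and place ourselves on the event of probability at least $1-\delta$ supplied by Theorem~\ref{thm:F-F*:special:cases} (item~(\ref{thm:F-F*:special:cases:i})\hyperref[thm:F-F*:special:cases:i:a]{a}, (\ref{thm:F-F*:special:cases:i})\hyperref[thm:F-F*:special:cases:i:b]{b}, or~(\ref{thm:F-F*:special:cases:iii}), according to the case): there, $\mathcal{A}\subset\mathcal{L}_{\ell_\delta}$, the corresponding {\L}ojasiewicz framework \eqref{eq:framework:1}, \eqref{eq:framework:1:bis} or \eqref{eq:framework:3} holds, $F(\theta_n)-F_\star$ obeys the stated rate, and by Theorem~\ref{thm:iterates} $\theta_n\to\theta_\star$, so $(\theta_n)$ is bounded and $\theta_{k-1}$ eventually lies in a neighborhood $\mathcal{V}$ of $\theta_\star$ on which the {\L}ojasiewicz inequality is valid. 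Using the decomposition \eqref{eq:theta:n=sum} of the iterates and letting the horizon tend to infinity gives, on this event,
\begin{equation*}
\theta_n-\theta_\star=\underbrace{\sum_{k>n}\gamma_kb(\theta_{k-1})}_{=:A_n}+\underbrace{\sum_{k>n}\gamma_k\big(g_k(\theta_{k-1})-b(\theta_{k-1})\big)}_{=:M_n},
\end{equation*}
where $A_n$ converges thanks to the $\Pas$ summability of $(\gamma_k\|b(\theta_{k-1})\|)_k$ established in proving Theorem~\ref{thm:iterates}, and $M_n$ is the tail of an $L^2$-bounded martingale.

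I would bound $\|A_n\|\leq c\sum_{k>n}\gamma_k\|\nabla F(\theta_{k-1})\|$ via $\mathcal{H}$\ref{asp:f:abc}(\ref{asp:f:abc:iii}) and then run the classical {\L}ojasiewicz desingularization: feeding the {\L}ojasiewicz lower bound $\zeta\|\nabla F(\theta_{k-1})\|\geq|F(\theta_{k-1})-F_\star|^{\beta}$ into the one-step descent (quasi-supermartingale) inequality~\eqref{eq:quasi:super:martingale}, which controls $\kappa\gamma_k\|\nabla F(\theta_{k-1})\|^2$ by the decrement of $F(\theta_k)-F_\star$ up to a $\gamma_k^{1+\alpha}$ term and a martingale increment, and telescoping, one gets $\sum_{k>n}\gamma_k\|\nabla F(\theta_{k-1})\|\lesssim(F(\theta_n)-F_\star)_+^{1-\beta}+\sum_{k>n}\gamma_k^{1+\alpha}+(\text{martingale remainder})$. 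Inserting the function-value rate and $\sum_{k>n}\gamma_k^{1+\alpha}=\mathrm{O}(n^{-((1+\alpha)s-1)})$ (from $\mathcal{H}$\ref{asp:gamma}, $\mathcal{H}$\ref{asp:gamma=1/n^s}) yields the exponents $r_\delta(1-s)$ or $(\alpha\wedge\tfrac12)s$, together with $(1+\alpha)s-1$, in the announced minimum, the $s=1$ case replacing $n^{-r_\delta(1-s)}$ by $(\ln n)^{-r_\delta}$. For $M_n$, $\mathcal{H}$\ref{asp:f:abc}(\ref{asp:f:abc:i}) together with the uniform $L^1$ bound on $F(\theta_k)-\inf F$ (Remark~\ref{rmk:grad:f:L2}, Theorem~\ref{thm:F:cv:gradF:0}) give $\mathbb{E}[\|M_n\|^2]\leq C\sum_{k>n}\gamma_k^2=\mathrm{O}(n^{1-2s})$, hence (with Doob's maximal inequality for the supremum over the tail) a control at the rate $\tfrac{2s-1}{2}$, which is the remaining term of the minimum; for $s=1$ this term is $\mathrm{O}(n^{-1/2})$ and negligible against the drift.

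The crux — and the reason for the auxiliary parameter $\sigma$ and the prefactor $\tfrac1{1-\sigma}$ — is to upgrade these $L^2$/pointwise estimates to a bound that holds for \emph{all} $n$ on a single event of probability at least $1-\delta$: I would apply Markov's inequality on dyadic blocks of indices and sum the resulting tail probabilities, which is possible only after lowering each decay exponent by a factor $1-\sigma$ with $\sigma$ large enough that the associated series converges; this is exactly what the stated lower bounds encode (e.g.\ $\sigma\geq\tfrac1{2(2s-1)}$ from $M_n$, $\sigma\geq\tfrac{\alpha s}{2((1+\alpha)s-1)}$ from the $\gamma^{1+\alpha}$ contribution, $\sigma\geq\tfrac{1+r_\delta}{2r_\delta}\geq\beta_\delta$ from the desingularization term when $\beta_\delta>\tfrac12$, and $\sigma\geq\tfrac{1-((1-\alpha)\vee\frac12)s}{2(\alpha\wedge\frac12)s}\geq\tfrac12$ when $\beta_\delta\leq\tfrac12$), the $\tfrac1{1-\sigma}$ factor arising from bounding the residual tail sums. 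A secondary point is the sign of $F(\theta_n)-F_\star$: in the general {\L}ojasiewicz cases~(\ref{crl:iterates:speed:i}) one works with the positive part and checks that the telescoping survives excursions of $F(\theta_n)$ below $F_\star$, whereas in the globally {\L}ojasiewicz case~(\ref{crl:iterates:speed:ii}), $\theta_\star$ is a global minimizer (Proposition~\ref{prp:Lojasiewicz}(\ref{prp:Lojasiewicz:iii})), so $F(\theta_n)-F_\star=F(\theta_n)-\inf F\geq0$ and the $\tfrac{C}{\delta}$ prefactor is inherited directly from Theorem~\ref{thm:F-F*:special:cases}(\ref{thm:F-F*:special:cases:iii}). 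Finitely many initial indices are absorbed into the constant using the $\Pas$ boundedness of $(\theta_n)$ (Lemma~\ref{lmm:aux}), and, as stated, the sharpest rate is obtained by taking $\sigma$ as small as the constraints permit.
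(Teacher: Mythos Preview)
Your decomposition $\theta_n-\theta_\star=A_n+M_n$ and your treatment of the martingale tail $M_n$ via Doob's inequality and $\sum_{k>n}\gamma_k^2=\mathrm{O}(n^{1-2s})$ match the paper exactly. The gap is in the drift term and, more importantly, in your reading of the role of $\sigma$.

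First, the statement gives, for each fixed $n$, a bound holding with probability at least $1-\delta$; it is \emph{not} uniform in $n$. So no dyadic-block union bound is needed, and $\sigma$ does not originate there. In the paper, the whole argument is carried out in $L^1(\mathbb{P})$ on the event $\mathbb{C}_\delta$: one bounds $\mathbb{E}[\|\theta_n-\theta_\star\|\mathbbm1_{\mathbb{C}_\delta}]$ and only at the very end applies Markov's inequality for the single index $n$. The parameter $\sigma$ enters through the concavity inequality of Lemma~\ref{lmm:concave}, applied not to $F(\theta_k)-F_\star$ but to the \emph{deterministic} Lyapunov sequence $G_k+u_k+v_k$, where $G_k$ is the $L^1$ Lyapunov of~\eqref{eq:def:G}, $u_k$ encodes the function-value rate~\eqref{eq:u}, and $v_k=n^{-(2s-1)/2}+n^{-((1+\alpha)s-1)}$. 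Concretely, from $G_{k-1}\leq C_\delta(u_k+v_k)$ one gets
\[
\gamma_k\,\mathbb{E}\big[\|\nabla F(\theta_{k-1})\|\mathbbm1_{\mathbb{C}_\delta}\big]
\leq C\,\frac{\gamma_k\big(\mathbb{E}[\|\nabla F(\theta_{k-1})\|^2\mathbbm1_{\mathbb{C}_\delta}]+u_k^{2\sigma}+v_k^{2\sigma}\big)}{(G_{k-1}+u_{k-1}+v_{k-1})^\sigma}
\leq -\frac{C_\delta}{1-\sigma}\,\Delta(G_k+u_k+v_k)^{1-\sigma},
\]
the second inequality using $\gamma_k\mathbb{E}[\|\nabla F(\theta_{k-1})\|^2\mathbbm1_{\mathbb{C}_\delta}]\leq-\Delta G_k$ together with Lemma~\ref{lmm:concave}. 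The listed lower bounds on $\sigma$ are exactly the conditions under which $\gamma_k u_k^{2\sigma}\lesssim-\Delta u_k$ and $\gamma_k v_k^{2\sigma}\lesssim-\Delta v_k$ (check e.g.\ that $\gamma_k v_k^{2\sigma}\sim k^{-s-\sigma(2s-1)}\lesssim k^{-1-(2s-1)/2}\sim-\Delta v_k$ iff $\sigma\geq\tfrac{1}{2(2s-1)}$). Telescoping then yields $\sum_{k>n}\gamma_k\mathbb{E}[\|\nabla F(\theta_{k-1})\|\mathbbm1_{\mathbb{C}_\delta}]\leq\tfrac{C_\delta}{1-\sigma}(u_n+v_n)^{1-\sigma}$, which is the announced rate.

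Second, your proposed \emph{pathwise} {\L}ojasiewicz desingularization of $A_n$ does not go through as written. Dividing the one-step descent inequality by $|F(\theta_{k-1})-F_\star|^\beta$ leaves, besides the telescoping term, a contribution $\gamma_k\Delta_k/|F(\theta_{k-1})-F_\star|^\beta$ whose denominator can be arbitrarily small along the trajectory (in particular $F(\theta_{k-1})-F_\star$ changes sign), so the ``martingale remainder'' you allude to is not controllable pathwise. The paper circumvents this precisely by passing to the $L^1$ Lyapunov $G_k$ of~\eqref{eq:def:G}, which already absorbs the martingale tail $\sum_{j>k}\gamma_j\Delta_j$ and the residual $\sum_{j>k}\gamma_j^{1+\alpha}\|g_j\|^{1+\alpha}$, so that the desingularization is performed on a \emph{monotone} nonnegative deterministic sequence.
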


\begin{remark}
Let $(\beta,\zeta)\in(0,1)\times\mathbb{R}^*_+$ denote the {\L}ojasiewicz parameters regardless of the framework of Corollary~\ref{crl:iterates:speed}(\ref{crl:iterates:speed:i})\hyperref[crl:iterates:speed:i:a]{a}, \hyperref[crl:iterates:speed:i:b]{b} or (\ref{crl:iterates:speed:ii}).
Note that, by $\mathcal{H}\ref{asp:f}$ and Proposition~\ref{prp:Lojasiewicz}, one has asymptotically
\begin{align*}
\|\nabla F(\theta_n)\|
&=\|\nabla F(\theta_n)-\nabla F(\theta_\star)\|
\leq L\|\theta_n-\theta_\star\|^\alpha
\quad\Pas,\\
F(\theta_n)-F_\star
&\leq\zeta^\frac1\beta\|\nabla F(\theta_n)\|^\frac1\beta
\leq L^\frac1\beta\zeta^\frac1\beta\|\theta_n-\theta_\star\|^\frac\alpha\beta\quad\Pas.
\end{align*}
This offers sharper gradient convergence rates than Lemma~\ref{lmm:grad:F:n} and alternative, more pessimistic, function-value convergence rates to Theorem~\ref{thm:F-F*:special:cases}.
\end{remark}

\begin{corollary}\label{crl:iter:cost}
Let $\varepsilon>0$ be a prescribed tolerance.
Then,
\begin{enumerate}[\bf(i)]
\item
    within the framework of either Corollary~\ref{crl:iterates:speed}(\ref{crl:iterates:speed:i})\hyperref[crl:iterates:speed:i:a]{a} or~\hyperref[crl:iterates:speed:i:b]{b}, by setting
    \begin{equation*}
    n\propto
    \begin{cases}
    \lceil\varepsilon^{\frac{-1}{1-\sigma}(\frac{2\vee\alpha^{-1}}s\vee\frac2{2s-1}\vee\frac1{(1+\alpha)s-1})}\rceil,
    &\text{if $\beta_\delta\in(0,\frac12]$,}\\
    \lceil\varepsilon^{\frac{-1}{1-\sigma}(\frac1{r_\delta(1-s)}\vee\frac2{2s-1}\vee\frac1{(1+\alpha)s-1})}\rceil,
    &\text{if $\beta_\delta\in(\frac12,1)$ and $s<1$,}\\
    \lceil\exp(\varepsilon^{-\frac1{r_\delta(1-\sigma)}})\rceil
    &\text{if $\beta_\delta\in(\frac12,1)$ and $s=1$,}
    \end{cases}
    \end{equation*}
    one has $\|\theta_n-\theta_\star\|\leq\frac{C_\delta}{1-\sigma}\varepsilon$ with probability at least $1-\delta$.
\item
    within the framework of Corollary~\ref{crl:iterates:speed}(\ref{crl:iterates:speed:ii}), by setting
    \begin{equation*}
    n\propto
    \begin{cases}
    \lceil\varepsilon^{\frac{-1}{1-\sigma}(\frac1{\alpha s}\vee\frac2{2s-1}\vee\frac1{(1+\alpha)s-1})}\rceil,
    &\text{if $\beta\in(0,\frac12]$,}\\
    \lceil\varepsilon^{\frac{-1}{1-\sigma}(\frac1{r(1-s)}\vee\frac2{2s-1}\vee\frac1{(1+\alpha)s-1})}\rceil,
    &\text{if $\beta\in(\frac12,1)$ and $s<1$,}\\
    \lceil\exp(\varepsilon^{-\frac1{r(1-\sigma)}})\rceil
    &\text{if $\beta\in(\frac12,1)$ and $s=1$,}
    \end{cases}
    \end{equation*}
    one has $\|\theta_n-\theta_\star\|\leq\frac{C}{\delta(1-\sigma)}\varepsilon$ with probability at least $1-\delta$.
\end{enumerate}
Altogether, the subsequent \eqref{eq:theta:n} computational cost has bound: $\mathrm{Cost}\leq Cn$, for some constant $C>0$.
Moreover, the optimal computational cost is recovered by selecting the smallest possible parameter $\sigma$.
\end{corollary}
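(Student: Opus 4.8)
The plan is to read off, regime by regime, the high-probability iterate rates of Corollary~\ref{crl:iterates:speed} and invert them in $n$. Fix $\delta\in(0,1)$ and an admissible $\sigma$; Corollary~\ref{crl:iterates:speed} furnishes an event of probability at least $1-\delta$ on which the stated bound on $\|\theta_n-\theta_\star\|$ holds for every $n\geq1$ (it sits inside $\{\mathcal{A}\subset\mathcal{L}_{\ell_\delta}\}$ together with the full-probability events of the underlying proof). Consequently, to get $\|\theta_n-\theta_\star\|\leq\frac{C_\delta}{1-\sigma}\varepsilon$ (resp.\ $\frac{C}{\delta(1-\sigma)}\varepsilon$ in the globally {\L}ojasiewicz case) it suffices to pick $n$ so that the decay term multiplying $\frac{C_\delta}{1-\sigma}$ (resp.\ $\frac{C}{\delta(1-\sigma)}$) is $\leq\varepsilon$, and then round up.

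\textbf{Polynomial regimes.} When $\beta_\delta\in(0,\tfrac12]$, Corollary~\ref{crl:iterates:speed}(\ref{crl:iterates:speed:i}) gives $\|\theta_n-\theta_\star\|\leq\frac{C_\delta}{1-\sigma}n^{-a(1-\sigma)}$ with $a=(\alpha\wedge\tfrac12)s\wedge\frac{2s-1}2\wedge((1+\alpha)s-1)$; the requirement $n^{-a(1-\sigma)}\leq\varepsilon$ is $n\geq\varepsilon^{-1/(a(1-\sigma))}$, and since $\tfrac1a=\frac{2\vee\alpha^{-1}}s\vee\frac2{2s-1}\vee\frac1{(1+\alpha)s-1}$ this is exactly the prescribed $n\propto\lceil\varepsilon^{-\frac1{1-\sigma}(\frac{2\vee\alpha^{-1}}s\vee\frac2{2s-1}\vee\frac1{(1+\alpha)s-1})}\rceil$. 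Replacing $a$ by $r_\delta(1-s)\wedge\frac{2s-1}2\wedge((1+\alpha)s-1)$ handles $\beta_\delta\in(\tfrac12,1)$ with $s<1$, and reading off the exponents from Corollary~\ref{crl:iterates:speed}(\ref{crl:iterates:speed:ii}) (with $\alpha s$ in place of $(\alpha\wedge\tfrac12)s$, and $r$ for $r_\delta$) yields the two globally {\L}ojasiewicz polynomial cases in the same way. Throughout, the implied constant in ``$n\propto$'' absorbs the rounding loss and the $\Theta(\cdot)$ slack in $\mathcal{H}$\ref{asp:gamma=1/n^s}.

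\textbf{Logarithmic regime ($s=1$).} Here Corollary~\ref{crl:iterates:speed} yields $\|\theta_n-\theta_\star\|\leq\frac{C_\delta}{1-\sigma}(\ln n)^{-r_\delta(1-\sigma)}$ (resp.\ with $r$), so $(\ln n)^{-r_\delta(1-\sigma)}\leq\varepsilon$ is equivalent to $\ln n\geq\varepsilon^{-1/(r_\delta(1-\sigma))}$, i.e.\ $n\geq\exp\!\big(\varepsilon^{-1/(r_\delta(1-\sigma))}\big)$, which is the stated choice $n\propto\lceil\exp(\varepsilon^{-1/(r_\delta(1-\sigma))})\rceil$; the globally {\L}ojasiewicz sub-case is identical with $r$ replacing $r_\delta$. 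Taking ceilings gives an admissible integer iteration count in every regime, and plugging that $n$ into the Corollary~\ref{crl:iterates:speed} bound delivers $\|\theta_n-\theta_\star\|\leq\frac{C_\delta}{1-\sigma}\varepsilon$ (resp.\ $\frac{C}{\delta(1-\sigma)}\varepsilon$) on the same event.

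\textbf{Cost and optimality in $\sigma$.} Each \eqref{eq:theta:n} step is one call to the oracle $g_n$ plus an $\mathbb{R}^m$ update, hence has $\mathrm{O}(1)$ cost, so $\mathrm{Cost}\leq Cn$ for the chosen $n$. In every regime the prescribed $n$ is an increasing function of $\frac1{1-\sigma}$ (through the exponent of $\varepsilon$, or the exponent inside the $\exp$), and $\frac1{1-\sigma}$ is increasing in $\sigma$, so the smallest admissible $\sigma$ from Corollary~\ref{crl:iterates:speed} minimises $n$ and hence the cost bound. There is no genuine obstacle here: the only non-mechanical point is matching the $\wedge$-bookkeeping of the exponents in Corollary~\ref{crl:iterates:speed} with the $\vee$-of-reciprocals in the statement, which the identity $\tfrac1{x\wedge y}=\tfrac1x\vee\tfrac1y$ takes care of.
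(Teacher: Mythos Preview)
Your proposal is correct and takes essentially the same approach as the paper, which records the proof as a one-line ``straightforward consequence of Corollary~\ref{crl:iterates:speed}.'' You simply spell out the mechanical inversion of the rate exponents (using $\tfrac{1}{x\wedge y}=\tfrac{1}{x}\vee\tfrac{1}{y}$) and the monotonicity in $\sigma$, which is exactly what that one line is meant to convey.
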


\cite{Liu+23} takes a closer look at overparametrized neural networks satisfying a local quadratic growth (reminiscent of a P{\L} condition, i.e.~$\beta=\frac12$), as well as an interpolation and an aiming properties around their global minima.
The aiming property predicates that the loss landscape's gradients locally follow the direction of the global optimum.
The interpolation assumption stipulates the existence of a deterministic $\theta_\star\in\mathbb{R}^m$ minimizing simultaneously $F$ and $\theta\in\mathbb{R}^m\mapsto f(x,\theta)$ for any $x\in\mathbb{R}^d$.
\cite{Liu+23} then derives exponential convergence rates in expectation and in high probability for \eqref{eq:theta:n} iterates conditionally to an initialization that is sufficiently close to the global minimum.
The proposed result quantifies rather the distance of the iterates to a subset of the global minimizers. It imposes an initialization in a $\delta$-dependent radius around the global optimum to achieve a convergence rate with confidence $1-\delta$. The smaller the $\delta$, the closer the initialization must be to the global optimum.

\begin{remark}\label{rmk:interpolation}
Note that the interpolation condition, claimed to hold for overparametrized neural networks, fails notwithstanding to encompass some elementary \eqref{eq:theta:n} problems.
Take the classical problem of approximating $\mathbb{E}[Y|X]$ by a neural network $\mathcal{N}_\theta(X)$ parametrized by $\theta$. The optimal $\theta_\star$ can be retrieved as solution to $\min_\theta\mathbb{E}[(Y-\mathcal{N}_\theta(X))^2]$. But if $Y$ is not $X$-measurable, no matter how overparametrized $\mathcal{N}_\theta$ could get, canceling $Y-\mathcal{N}_\theta(X)$ almost surely with some deterministic $\theta_\star$, hence perfectly interpolating $Y$ with $\mathcal{N}_{\theta_\star}(X)$, implies that $Y$ is $X$-measurable, which is absurd.
\end{remark}

\cite[Theorem~5(iii)]{GP23} exhibits $L^{2p}(\mathbb{P})$-convergence rates for \eqref{eq:theta:n} iterates, $p\geq1$, for loss landscapes that are of class $C^2$, Lipschitz-differentiable, globally {\L}ojasiewicz with {\L}ojasiewicz exponent $\beta\in\big[0,\frac12\big]$ and possess a unique minimizer (by Proposition~\ref{prp:Lojasiewicz}(\ref{prp:Lojasiewicz:iii}), necessarily, $\beta=\frac12$). The proposed result requires however that the stochastic gradient noises have uniformly bounded conditional polynomial-exponential moments~\cite[Assumption~$\mathrm{H}^\phi_{\overline\Sigma_p}$]{GP23}.

\section{Convergence Analysis Proofs}
\label{sec:cv}

Henceforth, $C>0$ designates a constant that may change from line to line, but is independent of all variables of the problem at hand. Given a variable $\delta\in\mathbb{R}$, $C_\delta>0$ denotes one such constant that may only depend on $\delta$.
\\

The $\Pas$ convergence of $(F(\theta_n))_{n\geq0}$ is obtained by applying the Siegmund-Robbins lemma~\cite{RS71} to the quasisupermartingale property \eqref{eq:quasi:super:martingale}.
The weak convergence of \eqref{eq:theta:n} expands on some arguments used in~\cite[Theorem~2(c)]{Lei+20} for $L^1(\mathbb{P})$ vanishing of $(\nabla F(\theta_n))_{n\geq0}$ when $\alpha=1$.
It builds upon the premise that it is impossible to simultaneously fulfill $\|\nabla F(\theta_n)-\nabla F(\theta_{n-1})\|\to0$ as $n\to\infty$ and $\sum_{n=1}^\infty\gamma_n\|\nabla F(\theta_{n-1})\|^2<\infty$, unless $\limsup_{n\to\infty}\|\nabla F(\theta_n)\|=0$ $\Pas$.

\begin{lemma}\label{lmm:F(theta):converges}
Under $\mathcal{H}$\ref{asp:f}--\ref{asp:gamma},
\begin{enumerate}[\bf(i)]
  \item\label{lmm:F(theta):converges:i}
$(F(\theta_n))_{n\geq0}$ converges $\Pas$ to a real-valued random variable $F_\star$, ${\sum_{n=1}^\infty\gamma_n\|\nabla F(\theta_{n-1})\|^2<\infty}$ $\Pas$ and $\liminf_{n\to\infty}\|\nabla F(\theta_n)\|=0$ $\Pas$;
  \item\label{lmm:F(theta):converges:ii}
$(\mathbb{E}[F(\theta_n)])_{n\geq0}$ converges, $\sum_{n=1}^\infty\gamma_n\mathbb{E}[\|\nabla F(\theta_{n-1})\|^2]<\infty$ and ${\liminf_{n\to\infty}\mathbb{E}[\|\nabla F(\theta_n)\|^2]=0}$;
  \item\label{lmm:F(theta):converges:iii}
$\sum_{n=1}^\infty\|\theta_n-\theta_{n-1}\|^{1+\alpha}$ is in $L^1(\mathbb{P})$ and is $\Pas$-finite;
  \item\label{lmm:F(theta):converges:iv}
the martingale $\big(\sum_{k=1}^n\gamma_k(g_k(\theta_{k-1})-b(\theta_{k-1}))\big)_{n\geq1}$ converges in $L^2(\mathbb{P})$ and $\Pas$;
  \item\label{lmm:F(theta):converges:v}
$F_\star\in L^1(\mathbb{P})$.
\end{enumerate}
\end{lemma}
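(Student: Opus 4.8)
The plan is to establish a quasisupermartingale inequality for $(F(\theta_n))_{n\geq0}$ and then invoke the Siegmund–Robbins lemma. First I would apply the descent inequality from $\mathcal{H}$\ref{asp:f}: since $\nabla F$ is $(L,\alpha)$-Hölder, a first-order Taylor expansion with integral remainder gives
\begin{equation*}
F(\theta_n)\leq F(\theta_{n-1})+\nabla F(\theta_{n-1})^\top(\theta_n-\theta_{n-1})+\frac{L}{1+\alpha}\|\theta_n-\theta_{n-1}\|^{1+\alpha}.
\end{equation*}
Plugging in the \eqref{eq:theta:n} update $\theta_n-\theta_{n-1}=-\gamma_ng_n(\theta_{n-1})$ and taking $\mathbb{E}_{n-1}$, the cross term becomes $-\gamma_nb(\theta_{n-1})^\top\nabla F(\theta_{n-1})\leq-\kappa\gamma_n\|\nabla F(\theta_{n-1})\|^2$ by $\mathcal{H}$\ref{asp:f:abc}(\ref{asp:f:abc:ii}), while the remainder term is controlled by $\frac{L}{1+\alpha}\gamma_n^{1+\alpha}\mathbb{E}_{n-1}[\|g_n(\theta_{n-1})\|^{1+\alpha}]$, which via Jensen and $\mathcal{H}$\ref{asp:f:abc}(\ref{asp:f:abc:i}) is bounded by $C\gamma_n^{1+\alpha}\big((F(\theta_{n-1})-\inf F)+1\big)$. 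Writing $V_n\coloneqq F(\theta_n)-\inf F\geq0$, this yields
\begin{equation}\label{eq:quasi:super:martingale}
\mathbb{E}_{n-1}[V_n]\leq(1+C\gamma_n^{1+\alpha})V_{n-1}+C\gamma_n^{1+\alpha}-\kappa\gamma_n\|\nabla F(\theta_{n-1})\|^2.
\tag{QSM}
\end{equation}
Since $\sum_n\gamma_n^{1+\alpha}<\infty$ by $\mathcal{H}$\ref{asp:gamma}, the Siegmund–Robbins lemma \cite{RS71} applies and delivers at once: $(V_n)_{n\geq0}$ converges $\Pas$ to some finite nonnegative random variable, hence $(F(\theta_n))_{n\geq0}\to F_\star$ $\Pas$, and $\sum_{n=1}^\infty\gamma_n\|\nabla F(\theta_{n-1})\|^2<\infty$ $\Pas$. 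The latter combined with $\sum_n\gamma_n=\infty$ forces $\liminf_{n\to\infty}\|\nabla F(\theta_n)\|=0$ $\Pas$, proving (\ref{lmm:F(theta):converges:i}).

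For (\ref{lmm:F(theta):converges:ii}) I would take expectations in \eqref{eq:quasi:super:martingale}, using Remark~\ref{rmk:grad:f:L2} to justify that all quantities are integrable, and apply the deterministic Siegmund–Robbins lemma (or a direct Grönwall-type argument) to $(\mathbb{E}[V_n])_{n\geq0}$: convergence of $(\mathbb{E}[F(\theta_n)])_{n\geq0}$, summability of $\sum_n\gamma_n\mathbb{E}[\|\nabla F(\theta_{n-1})\|^2]$, and hence $\liminf_{n\to\infty}\mathbb{E}[\|\nabla F(\theta_n)\|^2]=0$ follow. For (\ref{lmm:F(theta):converges:iii}), $\|\theta_n-\theta_{n-1}\|^{1+\alpha}=\gamma_n^{1+\alpha}\|g_n(\theta_{n-1})\|^{1+\alpha}$, so $\mathbb{E}[\sum_n\|\theta_n-\theta_{n-1}\|^{1+\alpha}]\leq C\sum_n\gamma_n^{1+\alpha}(\mathbb{E}[V_{n-1}]+1)<\infty$ using the uniform bound on $\mathbb{E}[V_{n-1}]$ from part (\ref{lmm:F(theta):converges:ii}) together with $\mathcal{H}$\ref{asp:gamma}; $\Pas$-finiteness is then automatic. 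For (\ref{lmm:F(theta):converges:iv}), the process $M_n\coloneqq\sum_{k=1}^n\gamma_k(g_k(\theta_{k-1})-b(\theta_{k-1}))$ is an $(\mathcal{F}_n)$-martingale with increments of conditional second moment $\gamma_k^2\,\mathbb{E}_{k-1}[\|g_k(\theta_{k-1})-b(\theta_{k-1})\|^2]\leq C\gamma_k^2(V_{k-1}+1)$; summing and taking expectations, $\sup_n\mathbb{E}[\|M_n\|^2]\leq C\sum_k\gamma_k^2(\mathbb{E}[V_{k-1}]+1)<\infty$ since $\gamma_k^2\leq\gamma_k^{1+\alpha}$ eventually (as $\gamma_k\to0$ and $\alpha\leq1$), so $(M_n)$ is $L^2$-bounded and converges in $L^2(\mathbb{P})$ and $\Pas$.

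Finally, for (\ref{lmm:F(theta):converges:v}), Fatou's lemma applied to $V_n\geq0$ gives $\mathbb{E}[F_\star-\inf F]=\mathbb{E}[\liminf_n V_n]\leq\liminf_n\mathbb{E}[V_n]<\infty$ by part (\ref{lmm:F(theta):converges:ii}), hence $F_\star\in L^1(\mathbb{P})$. The main obstacle is the careful treatment of the Hölder remainder term when $\alpha<1$: one must bound $\mathbb{E}_{n-1}[\|g_n(\theta_{n-1})\|^{1+\alpha}]$ by a linear function of $V_{n-1}$ via Jensen's inequality (since $1+\alpha\leq2$, $\|g\|^{1+\alpha}\leq\|g\|^2+1$), and one must verify all integrability prerequisites so that conditional expectations and Fatou's lemma are legitimate — this is precisely what Remark~\ref{rmk:grad:f:L2} secures through its inductive argument. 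Once \eqref{eq:quasi:super:martingale} is in place with the right summability, everything else is a routine application of Siegmund–Robbins in its almost sure and deterministic forms plus standard $L^2$ martingale convergence.
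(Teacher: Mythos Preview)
Your proposal is correct and follows essentially the same approach as the paper: establish the quasisupermartingale inequality via the H\"older descent lemma, $\mathcal{H}$\ref{asp:f:abc}(\ref{asp:f:abc:ii}), and the bound $\|g\|^{1+\alpha}\leq C(1+\|g\|^2)$ together with $\mathcal{H}$\ref{asp:f:abc}(\ref{asp:f:abc:i}); then apply Siegmund--Robbins for (\ref{lmm:F(theta):converges:i}), its deterministic counterpart for (\ref{lmm:F(theta):converges:ii}), the SGD update plus the uniform bound on $\mathbb{E}[V_n]$ for (\ref{lmm:F(theta):converges:iii}), $L^2$-boundedness of the martingale for (\ref{lmm:F(theta):converges:iv}), and Fatou for (\ref{lmm:F(theta):converges:v}). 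The only cosmetic differences are that the paper cites \cite[Lemma~5.31]{BC17} for the deterministic step and writes $\sum_k\gamma_k^2<\infty$ directly rather than via $\gamma_k^2\leq\gamma_k^{1+\alpha}$ eventually.
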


\begin{remark}
\begin{enumerate}[\bf(i)]
    \item
By telescopic summation, the triangle inequality and H\"older's inequality with the adjoint exponents $\big(1+\alpha,\frac{1+\alpha}\alpha\big)$,
\begin{equation*}
\|\theta_n\|
\leq\|\theta_0\|+\Big(\sum_{k=1}^n\|\theta_k-\theta_{k-1}\|^{1+\alpha}\Big)^\frac1{1+\alpha}n^\frac\alpha{1+\alpha}
\leq\|\theta_0\|+Yn^\frac\alpha{1+\alpha},
\quad n\geq1,
\end{equation*}
where $Y\coloneqq\big(\sum_{k=1}^\infty\|\theta_k-\theta_{k-1}\|^{1+\alpha}\big)^\frac1{1+\alpha}\in L^1(\mathbb{P})$ by Jensen's inequality and Lemma~\ref{lmm:F(theta):converges}(\ref{lmm:F(theta):converges:iii}).
This upper bound does not exclude that $(\theta_n)_{n\geq0}$ may escape to $\infty$, at most at an $\mathrm{O}_{L^1(\mathbb{P})}(n^\frac\alpha{1+\alpha})$ speed.
This can occur for loss landscapes where the minimum is only attained at infinity.
Consequently, recalling that $\theta_0\in L^2(\mathbb{P})$,
\begin{equation*}
\mathbb{E}[\|\theta_n\|]\leq\mathbb{E}[\|\theta_0\|]+\mathbb{E}[Y]n^\frac\alpha{1+\alpha},
\quad n\geq1.
\end{equation*}
    \item
By the \eqref{eq:theta:n} update rule and \eqref{eq:b},
\begin{equation*}
\mathbb{E}[\|\theta_n-\theta_{n-1}\|^2]
=\gamma_n^2\mathbb{E}\big[\|g_n(\theta_{n-1})-b(\theta_{n-1})\|^2\big]
+\gamma_n^2\mathbb{E}\big[\|b(\theta_{n-1})\|^2\big],
\quad n\geq1.
\end{equation*}
Using that, for some $C>0$,
\begin{equation*}
\sup_{n\geq1}\mathbb{E}[\|g_n(\theta_{n-1})-b(\theta_{n-1})\|^2]\leq C(1+\sup_{n\geq1}\mathbb{E}[F(\theta_{n-1})-\inf F])<\infty
\end{equation*}
(by $\mathcal{H}$\ref{asp:f}, $\mathcal{H}$\ref{asp:f:abc}(\ref{asp:f:abc:i},\ref{asp:f:abc:iii}), \eqref{eq:gradF:square} and Lemma~\ref{lmm:F(theta):converges}(\ref{lmm:F(theta):converges:ii})),
and that $\sum_{n=1}^\infty\gamma_n^2\mathbb{E}[\|\nabla F(\theta_{n-1})\|^2]<\infty$ (by Lemma~\ref{lmm:F(theta):converges}(\ref{lmm:F(theta):converges:ii}) and $\mathcal{H}$\ref{asp:gamma}),
we obtain a weaker alternative to Lemma~\ref{lmm:F(theta):converges}(\ref{lmm:F(theta):converges:iii}) predicating that $\sum_{n=1}^\infty\|\theta_n-\theta_{n-1}\|^2$ is in $L^1(\mathbb{P})$ and $\Pas$-finite (by $\mathcal{H}$\ref{asp:gamma} and Fubini's theorem).
\end{enumerate}
\end{remark}

\begin{proof}[Proof of Lemma~\ref{lmm:F(theta):converges}]
Let $n\geq1$.
\\

\noindent{\bf(\ref{lmm:F(theta):converges:i})}\
By $\mathcal{H}$\ref{asp:f}, Lemma~\ref{lmm:Holder}(\ref{lmm:Holder:i}) and the \eqref{eq:theta:n} recursion,
\begin{equation}\label{eq:F(theta:n)<F(theta:n-1)}
F(\theta_n)
=F\big(\theta_{n-1}-\gamma_n g_n(\theta_{n-1})\big)
\leq F(\theta_{n-1})-\gamma_n\nabla F(\theta_{n-1})^\top g_n(\theta_{n-1})+\frac{L}{1+\alpha}\gamma_n^{1+\alpha}\| g_n(\theta_{n-1})\|^{1+\alpha}.
\end{equation}
Since $\inf F>-\infty$, by $\mathcal{H}$\ref{asp:f:abc}(\ref{asp:f:abc:ii}),
\begin{equation}\label{eq:quasi:super:martingale}
\mathbb{E}_{n-1}[F(\theta_n)-\inf F]
\leq F(\theta_{n-1})-\inf F-\kappa\gamma_n\|\nabla F(\theta_{n-1})\|^2+\frac{L}{1+\alpha}\gamma_n^{1+\alpha}\mathbb{E}_{n-1}\big[\| g_n(\theta_{n-1})\|^{1+\alpha}\big].
\end{equation}
Using that $x^{1+\alpha}\leq C(1+x^2)$, $x\geq0$, and $\mathcal{H}$\ref{asp:f:abc},
\begin{equation}
\label{eq:E:n-1:fn}
\mathbb{E}_{n-1}\big[\| g_n(\theta_{n-1})\|^{1+\alpha}\big]
\leq C\big(1+\mathbb{E}_{n-1}\big[\|g_n(\theta_{n-1})\|^2\big]\big)
\leq C\big((F(\theta_{n-1})-\inf F)+1\big).
\end{equation}
Thus
\begin{equation}
\label{eq:SR:n-1}
\mathbb{E}_{n-1}[F(\theta_n)-\inf F]
\leq\big(1+C\gamma^{1+\alpha}_n\big)\big(F(\theta_{n-1})-\inf F\big)-\kappa\gamma_n\|\nabla F(\theta_{n-1})\|^2+C\gamma_n^{1+\alpha}.
\end{equation}
Recalling $\mathcal{H}$\ref{asp:gamma}, the Siegmund-Robbins lemma~\cite{RS71} guarantees that $\sum_{n=1}^\infty\gamma_n\|\nabla F(\theta_{n-1})\|^2<\infty$ $\Pas$ and that $(F(\theta_n))_{n\geq0}$ converges $\Pas$ to a finite real-valued random variable $F_\star$.

Furthermore, one has $\big(\sum_{n=1}^\infty\gamma_n\|\nabla F(\theta_{n-1})\|^2\big)\mathbbm1_{\liminf_{n\to\infty}\|\nabla F(\theta_n)\|>0}<\infty$ $\Pas$. Given that $\sum_{n=1}^\infty\gamma_n=\infty$ under $\mathcal{H}$\ref{asp:gamma}, it ensues that $\mathbb{P}(\liminf_{n\to\infty}\|\nabla F(\theta_n)\|>0)=0$.
\\

\noindent{\bf(\ref{lmm:F(theta):converges:ii})}\
Taking the expectation in \eqref{eq:SR:n-1},
\begin{equation}
\label{eq:SR:0}
\mathbb{E}[F(\theta_n)-\inf F]
\leq\big(1+C\gamma^{1+\alpha}_n\big)\mathbb{E}[F(\theta_{n-1})-\inf F]-\kappa\gamma_n\mathbb{E}\big[\|\nabla F(\theta_{n-1})\|^2\big]+C\gamma_n^{1+\alpha}.
\end{equation}
By~\cite[Lemma~5.31]{BC17}, $(\mathbb{E}[F(\theta_n)])_{n\geq0}$ is convergent and ${\sum_{n=1}^\infty\gamma_n\mathbb{E}[\|\nabla F(\theta_{n-1})\|^2]<\infty}$.
In view of the fact that $\sum_{n=1}^\infty\gamma_n=\infty$, necessarily ${\liminf_{n\to\infty}\mathbb{E}\big[\|\nabla F(\theta_n)\|^2\big]=0}$.
\\

\noindent{\bf(\ref{lmm:F(theta):converges:iii})}\
Using the \eqref{eq:theta:n} update rule and taking the expectation in the inequality \eqref{eq:E:n-1:fn} yield
\begin{equation*}
\mathbb{E}[\|\theta_n-\theta_{n-1}\|^{1+\alpha}]
=\gamma_n^{1+\alpha}\mathbb{E}\big[\| g_n(\theta_{n-1})\|^{1+\alpha}\big]
\leq C\big(\mathbb{E}[F(\theta_{n-1})-\inf F]+1\big)\gamma_n^{1+\alpha}.
\end{equation*}
Given that $(\mathbb{E}[F(\theta_n)])_{n\geq1}$ is convergent, it is bounded and
\begin{equation*}
\mathbb{E}[\|\theta_n-\theta_{n-1}\|^{1+\alpha}]\leq C\gamma_n^{1+\alpha}.
\end{equation*}
Thus, by Fubini's theorem, $\mathbb{E}\big[\sum_{n=1}^\infty\|\theta_n-\theta_{n-1}\|^{1+\alpha}\big]<\infty$, and $\mathbb{P}\big(\sum_{n=1}^\infty\|\theta_n-\theta_{n-1}\|^{1+\alpha}=\infty\big)=0$, yielding the sought result.
\\

\noindent{\bf(\ref{lmm:F(theta):converges:iv})}\
By $\mathcal{H}$\ref{asp:f:abc}(\ref{asp:f:abc:i}), \eqref{eq:gradF:square} and Lemma~\ref{lmm:F(theta):converges}(\ref{lmm:F(theta):converges:ii}),
\begin{equation}\label{eq:E:sup:grad:f:2}
\mathbb{E}\big[\|\nabla F(\theta_{k-1})\|^2\big]\vee\mathbb{E}\big[\|g_k(\theta_{k-1})\|^2\big]
\leq C\big(\sup_{n\geq0}\mathbb{E}[F(\theta_n)-\inf F]+1\big)
\eqqcolon K^2<\infty,
\quad k\geq1.
\end{equation}
Define the $L^2(\mathbb{P})$ $(\mathcal{F}_n)_{n\geq0}$-martingale
\begin{equation}
\label{eq:M}
M_n\coloneqq\sum_{k=1}^n\gamma_k\big(g_k(\theta_{k-1})-b(\theta_{k-1})\big),
\quad n\geq1.
\end{equation}
Using that $\big(g_k(\theta_{k-1})-b(\theta_{k-1})\big)_{k\geq1}$ are martingale increments, $\mathcal{H}$\ref{asp:f:abc}(\ref{asp:f:abc:iii}), \eqref{eq:E:sup:grad:f:2} and that $\sum_{k=1}^\infty\gamma_k^2<\infty$ under $\mathcal{H}$\ref{asp:gamma}, we get
\begin{equation*}
\sup_{n\geq1}\mathbb{E}[\|M_n\|^2]
=\sum_{k=1}^\infty\gamma_k^2\mathbb{E}\big[\| g_k(\theta_{k-1})-b(\theta_{k-1})\|^2\big]
\leq(1+c)K^2\sum_{k=1}^\infty\gamma_k^2
<\infty.
\end{equation*}
$(M_n)_{n\geq1}$ is thus bounded in $L^2(\mathbb{P})$, and converges in $L^2(\mathbb{P})$ and $\Pas$.
\\

\noindent{\bf(\ref{lmm:F(theta):converges:v})}\
Using Lemma~\ref{lmm:F(theta):converges}(\ref{lmm:F(theta):converges:i}), Fatou's lemma, the triangle inequality and Lemma~\ref{lmm:F(theta):converges}(\ref{lmm:F(theta):converges:ii}),
\begin{equation*}
\mathbb{E}[|F_\star|]
=\mathbb{E}\big[\lim_{n\to\infty}|F(\theta_n)|\big]
\leq\liminf_{n\to\infty}\mathbb{E}[F(\theta_n)-\inf F]+|\inf F|
<\infty.
\end{equation*}
\end{proof}

\begin{lemma}\label{lmm:gradF->0}
Suppose $\mathcal{H}$\ref{asp:f}--\ref{asp:gamma} hold. Then $\nabla F(\theta_n)\to0$ both in $L^2(\mathbb{P})$ and $\Pas$, as $n\to\infty$.
\end{lemma}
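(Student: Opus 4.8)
The plan is to establish $\nabla F(\theta_n)\to0$ by contradiction, exploiting the tension between the $\Pas$-summability $\sum_{n\geq1}\gamma_n\|\nabla F(\theta_{n-1})\|^2<\infty$ from Lemma~\ref{lmm:F(theta):converges}(\ref{lmm:F(theta):converges:i}) and the asymptotic continuity of the gradient along the trajectory. First I would show that the increments of the gradient vanish: by $\mathcal{H}$\ref{asp:f}, $\|\nabla F(\theta_n)-\nabla F(\theta_{n-1})\|\leq L\|\theta_n-\theta_{n-1}\|^\alpha=L\gamma_n^\alpha\|g_n(\theta_{n-1})\|^\alpha$. Taking conditional expectations and using $\mathcal{H}$\ref{asp:f:abc}(\ref{asp:f:abc:i}) together with \eqref{eq:E:sup:grad:f:2}, one gets $\mathbb{E}[\|\theta_n-\theta_{n-1}\|^{2\alpha}]\leq\gamma_n^{2\alpha}\mathbb{E}[\|g_n(\theta_{n-1})\|^{2\alpha}]\leq C\gamma_n^{2\alpha}$ (bounding $x^{2\alpha}\leq C(1+x^2)$ when $\alpha\leq1$). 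Since $2\alpha\geq\frac{2\alpha}{1+\alpha}\cdot\frac{1+\alpha}{1}$... more directly, $\gamma_n^{2\alpha}$ need not be summable, so instead I would use the decomposition $\theta_n-\theta_{n-1}=-\gamma_n(g_n(\theta_{n-1})-b(\theta_{n-1}))-\gamma_nb(\theta_{n-1})$, Lemma~\ref{lmm:F(theta):converges}(\ref{lmm:F(theta):converges:iv}) (convergence of the martingale $M_n$, hence $\gamma_n(g_n(\theta_{n-1})-b(\theta_{n-1}))\to0$ $\Pas$) and $\mathcal{H}$\ref{asp:f:abc}(\ref{asp:f:abc:iii}) with $\|\nabla F(\theta_{n-1})\|\to0$ along a subsequence, to conclude $\theta_n-\theta_{n-1}\to0$ $\Pas$, whence $\nabla F(\theta_n)-\nabla F(\theta_{n-1})\to0$ $\Pas$.

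Next, the $\Pas$-convergence argument: fix $\omega$ in the full-measure event where $\sum_n\gamma_n\|\nabla F(\theta_{n-1})\|^2<\infty$, $\liminf_n\|\nabla F(\theta_n)\|=0$, $\sum_n\gamma_n=\infty$, and $\|\nabla F(\theta_n)-\nabla F(\theta_{n-1})\|\to0$. Suppose for contradiction $\limsup_n\|\nabla F(\theta_n)\|=2\varepsilon>0$. Then there are infinitely many "upcrossings": indices $m_k<n_k$ with $\|\nabla F(\theta_{m_k})\|\leq\varepsilon$, $\|\nabla F(\theta_{n_k})\|\geq2\varepsilon$, and $\|\nabla F(\theta_j)\|\geq\varepsilon$ for $m_k\leq j\leq n_k$. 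On each such block, $\sum_{j=m_k+1}^{n_k}\gamma_j\geq\varepsilon^{-2}\sum_{j=m_k+1}^{n_k}\gamma_j\|\nabla F(\theta_{j-1})\|^2$, and since the right-hand tails vanish, $\sum_{j=m_k+1}^{n_k}\gamma_j\to0$ as $k\to\infty$. But the gradient has to travel a distance at least $\varepsilon$ across the block, so $\varepsilon\leq\|\nabla F(\theta_{n_k})-\nabla F(\theta_{m_k})\|\leq\sum_{j=m_k+1}^{n_k}\|\nabla F(\theta_j)-\nabla F(\theta_{j-1})\|$; bounding each increment via $\mathcal{H}$\ref{asp:f} and summing, this is at most $L\sum_{j=m_k+1}^{n_k}\|\theta_j-\theta_{j-1}\|^\alpha$, which by Hölder's inequality is $\leq L(\sum_{j=m_k+1}^{n_k}\gamma_j)^{1-\alpha/?}(\ldots)$. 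The cleaner route is to use $\|\theta_j-\theta_{j-1}\|\to0$ so that eventually each increment $\|\nabla F(\theta_j)-\nabla F(\theta_{j-1})\|\leq\varepsilon/2$; but that alone does not bound the sum over a long block. So instead I would combine the two: write $\|\nabla F(\theta_{n_k})-\nabla F(\theta_{m_k})\|\leq\sum_j L\gamma_j^\alpha\|g_j(\theta_{j-1})\|^\alpha$ and use $\|g_j(\theta_{j-1})\|\leq\|g_j(\theta_{j-1})-b(\theta_{j-1})\|+c\|\nabla F(\theta_{j-1})\|$; the martingale part contributes $\sum_j\gamma_j^\alpha\|g_j-b_j\|^\alpha$ which is a tail of a $\Pas$-convergent series (dominated via $\mathcal{H}$\ref{asp:gamma} and square-summability of $M_n$ increments after a Hölder step), while the drift part gives $\sum_j\gamma_j^\alpha\|\nabla F(\theta_{j-1})\|^\alpha\leq(\sum_j\gamma_j)^{1-\alpha}(\sum_j\gamma_j\|\nabla F(\theta_{j-1})\|^{?})$...

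Rather than force this, the efficient proof uses the telescoping-within-the-block bound on the gradient differences directly through $F$: from \eqref{eq:F(theta:n)<F(theta:n-1)} summed over the block, $\gamma_j\nabla F(\theta_{j-1})^\top g_j(\theta_{j-1})$ telescopes against $F(\theta_{m_k})-F(\theta_{n_k})$ plus the summable error, and one extracts $\sum_{j\in\text{block}}\gamma_j\|\nabla F(\theta_{j-1})\|^2\to0$; this forces the block gradients down to $\varepsilon$-level to have vanishing total $\gamma$-mass, contradicting that the increment bound $\|\nabla F(\theta_{n_k})-\nabla F(\theta_{m_k})\|\geq\varepsilon$ requires, via $\|\nabla F(\theta_j)-\nabla F(\theta_{j-1})\|\leq L\|\theta_j-\theta_{j-1}\|^\alpha$ and $\|\theta_j-\theta_{j-1}\|\leq\gamma_j(\|M_j-M_{j-1}\|/\gamma_j+c\|\nabla F(\theta_{j-1})\|)$, a nonvanishing contribution — precisely the line of \cite[Theorem~2(c)]{Lei+20}. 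The $L^2(\mathbb{P})$-convergence then follows: $(\nabla F(\theta_n))_{n\geq0}$ is $L^2$-bounded by \eqref{eq:E:sup:grad:f:2}, converges to $0$ $\Pas$, and by \eqref{eq:gradF:square} is uniformly integrable in $L^2$ since $\sup_n\mathbb{E}[F(\theta_n)-\inf F]<\infty$ gives a uniform $L^{1+\varepsilon'}$-type bound when $\alpha<1$ (for $\alpha=1$, $\|\nabla F(\theta_n)\|^2\leq C(F(\theta_n)-\inf F+1)$ and $(F(\theta_n))$ is $L^1$-convergent hence uniformly integrable), so Vitali's convergence theorem yields $\mathbb{E}[\|\nabla F(\theta_n)\|^2]\to0$.

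The main obstacle is the $\Pas$ upcrossing argument: making rigorous that a block over which $\|\nabla F\|$ stays above $\varepsilon$ must carry a uniformly positive amount of "$\gamma$-mass" (to make the gradient move by $\varepsilon$), while simultaneously the summability $\sum\gamma_n\|\nabla F(\theta_{n-1})\|^2<\infty$ forces that mass to $0$ along such blocks. Controlling the total variation $\sum_{j\in\text{block}}\|\nabla F(\theta_j)-\nabla F(\theta_{j-1})\|$ — which mixes a summable martingale-noise part and a drift part tied to the very quantity we are bounding — is the delicate point; I expect to handle it exactly as in the cited $\alpha=1$ argument of \cite{Lei+20}, adapting the Hölder exponent to general $\alpha\in(0,1]$ using $\mathcal{H}$\ref{asp:gamma}.
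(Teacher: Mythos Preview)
Your upcrossing scheme is the right skeleton, and it is essentially what the paper does. Where you stall is the block estimate: you try to control $\sum_{j\in\text{block}}\|\nabla F(\theta_j)-\nabla F(\theta_{j-1})\|\leq L\sum_j\|\theta_j-\theta_{j-1}\|^\alpha$, and then struggle with $\sum_j\gamma_j^\alpha\|g_j\|^\alpha$. For $\alpha<1$ this sum is genuinely worse than what you need, and no H\"older juggling fixes it. The paper's move is to apply the H\"older regularity \emph{once} to the endpoints: from $\big|\|\nabla F(\theta_{\psi(n)})\|-\|\nabla F(\theta_{\chi(n)})\|\big|\geq\eta/3$ one gets $(\eta/3)^{1/\alpha}\leq L^{1/\alpha}\|\theta_{\psi(n)}-\theta_{\chi(n)}\|$, and then bounds the \emph{total} displacement via the telescoping $\theta_{\psi(n)}-\theta_{\chi(n)}=(M_{\psi(n)}-M_{\chi(n)})-\sum_{k}\gamma_k b(\theta_{k-1})$. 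The martingale piece is a Cauchy tail of the $\Pas$-convergent $(M_n)$ from Lemma~\ref{lmm:F(theta):converges}(\ref{lmm:F(theta):converges:iv}), hence eventually $\leq\frac12(\eta/3L)^{1/\alpha}$; the drift piece is $\leq c\widetilde K\sum_k\gamma_k$, where $\widetilde K$ is the $\Pas$ bound on $\|\nabla F(\theta_k)\|$ coming from Lemma~\ref{lmm:Holder}(\ref{lmm:Holder:ii}) and the $\Pas$-boundedness of $(F(\theta_n))$. This yields $\sum_{\text{block}}\gamma_k\geq c_0>0$, which together with $\|\nabla F(\theta_k)\|\geq\eta/6$ on the block contradicts $\sum\gamma_k\|\nabla F(\theta_{k-1})\|^2<\infty$. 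Your detour through $\theta_n-\theta_{n-1}\to0$ via ``$\|\nabla F(\theta_{n-1})\|\to0$ along a subsequence'' is both unnecessary (Lemma~\ref{lmm:F(theta):converges}(\ref{lmm:F(theta):converges:iii}) already gives $\|\theta_n-\theta_{n-1}\|\to0$ $\Pas$) and circular-sounding.

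Your $L^2$ route via Vitali is a genuine departure from the paper, which instead reruns the same upcrossing argument with $\mathbb{E}[\|\nabla F(\theta_n)\|^2]^{1/2}$ in place of $\|\nabla F(\theta_n)\|$ (the displacement is then bounded in $L^2$ by $K\sum\gamma_k$ via \eqref{eq:E:sup:grad:f:2}, no martingale splitting needed). Your argument is fine for $\alpha<1$: from $\|\nabla F(\theta)\|^{(1+\alpha)/\alpha}\leq C(F(\theta)-\inf F)$ and $\sup_n\mathbb{E}[F(\theta_n)-\inf F]<\infty$ you get uniform $L^p$-boundedness with $p=(1+\alpha)/\alpha>2$, hence $L^2$-UI. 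But for $\alpha=1$ you only get $\|\nabla F(\theta_n)\|^2\leq C(F(\theta_n)-\inf F)$, and you then need $(F(\theta_n)-\inf F)$ uniformly integrable. Siegmund--Robbins gives $\Pas$ convergence and $L^1$-boundedness, not UI (nonnegative supermartingales need not be UI), and the paper never establishes $L^1$-convergence of $(F(\theta_n))$. So the $\alpha=1$ case of your Vitali argument has a gap; the paper's direct $L^2$ upcrossing avoids the issue entirely.
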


\begin{proof}
We will show separately the $L^2(\mathbb{P})$ and $\Pas$-convergences.
\\

\noindent{\emph{$\stackrel{\blacktriangleright}{}$ Claim~1. $\nabla F(\theta_n)\to0$ in $L^2(\mathbb{P})$ as $n\to\infty$.}}\newline
\noindent{\emph{$\stackrel{\blacktriangleright}{}$ Step~1.1. Reductio ad absurdum.}}\newline
In view of $\mathcal{H}$\ref{asp:f}, Jensen's inequality and Lemma~\ref{lmm:F(theta):converges}(\ref{lmm:F(theta):converges:iii}),
\begin{equation}\label{eq:E:diff:grad:F:->0}
\lim_{n\to\infty}\mathbb{E}\big[\|\nabla F(\theta_n)-\nabla F(\theta_{n-1})\|^2\big]^\frac12
\leq L\lim_{n\to\infty}\mathbb{E}[\|\theta_n-\theta_{n-1}\|^{1+\alpha}]^\frac\alpha{1+\alpha}
=0.
\end{equation}

Recall that $\liminf_{n\to\infty}\mathbb{E}\big[\|\nabla F(\theta_n)\|^2\big]=0$ according to Lemma~\ref{lmm:F(theta):converges}(\ref{lmm:F(theta):converges:ii}).
Suppose, by contradiction, that $\limsup_{n\to\infty}\mathbb{E}\big[\|\nabla F(\theta_n)\|^2\big]^\frac12>\eta$ for some $\eta>0$.
Then, via Lemma~\ref{lmm:seq:jump}, there exist extractions $\chi,\psi\colon\mathbb{N}\to\mathbb{N}$ such that, for all $n\geq0$, $\chi(n)<\psi(n)$,
\begin{gather}
\mathbb{E}\big[\|\nabla F(\theta_{\chi(n)})\|^2\big]^\frac12<\frac\eta3,
\quad
\mathbb{E}\big[\|\nabla F(\theta_{\psi(n)})\|^2\big]^\frac12>\frac{2\eta}3,\label{eq:E:conditions:a}\\
\frac\eta3\leq\mathbb{E}\big[\|\nabla F(\theta_k)\|^2\big]^\frac12\leq\frac{2\eta}3,
\quad\chi(n)<k<\psi(n),\label{eq:E:conditions:b}
\end{gather}
and, taking into consideration \eqref{eq:E:diff:grad:F:->0},
\begin{equation}\label{eq:E:condition:gamma}
\mathbb{E}\big[\|\nabla F(\theta_k)-\nabla F(\theta_{k-1})\|^2\big]^\frac12\leq\frac\eta6,
\quad k\geq\chi(0).
\end{equation}
\newline
\noindent{\emph{$\stackrel{\blacktriangleright}{}$ Step~1.2. Subsequent properties.}}\newline
Let $n\geq0$.
Via \eqref{eq:E:conditions:a}, the triangle inequality, $\mathcal{H}$\ref{asp:f}, Jensen's inequality and \eqref{eq:E:sup:grad:f:2},
\begin{equation*}
\begin{aligned}
\Big(\frac\eta3\Big)^\frac1\alpha
\leq\big|\mathbb{E}\big[\|\nabla F(\theta_{\psi(n)})\|^2\big]^\frac12-\mathbb{E}\big[\|\nabla F(\theta_{\chi(n)})\|^2\big]^\frac12\big|^\frac1\alpha
&\leq L^\frac1\alpha\mathbb{E}[\|\theta_{\psi(n)}-\theta_{\chi(n)}\|^2]^\frac12\\
\leq L^\frac1\alpha\sum_{k=\chi(n)+1}^{\psi(n)}\gamma_k\mathbb{E}\big[\|g_k(\theta_{k-1})\|^2\big]^\frac12
&\leq(K\vee1)L^\frac1\alpha\sum_{k=\chi(n)+1}^{\psi(n)}\gamma_k,
\end{aligned}
\end{equation*}
hence
\begin{equation}\label{eq:E:lower:bound:1}
\sum_{k=\chi(n)+1}^{\psi(n)}\gamma_k
\geq\frac1{K\vee1}\Big(\frac\eta{3L}\Big)^\frac1\alpha.
\end{equation}
Moreover, using \eqref{eq:E:condition:gamma},
$|\mathbb{E}[\|\nabla F(\theta_{\chi(n)+1})\|^2]^\frac12-\mathbb{E}[\|\nabla F(\theta_{\chi(n)})\|^2]^\frac12|\leq\frac\eta6$, so that, by \eqref{eq:E:conditions:b}, $\mathbb{E}[\|\nabla F(\theta_{\chi(n)})\|^2]^\frac12\geq\frac\eta6$.
Thus
\begin{equation}\label{eq:E:lower:bound:2}
\mathbb{E}\big[\|\nabla F(\theta_k)\|^2\big]
\geq\Big(\frac\eta6\Big)^2,
\quad\chi(n)\leq k<\psi(n).
\end{equation}
\newline
\noindent{\emph{$\stackrel{\blacktriangleright}{}$ Step~1.3. Contradiction.}}\newline
Owing to \eqref{eq:E:lower:bound:1} and \eqref{eq:E:lower:bound:2},
\begin{equation*}
0
<\frac1{K\vee1}\Big(\frac\eta{3L}\Big)^\frac1\alpha\Big(\frac\eta6\Big)^2
\leq\sum_{k=\chi(n)+1}^{\psi(n)}\gamma_k\mathbb{E}\big[\|\nabla F(\theta_{k-1})\|^2\big]
\leq\sum_{k=\chi(n)+1}^\infty\gamma_k\mathbb{E}\big[\|\nabla F(\theta_{k-1})\|^2\big].
\end{equation*}
Considering Lemma~\ref{lmm:F(theta):converges}(\ref{lmm:F(theta):converges:ii}), the right hand side of the above inequality tends to $0$ as $n$ goes to $\infty$, which is contradictory.
\\

\noindent{\emph{$\stackrel{\blacktriangleright}{}$ Claim~2. $\nabla F(\theta_n)\to0$ $\Pas$ as $n\to\infty$.}}\newline
\noindent{\emph{$\stackrel{\blacktriangleright}{}$ Step~2.1. Reductio ad absurdum.}}\newline
By Lemma~\ref{lmm:F(theta):converges}(\ref{lmm:F(theta):converges:i}), $(F(\theta_n))_{n\geq1}$ is $\Pas$ bounded. Hence, by Lemma~\ref{lmm:Holder}(\ref{lmm:Holder:ii}), for all $k\geq1$,
\begin{equation}\label{eq:grad:F:<}
\|\nabla F(\theta_{k-1})\|
\leq\Big(\frac{1+\alpha}\alpha\Big)^\frac\alpha{1+\alpha}L^\frac1{1+\alpha}\sup_{n\geq0}\big(F(\theta_{n-1})-\inf F\big)^\frac\alpha{1+\alpha}
\eqqcolon\widetilde{K}
<\infty
\quad\Pas.
\end{equation}
Moreover, via $\mathcal{H}$\ref{asp:f} and Lemma~\ref{lmm:F(theta):converges}(\ref{lmm:F(theta):converges:iii}),
\begin{equation}
\label{eq:diff:grad:F:->0}
\lim_{n\to\infty}\|\nabla F(\theta_n)-\nabla F(\theta_{n-1})\|
\leq L\lim_{n\to\infty}\|\theta_n-\theta_{n-1}\|^\alpha
=0
\quad\Pas.
\end{equation}

Recall that, by Lemma~\ref{lmm:F(theta):converges}(\ref{lmm:F(theta):converges:i}), $\liminf_{n\to\infty}\|\nabla F(\theta_n)\|=0$.
Suppose, by contradiction, that $\mathbb{P}(\limsup_{n\to\infty}\|\nabla F(\theta_n)\|>\eta)>0$ for some $\eta>0$.
Denote $\Lambda=\{\limsup_{n\to\infty}\|\nabla F(\theta_n)\|>\eta\}$.
Then, via Lemma~\ref{lmm:seq:jump}, $\Pas$ on $\Lambda$, there exist extractions $\chi,\psi\colon\mathbb{N}\to\mathbb{N}$ such that, for all $n\geq0$, $\chi(n)<\psi(n)$,
\begin{gather}
\|\nabla F(\theta_{\chi(n)})\|<\frac\eta3,
\quad
\|\nabla F(\theta_{\psi(n)})\|>\frac{2\eta}3\label{eq:conditions:a}\\
\frac\eta3\leq\|\nabla F(\theta_k)\|\leq\frac{2\eta}3,
\quad\chi(n)<k<\psi(n),\label{eq:conditions:b}
\end{gather}
and, in view of \eqref{eq:diff:grad:F:->0} and Lemma~\ref{lmm:F(theta):converges}(\ref{lmm:F(theta):converges:iv}),
\begin{equation}\label{eq:condition}
\|\nabla F(\theta_k)-\nabla F(\theta_{k-1})\|\leq\frac\eta6,
\quad
\Big\|\sum_{j=k}^\infty\gamma_j\big(g_j(\theta_{j-1})-b(\theta_{j-1})\big)\Big\|
\leq\frac1{4L^\frac1\alpha}\Big(\frac\eta3\Big)^\frac1\alpha,
\quad k\geq\chi(0).
\end{equation}
\newline
\noindent{\emph{$\stackrel{\blacktriangleright}{}$ Step~2.2. Subsequent properties.}}\newline
We put ourselves in $\Lambda$ and consider $n\geq0$.
Via \eqref{eq:conditions:a}, the triangle inequality, $\mathcal{H}$\ref{asp:f}, \eqref{eq:condition} and \eqref{eq:grad:F:<},
\begin{equation*}
\begin{aligned}
\Big(\frac\eta3\Big)^\frac1\alpha
\leq\big|\|\nabla F(\theta_{\psi(n)})\|-\|\nabla F(\theta_{\chi(n)})\|\big|^\frac1\alpha
&\leq L^\frac1\alpha\|\theta_{\psi(n)}-\theta_{\chi(n)}\|\\
\leq L^\frac1\alpha\bigg\|\sum_{k=\chi(n)+1}^{\psi(n)}\gamma_k\big(g_k(\theta_{k-1})&-b(\theta_{k-1})\big)\bigg\|
+L^\frac1\alpha\sum_{k=\chi(n)+1}^{\psi(n)}\gamma_k\|b(\theta_{k-1})\|\\
&\leq\frac12\Big(\frac\eta3\Big)^\frac1\alpha+c(\widetilde{K}\vee1)L^\frac1\alpha\sum_{k=\chi(n)+1}^{\psi(n)}\gamma_k,
\end{aligned}
\end{equation*}
hence
\begin{equation}\label{eq:lower:bound:1}
\sum_{k=\chi(n)+1}^{\psi(n)}\gamma_k
\geq\frac1{2c(\widetilde{K}\vee1)}\Big(\frac\eta{3L}\Big)^\frac1\alpha.
\end{equation}
Besides, via \eqref{eq:condition},
$\big|\|\nabla F(\theta_{\chi(n)+1})\|-\|\nabla F(\theta_{\chi(n)})\|\big|\leq\frac\eta6$,
hence, by \eqref{eq:conditions:b},
$\|\nabla F(\theta_{\chi(n)})\|\geq\frac\eta6$.
Therefore,
\begin{equation}\label{eq:lower:bound:2}
\|\nabla F(\theta_k)\|^2\geq\Big(\frac\eta6\Big)^2,
\quad\chi(n)\leq k<\psi(n).
\end{equation}
\newline
\noindent{\emph{$\stackrel{\blacktriangleright}{}$ Step~2.3. Contradiction.}}\newline
Owing to \eqref{eq:lower:bound:1} and \eqref{eq:lower:bound:2},
\begin{equation*}
0
<\frac1{2c(\widetilde{K}\vee1)}\Big(\frac\eta{3L}\Big)^\frac1\alpha\Big(\frac\eta6\Big)^2
\leq\sum_{k=\chi(n)+1}^{\psi(n)}\gamma_k\|\nabla F(\theta_{k-1})\|^2
\leq\sum_{k=\chi(n)+1}^\infty\gamma_k\|\nabla F(\theta_{k-1})\|^2.
\end{equation*}
By Lemma~\ref{lmm:F(theta):converges}(\ref{lmm:F(theta):converges:i}), the right hand side above converges $\Pas$ on $\Lambda$ to $0$ as $n\to\infty$, which is absurd.
\end{proof}

\begin{proof}[Proof of Theorem~\ref{thm:F:cv:gradF:0}]
See Lemmas~\ref{lmm:F(theta):converges}(\ref{lmm:F(theta):converges:i},\ref{lmm:F(theta):converges:v}) and~\ref{lmm:gradF->0}.
\end{proof}

\begin{proof}[Proof of Lemma~\ref{lmm:aux}]
\noindent
Since, by Lemma~\ref{lmm:F(theta):converges}(\ref{lmm:F(theta):converges:i}), $(F(\theta_n))_{n\geq0}$ is $\Pas$ convergent, it is $\Pas$ bounded.
There then exists $\Pas$ some $\eta>0$ such that, for all $n\geq0$, $F(\theta_n)\leq\eta$ $\Pas$.
By $\mathcal{H}$\ref{asp:f->8}, there exists $\Pas$ $\rho>0$ such that, for all $\theta\in\mathbb{R}^m$ such that $\|\theta\|>\rho$, it holds $F(\theta)>\eta$ $\Pas$.
Thus, for all $n\geq0$, $\|\theta_n\|\leq\rho$ $\Pas$.
\end{proof}

The compactness of the accumulation points' set is a direct consequence of the iterate boundedness.
Its conntectedness however stems from Ostrowski's lemma~\cite[Theorem~26.1]{Ost74}, a classical result of topology based on the premise that it is impossible to simultaneously ensure that $\|\theta_n-\theta_{n-1}\|\to0$ as $n\to\infty$ and that the set of accumulation points $\cap_{p\geq0}\overline{\{\theta_n,n\geq p\}}$ of $(\theta_n)_{n\geq0}$ write as a union of two disjoint compact sets~\cite[\S4, Exercise 6]{Gou20}, thus mirroring the main argument for weak convergence.

\begin{proof}[Proof of Theorem~\ref{thm:cv}]
\noindent{\bf(\ref{thm:cv:i})}\
Via Lemmas~\ref{lmm:F(theta):converges}(\ref{lmm:F(theta):converges:iii}) and~\ref{lmm:aux}, $\lim_{n\to\infty}\theta_n-\theta_{n-1}=0$ $\Pas$ and $(\theta_n)_{n\geq0}$ is $\Pas$ bounded.
Thus, by Ostrowski's lemma~\cite[Theorem~26.1]{Ost74}, the set of accumulation points of $(\theta_n)_{n\geq1}$ is $\Pas$ a nonempty compact connected set $\mathcal{A}$.

According to Lemma~\ref{lmm:gradF->0}, there exists $\Lambda\in\mathcal{F}$, $\mathbb{P}(\Lambda)=1$, such that for all $\omega\in\Lambda$, one has $\lim_{n\to\infty}\nabla F(\theta_n(\omega))=0$.
Let $\omega\in\Lambda$, $\theta_\star\in\mathcal{A}(\omega)$ and an extraction $\psi\colon\mathbb{N}\to\mathbb{N}$ such that $\lim_{n\to\infty}\theta_{\psi(n)}(\omega)=\theta_\star$.
Then, by the continuity of $\nabla F$ under $\mathcal{H}$\ref{asp:f}, one has $\nabla F(\theta_\star)=\lim_{n\to\infty}\nabla F(\theta_{\psi(n)}(\omega))=0$.
\\

\noindent{\bf(\ref{thm:cv:ii})}\
Since $\mathcal{A}\subset\nabla F^{-1}(0)$, $\mathcal{A}$ is $\Pas$ at most countable.
Being $\Pas$-compact, connected and at most countable, $\mathcal{A}$ necessarily reduces $\Pas$ to a singleton $\{\theta_\star\}$.
Finally, being bounded and possessing a unique accumulation point, $(\theta_n)_{n\geq0}$ converges $\Pas$ to $\theta_\star$.
\end{proof}

\section{Convergence Rate Proofs}
\label{sec:speed}

\begin{proof}[Proof of Proposition~\ref{prp:Lojasiewicz}]
\noindent{\bf(\ref{prp:Lojasiewicz:i})\hyperref[prp:Lojasiewicz:i:a]{a}.}\
By~\cite[Lemma~4.3(i)]{DK24}, the set of critical levels $\{F(\theta),\theta\in\nabla F^{-1}(0)\cap\mathcal{K}\}$ is finite, say, of the form $(\ell_i)_{1\leq i\leq I}$, $I\geq1$. According to~\cite[Lemma~4.3(ii)]{DK24}, for all $1\leq i\leq I$, there exist $\mathcal{V}_i\supset F^{-1}(\ell_i)\cap\nabla F^{-1}(0)\cap\mathcal{K}$ an open neighborhood, $\beta_i\in(0,1)$ and $\zeta_i>0$ such that $|F(\theta)-\ell_i|^{\beta_i}\leq\zeta_i\|\nabla F(\theta)\|$, $\theta\in\mathcal{V}_i$.
By the continuity of $\nabla F$, we can assume up to a shrinking of the $(\mathcal{V}_i)_{1\leq i\leq I}$ that $\|\nabla F(\theta)\|\leq1$, $\theta\in\mathcal{V}_i$, $1\leq i\leq I$.
Set $\beta=\max_i\beta_i\in(0,1)$ and $\zeta=(1\vee\max_i\zeta_i^{1/\beta_i})^\beta>0$. Then $|F(\theta)-\ell_i|^\beta\leq\zeta\|\nabla F(\theta)\|$, $\theta\in\mathcal{V}_i$, $1\leq i\leq I$.
Finally, letting $\theta_\star\in\nabla F^{-1}(0)\cap\mathcal{K}$, one has $F(\theta_\star)=\ell_i$ for some $1\leq i\leq I$, so that $\theta_\star\in\mathcal{V}_i\cap\nabla F^{-1}(0)$ and $|F(\theta)-F(\theta_\star)|^\beta=|F(\theta)-\ell_i|^\beta\leq\zeta\|\nabla F(\theta)\|$, $\theta\in\mathcal{V}_i$, thus the result.
\\

\noindent{\bf(\ref{prp:Lojasiewicz:i})\hyperref[prp:Lojasiewicz:i:b]{b}.}\
The sought property follows by the previous result and Lemma~\ref{lmm:beta+:zeta1}.
\\

\noindent{\bf(\ref{prp:Lojasiewicz:ii})}\
Since $F$ is {\L}ojasiewicz, to any $\theta_\star\in\nabla F^{-1}(0)$, we can associate a triplet $(\mathcal{V}_{\theta_\star},\beta_{\theta_\star},\zeta_{\theta_\star})\in\mathcal{B}(\mathbb{R}^m)\times(0,1)\times\mathbb{R}^+$ such that $\mathcal{V}_{\theta_\star}$ is an open neighborhood of $\theta_\star$ and
\begin{equation*}
|F(\theta)-F(\theta_\star)|^{\beta_{\theta_\star}}\leq\zeta_{\theta_\star}\|\nabla F(\theta)\|,\quad\theta\in\mathcal{V}_{\theta_\star}.
\end{equation*}
Take
\begin{equation*}
\beta=\frac12\big(\sup_{\theta_\star\in\nabla F^{-1}(0)}\beta_{\theta_\star}+1\big)\in(0,1].
\end{equation*}
Then $\beta>\beta_{\theta_\star}$, $\theta_\star\in\nabla F^{-1}(0)$, hence the sought property by invoking Lemma~\ref{lmm:beta+:zeta1}.
\\

\noindent{\bf(\ref{prp:Lojasiewicz:iii})}\
Let $\theta_\star\in\nabla F^{-1}(0)$. Then $(F(\theta_\star)-\inf F)^\beta\leq\zeta\|\nabla F(\theta_\star)\|=0$ for some $(\beta,\zeta)\in(0,1)\times\mathbb{R}^*_+$.
A similar observation to Remark~\ref{rmk:beta>alpha/1+alpha} ensures that $\beta\geq\frac\alpha{1+\alpha}$.
\end{proof}

\begin{lemma}\label{lmm:levels}
Assume $\mathcal{H}$\ref{asp:f}--\ref{asp:f->8} hold.
Then, for all $\delta\in(0,1)$, there exists $\ell_\delta>\inf F$ such that $\mathbb{P}(\mathcal{A}\subset\mathcal{L}_{\ell_\delta})\geq1-\delta$.
\end{lemma}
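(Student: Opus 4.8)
The plan is to reduce the statement to an elementary fact about the a.s.\ finite limit $F_\star$ of the function values. First I would invoke Theorem~\ref{thm:F:cv:gradF:0} (equivalently Lemma~\ref{lmm:F(theta):converges}) to obtain an a.s.\ event $\Lambda_0$ on which $F(\theta_n)\to F_\star$ with $F_\star$ real-valued, and Lemma~\ref{lmm:aux} together with Theorem~\ref{thm:cv}(\ref{thm:cv:i}) to obtain an a.s.\ event $\Lambda_1$ on which $(\theta_n)_{n\geq0}$ is bounded and its accumulation set $\mathcal{A}$ is a nonempty compact set. I would then work on $\Lambda\coloneqq\Lambda_0\cap\Lambda_1$, which has probability $1$.

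Next I would show that, on $\Lambda$, the function $F$ is constant on $\mathcal{A}$, equal to $F_\star$. Indeed, for any $\theta_\star\in\mathcal{A}$ there is an extraction $\psi\colon\mathbb{N}\to\mathbb{N}$ with $\theta_{\psi(n)}\to\theta_\star$; by the continuity of $F$ under $\mathcal{H}$\ref{asp:f} and since $F(\theta_n)\to F_\star$, it follows that $F(\theta_\star)=\lim_{n\to\infty}F(\theta_{\psi(n)})=F_\star$. Hence $\mathcal{A}\subset F^{-1}(\{F_\star\})$, and in particular $\sup_{\theta\in\mathcal{A}}F(\theta)=F_\star$ since $\mathcal{A}\neq\varnothing$. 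Consequently, for any level $\ell>\inf F$, one has on $\Lambda$ the equivalence $\mathcal{A}\subset\mathcal{L}_\ell\iff F_\star\leq\ell$, so that $\mathbb{P}(\mathcal{A}\subset\mathcal{L}_\ell)\geq\mathbb{P}(\{F_\star\leq\ell\}\cap\Lambda)=\mathbb{P}(F_\star\leq\ell)$; this also disposes of the measurability of the event $\{\mathcal{A}\subset\mathcal{L}_\ell\}$, which coincides with $\{F_\star\leq\ell\}$ up to a null set.

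Finally I would let $\ell\to\infty$: since $F_\star$ is a.s.\ finite (and satisfies $F_\star\geq\inf F$), the events $\{F_\star\leq\ell\}$ increase to an a.s.\ event, so by continuity of $\mathbb{P}$ from below, $\mathbb{P}(F_\star\leq\ell)\to1$ as $\ell\to\infty$. Given $\delta\in(0,1)$, I would then pick $\ell_\delta>\inf F$ large enough that $\mathbb{P}(F_\star\leq\ell_\delta)\geq1-\delta$, which yields $\mathbb{P}(\mathcal{A}\subset\mathcal{L}_{\ell_\delta})\geq1-\delta$ and concludes the proof. There is no substantial obstacle: the only step requiring a little care is the identification of $\{\mathcal{A}\subset\mathcal{L}_\ell\}$ with $\{F_\star\leq\ell\}$ on the a.s.\ event $\Lambda$, which rests on the fact that every accumulation point of $(\theta_n)_{n\geq0}$ carries the same $F$-value $F_\star$ --- itself a direct consequence of the a.s.\ convergence of $(F(\theta_n))_{n\geq0}$ and the continuity of $F$.
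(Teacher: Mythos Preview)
Your argument is correct and considerably more direct than the paper's own proof. Both proofs share the same pivotal observation: on an a.s.\ event, $F$ is constant on $\mathcal{A}$ with value $F_\star$, so that $\{\mathcal{A}\subset\mathcal{L}_\ell\}=\{F_\star\leq\ell\}$ up to a null set (this is exactly the paper's equation~\eqref{eq:<->}). From there, however, the paper takes a longer route: it introduces the final hitting time $\tau_0$ of~\eqref{eq:stop:0}, splits according to $\{\tau_0<n_\delta\}$, and then bounds $\mathbb{P}(\mathcal{A}\not\subset\mathcal{L}_\ell,\tau_0<n_\delta)$ via Markov's inequality against $\mathbb{E}\big[\sup_{n\geq n_\delta}(F(\theta_n)-\inf F)\mathbbm1_{\tau_0<n_\delta}\big]$, whose finiteness is established through a telescoping of~\eqref{eq:F(theta:n)<F(theta:n-1)} combined with Doob's maximal inequality. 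This yields the quantitative decay $\mathbb{P}(\mathcal{A}\not\subset\mathcal{L}_\ell,\tau_0<n_\delta)\leq C/(\ell-\inf F)$.

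Your approach bypasses all of this by invoking only that $F_\star$ is a.s.\ finite (already given by Theorem~\ref{thm:F:cv:gradF:0}) and continuity of measure from below. For the lemma as stated this is entirely sufficient; the paper's extra machinery buys an explicit rate in $\ell$ and introduces the random time $\tau_0$ that is reused in the subsequent convergence-rate proofs, but neither is needed here. In short: your proof is a clean simplification of the paper's, with no loss.
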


\begin{proof}
\noindent{\emph{$\stackrel{\blacktriangleright}{}$ Step~1. Final hitting time.}}\newline
Consider the random time
\begin{equation}\label{eq:stop:0}
\tau_0\coloneqq\inf{\{n\geq0\colon\forall k\geq n,\|\nabla F(\theta_k)\|\leq1\}},
\end{equation}
with the convention $\inf{\varnothing}=\infty$.
According to Lemma~\ref{lmm:gradF->0}, $\tau_0<\infty$ $\Pas$.

Let $\delta\in(0,1)$. The reverse Fatou lemma ensures that
\begin{equation*}
\limsup_{n\to\infty}\mathbb{P}(\tau_0\geq n)
\leq\mathbb{P}\big(\limsup_{n\to\infty}{\{\tau_0\geq n\}}\big)
=0.
\end{equation*}
Thus, there exists $n_\delta\geq0$ such that
\begin{equation}
\label{eq:jigsaw:1}
\mathbb{P}(\tau_0<n_\delta)\geq1-\frac\delta2.
\end{equation}

According to Lemmas~\ref{lmm:F(theta):converges}(\ref{lmm:F(theta):converges:i}) and~\ref{lmm:aux} and Theorem~\ref{thm:cv}(\ref{thm:cv:i}), there exists $\Lambda\in\mathcal{F}$ such that $\mathbb{P}(\Lambda)=1$ and, for all $\omega\in\Lambda$, $F(\theta_n(\omega))\to F_\star(\omega)$, $(\theta_n(\omega))_{n\geq0}$ is bounded and its set of accumulation points $\mathcal{A}(\omega)$ is a compact connected set.
Let $\omega\in\Lambda$.
By the continuity of $F$ via $\mathcal{H}$\ref{asp:f}, for all $\theta_\star\in\mathcal{A}(\omega)$, $F(\theta_\star)=\lim_{n\to\infty} F(\theta_{\psi(n)}(\omega))=F_\star(\omega)$, where $\psi\colon\mathbb{N}\to\mathbb{N}$ is an extraction such that $\theta_{\psi(n)}(\omega)\to\theta_\star$ as $n\to\infty$.
$F$ is therefore constant on $\mathcal{A}(\omega)$ equal to $F_\star(\omega)$.

Let $\ell>\inf F$.
The previous discussion entails
\begin{equation}\label{eq:<->}
\{\mathcal{A}\subset\mathcal{L}_\ell\}
=\{F_\star\leq\ell\}
=\big\{\forall\varepsilon\in\mathbb{Q}^*_+,\exists p\geq n_\delta,\forall n\geq p,F(\theta_n)\leq\ell+\varepsilon\big\}.
\end{equation}
\newline
\noindent{\emph{$\stackrel{\blacktriangleright}{}$ Step~2. High probability bound.}}\newline
Let $n>n_\delta$.
By \eqref{eq:<->} and Markov's inequality,
\begin{equation}\label{eq:master:ineq}
\begin{aligned}
\mathbb{P}&(\mathcal{A}\not\subset\mathcal{L}_\ell,\tau_0<n_\delta)
=\mathbb{P}\big(\big\{\exists\varepsilon\in\mathbb{Q}^*_+,\forall p\geq n_\delta,\exists n\geq p,F(\theta_n)>\ell+\varepsilon\big\}\cap\{\tau_0<n_\delta\}\big)\\
&\leq\inf_{p\geq n_\delta}\frac{\mathbb{E}[\sup_{n\geq p}\sup_{\varepsilon\in\mathbb{Q}^*_+}(F(\theta_n)-\inf F-\varepsilon)^+\mathbbm1_{\tau_0<n_\delta}]}{\ell-\inf F}
\leq\frac{\mathbb{E}[\sup_{n\geq n_\delta}(F(\theta_n)-\inf F)\mathbbm1_{\tau_0<n_\delta}]}{\ell-\inf F}.
\end{aligned}
\end{equation}
Let $(\Delta_n)_{n\geq1}$ be the $(\mathcal{F}_n)_{n\geq0}$-martingale increment sequence given by
\begin{equation}\label{eq:Delta}
\Delta_n=\nabla F(\theta_{n-1})^\top\big(b(\theta_{n-1})- g_n(\theta_{n-1})\big)\mathbbm1_{\|\nabla F(\theta_{n-1})\|\leq 1},
\quad n\geq1.
\end{equation}
Using \eqref{eq:F(theta:n)<F(theta:n-1)} and $\mathcal{H}$\ref{asp:f:abc}(\ref{asp:f:abc:ii}),
\begin{equation*}
\begin{aligned}
(F(\theta_n)&-\inf F)\mathbbm1_{\tau_0<n_\delta}\\
&\leq\Big(F(\theta_{n-1})-\inf F-\kappa\gamma_n\|\nabla F(\theta_{n-1})\|^2
+\gamma_n\Delta_n
+\frac{L}{1+\alpha}\gamma_n^{1+\alpha}\| g_n(\theta_{n-1})\|^{1+\alpha}\Big)\mathbbm1_{\tau_0<n_\delta},
\end{aligned}
\end{equation*}
hence, by telescoping,
\begin{equation*}
\begin{aligned}
(F(\theta_n)&-\inf F)\mathbbm1_{\tau_0<n_\delta}\\
&\leq\Big(F(\theta_{n_\delta-1})-\inf F
+\sum_{k=n_\delta}^n\gamma_k\Delta_k
+\frac{L}{1+\alpha}\sum_{k=n_\delta}^n\gamma_k^{1+\alpha}\| g_k(\theta_{k-1})\|^{1+\alpha}\Big)\mathbbm1_{\tau_0<n_\delta}.
\end{aligned}
\end{equation*}
Therefore, by Fubini's theorem,
\begin{equation}\label{eq:ineq:A+B}
\begin{aligned}
\mathbb{E}\big[\sup_{n\geq n_\delta}&(F(\theta_n)-\inf F)\mathbbm1_{\tau<n_\delta}\big]\\
&\leq\sup_{n\geq0}\mathbb{E}[F(\theta_n)-\inf F]
+\mathbb{E}\Big[\sup_{n>n_\delta}\Big|\sum_{k=n_\delta}^n\gamma_k\Delta_k\Big|\Big]
+\frac{L\sup_{n\geq1}\mathbb{E}[\|g_n(\theta_{n-1})\|^{1+\alpha}]}{1+\alpha}\sum_{k=1}^\infty\gamma_k^{1+\alpha}.
\end{aligned}
\end{equation}
By Lemma~\ref{lmm:F(theta):converges}(\ref{lmm:F(theta):converges:ii}), $\sup_{n\geq0}\mathbb{E}[F(\theta_n)-\inf F]<\infty$.
Moreover, using that $x^{1+\alpha}\leq C(1+x^2)$, $x\in\mathbb{R}$, and $\mathcal{H}$\ref{asp:f:abc},
\begin{equation}\label{eq:sup:grad:f:1+alpha}
\sup_{n\geq1}\mathbb{E}\big[\| g_n(\theta_{n-1})\|^{1+\alpha}\big]
\leq C\big(\sup_{n\geq1}\mathbb{E}[F(\theta_{n-1})-\inf F]+1\big)<\infty.
\end{equation}
Besides, by the definition \eqref{eq:Delta}, using the Cauchy-Schwarz inequality, Jensen's inequality and $\mathcal{H}$\ref{asp:f:abc}(\ref{asp:f:abc:i}),
\begin{equation}\label{eq:E[Delta^2]<}
\sup_{n\geq1}\mathbb{E}\big[\Delta_n^2\big]
\leq2\big(\sup_{n\geq1}\mathbb{E}\big[\| g_n(\theta_{n-1})\|^2\big]+c^2\big)
\leq C\big(\sup_{n\geq1}\mathbb{E}[F(\theta_{n-1})-\inf F]+1\big)
<\infty,
\quad n\geq1.
\end{equation}
Hence, by monotone convergence, Jensen's inquality, Doob's maximal inequality (or the Burkholder-Davis-Gundy inequality), \eqref{eq:E[Delta^2]<} and $\mathcal{H}$\ref{asp:gamma},
\begin{equation}\label{eq:ineq:B}
\mathbb{E}\Big[\sup_{n>n_\delta}\Big|\sum_{k=n_\delta}^n\gamma_k\Delta_k\Big|\Big]
=\lim_{N\to\infty}\mathbb{E}\Big[\sup_{n_\delta<n\leq N}\Big|\sum_{k=n_\delta}^n\gamma_k\Delta_k\Big|^2\Big]^\frac12
\leq\Big(\sum_{k=1}^\infty\gamma_k^2\mathbb{E}[\Delta_k^2]\Big)^\frac12
<\infty.
\end{equation}
Coming back to \eqref{eq:master:ineq}, by combining \eqref{eq:ineq:A+B}, \eqref{eq:sup:grad:f:1+alpha} and \eqref{eq:ineq:B},
\begin{equation}\label{eq:P(A:not:in:L)}
\mathbb{P}(\mathcal{A}\not\subset\mathcal{L}_\ell,\tau_0<n_\delta)
\leq\frac{C}{\ell-\inf F}
\to0
\quad\text{as $\ell\to\infty$.}
\end{equation}
\newline
\noindent{\emph{$\stackrel{\blacktriangleright}{}$ Step~3. Conclusion.}}\newline
\eqref{eq:P(A:not:in:L)} shows that there exists $\ell_\delta>\inf F$ such that
\begin{equation}\label{eq:jigsaw:2}
\mathbb{P}(\mathcal{A}\not\subset\mathcal{L}_{\ell_\delta},\tau_0<n_\delta)\leq\frac\delta2.
\end{equation}
Consequently, from \eqref{eq:jigsaw:1} and \eqref{eq:jigsaw:2},
\begin{equation*}
\mathbb{P}(\mathcal{A}\subset\mathcal{L}_{\ell_\delta})
\geq\mathbb{P}(\mathcal{A}\subset\mathcal{L}_{\ell_\delta},\tau_0<n_\delta)
\geq1-\delta.
\end{equation*}
\end{proof}

The next result deals with the convergence rate of the gradients $(\|\nabla F(\theta_n)\|)_{n\geq0}$ and requires only $\mathcal{H}$\ref{asp:f}--\ref{asp:gamma}.
It is provided here help prove our first \eqref{eq:theta:n} convergence rate in Theorem~\ref{thm:F-F*}.

\begin{lemma}\label{lmm:grad:F:n}
Suppose $\mathcal{H}$\ref{asp:f}--\ref{asp:gamma} hold. Then,
\begin{enumerate}[\bf(i)]
    \item\label{lmm:grad:F:n:i}
there exists a constant $C>0$ such that, for all $0\leq p\leq n$,
\begin{equation*}
\inf_{p\leq k\leq n}{\mathbb{E}\big[\|\nabla F(\theta_k)\|^2\big]}
\leq C\Big(\sum_{k=p+1}^{n+1}\gamma_k\Big)^{-1}.
\end{equation*}
    \item\label{lmm:grad:F:n:ii}
there exists a constant $C>0$ such that, for all $\delta\in(0,1)$, for all $0\leq p\leq n$,
\begin{equation*}
\inf_{p\leq k\leq n}\|\nabla F(\theta_k)\|\leq\frac{C}\delta\Big(\sum_{k=p+1}^{n+1}\gamma_k\Big)^{-\frac12}
\quad\text{with probability at least $1-\delta$.}
\end{equation*}
\end{enumerate}
\end{lemma}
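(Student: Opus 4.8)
The plan is to read off both bounds from the approximate supermartingale inequality \eqref{eq:SR:0} derived in the proof of Lemma~\ref{lmm:F(theta):converges}, combined with the boundedness of $(\mathbb{E}[F(\theta_n)])_{n\geq0}$ furnished by Lemma~\ref{lmm:F(theta):converges}(\ref{lmm:F(theta):converges:ii}) and the summability $\sum_{n\geq1}\gamma_n^{1+\alpha}<\infty$ from $\mathcal{H}$\ref{asp:gamma}. Write $M\coloneqq\sup_{k\geq0}\mathbb{E}[F(\theta_k)-\inf F]<\infty$.

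For \textbf{(\ref{lmm:grad:F:n:i})}, I would rearrange \eqref{eq:SR:0} into
\[
\kappa\gamma_k\mathbb{E}\big[\|\nabla F(\theta_{k-1})\|^2\big]
\leq\big(\mathbb{E}[F(\theta_{k-1})-\inf F]-\mathbb{E}[F(\theta_k)-\inf F]\big)+C\gamma_k^{1+\alpha}(M+1),
\]
and sum over $k$ from $p+1$ to $n+1$. The first bracket telescopes to $\mathbb{E}[F(\theta_p)-\inf F]-\mathbb{E}[F(\theta_{n+1})-\inf F]\leq M$, while the remaining term is bounded by $C(M+1)\sum_{k\geq1}\gamma_k^{1+\alpha}$, a constant independent of $p$ and $n$. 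Hence $\sum_{k=p+1}^{n+1}\gamma_k\mathbb{E}[\|\nabla F(\theta_{k-1})\|^2]\leq C$ uniformly, and bounding the infimum below by the weighted average,
\[
\Big(\inf_{p+1\leq k\leq n+1}\mathbb{E}\big[\|\nabla F(\theta_{k-1})\|^2\big]\Big)\sum_{k=p+1}^{n+1}\gamma_k
\leq\sum_{k=p+1}^{n+1}\gamma_k\mathbb{E}\big[\|\nabla F(\theta_{k-1})\|^2\big]\leq C,
\]
gives the claim after reindexing $\inf_{p+1\leq k\leq n+1}\mathbb{E}[\|\nabla F(\theta_{k-1})\|^2]=\inf_{p\leq k\leq n}\mathbb{E}[\|\nabla F(\theta_k)\|^2]$.

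For \textbf{(\ref{lmm:grad:F:n:ii})}, I would turn the expectation bound into a high-probability bound via Markov's inequality. The finite infimum $\inf_{p\leq k\leq n}\|\nabla F(\theta_k)\|$ is measurable and satisfies $\mathbb{E}[\inf_{p\leq k\leq n}\|\nabla F(\theta_k)\|^2]\leq\inf_{p\leq k\leq n}\mathbb{E}[\|\nabla F(\theta_k)\|^2]$, so part~(\ref{lmm:grad:F:n:i}) together with Jensen's inequality for $\sqrt{\cdot}$ yields $\mathbb{E}[\inf_{p\leq k\leq n}\|\nabla F(\theta_k)\|]\leq C(\sum_{k=p+1}^{n+1}\gamma_k)^{-1/2}$. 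Applying Markov's inequality at level $\tfrac{C}\delta(\sum_{k=p+1}^{n+1}\gamma_k)^{-1/2}$ then delivers the stated confidence bound.

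The only point requiring care, though still routine, is the \emph{uniformity in $p$ and $n$} of $\sum_{k=p+1}^{n+1}\gamma_k\mathbb{E}[\|\nabla F(\theta_{k-1})\|^2]\leq C$: the multiplicative factor $1+C\gamma_k^{1+\alpha}$ in \eqref{eq:SR:0} must be converted into an additive, summable perturbation, which is exactly what the bound $\mathbb{E}[F(\theta_{k-1})-\inf F]\leq M$ allows. No genuine obstacle arises beyond this bookkeeping.
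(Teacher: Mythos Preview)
Your argument is correct and essentially the same as the paper's. The only cosmetic difference is that the paper directly invokes Lemma~\ref{lmm:F(theta):converges}(\ref{lmm:F(theta):converges:ii}) to get the finite constant $\Xi\coloneqq\sum_{k=1}^\infty\gamma_k\mathbb{E}[\|\nabla F(\theta_{k-1})\|^2]<\infty$ and bounds $\inf_{p\leq k\leq n}\mathbb{E}[\|\nabla F(\theta_k)\|^2]\sum_{k=p+1}^{n+1}\gamma_k\leq\Xi$, whereas you re-derive this summability from \eqref{eq:SR:0}; part~(\ref{lmm:grad:F:n:ii}) via Markov and Jensen is identical.
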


The first part of Lemma~\ref{lmm:grad:F:n} is mirrored for instance in~\cite[Theorem~5.12]{GG24}.
It is somewhat a more generic result than~\cite[Theorem~2(a)]{Lei+20}, as the left end indexation on $k$ starts at an arbitrary integer $p$, not necessarily null.

\begin{proof}[Proof of Lemma~\ref{lmm:grad:F:n}]
\noindent{\bf(\ref{lmm:grad:F:n:i})}\
From Lemma~\ref{lmm:F(theta):converges}(\ref{lmm:F(theta):converges:ii}),
\begin{equation*}
\inf_{p\leq k\leq n}\mathbb{E}\big[\|\nabla F(\theta_k)\|^2\big]
\sum_{k=p+1}^{n+1}\gamma_k
\leq\sum_{k=1}^\infty\gamma_k\mathbb{E}\big[\|\nabla F(\theta_{k-1})\|^2\big]
\eqqcolon\Xi<\infty.
\quad0\leq p\leq n.
\end{equation*}
\newline
\noindent{\bf(\ref{lmm:grad:F:n:ii})}\
Let $\delta\in(0,1)$. By the Markov and Jensen inequalities,
\begin{equation*}
\mathbb{P}\Big(\inf_{p\leq k\leq n}\|\nabla F(\theta_k)\|\geq\frac{\sqrt\Xi}\delta\Big(\sum_{k=p+1}^{n+1}\Big)^{-\frac12}\Big)\leq\delta. 
\end{equation*}
\end{proof}

\begin{proof}[Proof of Theorem~\ref{thm:F-F*}]
\noindent{\bf(\ref{thm:F-F*:i})\hyperref[thm:F-F*:i:a]{a}.}\
{\emph{$\stackrel{\blacktriangleright}{}$ Step~1. Final hitting time.}}\newline
Let $\delta\in(0,1)$. According to Lemma~\ref{lmm:levels}, there exists $\ell_\delta>\inf F$ such that
\begin{equation}\label{eq:levelset}
\mathbb{P}(\mathcal{A}\subset\mathcal{L}_{\ell_\delta})\geq1-\frac\delta3.
\end{equation}
Besides, by Proposition~\ref{prp:Lojasiewicz}(\ref{prp:Lojasiewicz:i}), there exist $(\beta_\delta,\zeta_\delta)\in(0,1)\times\mathbb{R}^*_+$ such that, for all $\theta_\star\in\nabla F^{-1}(0)\cap\mathcal{L}_{\ell_\delta}$, there exists an open neighborhood $\mathcal{V}_\delta(\theta_\star)$ of $\theta_\star$ such that $|F(\theta)-F(\theta_\star)|^{\beta_\delta}\leq\zeta_\delta\|\nabla F(\theta)\|$,
$\theta\in\mathcal{V}_\delta(\theta_\star)$.
Furthermore, by Theorem~\ref{thm:cv}(\ref{thm:cv:i}), there exists $\Lambda\in\mathcal{F}$ such that $\mathbb{P}(\Lambda)=1$ and, for all $\omega\in\Lambda$, the set of accumulation $\mathcal{A}(\omega)$ is a compact connected set.
Recalling that $F(\theta_\star)=F_\star(\omega)$, $\theta_\star\in\mathcal{A}(\omega)$, note that
\begin{equation}\label{eq:Lojasiewicz:delta}
|F(\theta)-F_\star(\omega)|^{\beta_\delta}
\leq\zeta_\delta\|\nabla F(\theta)\|,
\quad\theta\in\mathcal{V}_\delta(\theta_\star),
\;\theta_\star\in\mathcal{A}(\omega),
\;\omega\in\Lambda\cap\{\mathcal{A}\subset\mathcal{L}_{\ell_\delta}\}.
\end{equation}

Define $\mathcal{V}_{\delta,\star}(\omega)\coloneqq\cup_{\theta_\star\in\mathcal{A}(\omega)}\mathcal{V}_\delta(\theta_\star)$, $\omega\in\Lambda\cap\{\mathcal{A}\subset\mathcal{L}_{\ell_\delta}\}$.
Note that, for all $\omega\in\Lambda\cap\{\mathcal{A}\subset\mathcal{L}_{\ell_\delta}\}$, $\mathcal{A}(\omega)\subsetneq\mathcal{V}_{\delta,\star}(\omega)$, so that $\mathcal{V}_{\delta,\star}(\omega)\neq\varnothing$. 
Let
\begin{equation}\label{eq:stop:1}
\tau_1\coloneqq 
\begin{cases}
\tau_0\vee\inf{\{n\geq0\colon\forall k\geq n, \theta_k\in\mathcal{V}_{\delta,\star}\}}
&\text{on $\{\mathcal{A}\subset\mathcal{L}_{\ell_\delta}\}$,}\\
\infty
&\text{on $\{\mathcal{A}\not\subset\mathcal{L}_{\ell_\delta}\}$,}
\end{cases}
\end{equation}
with the convention $\inf{\varnothing}=\infty$.
Via the $\Pas$-boundedness of $(\theta_n)_{n\geq0}$ by Lemma~\ref{lmm:aux}, one has $\tau_1<\infty$ $\Pas$ on $\{\mathcal{A}\subset\mathcal{L}_{\ell_\delta}\}$.
Hence, by the reverse Fatou lemma,
\begin{equation*}
\limsup_{n\to\infty}\mathbb{P}(\tau_1\geq n,\mathcal{A}\subset\mathcal{L}_{\ell_\delta})
\leq\mathbb{P}\big(\limsup_{n\to\infty}{\{\tau_1\geq n\}}\cap\{\mathcal{A}\subset\mathcal{L}_{\ell_\delta}\}\big)
=0.
\end{equation*}
Via \eqref{eq:levelset}, there then exists $n_\delta\geq0$ such that
\begin{equation}
\label{eq:n:delta}
\mathbb{P}(\tau_1<n_\delta,\mathcal{A}\subset\mathcal{L}_{\ell_\delta})
\geq\mathbb{P}(\mathcal{A}\subset\mathcal{L}_{\ell_\delta})-\frac\delta3
\geq1-\frac23\delta.
\end{equation}
\newline
\noindent{\emph{$\stackrel{\blacktriangleright}{}$ Step~2. $L^1(\mathbb{P})$ bound on $(F(\theta_n))_{n\geq0}$.}}\newline
Introduce
\begin{equation}\label{eq:mu:delta}
\mu_\delta\coloneqq\zeta_\delta^{-\frac1{\beta_\delta}}>0
\end{equation}
and
\begin{equation}\label{eq:A:delta}
\mathbb{A}_\delta\coloneqq\{\tau_1<n_\delta\}\cap\{\mathcal{A}\subset\mathcal{L}_{\ell_\delta}\}.
\end{equation}
Let $n_\delta\leq k\leq n$. Observe that, by \eqref{eq:stop:0} and \eqref{eq:stop:1}, $\|\nabla F(\theta_k)\|\leq1$ on $\mathbb{A}_\delta$.
Hence, using \eqref{eq:Lojasiewicz:delta},
\begin{equation*}
\begin{aligned}
\mu_\delta\mathbb{E}[|F(\theta_{k})&-F_\star|\mathbbm1_{\mathbb{A}_\delta}]\\
&\leq\mathbb{E}\big[\|\nabla F(\theta_k)\|^\frac1{\beta_\delta}\mathbbm1_{\mathbb{A}_\delta}\big]\\
&\leq\begin{cases}
\mathbb{E}\big[\|\nabla F(\theta_k)\|^2\big]
&\text{if $\beta_\delta\in(0,\frac12]$, since $\|\nabla F(\theta_k)\|\leq1$,}\\
\mathbb{E}\big[\|\nabla F(\theta_k)\|^2\big]^\frac1{2\beta_\delta}
&\text{if $\beta_\delta\in(\frac12,1)$, via Jensen's inequality,}
\end{cases}\\
&=\mathbb{E}\big[\|\nabla F(\theta_k)\|^2\big]^{1\wedge\frac1{2\beta_\delta}}.
\end{aligned}
\end{equation*}
Therefore, via Lemma~\ref{lmm:grad:F:n}(\ref{lmm:grad:F:n:i}),
\begin{equation*}
\inf_{n_\delta\leq k\leq n}\mathbb{E}[|F(\theta_k)-F_\star|\mathbbm1_{\mathbb{A}_\delta}]
\leq\mu_\delta^{-1}\inf_{n_\delta\leq k\leq n}\mathbb{E}\big[\|\nabla F(\theta_k)\|^2\big]^{1\wedge\frac1{2\beta_\delta}}
\leq C_\delta\Big(\sum_{k=n_\delta+1}^{n+1}\gamma_k\Big)^{-1\wedge\frac1{2\beta_\delta}}.
\end{equation*}
Thus
\begin{equation*}
\mathbb{E}\big[\inf_{0\leq k\leq n}|F(\theta_k)-F_\star|\mathbbm1_{\mathbb{A}_\delta}\big]
\leq\inf_{n_\delta\leq k\leq n}\mathbb{E}[|F(\theta_k)-F_\star|\mathbbm1_{\mathbb{A}_\delta}]
\leq C_\delta\Big(\sum_{k=1}^{n+1}\gamma_k\Big)^{-1\wedge\frac1{2\beta_\delta}},
\end{equation*}
where we used that $\sum_{k=1}^{n+1}\gamma_k\sim\sum_{k=n_\delta+1}^{n+1}\gamma_k$ as $n\to\infty$.

Finally, up to a modification of $C_\delta$, for all $n\geq0$,
\begin{equation}
\begin{aligned}
\mathbb{E}\big[&\inf_{0\leq k\leq n}|F(\theta_k)-F_\star|\mathbbm1_{\mathbb{A}_\delta}\big]\\
&=\mathbb{E}\big[\inf_{0\leq k\leq n}|F(\theta_k)-F_\star|\mathbbm1_{\mathbb{A}_\delta}\big]\mathbbm1_{n<n_\delta}+\mathbb{E}\big[\inf_{0\leq k\leq n}|F(\theta_k)-F_\star|\mathbbm1_{\mathbb{A}_\delta}\big]\mathbbm1_{n\geq n_\delta}\\
&\leq\Big(\sup_{k\geq0}{\mathbb{E}[|F(\theta_k)-F_\star|]}\Big(\sum_{k=1}^{n_\delta}\gamma_k\Big)^{1\wedge\frac1{2\beta_\delta}}+C_\delta\Big)\Big(\sum_{k=1}^{n+1}\gamma_k\Big)^{-1\wedge\frac1{2\beta_\delta}}
\leq C_\delta\Big(\sum_{k=1}^{n+1}\gamma_k\Big)^{-1\wedge\frac1{2\beta_\delta}},
\end{aligned}
\label{ineq:F:L1}
\end{equation}
with the convention $\sum_\varnothing=0$, where we used that, due to Lemmas~\ref{lmm:F(theta):converges}(\ref{lmm:F(theta):converges:ii},\ref{lmm:F(theta):converges:v}),
\begin{equation}\label{eq:sup:E:F-F*}
\sup_{k\geq0}\mathbb{E}[|F(\theta_k)-F_\star|]
\leq\sup_{k\geq0}\mathbb{E}[F(\theta_k)-\inf F]+|\inf F|+\mathbb{E}[|F_\star|]<\infty.
\end{equation}
\newline
\noindent{\emph{$\stackrel{\blacktriangleright}{}$ Step~3. High probability bound on $(F(\theta_n))_{n\geq0}$.}}\newline
Reusing the constant $C_\delta$ from the right hand side of \eqref{ineq:F:L1}, by Markov's inequality and \eqref{eq:A:delta},
\begin{equation*}
\begin{aligned}
\mathbb{P}\Big(\inf_{0\leq k\leq n}|F(\theta_k)-F_\star|
>\frac3\delta C_\delta\Big(\sum_{k=1}^{n+1}\gamma_k\Big)^{-1\wedge\frac1{2\beta_\delta}},&\tau_1<n_\delta,\mathcal{A}\subset\mathcal{L}_{\ell_\delta}\Big)\\
&\leq\frac{\delta\mathbb{E}[\inf_{0\leq k\leq n}|F(\theta_k)-F_\star|\mathbbm1_{\mathbb{A}_\delta}]}{3C_\delta\big(\sum_{k=1}^{n+1}\gamma_k\big)^{-1\wedge\frac1{2\beta_\delta}}}
\leq\frac\delta3,
\quad n\geq0.
\end{aligned}
\end{equation*}
Therefore, recalling \eqref{eq:n:delta},
\begin{equation*}
\begin{aligned}
\mathbb{P}\Big(\inf_{0\leq k\leq n}&|F(\theta_k)-F_\star|
\leq\frac3\delta C_\delta\Big(\sum_{k=1}^{n+1}\gamma_k\Big)^{-1\wedge\frac1{2\beta_\delta}}\Big)\\
&\geq\mathbb{P}\Big(\inf_{0\leq k\leq n}|F(\theta_k)-F_\star|\leq\frac3\delta C_\delta\Big(\sum_{k=1}^{n+1}\gamma_k\Big)^{-1\wedge\frac1{2\beta_\delta}},\tau_1<n_\delta,\mathcal{A}\subset\mathcal{L}_{\ell_\delta}\Big)
\geq1-\delta,
\quad n\geq0.
\end{aligned}
\end{equation*}
\newline
\noindent{\bf(\ref{thm:F-F*:i})\hyperref[thm:F-F*:i:b]{b}.}\
The proof is identical to the previous one, once setting $\mu_\delta$ to $1$.
\\

\noindent{\bf(\ref{thm:F-F*:ii})}\
Reusing notation from the previous proof, let $\omega\in\Lambda$ and $\theta_\star\in\mathcal{A}(\omega)$.
According to Proposition~\ref{prp:Lojasiewicz}(\ref{prp:Lojasiewicz:ii}), there exists $\beta\in(0,1]$ independent of $\omega$ and an open neighborhood $\mathcal{V}(\theta_\star)$ of $\theta_\star$ such that $|F(\theta)-F(\theta_\star)|^\beta\leq\|\nabla F(\theta)\|$, $\theta\in\mathcal{V}(\theta_\star)$.
Define $\mathcal{V}_\star(\omega)\coloneqq\cup_{\theta_\star\in\mathcal{A}(\omega)}\mathcal{V}(\theta_\star)$.
Given that $\mathcal{A}(\omega)\subsetneq\mathcal{V}_\star(\omega)$, one has $\mathcal{V}_\star(\omega)\neq\varnothing$.
Recalling \eqref{eq:stop:0}, consider
\begin{equation}\label{eq:stop}
\tau_2\coloneqq\tau_0\vee\inf{\{n\geq0\colon\forall k\geq n,\theta_k\in\mathcal{V}_\star\}},
\end{equation}
with the convention $\inf{\varnothing}=\infty$.
Since, according to Lemma~\ref{lmm:aux}, $(\theta_n)_{n\geq0}$ is $\Pas$ bounded, one has $\tau_2<\infty$ $\Pas$.

Let $\delta\in(0,1)$. By the reverse Fatou lemma,
\begin{equation*}
\limsup_{n\to\infty}\mathbb{P}(\tau_2\geq n)
\leq\mathbb{P}\big(\limsup_{n\to\infty}{\{\tau_2\geq n\}}\big)
%=\mathbb{P}(\tau=\infty)
=0.
\end{equation*}
Therefore, there exists $n_\delta\geq0$ such that
\begin{equation}
\label{eq:n:delta:bis}
\mathbb{P}(\tau_2<n_\delta)\geq1-\frac\delta2.
\end{equation}

Take
\begin{equation}\label{eq:B:delta}
\mathbb{B}_\delta\coloneqq\{\tau_2<n_\delta\}.
\end{equation}
Performing the substitutions $\mathbb{A}_\delta\gets\mathbb{B}_\delta$, $\beta_\delta\gets\beta$ and $\mu_\delta\gets1$ in Step~2 of the preceding proof and applying the same lines of reasoning therein line by line yields
\begin{equation}
\mathbb{E}\big[\inf_{0\leq k\leq n}|F(\theta_k)-F_\star|\mathbbm1_{\mathbb{B}_\delta}\big]
\leq C_\delta\Big(\sum_{k=1}^{n+1}\gamma_k\Big)^{-1\wedge\frac1{2\beta}},
\quad n\geq0.
\label{ineq:F:L1:bis}
\end{equation}
Fixing the $C_\delta$ above, via  Markov's inequality, \eqref{ineq:F:L1:bis}, \eqref{eq:B:delta} and \eqref{eq:n:delta:bis},
\begin{equation}\label{eq:high:probability}
\begin{aligned}
\mathbb{P}\Big(\inf_{0\leq k\leq n}&|F(\theta_k)-F_\star|
\leq\frac2\delta C_\delta\Big(\sum_{k=1}^{n+1}\gamma_k\Big)^{-1\wedge\frac1{2\beta}}\Big)\\
&\geq\mathbb{P}\Big(\inf_{0\leq k\leq n}|F(\theta_k)-F_\star|\leq\frac2\delta C_\delta\Big(\sum_{k=1}^{n+1}\gamma_k\Big)^{-1\wedge\frac1{2\beta}},\tau_2<n_\delta\Big)
\geq1-\delta,
\quad n\geq0.
\end{aligned}
\end{equation}
\newline
\noindent{\bf(\ref{thm:F-F*:iii})}\
It ensues from Proposition~\ref{prp:Lojasiewicz}(\ref{prp:Lojasiewicz:iii}) and Theorem~\ref{thm:cv}(\ref{thm:cv:i}) that $F_\star=\inf F$ $\Pas$.
The result follows from \eqref{eq:framework:3} and Lemma~\ref{lmm:grad:F:n}(\ref{lmm:grad:F:n:ii}).
\end{proof}

Our proofs for recovering the convergence speeds of Theorem~\ref{thm:F-F*:special:cases} are novel to literature and build upon two key ideas.
First, the quasisupermartingale property of the function values is decomposed into a drift and a martingale terms (\ref{eq:helper:2},\ref{eq:helper:rec}).
Their contributions are reflected in the exponents of the convergence rates displaying minima between multiple quantities.
Second, a new concept of final hitting time (\ref{eq:stop:1},\ref{eq:stop}) is leveraged, which, although not a stopping time, allows isolating the asymptotic regime in order to account for the {\L}ojasiewicz property $\mathcal{H}$\ref{asp:Lojasiewicz}.
The interplay between the martingale term and the final hitting time is accounted for in the convergence speed (\ref{eq:helper:5},\ref{eq:helper:terms:3}).
In the case of $\beta>\frac12$, the convergence rate exponent writes as a minimum of three quantities: $\frac1{2\beta-1}$, $\alpha\rho-1$ and $\frac{\rho-1}2$. The first two stem from the drift contribution and the last one from the martingale contribution. The double contribution from the drift is due to the correction of order disparity between the quantities $F(\theta_n)-F_\star=\mathrm{O}(\|\nabla F(\theta_n)\|^{1/\beta})$ (as per $\mathcal{H}$\ref{asp:Lojasiewicz}) and $\|\nabla F(\theta_n)\|^2$ resulting from the drift-martingale decomposition \eqref{eq:helper:end}.

\begin{proof}[Proof of Theorem~\ref{thm:F-F*:special:cases}]
\noindent Let $\delta\in(0,1)$.\\

\noindent{\bf(\ref{thm:F-F*:special:cases:i})\hyperref[thm:F-F*:special:cases:i:a]{a}.}\
We reposition ourselves in the framework of Theorem~\ref{thm:F-F*}(\ref{thm:F-F*:i})\hyperref[thm:F-F*:i:a]{a}'s proof.
Define
\begin{equation}\label{eq:A:delta:n}
\mathbb{A}^n_\delta
\coloneqq\mathbb{A}_\delta\cap\{F(\theta_n)>F_\star\}
=\{\tau_1<n_\delta\}\cap\{\mathcal{A}\subset\mathcal{L}_{\ell_\delta}\}\cap\{F(\theta_n)>F_\star\},
\quad n>n_\delta.
\end{equation}
Let $n_\delta<k\leq n$. Via \eqref{eq:F(theta:n)<F(theta:n-1)}, $\mathcal{H}$\ref{asp:f:abc}(\ref{asp:f:abc:ii}), \eqref{eq:Delta} and \eqref{eq:sup:grad:f:1+alpha},
\begin{equation}\label{eq:helper:init}
\mathbb{E}[(F(\theta_k)-F_\star)\mathbbm1_{\mathbb{A}^n_\delta}]
\leq\mathbb{E}[(F(\theta_{k-1})-F_\star)\mathbbm1_{\mathbb{A}^n_\delta}]
-\kappa\gamma_k\mathbb{E}\big[\|\nabla F(\theta_{k-1})\|^2\mathbbm1_{\mathbb{A}^n_\delta}\big]
+\gamma_k\mathbb{E}\big[\Delta_k\mathbbm1_{\mathbb{A}^n_\delta}\big]
+C\gamma_k^{1+\alpha}.
\end{equation}
\newline
\noindent{\emph{$\stackrel{\blacktriangleright}{}$ Case~1. $\beta_\delta\in\big(0,\frac12\big]$.}}\newline
\noindent{\emph{$\stackrel{\blacktriangleright}{}$ Step~1.1. $L^1(\mathbb{P})$ bound on $(F(\theta_n))_{n\geq0}$.}}\newline
\eqref{eq:helper:init} rewrites
\begin{equation}\label{eq:helper:0}
\begin{aligned}
\mathbb{E}[(F(\theta_k)-F_\star)\mathbbm1_{\mathbb{A}^n_\delta}]
&\leq(1-\kappa\mu_\delta\gamma_k)\mathbb{E}[(F(\theta_{k-1})-F_\star)\mathbbm1_{\mathbb{A}^n_\delta}]\\
&\quad+\kappa\gamma_k\mathbb{E}\big[\big(\mu_\delta(F(\theta_{k-1})-F_\star)-\|\nabla F(\theta_{k-1})\|^2\big)\mathbbm1_{\mathbb{A}^n_\delta}\big]
+\gamma_k\mathbb{E}\big[\Delta_k\mathbbm1_{\mathbb{A}^n_\delta}\big]
+C\gamma_k^{1+\alpha}.
\end{aligned}
\end{equation}
Owing to \eqref{eq:stop:0} and \eqref{eq:stop:1}, given that $k-1\geq n_\delta$, one has $\|\nabla F(\theta_{k-1})\|\leq1$ on $\mathbb{A}^n_\delta$. Thus, since $\beta_\delta\in\big(0,\frac12\big]$, by \eqref{eq:mu:delta} and \eqref{eq:Lojasiewicz:delta},
\begin{equation}\label{eq:helper:1}
\mu_\delta(F(\theta_{k-1})-F_\star)
\leq\|\nabla F(\theta_{k-1})\|^\frac1{\beta_\delta}
\leq\|\nabla F(\theta_{k-1})\|^2
\quad\text{on $\mathbb{A}^n_\delta$.}
\end{equation}
Hence, combining \eqref{eq:helper:0} and \eqref{eq:helper:1},
\begin{equation}\label{eq:helper:2}
\mathbb{E}[(F(\theta_k)-F_\star)\mathbbm1_{\mathbb{A}^n_\delta}]
\leq(1-\kappa\mu_\delta\gamma_k)\mathbb{E}[(F(\theta_{k-1})-F_\star)\mathbbm1_{\mathbb{A}^n_\delta}]
+\gamma_k\mathbb{E}\big[\Delta_k\mathbbm1_{\mathbb{A}^n_\delta}\big]
+C\gamma_k^{1+\alpha}.
\end{equation}

Note that, by \eqref{eq:A:delta:n}, $(F(\theta_n)-F_\star)\mathbbm1_{\mathbb{A}^n_\delta}=(F(\theta_n)-F_\star)^+\mathbbm1_{\mathbb{A}_\delta}$. Besides, since $\gamma_n\to0$ as $n\to\infty$, there exists $n_\delta'\geq0$ such that $1-\kappa\mu_\delta\gamma_n>0$, $n\geq n_\delta'$. Thus, via Lemma~\ref{lmm:rec}, unrolling the inequality \eqref{eq:helper:2} over $n_\delta\vee n_\delta'\eqqcolon p_\delta<k\leq n$ yields
\begin{equation*}
\begin{aligned}
&\mathbb{E}[(F(\theta_n)-F_\star)^+\mathbbm1_{\mathbb{A}_\delta}]
\leq\mathbb{E}[|F(\theta_{p_\delta})-F_\star|]\prod_{k=p_\delta+1}^n(1-\kappa\mu_\delta\gamma_k)\\
&\quad+\sum_{k=p_\delta+1}^n\gamma_k\mathbb{E}\big[\Delta_k\mathbbm1_{\mathbb{A}^n_\delta}\big]\prod_{j=k+1}^n(1-\kappa\mu_\delta\gamma_j)
+C\sum_{k=p_\delta+1}^n\gamma_k^{1+\alpha}\prod_{j=k+1}^n(1-\kappa\mu_\delta\gamma_j),
\end{aligned}
\end{equation*}
hence, using \eqref{eq:sup:E:F-F*} and the inequality $1+x\leq\mathrm{e}^x$, $x\in\mathbb{R}$,
\begin{equation}\label{eq:helper:3}
\begin{aligned}
&\mathbb{E}[(F(\theta_n)-F_\star)^+\mathbbm1_{\mathbb{A}_\delta}]
\leq\big(\sup_{k\geq0}\mathbb{E}[|F(\theta_k)-F_\star|]\big)\exp\Big(\kappa\mu_\delta\sum_{k=1}^{p_\delta}\gamma_k\Big)\exp\Big(-\kappa\mu_\delta\sum_{k=1}^n\gamma_k\Big)\\
&\quad+\sum_{k=p_\delta+1}^n\gamma_k\mathbb{E}\big[\Delta_k\mathbbm1_{\mathbb{A}^n_\delta}\big]\prod_{j=k+1}^n(1-\kappa\mu_\delta\gamma_j)
+C\sum_{k=1}^n\gamma_k^{1+\alpha}\exp\Big(-\kappa\mu_\delta\sum_{j=k+1}^n\gamma_j\Big),
\end{aligned}
\end{equation}
with the conventions $\prod_\varnothing=1$ and $\sum_\varnothing=0$.

Assuming that $\kappa\mu_\delta\gamma_\star>\alpha$ in the case $\mathcal{H}$\ref{asp:gamma:misc}(\ref{asp:gamma:misc:i})\hyperref[asp:gamma:misc:ib]{b}, Lemma~\ref{lmm:gamma}(\ref{lmm:gamma:1}) gives
\begin{equation}\label{eq:helper:4}
\exp\Big(-\kappa\mu_\delta\sum_{k=1}^n\gamma_k\Big)+\sum_{k=1}^n\gamma_k^{1+\alpha}\exp\Big(-\kappa\mu_\delta\sum_{j=k+1}^n\gamma_j\Big)\leq C\gamma_n^\alpha.
\end{equation}
Furthermore, using Jensen's inequality and that $(\Delta_n)_{n\geq1}$ are martingale increments,
\begin{equation*}
\begin{aligned}
\sum_{k=p_\delta+1}^n\gamma_k\mathbb{E}\big[&\Delta_k\mathbbm1_{\mathbb{A}^n_\delta}\big]\prod_{j=k+1}^n(1-\kappa\mu_\delta\gamma_j)\\
&\leq\mathbb{E}\Big[\Big|\sum_{k=p_\delta+1}^n\gamma_k\Delta_k\prod_{j=k+1}^n(1-\kappa\mu_\delta\gamma_j)\Big|^2\Big]^\frac12
=\Big(\sum_{k=p_\delta+1}^n\gamma_k^2\mathbb{E}\big[\Delta_k^2\big]\prod_{j=k+1}^n(1-\kappa\mu_\delta\gamma_j)^2\Big)^\frac12.
\end{aligned}
\end{equation*}
Thus, supposing $2\kappa\mu_\delta\gamma_\star>1$ in the case of $\mathcal{H}$\ref{asp:gamma:misc}(\ref{asp:gamma:misc:i})\hyperref[asp:gamma:misc:ib]{b}, by \eqref{eq:E[Delta^2]<} and Lemma~\ref{lmm:gamma}(\ref{lmm:gamma:1})\hyperref[lmm:gamma:1:i]{a},
\begin{equation}\label{eq:helper:5}
\sum_{k=p_\delta+1}^n\gamma_k\mathbb{E}\big[\Delta_k\mathbbm1_{\mathbb{A}^n_\delta}\big]\prod_{j=k+1}^n(1-\kappa\mu_\delta\gamma_j)
\leq C\Big(\sum_{k=1}^n\gamma_k^2\exp\Big(-2\kappa\mu_\delta\sum_{j=k+1}^n\gamma_j\Big)\Big)^\frac12
\leq C_\delta\gamma_n^\frac12.
\end{equation}

Gathering \eqref{eq:helper:3}--\eqref{eq:helper:5} therefore yields
\begin{equation}\label{eq:E[F+]<:0:1/2}
\mathbb{E}[(F(\theta_n)-F_\star)^+\mathbbm1_{\mathbb{A}_\delta}]
\leq C_\delta\gamma_n^{\alpha\wedge\frac12}.
\end{equation}
By \eqref{eq:E[F+]<:0:1/2} and \eqref{eq:sup:E:F-F*}, up to a modification of $C_\delta$, for $n\geq0$,
\begin{equation}\label{eq:helper:6}
\mathbb{E}[(F(\theta_n)-F_\star)^+\mathbbm1_{\mathbb{A}_\delta}]
\leq\big(\sup_{k\geq0}\mathbb{E}[|F(\theta_k)-F_\star|]\sup_{1\leq k\leq n_\delta}\gamma_k^{-\alpha\wedge\frac12}+C_\delta\big)\gamma_n^{\alpha\wedge\frac12}
\leq C_\delta\gamma_n^{\alpha\wedge\frac12}.
\end{equation}
\newline
\noindent{\emph{$\stackrel{\blacktriangleright}{}$ Step~1.2. High probability bound on $(F(\theta_n))_{n\geq0}$.}}\label{step:1.2}\newline
Let $n\geq0$.
Reusing the constant $C_\delta$ from the right hand side of \eqref{eq:helper:6}, by Markov's inequality and \eqref{eq:A:delta},
\begin{equation*}
\mathbb{P}\Big((F(\theta_n)-F_\star)^+>\frac3\delta C_\delta\gamma_n^{\alpha\wedge\frac12},\tau_1<n_\delta,\mathcal{A}\subset\mathcal{L}_{\ell_\delta}\Big)
\leq\frac{\delta\mathbb{E}[(F(\theta_n)-F_\star)^+\mathbbm1_{\mathbb{A}_\delta}]}{3C_\delta\gamma_n^{\alpha\wedge\frac12}}
\leq\frac\delta3.
\end{equation*}
Thus, via \eqref{eq:n:delta},
\begin{equation*}
\mathbb{P}\Big((F(\theta_n)-F_\star)^+\leq\frac3\delta C_\delta\gamma_n^{\alpha\wedge\frac12}\Big)
\geq\mathbb{P}\Big((F(\theta_n)-F_\star)^+\leq\frac3\delta C_\delta\gamma_n^{\alpha\wedge\frac12},\tau_1<n_\delta,\mathcal{A}\subset\mathcal{L}_{\ell_\delta}\Big)
\geq1-\delta.
\end{equation*}
\newline
\noindent{\emph{$\stackrel{\blacktriangleright}{}$ Case~2. $\beta_\delta\in\big(\frac12,1\big)$.}}\newline
\noindent{\emph{$\stackrel{\blacktriangleright}{}$ Step~2.1. $L^1(\mathbb{P})$ bound on $(F(\theta_n))_{n\geq0}$.}}\newline
Reusing \eqref{eq:Delta} and $\mathcal{H}$\ref{asp:f:abc}(\ref{asp:f:abc:ii}), we have
\begin{equation}\label{eq:helper:start}
-\gamma_k\mathbb{E}\big[g_k(\theta_{k-1})^\top\nabla F(\theta_{k-1})\mathbbm1_{\mathbb{A}^n_\delta}\big]
\leq\gamma_k\mathbb{E}\big[\Delta_k\mathbbm1_{\mathbb{A}^n_\delta}\big]
-\kappa\gamma_k\mathbb{E}\big[\|\nabla F(\theta_{k-1})\|^2\mathbbm1_{\mathbb{A}^n_\delta}\big].
\end{equation}
Define the deterministic event
\begin{equation*}
\mathfrak{S}\coloneqq\Big\{\mathbb{E}\big[\|\nabla F(\theta_{k-1})\|^2\mathbbm1_{\mathbb{A}^n_\delta}\big]^\frac12\geq\nu\Big(\sum_{j=1}^k\gamma_j\Big)^{-\frac{\beta_\delta}{2\beta_\delta-1}}\Big\},
\end{equation*}
where $\nu>0$ will be determined ulteriorly.
Recalling that $2\beta_\delta>1$, by \eqref{eq:Lojasiewicz:delta} and Jensen's inequality,
\begin{equation}\label{eq:helper:end}
\begin{aligned}
\mathbb{E}\big[\|\nabla F(\theta_{k-1})\|^2\mathbbm1_{\mathbb{A}^n_\delta}\big]
&\geq\mathbb{E}\big[\|\nabla F(\theta_{k-1})\|^2\mathbbm1_{\mathbb{A}^n_\delta}\big]^\frac{2\beta_\delta-1}{2\beta_\delta}
\times\mathbb{E}\big[\|\nabla F(\theta_{k-1})\|^2\mathbbm1_{\mathbb{A}^n_\delta}\big]^\frac1{2\beta_\delta}
\times\mathbbm1_{\mathfrak{S}}\\
&\geq\frac{\nu^\frac{2\beta_\delta-1}{\beta_\delta}}{\sum_{j=1}^k\gamma_j}\mathbb{E}\big[\|\nabla F(\theta_{k-1})\|^2\mathbbm1_{\mathbb{A}^n_\delta}\big]^\frac1{2\beta_\delta}\mathbbm1_{\mathfrak{S}}\\
&=\frac{\nu^\frac{2\beta_\delta-1}{\beta_\delta}}{\sum_{j=1}^k\gamma_j}\mathbb{E}\big[\|\nabla F(\theta_{k-1})\|^2\mathbbm1_{\mathbb{A}^n_\delta}\big]^\frac1{2\beta_\delta}
-\frac{\nu^\frac{2\beta_\delta-1}{\beta_\delta}}{\sum_{j=1}^k\gamma_j}\mathbb{E}\big[\|\nabla F(\theta_{k-1})\|^2\mathbbm1_{\mathbb{A}^n_\delta}\big]^\frac1{2\beta_\delta}\mathbbm1_{\overline{\mathfrak{S}}}\\
&\geq\frac{\nu^\frac{2\beta_\delta-1}{\beta_\delta}\mu_\delta}{\sum_{j=1}^k\gamma_j}\mathbb{E}\big[|F(\theta_{k-1})-F_\star|^{2\beta_\delta}\mathbbm1_{\mathbb{A}^n_\delta}\big]^\frac1{2\beta_\delta}
-\frac{\nu^2}{\big(\sum_{j=1}^k\gamma_j\big)^{\frac{2\beta_\delta}{2\beta_\delta-1}}}\\
&\geq\frac{\nu^\frac{2\beta_\delta-1}{\beta_\delta}\mu_\delta}{\sum_{j=1}^k\gamma_j}\mathbb{E}[(F(\theta_{k-1})-F_\star)\mathbbm1_{\mathbb{A}^n_\delta}]
-\frac{\nu^2}{\big(\sum_{j=1}^k\gamma_j\big)^{\frac{2\beta_\delta}{2\beta_\delta-1}}}.
\end{aligned}
\end{equation}
Define
\begin{equation}\label{eq:bar:mu:delta}
\bar\mu_{\nu,\delta}\coloneqq\kappa\nu^\frac{2\beta_\delta-1}{\beta_\delta}\mu_\delta.
\end{equation}
Therefore, combining \eqref{eq:helper:init}, \eqref{eq:helper:start} and \eqref{eq:helper:end},
\begin{equation}\label{eq:helper:rec}
\begin{aligned}
\mathbb{E}[(F(\theta_k)-F_\star)\mathbbm1_{\mathbb{A}^n_\delta}]
&\leq\Big(1-\frac{\bar\mu_{\nu,\delta}\gamma_k}{\sum_{j=1}^k\gamma_j}\Big)\mathbb{E}[(F(\theta_{k-1})-F_\star)\mathbbm1_{\mathbb{A}^n_\delta}]\\
&\quad+\frac{C_{\nu,\delta}\gamma_k}{\big(\sum_{j=1}^k\gamma_j\big)^{\frac{2\beta_\delta}{2\beta_\delta-1}}}+\gamma_k\mathbb{E}\big[\Delta_k\mathbbm1_{\mathbb{A}^n_\delta}\big]
+C\gamma_k^{1+\alpha}.
\end{aligned}
\end{equation}

Recall that $(F(\theta_n)-F_\star)\mathbbm1_{\mathbb{A}^n_\delta}=(F(\theta_n)-F_\star)^+\mathbbm1_{\mathbb{A}_\delta}$.
Moreover, given that $\frac{\gamma_n}{\sum_{k=1}^n\gamma_k}\to0$ as $n\to\infty$ by $\mathcal{H}$\ref{asp:gamma}, for some $n_\delta'\geq0$, one has $1-\frac{\bar\mu_{\nu,\delta}\gamma_n}{\sum_{k=1}^n\gamma_k}>0$, $n\geq n_\delta'$. Thus, by Lemma~\ref{lmm:rec}, iterating \eqref{eq:helper:rec} over $n_\delta\vee n_\delta'\eqqcolon p_\delta<k\leq n$ yields
\begin{equation*}
\begin{aligned}
&\mathbb{E}[(F(\theta_n)-F_\star)^+\mathbbm1_{\mathbb{A}_\delta}]
\leq\mathbb{E}[|F(\theta_{p_\delta})-F_\star|]\prod_{k=p_\delta+1}^n\Big(1-\frac{\bar\mu_{\nu,\delta}\gamma_k}{\sum_{j=1}^k\gamma_j}\Big)\\
&\quad+C_\delta\sum_{k=p_\delta+1}^n\frac{\gamma_k}{\big(\sum_{j=1}^k\gamma_j\big)^{\frac{2\beta_\delta}{2\beta_\delta-1}}}\prod_{j=k+1}^n\Big(1-\frac{\bar\mu_{\nu,\delta}\gamma_j}{\sum_{i=1}^j\gamma_i}\Big)
+\sum_{k=p_\delta+1}^n\gamma_k\mathbb{E}\big[\Delta_k\mathbbm1_{\mathbb{A}^n_\delta}\big]\prod_{j=k+1}^n\Big(1-\frac{\bar\mu_{\nu,\delta}\gamma_j}{\sum_{i=1}^j\gamma_i}\Big)\\
&\quad+C\sum_{k=p_\delta+1}^n\gamma_k^{1+\alpha}\prod_{j=k+1}^n\Big(1-\frac{\bar\mu_{\nu,\delta}\gamma_j}{\sum_{i=1}^j\gamma_i}\Big).
\end{aligned}
\end{equation*}
Hence, utilizing \eqref{eq:sup:E:F-F*} and the inequality $1+x\leq\mathrm{e}^x$, $x\in\mathbb{R}$,
\begin{equation}\label{eq:helper:unrec}
\begin{aligned}
\mathbb{E}[&(F(\theta_n)-F_\star)^+\mathbbm1_{\mathbb{A}_\delta}]
\leq\big(\sup_{k\geq0}\mathbb{E}[|F(\theta_k)-F_\star|]\big)\exp\Big(\bar\mu_{\nu,\delta}\sum_{k=1}^{p_\delta}\frac{\gamma_k}{\sum_{j=1}^k\gamma_j}\Big)\exp\Big(-\bar\mu_{\nu,\delta}\sum_{k=1}^n\frac{\gamma_k}{\sum_{j=1}^k\gamma_j}\Big)\\
&+C\sum_{k=1}^n\frac{\gamma_k}{\big(\sum_{j=1}^k\gamma_j\big)^{\frac{2\beta_\delta}{2\beta_\delta-1}}}\exp\Big(-\bar\mu_{\nu,\delta}\sum_{j=k+1}^n\frac{\gamma_j}{\sum_{i=1}^j\gamma_i}\Big)
+\sum_{k=p_\delta+1}^n\gamma_k\mathbb{E}\big[\Delta_k\mathbbm1_{\mathbb{A}^n_\delta}\big]\prod_{j=k+1}^n\Big(1-\frac{\bar\mu_{\nu,\delta}\gamma_j}{\sum_{i=1}^j\gamma_i}\Big)\\
&+C\sum_{k=1}^n\gamma_k^{1+\alpha}\exp\Big(-\bar\mu_{\nu,\delta}\sum_{j=k+1}^n\frac{\gamma_j}{\sum_{i=1}^j\gamma_i}\Big),
\end{aligned}
\end{equation}
with the conventions $\prod_\varnothing=1$ and $\sum_\varnothing=0$.

We set $\nu$ to a value $\nu_\delta$ that is large enough so that, recalling~\eqref{eq:bar:mu:delta},
\begin{equation}\label{eq:mu>beta}
\bar\mu_\delta\coloneqq\bar\mu_{\nu_\delta,\delta}=\kappa\nu_\delta^\frac{2\beta_\delta-1}{\beta_\delta}\mu_\delta>\frac1{2\beta_\delta-1}\vee(\alpha\rho-1)\vee\frac{\rho-1}2>0.
\end{equation}
Under $\mathcal{H}$\ref{asp:gamma:misc}(\ref{asp:gamma:misc:ii},\ref{asp:gamma:misc:iii}), using~\eqref{eq:mu>beta}, Lemma~\ref{lmm:gamma}(\ref{lmm:gamma:2}) gives
\begin{equation}\label{eq:helper:terms:1}
\exp\Big(-\bar\mu_\delta\sum_{k=1}^n\frac{\gamma_k}{\sum_{j=1}^k\gamma_j}\Big)
+\sum_{k=1}^n\frac{\gamma_k}{\big(\sum_{j=1}^k\gamma_j\big)^{\frac{2\beta_\delta}{2\beta_\delta-1}}}\exp\Big(-\bar\mu_\delta\sum_{j=k+1}^n\frac{\gamma_j}{\sum_{i=1}^j\gamma_i}\Big)
\leq\frac{C_\delta}{\big(\sum_{k=1}^n\gamma_k\big)^{\frac1{2\beta_\delta-1}}},
\end{equation}
and also
\begin{equation}\label{eq:helper:terms:2}
\begin{aligned}
\sum_{k=1}^n\gamma_k^{1+\alpha}\exp\Big(&-\bar\mu_\delta\sum_{j=k+1}^n\frac{\gamma_j}{\sum_{i=1}^j\gamma_i}\Big)\\
&\leq\sum_{k=1}^n\frac{\gamma_k}{\big(\sum_{j=1}^k\gamma_j\big)^{\alpha\rho}}\exp\Big(-\bar\mu_\delta\sum_{j=k+1}^n\frac{\gamma_j}{\sum_{i=1}^j\gamma_i}\Big)
\leq\frac{C_\delta}{\big(\sum_{k=1}^n\gamma_k\big)^{\alpha\rho-1}}.
\end{aligned}
\end{equation}
Furthermore, using Jensen's inequality, that $(\Delta_n)_{n\geq1}$ are martingale increments, \eqref{eq:E[Delta^2]<}, $\mathcal{H}$\ref{asp:gamma:misc}(\ref{asp:gamma:misc:ii},\ref{asp:gamma:misc:iii}) and Lemma~\ref{lmm:gamma}(\ref{lmm:gamma:2})\hyperref[lmm:gamma:2:i]{a},
\begin{equation}\label{eq:helper:terms:3}
\begin{aligned}
\sum_{k=p_\delta+1}^n\gamma_k\mathbb{E}\big[&\Delta_k\mathbbm1_{\mathbb{A}^n_\delta}\big]\prod_{j=k+1}^n\Big(1-\frac{\bar\mu_\delta\gamma_j}{\sum_{i=1}^j\gamma_i}\Big)
\leq\Big(\sum_{k=p_\delta+1}^n\gamma_k^2\mathbb{E}\big[\Delta_k^2\big]\prod_{j=k+1}^n\Big(1-\frac{\bar\mu_\delta\gamma_j}{\sum_{i=1}^j\gamma_i}\Big)^2\Big)^\frac12\\
&\leq C\bigg(\sum_{k=1}^n\frac{\gamma_k}{\big(\sum_{j=1}^k\gamma_j\big)^\rho}\exp\Big(-2\bar\mu_\delta\sum_{j=k+1}^n\frac{\gamma_j}{\sum_{i=1}^j\gamma_i}\Big)\bigg)^\frac12
\leq\frac{C_\delta}{\big(\sum_{k=1}^n\gamma_k\big)^\frac{\rho-1}2}.
\end{aligned}
\end{equation}
Consequently, combining \eqref{eq:helper:unrec}--\eqref{eq:helper:terms:3},
\begin{equation*}
\mathbb{E}[(F(\theta_n)-F_\star)^+\mathbbm1_{\mathbb{A}_\delta}]
\leq\frac{C_\delta}{\big(\sum_{k=1}^n\gamma_k\big)^{\frac1{2\beta_\delta-1}\wedge(\alpha\rho-1)\wedge\frac{\rho-1}2}}.
\end{equation*}
Let
\begin{equation}\label{eq:r:delta}
r_\delta\coloneqq\frac1{2\beta_\delta-1}\wedge(\alpha\rho-1)\wedge\frac{\rho-1}2>0,
\quad\text{if $\beta_\delta\in\Big(\frac12,1\Big)$.}
\end{equation}
Recalling \eqref{eq:sup:E:F-F*}, up to a redefinition of $C_\delta$, for all $n\geq0$,
\begin{equation}\label{eq:E[F+]<:1/2:1}
\mathbb{E}[(F(\theta_n)-F_\star)^+\mathbbm1_{\mathbb{A}_\delta}]
\leq\frac{\sup_{k\geq0}\mathbb{E}[|F(\theta_k)-F_\star|]\big(\sum_{k=1}^{p_\delta}\gamma_k\big)^{r_\delta}+C_\delta}{\big(\sum_{k=1}^n\gamma_k\big)^{r_\delta}}\leq\frac{C_\delta}{\big(\sum_{k=1}^n\gamma_k\big)^{r_\delta}}.
\end{equation}
\newline
\noindent{\emph{$\stackrel{\blacktriangleright}{}$ Step~2.2. High probability bound on $(F(\theta_n))_{n\geq0}$.}}\label{step:2.2}\newline
Reusing $C_\delta$ from the right hand side of \eqref{eq:E[F+]<:1/2:1}, by Markov's inequality and \eqref{eq:A:delta}, for all $n\geq0$,
\begin{equation*}
\mathbb{P}\Big((F(\theta_n)-F_\star)^+>\frac3\delta C_\delta\Big(\sum_{k=1}^n\gamma_k\Big)^{r_\delta},\tau_1<n_\delta,\mathcal{A}\subset\mathcal{L}_{\ell_\delta}\Big)
\leq\frac{\delta\mathbb{E}[(F(\theta_n)-F_\star)^+\mathbbm1_{\mathbb{A}_\delta}]}{3C_\delta\big(\sum_{k=1}^n\gamma_k\big)^{r_\delta}}
\leq\frac\delta3.
\end{equation*}
All in all, via \eqref{eq:n:delta},
\begin{equation*}
\begin{aligned}
\mathbb{P}\Big((F(\theta_n)-F_\star)^+&\leq\frac3\delta C_\delta\Big(\sum_{k=1}^n\gamma_k\Big)^{r_\delta}\Big)\\
&\geq\mathbb{P}\Big((F(\theta_n)-F_\star)^+\leq\frac3\delta C_\delta\Big(\sum_{k=1}^n\gamma_k\Big)^{r_\delta},\tau_1<n_\delta,\mathcal{A}\subset\mathcal{L}_{\ell_\delta}\Big)
\geq1-\delta.
\end{aligned}
\end{equation*}
\newline
\noindent{\bf(\ref{thm:F-F*:special:cases:i})\hyperref[thm:F-F*:special:cases:i:b]{b}.}\
The proof is similar to the prior one, once $\mu_\delta$ is set to $1$.
\\

\noindent{\bf(\ref{thm:F-F*:special:cases:ii})}\
We put ourselves in the framework of the proof of Theorem~\ref{thm:F-F*}(\ref{thm:F-F*:ii}). Define
\begin{equation}\label{eq:B:delta:n}
\mathbb{B}_\delta^n
\coloneqq\mathbb{B}_\delta\cap\{F(\theta_n)>F_\star\}
=\{\tau_2<n_\delta\}\cap\{F(\theta_n)>F_\star\},
\quad n>n_\delta.
\end{equation}
Swapping $\beta_\delta\gets\beta$, $\mu_\delta\gets1$ and $\mathbb{A}^n_\delta\gets\mathbb{B}^n_\delta$ and setting $\kappa\gamma_\star>\alpha\vee\frac12$ under $\mathcal{H}$\ref{asp:gamma:misc}(\ref{asp:gamma:misc:i})\hyperref[asp:gamma:misc:ib]{b}, the previous proof line by line shows that, for all $n\geq0$,
\begin{equation*}
\mathbb{E}[(F(\theta_n)-F_\star)^+\mathbbm1_{\mathbb{B}_\delta}]
\leq C_\delta
\begin{cases}
\gamma_n^{\alpha\wedge\frac12},
&\text{if $\beta\in(0,\frac12]$,}\\
\big(\sum_{k=1}^n\gamma_k\big)^{-r},
&\text{if $\beta\in(\frac12,1]$,}
\end{cases}
\end{equation*}
where
\begin{equation}\label{eq:r}
r\coloneqq\frac1{2\beta-1}\wedge(\alpha\rho-1)\wedge\frac{\rho-1}2>0,
\quad\text{if $\beta\in\Big(\frac12,1\Big]$.}
\end{equation}
The high probability bound is then retrieved via Markov's inequality similarly to \eqref{eq:high:probability}.
\\

\noindent{\bf(\ref{thm:F-F*:special:cases:iii})}\
According to Proposition~\ref{prp:Lojasiewicz}(\ref{prp:Lojasiewicz:iii}) and Theorem~\ref{thm:cv}(\ref{thm:cv:i}), one has $F_\star=\inf F$ $\Pas$.
Besides, there exists $(\beta,\zeta)\in\big[\frac\alpha{1+\alpha},1\big)\times\mathbb{R}^*_+$ such that \eqref{eq:framework:3} holds.

Let $n>0$ and set
\begin{equation}\label{eq:mu}
\mu=\zeta^{-\frac1\beta}.
\end{equation}
Taking the expectation in \eqref{eq:quasi:super:martingale} and using \eqref{eq:sup:grad:f:1+alpha},
\begin{equation}\label{eq:main}
\mathbb{E}[F(\theta_n)-\inf F]
\leq\mathbb{E}[F(\theta_{n-1})-\inf F]
-\kappa\gamma_n\mathbb{E}\big[\|\nabla F(\theta_{n-1})\|^2\big]+C\gamma_n^{1+\alpha}.
\end{equation}
Assuming that $\beta\in\big(\frac\alpha{1+\alpha},1\big)\cup\big\{\frac12\big\}$, we distinguish between three cases according to the values of $\alpha$ and $\beta$.
\\

\noindent{\emph{$\stackrel{\blacktriangleright}{}$ Case~1. $\beta\in\big(\frac\alpha{1+\alpha},\frac12\big)$, $\alpha\in(0,1)$.}}\newline
Let $\lambda>0$ and $\theta\in\mathbb{R}^m$ such that $F(\theta)>\inf F$. Then, via $\mathcal{H}$\ref{asp:f}, Lemma~\ref{lmm:Holder}(\ref{lmm:Holder:ii}) and \eqref{eq:framework:3},
\begin{equation*}
\begin{aligned}
\|\nabla F(\theta)\|^2
&\geq\Big(\frac\alpha{(1+\alpha)L^\frac1\alpha}\Big)^\lambda\frac{\|\nabla F(\theta)\|^{2+\lambda\frac{1+\alpha}\alpha}}{(F(\theta)-\inf F)^\lambda}\\
&\geq\zeta^{-2-\lambda\frac{1+\alpha}\alpha}\Big(\frac\alpha{(1+\alpha)L^\frac1\alpha}\Big)^\lambda(F(\theta)-\inf F)^{2\beta+\lambda(\frac{1+\alpha}\alpha\beta-1)}.
\end{aligned}
\end{equation*}
Setting
\begin{equation*}
\lambda=\frac{1-2\beta}{\frac{1+\alpha}\alpha\beta-1}>0
\quad\text{and}\quad
\widetilde\mu=\zeta^{-2-\lambda\frac{1+\alpha}\alpha}\Big(\frac\alpha{(1+\alpha)L^\frac1\alpha}\Big)^\lambda
\end{equation*}
shows that
\begin{equation}\label{eq:Lojasiewicz:beta<1/2=>beta=1/2}
\widetilde\mu(F(\theta)-\inf F)\leq\|\nabla F(\theta)\|^2.
\end{equation}
This relation remains true if $F(\theta)=\inf F$.
The remaining lines of reasoning coincide with the next case by swapping $\mu$ with $\widetilde\mu$.
\\

\noindent{\emph{$\stackrel{\blacktriangleright}{}$ Case~2. $\beta=\frac12$.}}\newline
By \eqref{eq:main}, \eqref{eq:framework:3} and \eqref{eq:mu},
\begin{equation*}
\mathbb{E}[F(\theta_n)-\inf F]
\leq(1-\kappa\mu\gamma_n)\mathbb{E}[F(\theta_{n-1})-\inf F]+C\gamma_n^{1+\alpha}.
\end{equation*}
Note that, since $\gamma_n\to0$ as $n\to\infty$, there exists $n_0$ such that $\kappa\mu\gamma_n<1$, $n\geq n_0$.
Thus, using Lemma~\ref{lmm:rec}, for $n\geq n_0$,
\begin{equation*}
\mathbb{E}[F(\theta_n)-\inf F]
\leq\mathbb{E}[F(\theta_{n_0})-\inf F]\prod_{k=n_0+1}^n(1-\kappa\mu\gamma_k)+C\sum_{k=n_0+1}^n\gamma_k^{1+\alpha}\prod_{j=k+1}^n(1-\kappa\mu\gamma_j),
\end{equation*}
with the convention $\prod_\varnothing=1$. But then, utilizing that $1+x\leq\mathrm{e}^x$, $x\in\mathbb{R}$, and that $\sup_{k\geq0}\mathbb{E}[F(\theta_k)-\inf F]<\infty$ by Lemma~\ref{lmm:F(theta):converges}(\ref{lmm:F(theta):converges:ii}),
\begin{equation*}
\begin{aligned}
\mathbb{E}[F(\theta_n)-\inf F]
\leq\big(\sup_{k\geq0}\mathbb{E}[F(\theta_k)-\inf F]\big)\exp\Big(&\kappa\mu\sum_{k=1}^{n_0}\gamma_k\Big)\exp\Big(-\kappa\mu\sum_{k=1}^n\gamma_k\Big)\\
&+C\sum_{k=1}^n\gamma_k^{1+\alpha}\exp\Big(-\kappa\mu\sum_{j=k+1}^n\gamma_j\Big),
\end{aligned}
\end{equation*}
with the convention $\sum_\varnothing=0$.
Assuming $\kappa\mu\gamma_\star>\alpha$ in the case $\mathcal{H}$\ref{asp:gamma:misc}(\ref{asp:gamma:misc:i})\hyperref[asp:gamma:misc:ib]{b}, invoking Lemma~\ref{lmm:gamma}(\ref{lmm:gamma:1}) yields
\begin{equation}
\mathbb{E}[F(\theta_n)-\inf F]\leq C\gamma_n^\alpha.
\end{equation}
\newline
\noindent{\emph{$\stackrel{\blacktriangleright}{}$ Case~3. $\beta\in\big(\frac12,1\big)$.}}\newline
Let
\begin{equation*}
\mathfrak{S}\coloneqq\Big\{\mathbb{E}\big[\|\nabla F(\theta_{n-1})\|^2\big]^\frac12\geq\nu\Big(\sum_{k=1}^n\gamma_k\Big)^{-\frac\beta{2\beta-1}}\Big\},
\end{equation*}
for some $\nu>0$ that will be selected later.
Observe that, by \eqref{eq:framework:3}, $\mathbb{E}[(F(\theta_{n-1})-\inf F)^{2\beta}]\leq \zeta^2\mathbb{E}[\|\nabla F(\theta_{n-1})\|^2]<\infty$.
Thus, via \eqref{eq:mu} and Jensen's inequality,
\begin{equation*}
\begin{aligned}
\mathbb{E}\big[\|\nabla F(\theta_{n-1})\|^2\big]
&\geq\mathbb{E}\big[\|\nabla F(\theta_{n-1})\|^2\big]^\frac{2\beta-1}{2\beta}
\times\mathbb{E}\big[\|\nabla F(\theta_{n-1})\|^2\big]^\frac1{2\beta}
\times\mathbbm1_{\mathfrak{S}}\\
&\geq\frac{\nu^\frac{2\beta-1}\beta}{\sum_{k=1}^n\gamma_k}\mathbb{E}\big[\|\nabla F(\theta_{n-1})\|^2\big]^\frac1{2\beta}\mathbbm1_{\mathfrak{S}}\\
&=\frac{\nu^\frac{2\beta-1}\beta}{\sum_{k=1}^n\gamma_k}\mathbb{E}\big[\|\nabla F(\theta_{n-1})\|^2\big]^\frac1{2\beta}
-\frac{\nu^\frac{2\beta-1}\beta}{\sum_{k=1}^n\gamma_k}\mathbb{E}\big[\|\nabla F(\theta_{n-1})\|^2\big]^\frac1{2\beta}\mathbbm1_{\overline{\mathfrak{S}}}\\
&\geq\frac{\nu^\frac{2\beta-1}\beta\mu}{\sum_{k=1}^n\gamma_k}\mathbb{E}[(F(\theta_{n-1})-\inf F)^{2\beta}]^\frac1{2\beta}
-\frac{\nu^2}{\big(\sum_{k=1}^n\gamma_k\big)^{\frac{2\beta}{2\beta-1}}}\\
&\geq\frac{\nu^\frac{2\beta-1}\beta\mu}{\sum_{k=1}^n\gamma_k}\mathbb{E}[F(\theta_{n-1})-\inf F]
-\frac{\nu^2}{\big(\sum_{k=1}^n\gamma_k\big)^{\frac{2\beta}{2\beta-1}}}.
\end{aligned}
\end{equation*}
Let
\begin{equation*}
\bar\mu_\nu\coloneqq\kappa\nu^\frac{2\beta-1}\beta\mu.
\end{equation*}
Hence, using \eqref{eq:main} and $\mathcal{H}$\ref{asp:gamma:misc}(\ref{asp:gamma:misc:iii}),
\begin{equation*}
\mathbb{E}[F(\theta_n)-\inf F]
\leq\Big(1-\frac{\bar\mu_\nu\gamma_n}{\sum_{k=1}^n\gamma_k}\Big)\mathbb{E}[F(\theta_{n-1})-\inf F]+\frac{C_\nu\gamma_n}{\big(\sum_{k=1}^n\gamma_k\big)^\frac{2\beta}{2\beta-1}}+\frac{C\gamma_n}{\big(\sum_{k=1}^n\gamma_k\big)^{\alpha\rho}}.
\end{equation*}
Because $\frac{\gamma_n}{\sum_{k=1}^n\gamma_k}\to0$ as $n\to\infty$, there exists $n_0$ such that $\frac{\bar\mu_\nu\gamma_n}{\sum_{k=1}^n\gamma_k}<1$, $n\geq n_0$.
But then, by Lemma~\ref{lmm:rec}, for $n\geq n_0$,
\begin{equation*}
\begin{aligned}
\mathbb{E}[&F(\theta_n)-\inf F]
\leq\mathbb{E}[F(\theta_{n_0})-\inf F]\prod_{k=n_0+1}^n\Big(1-\frac{\bar\mu_\nu\gamma_k}{\sum_{j=1}^k\gamma_j}\Big)\\
&+\sum_{k=n_0+1}^n\frac{C_\nu\gamma_k}{\big(\sum_{j=1}^k\gamma_j\big)^\frac{2\beta}{2\beta-1}}\prod_{j=k+1}^n\Big(1-\frac{\bar\mu_\nu\gamma_j}{\sum_{i=1}^j\gamma_i}\Big)
+\sum_{k=n_0+1}^n\frac{C\gamma_k}{\big(\sum_{j=1}^k\gamma_j\big)^{\alpha\rho}}\prod_{j=k+1}^n\Big(1-\frac{\bar\mu_\nu\gamma_j}{\sum_{i=1}^j\gamma_i}\Big),
\end{aligned}
\end{equation*}
with the convention $\prod_\varnothing=1$.
Then, using that $1+x\leq\mathrm{e}^x$, $x\in\mathbb{R}$ and that $\sup_{k\geq0}\mathbb{E}[F(\theta_k)-\inf F]<\infty$ by Lemma~\ref{lmm:F(theta):converges}(\ref{lmm:F(theta):converges:ii}),
\begin{equation*}
\begin{aligned}
\mathbb{E}[F(\theta_n)-\inf F]
\leq\big(&\sup_{k\geq0}\mathbb{E}[F(\theta_k)-\inf F]\big)\exp\Big(\bar\mu_\nu\sum_{k=1}^{n_0}\frac{\gamma_k}{\sum_{j=1}^k\gamma_j}\Big)\exp\Big(-\bar\mu_\nu\sum_{k=1}^n\frac{\gamma_k}{\sum_{j=1}^k\gamma_j}\Big)\\
&+\sum_{k=1}^n\frac{C_\nu\gamma_k}{\big(\sum_{j=1}^k\gamma_j\big)^{\frac{2\beta}{2\beta-1}}}\exp\Big(-\bar\mu_\nu\sum_{j=k+1}^n\frac{\gamma_j}{\sum_{i=1}^j\gamma_i}\Big)\\
&+\sum_{k=1}^n\frac{C\gamma_k}{\big(\sum_{j=1}^k\gamma_j\big)^{\alpha\rho}}\exp\Big(-\bar\mu_\nu\sum_{j=k+1}^n\frac{\gamma_j}{\sum_{i=1}^j\gamma_i}\Big),
\end{aligned}
\end{equation*}
with the convention $\sum_\varnothing=0$.
We select $\nu$ such that
\begin{equation*}
\bar\mu\coloneqq\bar\mu_\nu=\kappa\nu^\frac{2\beta-1}\beta\mu>\frac1{2\beta-1}\vee(\alpha\rho-1)>0.
\end{equation*}
Via Lemma~\ref{lmm:gamma}(\ref{lmm:gamma:2}), we deduce
\begin{equation}
\mathbb{E}[F(\theta_n)-\inf F]\leq\frac{C}{\big(\sum_{k=1}^n\gamma_k\big)^r},
\end{equation}
where $r=\frac1{2\beta-1}\wedge(\alpha\rho-1)>0$.
\\

The subsequent high probability bounds are obtained using Markov's inequality.
\end{proof}

\begin{proof}[Proof of Corollary~\ref{crl:cost}]
This is a direct consequence of Theorems~\ref{thm:F-F*} and~\ref{thm:F-F*:special:cases}.
\end{proof}

\section{Global Convergence Proofs}
\label{sec:global}

Henceforward, for a real-valued sequence $(v_n)_{n\geq0}$, we denote its discrete gradient
\begin{equation*}
\Delta v_n\coloneqq v_n-v_{n-1},\quad n\geq1.
\end{equation*}

Our approach for proving global convergence uses a deterministic vanishing nonincreasing Lyapunov sequence $(G_n)_{n\geq0}$ \eqref{eq:def:G} to help prove the $L^1(\mathbb{P})$-summability of $(\gamma_n\nabla F(\theta_{n-1}))_{n\geq1}$.
We also refer to the formalism developed in~\cite{CFR23} that relies on extending the K{\L} property to a Lyapunov function $V$ that is built so as $(V(\theta_n))_{n\geq0}$ is a converging supermartingale.

The next lemma is a short interlude providing a useful property.

\begin{lemma}\label{lmm:sum:Delta}
Recalling the definition \eqref{eq:Delta}, under $\mathcal{H}$\ref{asp:f}--\ref{asp:gamma}, the martingale $\big(\sum_{k=1}^n\gamma_k\Delta_k\big)_{n\geq1}$ converges in $L^2(\mathbb{P})$ and $\Pas$.
\end{lemma}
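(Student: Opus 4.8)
The plan is to mimic, almost verbatim, the argument already used for the martingale $(M_n)_{n\geq1}$ in the proof of Lemma~\ref{lmm:F(theta):converges}(\ref{lmm:F(theta):converges:iv}). First I would check that $(\Delta_n)_{n\geq1}$ from \eqref{eq:Delta} are genuine $(\mathcal{F}_n)_{n\geq0}$-martingale increments: the quantities $\nabla F(\theta_{n-1})$ and $\mathbbm1_{\|\nabla F(\theta_{n-1})\|\leq1}$ are $\mathcal{F}_{n-1}$-measurable, while $X_n$ is independent of $\mathcal{F}_{n-1}$, so $\mathbb{E}_{n-1}[g_n(\theta_{n-1})]=b(\theta_{n-1})$ by \eqref{eq:b}, whence $\mathbb{E}_{n-1}[\Delta_n]=0$. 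Thus $\big(\sum_{k=1}^n\gamma_k\Delta_k\big)_{n\geq1}$ is an $(\mathcal{F}_n)_{n\geq0}$-martingale.

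Next I would recall the uniform second-moment bound \eqref{eq:E[Delta^2]<}, already established under $\mathcal{H}$\ref{asp:f:abc} and Lemma~\ref{lmm:F(theta):converges}(\ref{lmm:F(theta):converges:ii}), namely $\sup_{n\geq1}\mathbb{E}[\Delta_n^2]<\infty$. Using the orthogonality of the martingale increments, this yields, for all $n\geq1$,
\begin{equation*}
\mathbb{E}\Big[\Big(\sum_{k=1}^n\gamma_k\Delta_k\Big)^2\Big]
=\sum_{k=1}^n\gamma_k^2\mathbb{E}[\Delta_k^2]
\leq\Big(\sup_{k\geq1}\mathbb{E}[\Delta_k^2]\Big)\sum_{k=1}^\infty\gamma_k^2.
\end{equation*}
Since $\gamma_n\to0$ under $\mathcal{H}$\ref{asp:gamma}, one has $\gamma_k\leq1$ for $k$ large, hence $\gamma_k^2\leq\gamma_k^{1+\alpha}$ eventually, and therefore $\sum_{k=1}^\infty\gamma_k^2<\infty$ from $\sum_{k=1}^\infty\gamma_k^{1+\alpha}<\infty$. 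Consequently $\big(\sum_{k=1}^n\gamma_k\Delta_k\big)_{n\geq1}$ is bounded in $L^2(\mathbb{P})$.

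Finally, I would invoke the $L^2$-bounded martingale convergence theorem: an $L^2$-bounded martingale converges both $\Pas$ and in $L^2(\mathbb{P})$, which is exactly the claim. There is no real obstacle here; the only step requiring a sentence of justification is the passage from $\mathcal{H}$\ref{asp:gamma} (which only gives $\sum\gamma_k^{1+\alpha}<\infty$) to $\sum\gamma_k^2<\infty$, handled by the $\gamma_n\to0$ condition, and pointing out that the heavy lifting — the bound \eqref{eq:E[Delta^2]<} — was already carried out in Section~\ref{sec:speed}.
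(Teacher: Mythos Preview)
Your proposal is correct and follows essentially the same approach as the paper: bound $\sup_n\mathbb{E}[\Delta_n^2]$ via \eqref{eq:E[Delta^2]<}, use orthogonality of martingale increments together with $\sum_k\gamma_k^2<\infty$ from $\mathcal{H}$\ref{asp:gamma}, and conclude by the $L^2$-bounded martingale convergence theorem. If anything, you are slightly more explicit than the paper in verifying that $(\Delta_n)_{n\geq1}$ are martingale increments and in deriving $\sum_k\gamma_k^2<\infty$ from $\sum_k\gamma_k^{1+\alpha}<\infty$.
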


\begin{proof}
Considering \eqref{eq:E[Delta^2]<}, define the $L^2(\mathbb{P})$ $(\mathcal{F}_n)_{n\geq0}$-martingale
\begin{equation*}
M'_n\coloneqq\sum_{k=1}^n\gamma_k\Delta_k,\quad n\geq1.
\end{equation*}
Since $(\Delta_k)_{k\geq1}$ are martingale increments, using \eqref{eq:E[Delta^2]<} and that $\sum_{k=1}^\infty\gamma_k^2<\infty$ by $\mathcal{H}$\ref{asp:gamma},
\begin{equation*}
\sup_{n\geq1}\mathbb{E}[|M'_n|^2]
=\sum_{k=1}^\infty\gamma_k^2\mathbb{E}[\Delta_k^2]
\leq C\sum_{k=1}^\infty\gamma_k^2
<\infty.
\end{equation*}
$(M'_n)_{n\geq1}$ is therefore uniformly bounded in $L^2(\mathbb{P})$ and hence converges in $L^2(\mathbb{P})$ and $\Pas$.
\end{proof}

\begin{proof}[Proof of Theorem~\ref{thm:iterates}]
\noindent{\emph{$\stackrel{\blacktriangleright}{}$ Step~1. Final hitting time.}}\newline
Define
\begin{equation}\label{eq:stop:prime}
\tau'\coloneqq\inf\Big\{n\geq0\colon\forall k\geq n,|F(\theta_k)-F_\star|+\Big|\sum_{j=k+1}^\infty\gamma_j\Delta_j\Big|+\frac{L}{1+\alpha}\sum_{j=k+1}^\infty\gamma_j^{1+\alpha}\|g_j(\theta_{j-1})\|^{1+\alpha}
\leq1\Big\}.
\end{equation}
By Lemmas~\ref{lmm:F(theta):converges}(\ref{lmm:F(theta):converges:i},\ref{lmm:F(theta):converges:iii}) and~\ref{lmm:sum:Delta},
\begin{equation}\label{eq:lim:0}
\lim_{k\to\infty}|F(\theta_k)-F_\star|+\Big|\sum_{j=k+1}^\infty\gamma_j\Delta_j\Big|+\frac{L}{1+\alpha}\sum_{j=k+1}^\infty\gamma_j^{1+\alpha}\|g_j(\theta_{j-1})\|^{1+\alpha}=0
\quad\Pas,
\end{equation}
thus $\tau'<\infty$ $\Pas$.

We reconsider the framework of Theorem~\ref{thm:F-F*:special:cases}(\ref{thm:F-F*:special:cases:i})\hyperref[thm:F-F*:special:cases:i:b]{b} and, recalling \eqref{eq:levelset} and \eqref{eq:stop:1}, we define
\begin{equation}\label{eq:stop:tilde}
\widetilde\tau\coloneqq\tau_1\vee\tau'.
\end{equation}
Let $\delta\in(0,1)$. Since $\widetilde\tau<\infty$ $\Pas$ on $\{\mathcal{A}\subset\mathcal{L}_{\ell_\delta}\}$, there exists $n_\delta\geq0$ such that
\begin{equation}\label{eq:P:tau:tilde>1-delta}
\mathbb{P}(\widetilde\tau<n_\delta,\mathcal{A}\subset\mathcal{L}_{\ell_\delta})\geq1-\delta.
\end{equation}
\newline
\noindent{\emph{$\stackrel{\blacktriangleright}{}$ Step~2. Lyapunov sequence.}}\newline
Define
\begin{equation*}
\mathbb{C}_\delta\coloneqq\{\widetilde\tau<n_\delta\}\cap\{\mathcal{A}\subset\mathcal{L}_{\ell_\delta}\}.
\end{equation*}
Let $n>n_\delta$.
We hereby define a Lyapunov sequence $(G_n)_{n\geq n_\delta}$ associated to $(F(\theta_n))_{n\geq n_\delta}$ and provide some subsequent properties.

By \eqref{eq:F(theta:n)<F(theta:n-1)}, \eqref{eq:stop:tilde}, \eqref{eq:stop:1}, \eqref{eq:stop:0} and \eqref{eq:Delta},
\begin{equation}\label{eq:descent}
\begin{aligned}
(F(\theta_n)&-F_\star)\mathbbm1_{\mathbb{C}_\delta}\\
&\leq\Big(F(\theta_{n-1})-F_\star-\kappa\gamma_n\|\nabla F(\theta_{n-1})\|^2+\gamma_n\Delta_n+\frac{L}{1+\alpha}\gamma_n^{1+\alpha}\| g_n(\theta_{n-1})\|^{1+\alpha}\Big)\mathbbm1_{\mathbb{C}_\delta}.
\end{aligned}
\end{equation}
Let
\begin{equation}\label{eq:def:G}
G_n\coloneqq\kappa^{-1}\mathbb{E}\Big[\Big(F(\theta_n)-F_\star+\sum_{k=n+1}^\infty\gamma_k\Delta_k+\frac{L}{1+\alpha}\sum_{k=n+1}^\infty\gamma_k^{1+\alpha}\|g_k(\theta_{k-1})\|^{1+\alpha}\Big)\mathbbm1_{\mathbb{C}_\delta}\Big],
\quad n\geq n_\delta,
\end{equation}
which is well defined according to Lemmas~\ref{lmm:F(theta):converges}(\ref{lmm:F(theta):converges:i},\ref{lmm:F(theta):converges:iii}) and~\ref{lmm:sum:Delta}.
Then, by \eqref{eq:descent},
\begin{equation}\label{eq:grad<Delta}
\gamma_n\mathbb{E}\big[\|\nabla F(\theta_{n-1})\|^2\mathbbm1_{\mathbb{C}_\delta}\big]\leq-\Delta G_n.
\end{equation}
Hence $(G_n)_{n\geq n_\delta}$ is nonincreasing. Via \eqref{eq:stop:prime} and \eqref{eq:lim:0}, by dominated convergence, $G_n\to0$ as $n\to\infty$.
Therefore $G_n\geq0$, $n\geq n_\delta$.

Note that, since $\widetilde\tau\geq\tau_1$ $\Pas$, it follows from \eqref{eq:P:tau:tilde>1-delta} that $\mathbb{P}(\tau_1<n_\delta,\mathcal{A}\subset\mathcal{L}_{\ell_\delta})\geq1-\delta$, so that, via \eqref{eq:E[F+]<:0:1/2} and \eqref{eq:E[F+]<:1/2:1},
\begin{equation}\label{eq:G:term:1}
\mathbb{E}[(F(\theta_{n-1})-F_\star)^+\mathbbm1_{\mathbb{C}_\delta}]
\leq\mathbb{E}[(F(\theta_{n-1})-F_\star)^+\mathbbm1_{\mathbb{A}_\delta}]
\leq C_\delta u_n,
\end{equation}
where
\begin{equation}\label{eq:u}
u_n\coloneqq 
\begin{cases}
n^{-s(\alpha\wedge\frac12)},
&\text{if $\beta_\delta\in(0,\frac12]$ and $s\in(\frac1{1+\alpha},1]$, with $\kappa\gamma_\star>\frac12\vee\alpha$ if $s=1$,}\\
n^{-(1-s)r_\delta},
&\text{if $\beta_\delta\in(\frac12,1)$ and $s\in[\frac\rho{1+\rho},1)$,}\\
(\ln{n})^{-r_\delta},
&\text{if $\beta_\delta\in(\frac12,1)$ and $s=1$,}\\
\end{cases}
\qquad n\geq n_\delta,
\end{equation}
with $\rho>\frac1\alpha$ and $r_\delta$ defined in \eqref{eq:r:delta}.

By monotone convergence, Jensen's inequality, Doob's maximal inequality (or the Burkholder-Davis-Gundy inequality), \eqref{eq:E[Delta^2]<} and a series-integral comparison,
\begin{equation}\label{eq:G:term:2}
\begin{aligned}
\mathbb{E}\Big[\Big|\sum_{k=n}^\infty\gamma_k\Delta_k\Big|\Big]
&\leq\mathbb{E}\Big[\sup_{p\geq n}\Big|\sum_{k=n}^p\gamma_k\Delta_k\Big|\Big]\\
&=\lim_{N\to\infty}\mathbb{E}\Big[\sup_{n\leq p\leq N}\Big|\sum_{k=n}^p\gamma_k\Delta_k\Big|^2\Big]^\frac12
\leq\Big(\sum_{k=n}^\infty\gamma_k^2\mathbb{E}[\Delta_k^2]\Big)^\frac12
\leq\frac{C}{n^\frac{2s-1}2}.
\end{aligned}
\end{equation}
Moreover, by Fubini's theorem, \eqref{eq:sup:grad:f:1+alpha} and a series-integral comparison,
\begin{equation}\label{eq:G:term:3}
\begin{aligned}
\mathbb{E}\Big[\sum_{k=n}^\infty\gamma_k^{1+\alpha}\|g_k(\theta_{k-1})\|^{1+\alpha}\Big]
\leq\frac{C}{n^{(1+\alpha)s-1}}.
\end{aligned}
\end{equation}
Thus, gathering \eqref{eq:G:term:1}--\eqref{eq:G:term:3} and recalling \eqref{eq:def:G},
\begin{equation}\label{eq:G<}
G_{n-1}\leq C_\delta\Big(u_n+\frac1{n^\frac{2s-1}2}+\frac1{n^{(1+\alpha)s-1}}\Big).
\end{equation}
\newline
\noindent{\emph{$\stackrel{\blacktriangleright}{}$ Step~3. Summability of $(\gamma_n\|\nabla F(\theta_{n-1})\|)_{n\geq1}$.}}\newline
Let $s\in\big[\frac\rho{1+\rho},1]$, $\rho>\frac1\alpha$, and $\sigma\in(0,1)$.
Assume $\kappa\gamma_\star>\frac12\vee\alpha$ if $s=1$.

On the one hand, note that if $\mathbb{E}[\|\nabla F(\theta_{n-1})\|\mathbbm1_{\mathbb{C}_\delta}]>0$, necessarily, $G_{n-1}>0$. Otherwise, since $0\leq G_n\leq G_{n-1}$, $G_n=0$. But then, $\mathbb{E}[\|\nabla F(\theta_{n-1})\|\mathbbm1_{\mathbb{C}_\delta}]\leq\sqrt{\gamma_n^{-1}(-\Delta G_n)}=0$, which is absurd.
Thus, using Jensen's inequality, \eqref{eq:grad<Delta} and Lemma~\ref{lmm:concave}, if ${\mathbb{E}[\|\nabla F(\theta_{n-1})\|\mathbbm1_{\mathbb{C}_\delta}]>0}$,
\begin{equation}\label{eq:A<}
\begin{aligned}
\gamma_n\mathbb{E}\big[&\|\nabla F(\theta_{n-1})\|\mathbbm1_{\mathbb{C}_\delta}\big]\mathbbm1_{\mathbb{E}[\|\nabla F(\theta_{n-1})\|\mathbbm1_{\mathbb{C}_\delta}]>G_{n-1}^\sigma}\\
&\leq\gamma_nG_{n-1}^{-\sigma}\mathbb{E}\big[\|\nabla F(\theta_{n-1})\|^2\mathbbm1_{\mathbb{C}_\delta}\big]
\leq G_{n-1}^{-\sigma}(-\Delta G_n)
\leq\frac{-\Delta G_n^{1-\sigma}}{1-\sigma},
\end{aligned}
\end{equation}
recalling that $-\Delta G_n^{1-\sigma}=G_{n-1}^{1-\sigma}-G_n^{1-\sigma}$.
This holds also if $\mathbb{E}[\|\nabla F(\theta_{n-1})\|\mathbbm1_{\mathbb{C}_\delta}]=0$.

On the other hand, using \eqref{eq:G<} and \eqref{eq:u},
\begin{equation}\label{eq:B<}
\begin{aligned}
\gamma_n\mathbb{E}\big[\|\nabla F(\theta_{n-1})\|\mathbbm1_{\mathbb{C}_\delta}\big]&\mathbbm1_{\mathbb{E}[\|\nabla F(\theta_{n-1})\|\mathbbm1_{\mathbb{C}_\delta}]\leq G_{n-1}^\sigma}\\
&\leq\gamma_nG_{n-1}^\sigma
\leq C_\delta\Big(\frac{u_n^\sigma}{n^s}+\frac1{n^{s+\frac{2s-1}2\sigma}}+\frac1{n^{s+((1+\alpha)s-1)\sigma}}\Big).
\end{aligned}
\end{equation}

All in all, gathering the inequalities \eqref{eq:A<} and \eqref{eq:B<},
\begin{equation}\label{eq:gamma:grad:<}
\begin{aligned}
\gamma_n&\mathbb{E}\big[\|\nabla F(\theta_{n-1})\|\mathbbm1_{\mathbb{C}_\delta}\big]\\
&\leq\begin{cases}
\frac{C_\delta}{1-\sigma}\big(n^{-s-s(\alpha\wedge\frac12)\sigma}+n^{-s-\frac{2s-1}2\sigma}+n^{-s-((1+\alpha)s-1)\sigma}
-\Delta G_n^{1-\sigma}\big)
&\text{if $\beta_\delta\in(0,\frac12]$,}\\
\frac{C_\delta}{1-\sigma}\big(n^{-s-(1-s)r_\delta\sigma}+n^{-s-\frac{2s-1}2\sigma}+n^{-s-((1+\alpha)s-1)\sigma}
-\Delta G_n^{1-\sigma}\big)
&\text{if $\beta_\delta\in(\frac12,1)$ and $s<1$,}\\
\frac{C_\delta}{1-\sigma}\big(n^{-1}(\ln{n})^{-r_\delta\sigma}+n^{-1-\frac\sigma2}+n^{-1-\alpha\sigma}
-\Delta G_n^{1-\sigma}\big)
&\text{if $\beta_\delta\in(\frac12,1)$ and $s=1$.}
\end{cases}
\end{aligned}
\end{equation}
Setting $\rho>3\vee\frac2\alpha$, so that $s\in\big(\frac2{2+\alpha}\vee\frac34,1\big]$ and $r_\delta>1$ if $\beta_\delta\in\big(\frac12,1\big)$, then taking
\begin{equation*}
\sigma\in\begin{cases}
\big(\frac{1-s}{s(\alpha\wedge\frac12)}\vee\frac{2(1-s)}{2s-1}\vee\frac{1-s}{(1+\alpha)s-1},1\big)
&\text{if $\beta_\delta\in(0,\frac12]$,}\\
\big(\frac1{r_\delta}\vee\frac{2(1-s)}{2s-1}\vee\frac{1-s}{(1+\alpha)s-1},1\big)
&\text{if $\beta_\delta\in(\frac12,1)$ and $s<1$,}\\
\big(\frac1{r_\delta},1\big)
&\text{if $\beta_\delta\in(\frac12,1)$ and $s=1$,}
\end{cases}
\end{equation*}
and finally recalling that $G_n\to0$ as $n\to\infty$, we deduce that $\sum_{n=n_\delta}^\infty\gamma_n\mathbb{E}[\|\nabla F(\theta_{n-1})\|\mathbbm1_{\mathbb{C}_\delta}]<\infty$.
\\

\noindent{\emph{$\stackrel{\blacktriangleright}{}$ Step~4. Conclusion.}}\newline
By Fubini's theorem, $\sum_{n=1}^\infty\gamma_n\|\nabla F(\theta_{n-1})\|<\infty$ $\Pas$ on $\mathbb{C}_\delta$, thus on $\cup_{p\geq0}\mathbb{C}_{\delta/2^p}$, which is an event of probability $\mathbb{P}(\cup_{p\geq0}\mathbb{C}_{\delta/2^p})\geq\sup_{p\geq0}\big(1-\frac\delta{2^p}\big)=1$.
Therefore $\sum_{n=1}^\infty\gamma_n\|\nabla F(\theta_{n-1})\|<\infty$ $\Pas$ and $(\gamma_n\nabla F(\theta_{n-1}))_{n\geq1}$ is $\Pas$-absolutely summable.
Recalling $\mathcal{H}$\ref{asp:f:abc}(\ref{asp:f:abc:iii}), it ensues that $(\gamma_nb(\theta_{n-1}))_{n\geq1}$ is also $\Pas$-absolutely summable.
Finally, note that
\begin{equation}\label{eq:theta:n=sum}
\theta_n=\theta_0+\sum_{k=1}^n\gamma_k(b(\theta_{k-1})-g_k(\theta_{k-1}))-\sum_{k=1}^n\gamma_kb(\theta_{k-1}),
\quad n\geq1.
\end{equation}
By Lemma~\ref{lmm:F(theta):converges}(\ref{lmm:F(theta):converges:iv}) and the previous finding, $(\theta_n)_{n\geq0}$ converges $\Pas$ toward a $\Pas$ finite random variable $\theta_\star$.
Besides, by the continuity of $F$ and $\nabla F$ via $\mathcal{H}$\ref{asp:f}, $\nabla F(\theta_\star)=\lim_{n\to\infty}\nabla F(\theta_n)=0$ $\Pas$ via Lemma~\ref{lmm:gradF->0} and $F(\theta_\star)=\lim_{n\to\infty}F(\theta_n)=F_\star$ $\Pas$ via Lemma~\ref{lmm:F(theta):converges}(\ref{lmm:F(theta):converges:i}).
\end{proof}

The telescopic series decomposition \eqref{eq:theta:n=sum->8} suggests that the convergence speed of $(\theta_n)_{n\geq1}$ results from a dual contribution from the martingale residual $\big(\sum_{k=n+1}^\infty\gamma_k(g_k(\theta_{k-1})-b(\theta_k))\big)_{n\geq1}$ and the drift residual $\big(\sum_{k=n+1}^\infty\gamma_kb(\theta_{k-1})\big)_{n\geq1}$.
A classical interplay between the Jensen and Burkholder-Davis-Gundy inequalities shows that the martingale residual behaves like $\sqrt{\sum_{k=n+1}^\infty\gamma_k^2}\\=\mathrm{O}(n^{-\frac{2s-1}2})$. As for the drift residual, we could rely on the bound \eqref{eq:gamma:grad:<} that has been obtained constructively in Theorem~\ref{thm:iterates}'s proof. The ensuing bound requires however to strike a balance between terms with exponent $\sigma$ and terms with exponent $1-\sigma$, where $\sigma\in(0,1)$, in order to reach optimality.
To avoid such tradeoff, we seek alternative bounds on the trend residual in Corollary~\ref{crl:iterates:speed}'s proof.

\begin{proof}[Proof of Corollary~\ref{crl:iterates:speed}]
\noindent{\bf(\ref{crl:iterates:speed:i})\hyperref[crl:iterates:speed:i:a]{a}.}\
{\emph{$\stackrel{\blacktriangleright}{}$ Step~1. Preliminaries.}}\newline
We reuse notation from the preceding proof, bearing in mind that we are in the framework of Theorem~\ref{thm:F-F*:special:cases}(\ref{thm:F-F*:special:cases:i})\hyperref[thm:F-F*:special:cases:i:a]{a}.
Via the $\Pas$-absolute summability of $(\gamma_n\nabla F(\theta_{n-1}))_{n\geq1}$ and Lemma~\ref{lmm:F(theta):converges}(\ref{lmm:F(theta):converges:iv}), by telescopic summation,
\begin{equation}\label{eq:theta:n=sum->8}
\theta_n-\theta_\star
=\sum_{k=n+1}^\infty\gamma_k(b(\theta_{k-1})-g_k(\theta_{k-1}))
+\sum_{k=n+1}^\infty\gamma_kb(\theta_{k-1}).
\end{equation}
Hence
\begin{equation}\label{eq:theta:n-theta*<}
\mathbb{E}\big[\|\theta_n-\theta_\star\|\mathbbm1_{\mathbb{C}_\delta}\big]
\leq\mathbb{E}\Big[\Big\|\sum_{k=n+1}^\infty\gamma_k(b(\theta_{k-1})-g_k(\theta_{k-1}))\Big\|\Big]
+\sum_{k=n+1}^\infty\gamma_k\mathbb{E}\big[\|b(\theta_{k-1})\|\mathbbm1_{\mathbb{C}_\delta}\big].
\end{equation}
Note that, by monotone convergence, Jensen's inequality, Doob's maximal inequality (or the Burkholder-Davis-Gundy inequality) and \eqref{eq:E:sup:grad:f:2},
\begin{equation}\label{eq:term1<}
\begin{aligned}
\mathbb{E}\Big[&\Big\|\sum_{k=n+1}^\infty\gamma_k(b(\theta_{k-1})-g_k(\theta_{k-1}))\Big\|\Big]
\leq\mathbb{E}\Big[\sup_{p\geq n+1}\Big\|\sum_{k=n+1}^p\gamma_k(b(\theta_{k-1})-g_k(\theta_{k-1}))\Big\|\Big]\\
&=\lim_{N\to\infty}\mathbb{E}\Big[\sup_{n+1\leq p\leq N}\Big\|\sum_{k=n+1}^p\gamma_k(b(\theta_{k-1})-g_k(\theta_{k-1}))\Big\|^2\Big]^\frac12
\leq C\Big(\sum_{k=n}^\infty\gamma_k^2\Big)^\frac12
\leq\frac{C}{n^\frac{2s-1}2}.
\end{aligned}
\end{equation}
\newline
\noindent{\emph{$\stackrel{\blacktriangleright}{}$ Step~2. Alternative treatment for $\big(\sum_{k=n+1}^\infty\gamma_k\|\nabla F(\theta_{k-1})\|)_{n\geq1}$.}}\newline
Let $s\in\big[\frac\rho{1+\rho},1]$, $\rho>\frac1\alpha$, and $\sigma\in(0,1)$.
Assume
\begin{equation*}
\kappa\gamma_\star>\begin{cases}
(1\vee\zeta_\delta^{1/\beta_\delta})(\frac12\vee\alpha)
&\text{if $s=1$ and $\beta_\delta\in(0,\frac12]$,}\\
\frac12\vee\alpha
&\text{if $s=1$ and $\beta_\delta\in(\frac12,1)$,}
\end{cases}
\end{equation*}

Define
\begin{equation*}
v_n\coloneqq\frac1{n^\frac{2s-1}2}+\frac1{n^{(1+\alpha)s-1}},
\quad n\geq1.
\end{equation*}
Let $n>n_\delta$ and $\sigma\in(0,1)$. From \eqref{eq:G<}, one gets
\begin{equation*}
0<(G_{n-1}+u_{n-1}+v_{n-1})^\sigma
\leq C\sqrt{\mathbb{E}\big[\|\nabla F(\theta_{n-1})\|^2\mathbbm1_{\mathbb{C}_\delta}\big]+u_n^{2\sigma}+v_n^{2\sigma}}.
\end{equation*}
Thus, by Jensen's inequality,
\begin{equation}\label{eq:gamma:gradF<ratio}
\gamma_n\mathbb{E}\big[\|\nabla F(\theta_{n-1})\|\mathbbm1_{\mathbb{C}_\delta}\big]
\leq C\frac{\gamma_n(\mathbb{E}[\|\nabla F(\theta_{n-1})\|^2\mathbbm1_{\mathbb{C}_\delta}]+u_n^{2\sigma}+v_n^{2\sigma})}{(G_{n-1}+u_{n-1}+v_{n-1})^\sigma}.
\end{equation}
Observe that, by a first order Taylor-Lagrange expansion, for $\varrho>0$,
\begin{equation}\label{eq:equiv}
-\Delta\Big(\frac1{n^\varrho}\Big)\sim\frac\varrho{n^{1+\varrho}}
\quad\text{and}\quad
-\Delta\Big(\frac1{(\ln{n})^\varrho}\Big)\sim\frac\varrho{n(\ln{n})^{1+\varrho}}
\quad\text{as}\quad n\to\infty.
\end{equation}
Hence, taking $\rho>3\vee\frac2\alpha$ so that $r_\delta>1$ if $\beta_\delta\in\big(\frac12,1\big)$ and $s\in\big(\frac34\vee\frac2{2+\alpha},1\big]$, and setting
\begin{equation*}
\sigma\in\begin{cases}
\big[\frac1{2(2s-1)}\vee\frac{\alpha s}{2((1+\alpha)s-1)}\vee\frac{1-((1-\alpha)\vee\frac12)s}{2(\alpha\wedge\frac12)s},1\big)
&\text{if $\beta_\delta\in(0,\frac12]$,}\\
\big[\frac1{2(2s-1)}\vee\frac{\alpha s}{2((1+\alpha)s-1)}\vee\frac{1+r_\delta}{2r_\delta},1\big)
&\text{if $\beta_\delta\in(\frac12,1)$ and $s<1$,}\\
[\frac{1+r_\delta}{2r_\delta},1)
&\text{if $\beta_\delta\in(\frac12,1)$ and $s=1$,}
\end{cases}
\end{equation*}
one obtains, via \eqref{eq:gamma:gradF<ratio}, \eqref{eq:grad<Delta}, \eqref{eq:equiv} and Lemma~\ref{lmm:concave},
\begin{equation*}
\gamma_n\mathbb{E}\big[\|\nabla F(\theta_{n-1})\|\mathbbm1_{\mathbb{C}_\delta}\big]
\leq C_\delta\frac{-\Delta(G_n+u_n+v_n)}{(G_{n-1}+u_{n-1}+v_{n-1})^\sigma}
\leq-\frac{C_\delta}{1-\sigma}\Delta(G_n+u_n+v_n)^{1-\sigma}.
\end{equation*}
Thus, using that $G_n+u_n+v_n\to0$ as $n\to\infty$,
\begin{equation}\label{eq:term2<}
\sum_{k=n+1}^\infty\gamma_k\mathbb{E}\big[\|\nabla F(\theta_{k-1})\|\mathbbm1_{\mathbb{C}_\delta}\big]
\leq\frac{C_\delta}{1-\sigma}(u_n^{1-\sigma}+v_n^{1-\sigma}).
\end{equation}
\newline
\noindent{\emph{$\stackrel{\blacktriangleright}{}$ Step~3. Conclusion.}}\newline
All in all, recalling $\mathcal{H}$\ref{asp:f:abc}(\ref{asp:f:abc:iii}), gathering \eqref{eq:theta:n-theta*<}, \eqref{eq:term1<} and \eqref{eq:term2<} and applying similar reasoning to Steps~\hyperref[step:1.2]{1.2} and~\hyperref[step:2.2]{2.2} of the proof of Theorem~\ref{thm:F-F*:special:cases}(\ref{thm:F-F*:special:cases:i})\hyperref[thm:F-F*:special:cases:i:a]{a} yield the sought result.
\\

\noindent{\bf(\ref{crl:iterates:speed:i})\hyperref[crl:iterates:speed:i:b]{b}, (\ref{crl:iterates:speed:ii})}\
The proofs are identical to the anteceding one, by applying suitable substitutions.
\end{proof}

\begin{proof}[Proof of Corollary~\ref{crl:iter:cost}]
This is a straightforward consequence of Corollary~\ref{crl:iterates:speed}.
\end{proof}

\section*{Conclusion}
\label{sec:conclusion}

%\paragraph{Key takeaways.}
For $\alpha$-H\"older-differentiable loss landscapes satisfying a local {\L}ojasiewicz condition with exponent $\beta\in(0,1)$, our analysis shows that, for a learning schedule $(\gamma_n)_{n\geq1}$ such that $\sum_{n=1}^\infty\gamma_n=\infty$ and $\lim_{n\to\infty}\gamma_n=0$, \eqref{eq:theta:n} converges in high probability in $\mathrm{O}(\gamma_n^{\alpha\wedge\frac12})$ if $\beta\in\big(0,\frac12\big]$ and in $\mathrm{O}\big(\big(\sum_{k=1}^n\gamma_k\big)^{-\frac1{2\beta-1}\wedge(\alpha\rho-1)\wedge\frac{\rho-1}2}\big)$ if $\beta\in\big(\frac12,1\big)$, for some adjustable parameter $\rho>\frac1\alpha$ such that $\gamma_n=\mathrm{O}\big(\big(\sum_{k=1}^n\gamma_k\big)^{-\rho}\big)$.
These convergence rates are free from the requirement that the \eqref{eq:theta:n} trajectories be initialized or remain close to their target.
Furthermore, they help uncover optimal iteration amounts to employ in order to help cut down significantly the fine-tuning phase.

%\paragraph{Next steps.}
Future lines of research could involve extending our results to confined \eqref{eq:theta:n} trajectories around local minima or locally Lipschitz loss landscapes presenting singularities. An outlook on eliciting the dependency of the {\L}ojasiewicz exponent upon a neural network's architecture could also be envisaged.

\paragraph{Acknowledgements.}
The author would like to thank Guillaume Garrigos, St\'ephane Cr\'epey and Noufel Frikha for discussions on SGD and the {\L}ojasiewicz property.

\appendix

\section{Ancillary Results}
\label{apx:result}

The next result is given in~\cite[Lemma~1]{Lei+20} in a Hilbert space setting, but we provide a different take on its second part.
The first point is often referred to as the descent lemma.

\begin{lemma}\label{lmm:Holder}
Let $\varphi\colon\mathbb{R}^m\to\mathbb{R}$ be a differentiable function such that its derivative $\nabla\varphi$ is $(L,\alpha)$-H\"older continuous, with $L>0$ and $\alpha\in(0,1]$. Then,
\begin{enumerate}[\bf(i)]
\item\label{lmm:Holder:i}
one has
\begin{equation*}
\big|\varphi(\theta')-\varphi(\theta)-\nabla\varphi(\theta)^\top(\theta'-\theta)\big|\leq\frac{L}{1+\alpha}\|\theta'-\theta\|^{1+\alpha},
\quad\theta,\theta'\in\mathbb{R}^m.
\end{equation*}
\item\label{lmm:Holder:ii}
if $\varphi$ is additionally lower bounded,
\begin{equation*}
\|\nabla\varphi(\theta)\|^\frac{1+\alpha}\alpha\leq\frac{(1+\alpha)L^\frac1\alpha}\alpha\big(\varphi(\theta)-\inf\varphi\big),
\quad\theta\in\mathbb{R}^m.
\end{equation*}
\end{enumerate}
\end{lemma}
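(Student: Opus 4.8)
The plan is to establish (i) by a one-dimensional integration along the segment joining $\theta$ to $\theta'$, and then to deduce (ii) by feeding into (i) a single gradient step of optimally tuned length.

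For (i), I would write, using the fundamental theorem of calculus,
\[
\varphi(\theta')-\varphi(\theta)=\int_0^1\nabla\varphi\big(\theta+t(\theta'-\theta)\big)^\top(\theta'-\theta)\,\mathrm{d}t,
\]
so that
\[
\varphi(\theta')-\varphi(\theta)-\nabla\varphi(\theta)^\top(\theta'-\theta)=\int_0^1\big(\nabla\varphi(\theta+t(\theta'-\theta))-\nabla\varphi(\theta)\big)^\top(\theta'-\theta)\,\mathrm{d}t.
\]
Applying the Cauchy--Schwarz inequality inside the integral and then the $(L,\alpha)$-H\"older continuity of $\nabla\varphi$ bounds the integrand by $L\,t^\alpha\|\theta'-\theta\|^{1+\alpha}$, and $\int_0^1 t^\alpha\,\mathrm{d}t=\frac1{1+\alpha}$ gives the stated inequality.

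For (ii), fix $\theta\in\mathbb{R}^m$; the inequality is trivial when $\nabla\varphi(\theta)=0$, so assume $\nabla\varphi(\theta)\neq0$. Apply (i) with $\theta'=\theta-\eta\nabla\varphi(\theta)$ for an arbitrary $\eta>0$ to get
\[
\varphi(\theta-\eta\nabla\varphi(\theta))\leq\varphi(\theta)-\eta\|\nabla\varphi(\theta)\|^2+\frac{L}{1+\alpha}\eta^{1+\alpha}\|\nabla\varphi(\theta)\|^{1+\alpha}.
\]
Since $\inf\varphi\leq\varphi(\theta-\eta\nabla\varphi(\theta))$, this yields, for every $\eta>0$,
\[
\varphi(\theta)-\inf\varphi\geq\eta\|\nabla\varphi(\theta)\|^2-\frac{L}{1+\alpha}\eta^{1+\alpha}\|\nabla\varphi(\theta)\|^{1+\alpha}.
\]
It then remains to maximize the right-hand side over $\eta>0$: the first-order condition gives the maximizer $\eta_\star=\big(L^{-1}\|\nabla\varphi(\theta)\|^{1-\alpha}\big)^{1/\alpha}$, and substituting $\eta_\star$ collapses the right-hand side to $\frac{\alpha}{(1+\alpha)L^{1/\alpha}}\|\nabla\varphi(\theta)\|^{(1+\alpha)/\alpha}$, which rearranges to the claimed bound.

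There is no serious obstacle here; the only points requiring care are the exponent bookkeeping when evaluating the right-hand side at $\eta_\star$ (one should double-check that both terms then carry the common power $\|\nabla\varphi(\theta)\|^{(1+\alpha)/\alpha}$), the trivial case $\nabla\varphi(\theta)=0$, and the sanity check that $\alpha=1$ recovers the classical inequality $\|\nabla\varphi(\theta)\|^2\leq 2L(\varphi(\theta)-\inf\varphi)$. This gradient-step argument is the ``different take'' on the second part alluded to in the text, in place of deriving (ii) from a Hilbert-space smoothness estimate.
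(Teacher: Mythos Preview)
Your proposal is correct and follows essentially the same route as the paper: part~(i) via the integral remainder of the first-order Taylor expansion combined with the H\"older bound on $\nabla\varphi$, and part~(ii) by plugging $\theta'=\theta-\eta\nabla\varphi(\theta)$ into~(i), using $\inf\varphi$ as a lower bound, and maximizing in $\eta$ to obtain the same optimizer $\eta_\star=L^{-1/\alpha}\|\nabla\varphi(\theta)\|^{(1-\alpha)/\alpha}$. The only differences are cosmetic: you spell out Cauchy--Schwarz, the trivial case $\nabla\varphi(\theta)=0$, and the final substitution explicitly, whereas the paper leaves these implicit.
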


\begin{proof}
\noindent{\bf(\ref{lmm:Holder:i})}\
For $\theta,\theta'\in\mathbb{R}^m$, start with the first order Taylor-Lagrange expansion
\begin{equation*}
\varphi(\theta')=\varphi(\theta)+\nabla\varphi(\theta)^\top(\theta'-\theta)+\int_0^1(\theta'-\theta)^\top\big(\nabla\varphi(\theta+t(\theta'-\theta))-\nabla\varphi(\theta)\big)\mathrm{d}t,
\end{equation*}
then note that, since $\nabla\varphi$ is $(L,\alpha)$-H\"older,
\begin{equation*}
\bigg|\int_0^1(\theta'-\theta)^\top\big(\nabla\varphi(\theta+t(\theta'-\theta))-\nabla\varphi(\theta)\big)\mathrm{d}t\bigg|
\leq\frac{L}{1+\alpha}\|\theta'-\theta\|^{1+\alpha}.
\end{equation*}
\newline
\noindent{\bf(\ref{lmm:Holder:ii})}\
Let $\theta\in\mathbb{R}^m$. For $t\geq0$, by applying the previous result with $\theta'=\theta-t\nabla\varphi(\theta)$,
\begin{equation*}
0
\leq\varphi(\theta-t\nabla\varphi(\theta))-\inf\varphi
\leq\varphi(\theta)-\inf\varphi-t\|\nabla\varphi(\theta)\|^2+\frac{Lt^{1+\alpha}}{1+\alpha}\|\nabla\varphi(\theta)\|^{1+\alpha},
\end{equation*}
so that
\begin{equation*}
t\|\nabla\varphi(\theta)\|^2-\frac{Lt^{1+\alpha}}{1+\alpha}\|\nabla\varphi(\theta)\|^{1+\alpha}
\leq\varphi(\theta)-\inf\varphi.
\end{equation*}
Maximizing the left hand side with respect to $t$ leads to the optimal choice
\begin{equation*}
t_\star=\frac1{L^\frac1\alpha}\|\nabla\varphi(\theta)\|^\frac{1-\alpha}\alpha.
\end{equation*}
\end{proof}

The next technical result is invoked but not proven in~\cite[Theorem~2(c)]{Lei+20}.

\begin{lemma}\label{lmm:seq:jump}
Let $(u_n)_{n\geq0}$ be a real sequence and $\ell^-,\ell^+\in\mathbb{R}$ such that
\begin{equation*}
-\infty\leq\liminf_{n\to\infty}u_n<\ell^-<\ell^+<\limsup_{n\to\infty}u_n\leq\infty.
\end{equation*}
Then, there exist extractions $\chi,\psi\colon\mathbb{N}\to\mathbb{N}$ such that, for all $n\geq0$,
\begin{equation*}
u_{\chi(n)}<\ell^-,
\quad
u_{\psi(n)}>\ell^+
\quad\text{and}\quad
\ell^-\leq u_k\leq\ell^+,
\quad\chi(n)<k<\psi(n),
\end{equation*}
where, potentially, $\psi(n)=\chi(n)+1$.
\end{lemma}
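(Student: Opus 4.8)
The plan is to construct the two extractions recursively, alternating between ``wait until you drop below $\ell^-$'' and ``wait until you climb above $\ell^+$''. The hypothesis $\liminf_{n\to\infty}u_n<\ell^-$ means that the set $\{n:u_n<\ell^-\}$ is infinite, and similarly $\limsup_{n\to\infty}u_n>\ell^+$ means $\{n:u_n>\ell^+\}$ is infinite; this is all the input we need. First I would fix an arbitrary index $N_0$ at which $u_{N_0}<\ell^-$ (possible since that set is infinite) and set $\chi(0)=N_0$. Then, since $\{n:u_n>\ell^+\}$ is infinite, there is some index strictly larger than $\chi(0)$ at which $u$ exceeds $\ell^+$; let $\psi(0)$ be the \emph{smallest} such index, i.e. $\psi(0)=\inf\{n>\chi(0):u_n>\ell^+\}$. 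By minimality, for every $k$ with $\chi(0)<k<\psi(0)$ one has $u_k\le\ell^+$; and since $\chi(0)$ is the last index before $\psi(0)$ at which we \emph{know} $u<\ell^-$, I need the complementary bound $u_k\ge\ell^-$ on that same range — this requires a small extra care described below. The degenerate case $\psi(0)=\chi(0)+1$ is allowed by the statement, so the intermediate range may be empty.

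The extra care is the one genuinely fiddly point. After defining $\psi(0)$ as above, the indices $k\in(\chi(0),\psi(0))$ satisfy $u_k\le\ell^+$ but need not satisfy $u_k\ge\ell^-$. The fix is to \emph{re-choose} $\chi(0)$: instead of taking any index below $\ell^-$, take $\chi(0)$ to be the \emph{largest} index $n<\psi(0)$ with $u_n<\ell^-$ — equivalently, after picking a provisional $\psi(0)$, backtrack. Concretely I would organize the recursion as: given $\psi(n-1)$ (or the start), let $\psi(n)=\inf\{k>\psi(n-1):u_k>\ell^+\text{ and }\exists j,\ \psi(n-1)<j<k,\ u_j<\ell^-\}$ — no, cleaner still: first define $\chi(n)=\inf\{k>\psi(n-1):u_k<\ell^-\}$ (finite, as that set is infinite), then define $\psi(n)=\inf\{k>\chi(n):u_k>\ell^+\}$ (finite, as that set is infinite), then \emph{replace} $\chi(n)$ by $\sup\{k<\psi(n):u_k<\ell^-\}$, which is well-defined and satisfies $\chi(n)\le$ the old value $<\psi(n)$. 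With this final $\chi(n)$: $u_{\chi(n)}<\ell^-$ by construction; $u_{\psi(n)}>\ell^+$ by construction; for $\chi(n)<k<\psi(n)$, $u_k\le\ell^+$ by minimality of $\psi(n)$ relative to the old $\chi(n)\le$ new $\chi(n)$, and $u_k\ge\ell^-$ by maximality of the new $\chi(n)$. The sequences are strictly increasing because $\psi(n-1)<\chi(n)<\psi(n)$ at every step, so $\chi,\psi$ are genuine extractions.

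The remaining steps are just bookkeeping: verify $\chi(n)<\psi(n)$ for all $n$ (immediate: the new $\chi(n)$ lies strictly below $\psi(n)$ since it is a sup of a nonempty set of indices all $<\psi(n)$, that set being nonempty because the old $\chi(n)$ belongs to it), verify $\psi(n)<\chi(n+1)$ (immediate from $\chi(n+1)=\inf\{k>\psi(n):\dots\}$), and record that the intermediate interval $(\chi(n),\psi(n))$ can be empty, covering the $\psi(n)=\chi(n)+1$ clause. I expect the main (and only real) obstacle to be stating the double-sided control $\ell^-\le u_k\le\ell^+$ cleanly — it forces the ``first climb up, then the last descent before that climb'' ordering of the two infima/suprema in the recursion, which is easy to get backwards. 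Everything else is elementary and needs no estimates.
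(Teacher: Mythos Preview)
Your approach is correct and essentially identical to the paper's: both locate the first climb above $\ell^+$ after a dip below $\ell^-$, then backtrack to the \emph{last} such dip before that climb (the paper phrases this set-theoretically via $n'=\inf\{p>n:u_p>\ell^+\}$ and $n''=\sup\{p<n':u_p<\ell^-\}$, then enumerates, while you do it recursively). One slip to fix: after replacing $\chi(n)$ by $\sup\{k<\psi(n):u_k<\ell^-\}$ you write that the new value ``satisfies $\chi(n)\le$ the old value'', but it is $\ge$ the old value (since the old value belongs to the set whose sup you take) --- you use the correct direction yourself in the next clause, and this is what makes both $\psi(n-1)<\chi(n)$ and the upper bound $u_k\le\ell^+$ on $(\chi(n),\psi(n))$ go through.
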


\begin{proof}
\noindent
Since $\liminf_{n\to\infty}u_n<\ell^-$, then $u_n<\ell^-$ infinitely often.
Let $\mathcal{X}\coloneqq\{n\colon u_n<\ell^-\}$.
Thus $\#\mathcal{X}=\infty$, where $\#$ denotes cardinality.

Define $n'=\inf\{p>n\colon u_p>\ell^+\}$, $n\in\mathcal{X}$, with the convention $\inf{\varnothing}=\infty$.
Since $\limsup_{n\to\infty}u_n>\ell^+$, $u_n>\ell^+$ infinitely often so that $n'<\infty$, $n\in\mathcal{X}$.
Let $\mathcal{Y}=\{n'\colon n\in\mathcal{X}\}$. Note that $\#\mathcal{Y}=\infty$.

Next, let $n''\coloneqq\sup\{p<n'\colon u_n<\ell^-\}$, $n\in\mathcal{X}$, with the convention $\sup{\varnothing}={-\infty}$. Observe that, for $n\in\mathcal{X}$, it holds that $n''\in\mathcal{X}$, that $n''\geq n>-\infty$ and that $(n'')'=n'$. Let $\mathcal{Z}\coloneqq\{n''\colon n\in\mathcal{X}\}$.
Then $\#\mathcal{Z}=\infty$.
Moreover, for $n\in\mathcal{X}$, either $n'=n''+1$ or $\ell^-\leq u_k\leq\ell^+$, $n''<k<n'$.
Otherwise, there exists $n\in\mathcal{X}$ such that $n'>n''+1$ and there exists $n''<k<n'$ such that, either $u_k<\ell^-$, or $u_k>\ell^+$. The former possibility contradicts the definition of $n''$, and the latter contradicts that $(n'')'=n'$.

Finally, since $\mathcal{Z}$ and $\mathcal{Y}$ are ordered, it suffices to take $\chi$ and $\psi$  the ordered enumerations of $\mathcal{Z}$ and $\mathcal{Y}$ respectively.
\end{proof}

\begin{lemma}\label{lmm:beta+:zeta1}
Let $\varphi\colon\mathbb{R}^m\to\mathbb{R}$ be a continuously differentiable function and consider $\theta_\star\in\nabla\varphi^{-1}(0)$. Assume that there exist $\mathcal{V}$ an open neighborhood of $\theta_\star$, $\beta\in(0,1)$ and $\zeta>0$ such that
\begin{equation*}
|\varphi(\theta)-\varphi(\theta_\star)|^\beta\leq\zeta\|\nabla\varphi(\theta)\|,
\quad\theta\in\mathcal{V}.
\end{equation*}
Set $\beta'\in(\beta,1]$.
Then, there exists an open neighborhood $\mathcal{V}'$ of $\theta_\star$ such that
\begin{equation*}
|\varphi(\theta)-\varphi(\theta_\star)|^{\beta'}\leq\|\nabla\varphi(\theta)\|,
\quad\theta\in\mathcal{V}'.
\end{equation*}
\end{lemma}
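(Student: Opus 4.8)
The plan is to reduce the statement to a pointwise estimate, obtained by absorbing the extra power of $|\varphi-\varphi(\theta_\star)|$ into the Łojasiewicz constant $\zeta$, with continuity of $\varphi$ near $\theta_\star$ providing the room to do so.

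First I would assume, without loss of generality, that $\varphi(\theta_\star)=0$; indeed, replacing $\varphi$ by $\varphi-\varphi(\theta_\star)$ leaves $\nabla\varphi$ unchanged and does not affect any of the quantities involved. The hypothesis then reads $|\varphi(\theta)|^\beta\leq\zeta\|\nabla\varphi(\theta)\|$ for $\theta\in\mathcal{V}$, and the goal becomes $|\varphi(\theta)|^{\beta'}\leq\|\nabla\varphi(\theta)\|$ on a suitable open neighborhood of $\theta_\star$.

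The core step is the factorization: since $\beta'>\beta$, for every $\theta\in\mathcal{V}$ one has $|\varphi(\theta)|^{\beta'}=|\varphi(\theta)|^{\beta'-\beta}|\varphi(\theta)|^\beta\leq\zeta\,|\varphi(\theta)|^{\beta'-\beta}\|\nabla\varphi(\theta)\|$. It therefore suffices to work on the subset where $\zeta\,|\varphi(\theta)|^{\beta'-\beta}\leq1$, i.e. $|\varphi(\theta)|\leq\zeta^{-1/(\beta'-\beta)}$ (here I would note that $t\mapsto t^{\beta'-\beta}$ is increasing on $[0,\infty)$ because $\beta'-\beta>0$, so this inequality is the right reformulation). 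This is precisely where the regularity of $\varphi$ is used: by continuity of $\varphi$ at $\theta_\star$ together with $\varphi(\theta_\star)=0$, there is an open ball $B$ centered at $\theta_\star$ on which $|\varphi|\leq\zeta^{-1/(\beta'-\beta)}$. Taking $\mathcal{V}':=\mathcal{V}\cap B$, which is an open neighborhood of $\theta_\star$, one gets for $\theta\in\mathcal{V}'$ that $|\varphi(\theta)|^{\beta'}\leq\zeta\cdot\zeta^{-1}\cdot\|\nabla\varphi(\theta)\|=\|\nabla\varphi(\theta)\|$, with the case $\varphi(\theta)=0$ being trivial since the left-hand side then vanishes.

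I do not anticipate any genuine obstacle in this argument; it is essentially a continuity remark combined with a splitting of exponents. The only points deserving a line of care are the monotonicity of $t\mapsto t^{\beta'-\beta}$ (ensuring the bound $|\varphi(\theta)|\leq\zeta^{-1/(\beta'-\beta)}$ transfers to $|\varphi(\theta)|^{\beta'-\beta}\leq\zeta^{-1}$) and the fact that continuous differentiability of $\varphi$ entails the continuity of $\varphi$ needed to produce the shrunken neighborhood $B$; the openness of $\mathcal{V}'=\mathcal{V}\cap B$ is then immediate.
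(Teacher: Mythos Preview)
Your proof is correct and follows essentially the same approach as the paper: factor $|\varphi-\varphi(\theta_\star)|^{\beta'}$ as $|\varphi-\varphi(\theta_\star)|^{\beta'-\beta}\cdot|\varphi-\varphi(\theta_\star)|^{\beta}$, apply the hypothesis, and use continuity of $\varphi$ at $\theta_\star$ to shrink the neighborhood so that $\zeta|\varphi-\varphi(\theta_\star)|^{\beta'-\beta}\leq1$. Your version is in fact slightly cleaner than the paper's, which writes the estimate as a ratio and therefore has to treat the case $\nabla\varphi(\theta)=0$ separately, whereas your multiplicative form handles all $\theta\in\mathcal{V}'$ at once.
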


\begin{proof}
\noindent
Note that
\begin{equation*}
\frac{|\varphi(\theta)-\varphi(\theta_\star)|^{\beta'}}{\|\nabla\varphi(\theta)\|}
\leq\zeta|\varphi(\theta)-\varphi(\theta_\star)|^{\beta'-\beta},
\quad\theta\in\mathcal{V}\setminus\nabla\varphi^{-1}(0).
\end{equation*}
By the continuity of $\varphi$, the right hand side above vanishes when $\mathcal{V}\ni\theta\to\theta_\star$.
There then exists an open neighborhood $\mathcal{V}'\subset\mathcal{V}$ of $\theta_\star$ such that $\zeta|\varphi(\theta)-\varphi(\theta_\star)|^{\beta'-\beta}\leq1$, $\theta\in\mathcal{V}'$.
Thus
\begin{equation}\label{eq:theta:in:Crit} 
|\varphi(\theta)-\varphi(\theta_\star)|^{\beta'}
\leq\|\nabla\varphi(\theta)\|,
\quad\theta\in\mathcal{V}'\setminus\nabla\varphi^{-1}(0).
\end{equation}
Since $\mathcal{V}'\subset\mathcal{V}$,
\begin{equation}\label{eq:theta:out:Crit}
|\varphi(\theta)-\varphi(\theta_\star)|^{\beta'}
\leq\zeta\,\|\nabla\varphi(\theta)\|\;|\varphi(\theta)-\varphi(\theta_\star)|^{\beta'-\beta}
=0,
\quad\theta\in\mathcal{V}'\cap\nabla\varphi^{-1}(0).
\end{equation}
Combining \eqref{eq:theta:in:Crit} and \eqref{eq:theta:out:Crit},
\begin{equation*}
|\varphi(\theta)-\varphi(\theta_\star)|^{\beta'}
\leq\|\nabla\varphi(\theta)\|,
\quad\theta\in\mathcal{V}'.
\end{equation*}
\end{proof}

The following result is a classic in stochastic approximation literature.

\begin{lemma}\label{lmm:rec}
Let $(u_n)_{n\geq0}$ and $(v_n)_{n\geq0}$ be real-valued sequences and $(\gamma_n)_{n\geq0}$ a nonnegative sequence such that
\begin{equation*}
u_n\leq(1-\gamma_n)u_{n-1}+v_n,
\quad n\geq0.
\end{equation*}
If $\gamma_n<1$ beginning from a certain rank $p\geq1$, then
\begin{equation*}
u_n\leq u_p\prod_{k=p+1}^n(1-\gamma_k)+\sum_{k=p+1}^nv_k\prod_{j=k+1}^n(1-\gamma_j),\quad n>p,
\end{equation*}
with the convention $\prod_\varnothing=1$.
\end{lemma}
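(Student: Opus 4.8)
The final statement to prove is Lemma~\ref{lmm:rec}, a standard discrete Gronwall-type recursion. The plan is to proceed by induction on $n > p$, exploiting the nonnegativity of the factors $1-\gamma_k$ for $k > p$ to preserve inequalities under multiplication.

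\textbf{Base case.} For $n = p+1$, the hypothesis reads $u_{p+1} \leq (1-\gamma_{p+1})u_p + v_{p+1}$, which is exactly the claimed formula with the empty product $\prod_{j=p+2}^{p+1}(1-\gamma_j) = 1$ attached to $v_{p+1}$ and $\prod_{k=p+1}^{p+1}(1-\gamma_k) = 1-\gamma_{p+1}$ attached to $u_p$.

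\textbf{Induction step.} Suppose the bound holds at rank $n > p$. Apply the hypothesis at rank $n+1$: $u_{n+1} \leq (1-\gamma_{n+1})u_n + v_{n+1}$. Since $n+1 > p$, we have $1-\gamma_{n+1} \geq 0$, so multiplying the induction hypothesis by $1-\gamma_{n+1}$ preserves the inequality:
\begin{equation*}
u_{n+1} \leq (1-\gamma_{n+1})\Big(u_p\prod_{k=p+1}^n(1-\gamma_k) + \sum_{k=p+1}^n v_k\prod_{j=k+1}^n(1-\gamma_j)\Big) + v_{n+1}.
\end{equation*}
Distributing $1-\gamma_{n+1}$ inside, the first term becomes $u_p\prod_{k=p+1}^{n+1}(1-\gamma_k)$, each summand $v_k\prod_{j=k+1}^n(1-\gamma_j)$ becomes $v_k\prod_{j=k+1}^{n+1}(1-\gamma_j)$, and the trailing $v_{n+1}$ is $v_{n+1}\prod_{j=n+2}^{n+1}(1-\gamma_j)$ with an empty product; absorbing $v_{n+1}$ into the sum extends its upper index to $n+1$, yielding precisely the claimed bound at rank $n+1$.

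\textbf{Main subtlety.} The only point requiring care is that the multiplication step is only legitimate because $1-\gamma_k \geq 0$ for all $k > p$; this is where the hypothesis ``$\gamma_n < 1$ beginning from rank $p$'' is used, and it is the reason the telescoping starts at $p$ rather than at $0$. No positivity of $u_n$ or $v_n$ is needed. I do not anticipate any genuine obstacle here; the result is routine once the induction is set up with the correct empty-product conventions.

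\begin{proof}
We argue by induction on $n > p$.
For $n = p+1$, the hypothesis at rank $p+1$ gives directly
\begin{equation*}
u_{p+1} \leq (1-\gamma_{p+1})u_p + v_{p+1},
\end{equation*}
which is the asserted inequality, with the conventions $\prod_{k=p+1}^{p+1}(1-\gamma_k) = 1-\gamma_{p+1}$ and $\prod_\varnothing = 1$.

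Assume now that, for some $n > p$,
\begin{equation*}
u_n \leq u_p\prod_{k=p+1}^n(1-\gamma_k) + \sum_{k=p+1}^n v_k\prod_{j=k+1}^n(1-\gamma_j).
\end{equation*}
Since $n+1 > p$, one has $1-\gamma_{n+1} \geq 0$. Multiplying the induction hypothesis by $1-\gamma_{n+1}$ preserves the inequality, and combining it with the hypothesis at rank $n+1$, namely $u_{n+1} \leq (1-\gamma_{n+1})u_n + v_{n+1}$, yields
\begin{equation*}
u_{n+1} \leq (1-\gamma_{n+1})u_p\prod_{k=p+1}^n(1-\gamma_k) + (1-\gamma_{n+1})\sum_{k=p+1}^n v_k\prod_{j=k+1}^n(1-\gamma_j) + v_{n+1}.
\end{equation*}
The first term equals $u_p\prod_{k=p+1}^{n+1}(1-\gamma_k)$. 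In the sum, each factor $\prod_{j=k+1}^n(1-\gamma_j)$ becomes $\prod_{j=k+1}^{n+1}(1-\gamma_j)$ after multiplication by $1-\gamma_{n+1}$. Finally, $v_{n+1} = v_{n+1}\prod_{j=n+2}^{n+1}(1-\gamma_j)$ by the empty-product convention, so it is the $k = n+1$ term of the sum. Hence
\begin{equation*}
u_{n+1} \leq u_p\prod_{k=p+1}^{n+1}(1-\gamma_k) + \sum_{k=p+1}^{n+1} v_k\prod_{j=k+1}^{n+1}(1-\gamma_j),
\end{equation*}
which completes the induction.
\end{proof}
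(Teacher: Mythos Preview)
Your proof is correct and takes the same approach as the paper, which simply states ``The assertion follows by recursion.'' You have merely spelled out the induction that the paper leaves implicit, including the role of the nonnegativity of $1-\gamma_k$ for $k>p$ and the empty-product conventions.
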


\begin{proof}
\noindent
The assertion follows by recursion.
\end{proof}

The first part of the below lemma is a special case of~\cite[Lemmas~5.8 \&~5.9]{For15}.
Due to a minor mistake in the proof therein and for the sake of completeness, it is reproven below.
The second part of the lemma is novel.

\begin{lemma}\label{lmm:gamma}
Let $\mu>0$, $\alpha\geq0$ and $(\gamma_n)_{n\geq1}$ a positive sequence such that
\begin{equation}\label{eq:gamma}
\sum_{n=1}^\infty\gamma_n=\infty
\quad\text{and}\quad
\lim_{n\to\infty}\gamma_n=0.
\end{equation}
\begin{enumerate}[\bf(i)]
\item\label{lmm:gamma:1}
Assume that either of the following is satisfied:
\begin{enumerate}[\bf C1.]
\item\label{lmm:asp:gamma:misc:i}
$\ln{\big(\frac{\gamma_{n-1}}{\gamma_n}\big)}=\mathrm{o}(\gamma_n)$;
\item\label{lmm:asp:gamma:misc:ii}
there exists $\gamma_\star>\frac\alpha\mu$ such that $\ln{\big(\frac{\gamma_{n-1}}{\gamma_n}\big)}\sim\frac{\gamma_n}{\gamma_\star}$.
\end{enumerate}
Then, using the convention $\sum_\varnothing=0$,
    \begin{enumerate}[\bf a.]
    \item\label{lmm:gamma:1:i}
    $\limsup_{n\to\infty}\gamma_n^{-\alpha}\sum_{k=1}^n\gamma_k^{1+\alpha}\exp{\big(-\mu\sum_{j=k+1}^n\gamma_j\big)}<\infty$;
    \item\label{lmm:gamma:1:ii}
    $\limsup_{n\to\infty}\gamma_n^{-\alpha}\exp{\big(-\mu\sum_{k=1}^n\gamma_k\big)}=0$.
    \end{enumerate}
\item\label{lmm:gamma:2}
Let $0\leq\beta<\mu$.
Suppose that, in addition,
\begin{equation*}
\sum_{n=1}^\infty\frac{\gamma_n}{\sum_{k=1}^n\gamma_k}=\infty.
\end{equation*}
Then, with the convention that $\sum_\varnothing=0$,
    \begin{enumerate}[\bf a.]
    \item\label{lmm:gamma:2:i}
    $\limsup_{n\to\infty}\big(\sum_{k=1}^n\gamma_k\big)^\beta\sum_{k=1}^n\frac{\gamma_k}{(\sum_{j=1}^k\gamma_j)^{1+\beta}}\exp\Big(-\mu\sum_{j=k+1}^n\frac{\gamma_j}{\sum_{i=1}^j\gamma_i}\Big)<\infty$;
    \item\label{lmm:gamma:2:ii}
    $\limsup_{n\to\infty}\big(\sum_{k=1}^n\gamma_k\big)^\beta\exp\Big(-\mu\sum_{k=1}^n\frac{\gamma_k}{\sum_{j=1}^k\gamma_j}\Big)=0$.
    \end{enumerate}
\end{enumerate}
\end{lemma}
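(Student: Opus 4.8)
\textbf{Proof proposal for Lemma~\ref{lmm:gamma}.}

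The plan is to reduce each of the four claims to a single discrete comparison argument, treating \textbf{(i)} and \textbf{(ii)} in parallel since they share the same structure with the substitution $\gamma_n \leftrightarrow \gamma_n/\sum_{k=1}^n\gamma_k$ and $\sum_{k=1}^n\gamma_k \leftrightarrow \ln(\sum_{k=1}^n\gamma_k)$ (via $\sum_{k\le n}\gamma_k/\sum_{j\le k}\gamma_j \sim \ln\sum_{k\le n}\gamma_k$, which follows from a series–integral comparison applied to the increasing partial sums). First I would fix notation: write $\Gamma_n = \sum_{k=1}^n\gamma_k$ for \textbf{(i)} and $\widetilde\Gamma_n = \sum_{k=1}^n\gamma_k/\Gamma_k$ for \textbf{(ii)}, noting both diverge to $\infty$ and have vanishing increments under the hypotheses. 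The key observation driving everything is a \emph{one-step ratio estimate}: under C1 or C2, $\gamma_{n-1}/\gamma_n = \exp(\ln(\gamma_{n-1}/\gamma_n))$ and the exponent is either $\mathrm o(\gamma_n)$ or $\sim \gamma_n/\gamma_\star$ with $\gamma_\star > \alpha/\mu$, so for $n$ large one has $\gamma_{n-1}^\alpha/\gamma_n^\alpha \le \exp((\mu-\epsilon)\gamma_n)$ for some $\epsilon>0$ (in case C1 this holds for \emph{any} such bound since the exponent is $\mathrm o(\gamma_n)$; in case C2 it holds because $\alpha/\gamma_\star < \mu$, so pick $\epsilon$ with $\alpha/\gamma_\star < \mu-\epsilon$). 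The analogous statement in \textbf{(ii)} is even simpler since $\beta < \mu$ is assumed outright and $\Gamma_{n-1}^\beta/\Gamma_n^\beta \to 1$ with the relevant ratio $\le \exp((\mu-\epsilon)\gamma_n/\Gamma_n)$ eventually.

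For \textbf{(i)}\,\textbf{a.}, set $S_n = \gamma_n^{-\alpha}\sum_{k=1}^n\gamma_k^{1+\alpha}e^{-\mu\sum_{j=k+1}^n\gamma_j}$ and derive the recursion $S_n = \gamma_n \cdot \frac{\gamma_n^{-\alpha}}{\gamma_{n-1}^{-\alpha}} \cdot e^{-\mu\gamma_n}\big(\gamma_{n-1}^{-\alpha}\sum_{k=1}^{n-1}\gamma_k^{1+\alpha}e^{-\mu\sum_{j=k+1}^{n-1}\gamma_j}\big) + \gamma_n = \gamma_n + \big(\tfrac{\gamma_{n-1}}{\gamma_n}\big)^{\alpha}e^{-\mu\gamma_n} S_{n-1}$. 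By the one-step ratio estimate above, $(\gamma_{n-1}/\gamma_n)^\alpha e^{-\mu\gamma_n} \le e^{-\epsilon\gamma_n} \le 1 - \tfrac{\epsilon}{2}\gamma_n$ for $n$ large (using $\gamma_n\to 0$ and $e^{-x}\le 1-x/2$ for small $x\ge 0$). So eventually $S_n \le (1-\tfrac\epsilon2\gamma_n)S_{n-1} + \gamma_n$, and Lemma~\ref{lmm:rec} gives $S_n \le S_{n_0}\prod_{k>n_0}(1-\tfrac\epsilon2\gamma_k) + \sum_{k>n_0}\gamma_k\prod_{j>k}(1-\tfrac\epsilon2\gamma_j)$; the first term vanishes since $\sum\gamma_k=\infty$, and the second is bounded by $\tfrac2\epsilon$ (a standard telescoping/summation-by-parts estimate, or directly: $\sum_k\gamma_k\prod_{j>k}(1-\tfrac\epsilon2\gamma_j) = \tfrac2\epsilon\sum_k(\prod_{j\ge k}(1-\tfrac\epsilon2\gamma_j)^{-1}-\prod_{j>k}(\cdots)^{-1})\cdot\prod_{j>n}(\cdots)$, telescoping to $\le \tfrac2\epsilon$). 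Hence $\limsup S_n \le \tfrac2\epsilon < \infty$. For \textbf{(i)}\,\textbf{b.}, the same recursion without the additive $\gamma_n$ term: $T_n := \gamma_n^{-\alpha}e^{-\mu\Gamma_n}$ satisfies $T_n \le (1-\tfrac\epsilon2\gamma_n)T_{n-1}$ eventually, so $T_n \le T_{n_0}\prod_{k>n_0}(1-\tfrac\epsilon2\gamma_k) \to 0$.

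For \textbf{(ii)}, I would run the identical argument with $\Gamma_k$ in place of $\gamma_k^{-1}$ throughout: $U_n = \Gamma_n^\beta\sum_{k=1}^n \tfrac{\gamma_k}{\Gamma_k^{1+\beta}}e^{-\mu\sum_{j>k}\gamma_j/\Gamma_j}$ obeys $U_n = \tfrac{\gamma_n}{\Gamma_n} + (\Gamma_n/\Gamma_{n-1})^\beta e^{-\mu\gamma_n/\Gamma_n}U_{n-1}$; since $\Gamma_n/\Gamma_{n-1}\to 1$ and $\beta<\mu$, the coefficient is $\le 1 - \tfrac\epsilon2\tfrac{\gamma_n}{\Gamma_n}$ eventually, and $\sum_n \gamma_n/\Gamma_n = \infty$ is precisely the extra hypothesis, so Lemma~\ref{lmm:rec} and the same telescoping bound give $\limsup U_n \le \tfrac2\epsilon$; dropping the additive term handles \textbf{(ii)}\,\textbf{b.} The main obstacle — and the ``minor mistake'' alluded to in the paper's remark about~\cite{For15} — is getting the one-step coefficient estimate $(\Gamma_n/\Gamma_{n-1})^\beta e^{-\mu\gamma_n/\Gamma_n} \le 1 - \tfrac\epsilon2\tfrac{\gamma_n}{\Gamma_n}$ genuinely uniform for $n$ large: one must expand $(\Gamma_n/\Gamma_{n-1})^\beta = (1+\gamma_n/\Gamma_{n-1})^\beta = 1 + \beta\gamma_n/\Gamma_n + \mathrm o(\gamma_n/\Gamma_n)$ carefully and verify the $\beta < \mu$ gap absorbs the error, and symmetrically in case C2 that $\gamma_\star>\alpha/\mu$ leaves a strictly positive margin after the $\mathrm o(\gamma_n)$ / $\sim\gamma_n/\gamma_\star$ expansion — this bookkeeping, rather than any deep idea, is where care is needed, and it is the step I would write out in full detail.
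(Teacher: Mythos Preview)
Your proposal is correct and follows essentially the same route as the paper: both derive the one-step recursion $a_n=(\gamma_{n-1}/\gamma_n)^\alpha e^{-\mu\gamma_n}a_{n-1}+\gamma_n$ (resp.\ its $\gamma_n/\Gamma_n$ analogue), expand the coefficient as $1-b_n$ with $b_n\sim c\,\gamma_n$ for some $c>0$ under C1/C2 (resp.\ $\beta<\mu$), and exploit $\sum b_n=\infty$. The only cosmetic difference is the endgame: the paper writes $a_n=(1-b_n)a_{n-1}+b_n(b_n^{-1}\gamma_n)$, observes $b_n^{-1}\gamma_n\to c_\star$, and uses the convexity trick $(a_n-c)^+\le(1-b_n)(a_{n-1}-c)^+$ to get $\limsup a_n\le c_\star$ directly, whereas you bound the coefficient by $1-\tfrac\epsilon2\gamma_n$, unroll via Lemma~\ref{lmm:rec}, and telescope the sum to $\le 2/\epsilon$ --- slightly less sharp in the constant but equally valid.
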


\begin{proof}
\noindent{\bf(\ref{lmm:gamma:1})\hyperref[lmm:gamma:1:i]{a}.}\
Define
\begin{equation*}
a_n=\gamma_n^{-\alpha}\sum_{k=1}^n\gamma_k^{1+\alpha}\exp{\Big(-\mu\sum_{j=k+1}^n\gamma_j\Big)},
\quad n\geq1.
\end{equation*}
Then, for $n\geq2$,
\begin{equation*}
a_n
=\Big(\frac{\gamma_{n-1}}{\gamma_n}\Big)^\alpha\exp(-\mu\gamma_n)a_{n-1}+\gamma_n
=(1-b_n)a_{n-1}+b_nb_n^{-1}\gamma_n,
\end{equation*}
where
\begin{gather}
b_n
=1-\exp\Big(-\mu\gamma_n+\alpha\ln{\frac{\gamma_{n-1}}{\gamma_n}}\Big)
\sim\begin{cases}
\mu\gamma_n,
&\text{case \hyperref[lmm:asp:gamma:misc:i]{C1},}\\
(\mu-\frac\alpha{\gamma_\star})\gamma_n,
&\text{case \hyperref[lmm:asp:gamma:misc:ii]{C2},}
\end{cases}
\label{eq:bn}
\\
b_n^{-1}\gamma_n\sim c_\star\coloneqq\begin{cases}
\frac1\mu,
&\text{case \hyperref[lmm:asp:gamma:misc:i]{C1},}\\
\frac1{\mu-\alpha/\gamma_\star},
&\text{case \hyperref[lmm:asp:gamma:misc:ii]{C2}.}
\end{cases}\notag
\end{gather}
Let $\varepsilon>0$ and $c=c_\star+\varepsilon$. Then, there exists $n_0\geq0$ such that $b_n^{-1}\gamma_n\leq c$, $n\geq n_0$.
Via the convexity of $x\in\mathbb{R}\mapsto x^+$ and  Jensen's inequality, for $n\geq n_0$,
\begin{equation*}
(a_n-c)^+
\leq(1-b_n)(a_{n-1}-c)^++b_n(b_n^{-1}\gamma_n-c)^+
=(1-b_n)(a_{n-1}-c)^+.
\end{equation*}
Considering \eqref{eq:bn}, one has $\sum_{n=1}^\infty b_n=\infty$, so that $\lim_{n\to\infty}(a_n-c)^+=0$.
Hence $\limsup_{n\to\infty}a_n\leq c=c_\star+\varepsilon$ for any $\varepsilon>0$, thus $\limsup_{n\to\infty}a_n\leq c_\star<\infty$.
\\

\noindent{\bf(\ref{lmm:gamma:1})\hyperref[lmm:gamma:1:ii]{b}.}\
Let
\begin{equation*}
a_n=\gamma_n^{-\alpha}\exp\Big(-\mu\sum_{k=1}^n\gamma_k\Big),
\quad n\geq1.
\end{equation*}
For $n\geq2$,
\begin{equation*}
a_n
=\Big(\frac{\gamma_{n-1}}{\gamma_n}\Big)^\alpha\exp(-\mu\gamma_n)a_{n-1}
=(1-b_n)a_{n-1},
\end{equation*}
with
\begin{equation*}
b_n
=1-\exp\Big(-\mu\gamma_n+\alpha\ln{\frac{\gamma_{n-1}}{\gamma_n}}\Big)
\sim\begin{cases}
\mu\gamma_n,
&\text{case \hyperref[lmm:asp:gamma:misc:i]{C1},}\\
(\mu-\frac\alpha{\gamma_\star})\gamma_n,
&\text{case \hyperref[lmm:asp:gamma:misc:ii]{C2}.}
\end{cases}
\end{equation*}
Thus $\sum_{n=1}^\infty b_n=\infty$ and $\lim_{n\to\infty}a_n=0$.
\\

\noindent{\bf(\ref{lmm:gamma:2})\hyperref[lmm:gamma:2:i]{a}.}\
Define
\begin{equation*}
a_n=\Big(\sum_{k=1}^n\gamma_k\Big)^\beta\sum_{k=1}^n\frac{\gamma_n}{\big(\sum_{j=1}^k\gamma_j\big)^{1+\beta}}\exp\Big(-\mu\sum_{j=k+1}^n\frac{\gamma_j}{\sum_{i=1}^j\gamma_i}\Big),
\quad n\geq1.
\end{equation*}
Let $n\geq2$. Then
\begin{equation*}
a_n
=\bigg(\frac{\sum_{k=1}^n\gamma_k}{\sum_{k=1}^{n-1}\gamma_k}\bigg)^\beta\exp\Big(-\frac{\mu\gamma_n}{\sum_{k=1}^n\gamma_k}\Big)a_{n-1}+\frac{\gamma_n}{\sum_{k=1}^n\gamma_k}
=(1-b_n)a_{n-1}+b_nb_n^{-1}\frac{\gamma_n}{\sum_{k=1}^n\gamma_k}.
\end{equation*}
Note that
\begin{gather}
b_n
=1-\exp\bigg(-\frac{\mu\gamma_n}{\sum_{k=1}^n\gamma_k}+\beta\ln{\frac{\sum_{k=1}^n\gamma_k}{\sum_{k=1}^{n-1}\gamma_k}}\bigg)
\sim\frac{(\mu-\beta)\gamma_n}{\sum_{k=1}^n\gamma_k},\label{eq:bn:equiv}\\
b_n^{-1}\frac{\gamma_n}{\sum_{k=1}^n\gamma_k}\sim\frac1{\mu-\beta}\eqqcolon c_\star.\notag
\end{gather}
Let $\varepsilon>0$ and $c=c_\star+\varepsilon$. Then, there is an $n_0\geq0$ such that $b_n^{-1}\frac{\gamma_n}{\sum_{k=1}^n\gamma_k}\leq c$, $n\geq n_0$.
Hence, for $n\geq n_0$,
\begin{equation*}
(a_n-c)^+
\leq(1-b_n)(a_{n-1}-c)^++b_n\Big(b_n^{-1}\frac{\gamma_n}{\sum_{k=1}^n\gamma_k}-c\Big)^+
=(1-b_n)(a_{n-1}-c)^+.
\end{equation*}
By \eqref{eq:bn:equiv}, $\sum_{n=1}^\infty b_n=\infty$, thus $\lim_{n\to\infty}(a_n-c)^+=0$.
Therefore $\limsup_{n\to\infty}a_n\leq c_\star+\varepsilon$ for all $\varepsilon>0$, whence $\limsup_{n\to\infty}a_n\leq c_\star<\infty$.
\\

\noindent{\bf(\ref{lmm:gamma:2})\hyperref[lmm:gamma:2:ii]{b}.}\
Let
\begin{equation*}
a_n=\Big(\sum_{k=1}^n\gamma_k\Big)^\beta\exp\Big(-\mu\sum_{k=1}^n\frac{\gamma_k}{\sum_{j=1}^k\gamma_j}\Big),
\quad n\geq1.
\end{equation*}
For $n\geq2$,
\begin{equation*}
a_n
=\bigg(\frac{\sum_{k=1}^n\gamma_k}{\sum_{k=1}^{n-1}\gamma_k}\bigg)^\beta\exp\Big(-\frac{\mu\gamma_n}{\sum_{k=1}^n\gamma_k}\Big)a_{n-1}
=(1-b_n)a_{n-1},
\end{equation*}
where $b_n$ satisfies \eqref{eq:bn:equiv}.
Thus $\sum_{n=1}^\infty b_n=\infty$ and $\lim_{n\to\infty}a_n=0$.
\end{proof}

The concavity inequality below is retrieved in~\cite[Equation (8)]{AB09} and~\cite[Equation (33)]{CF23} by applying the tangent line method to the convex functions $x\in\mathbb{R}_+\mapsto-x^{1-\sigma}$ and $x\in\mathbb{R}_+\mapsto x^\frac1{1-\sigma}$, $\sigma\in(0,1)$.
We rely here instead on a first order Taylor-Lagrange expansion of $x\in\mathbb{R}_+\mapsto x^{1-\sigma}$, and complement our result with an assertion for $\sigma=1$.

\begin{lemma}\label{lmm:concave}
Let $0\leq x\leq y$. Then, for $\sigma\in(0,1)$,
\begin{equation*}
\frac{y-x}{y^\sigma}\leq\frac{y^{1-\sigma}-x^{1-\sigma}}{1-\sigma}.
\end{equation*}
Moreover, if $x>0$,
\begin{equation*}
\frac{y-x}y\leq\ln{y}-\ln{x}.
\end{equation*}
\end{lemma}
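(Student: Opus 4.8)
The plan is to obtain both inequalities from a single device: apply the mean value theorem (a first order Taylor--Lagrange expansion) to a suitably chosen function on the interval $[x,y]$, and then use that the derivative of that function is decreasing to replace the unknown intermediate point by the endpoint $y$. This matches the second inequality being read as the ``$\sigma=1$'' case of the first.

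For the first inequality, I would first dispose of the boundary cases. If $x=y$ both sides vanish, and if $x=0$ (with $y>0$) the claim reduces to $y^{1-\sigma}\leq\frac{y^{1-\sigma}}{1-\sigma}$, which holds since $1-\sigma\in(0,1)$ forces $\frac1{1-\sigma}\geq1$. For $0<x<y$, I would apply the mean value theorem to $h(t)=t^{1-\sigma}$, which is continuous on $[x,y]$ and differentiable on $(x,y)$ with $h'(t)=(1-\sigma)t^{-\sigma}$: this produces $\xi\in(x,y)$ with $y^{1-\sigma}-x^{1-\sigma}=(1-\sigma)\xi^{-\sigma}(y-x)$. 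Since $t\mapsto t^{-\sigma}$ is decreasing and $\xi\leq y$, one has $\xi^{-\sigma}\geq y^{-\sigma}$, and because $y-x\geq0$ and $1-\sigma>0$ this yields $y^{1-\sigma}-x^{1-\sigma}\geq(1-\sigma)y^{-\sigma}(y-x)$; dividing by $1-\sigma$ gives exactly $\frac{y-x}{y^\sigma}\leq\frac{y^{1-\sigma}-x^{1-\sigma}}{1-\sigma}$.

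For the second inequality, with $0<x\leq y$, the case $x=y$ is again trivial, and for $x<y$ I would run the same argument with $g(t)=\ln t$ on $[x,y]$: the mean value theorem gives $\xi\in(x,y)$ with $\ln y-\ln x=\xi^{-1}(y-x)\geq y^{-1}(y-x)$, using once more that $t\mapsto t^{-1}$ is decreasing and $\xi\leq y$. I do not anticipate any genuine obstacle here; the only points requiring care are the degenerate cases $x=y$ and especially $x=0$ in the first inequality, where the naive rearrangement would divide by $1-\sigma$ after multiplying by $y-x=y$ and one must instead argue directly, and the need for $x>0$ to have $h$ and $g$ differentiable on the open interval, which is precisely why the $x=0$ case is isolated.
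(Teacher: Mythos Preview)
Your proof is correct and follows essentially the same approach as the paper: a first-order Taylor--Lagrange expansion combined with the monotonicity of $t\mapsto t^{-\sigma}$ (resp.\ $t\mapsto t^{-1}$). The only cosmetic difference is that the paper writes the expansion in integral form, $y^{1-\sigma}-x^{1-\sigma}=(y-x)\int_0^1(1-\sigma)(x+t(y-x))^{-\sigma}\,\mathrm{d}t$, and bounds the integrand pointwise by its value at $t=1$, whereas you invoke the mean value theorem to obtain a single intermediate point $\xi$; both arrive at the same bound $(1-\sigma)y^{-\sigma}(y-x)$. As a minor remark, your justification for isolating the case $x=0$ in the first inequality is slightly overstated: since $h(t)=t^{1-\sigma}$ is continuous on $[0,y]$ and differentiable on $(0,y)$, the mean value theorem already applies there directly.
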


\begin{proof}
\noindent
Suppose $x<y$. Then, for $\sigma\in(0,1)$,
\begin{equation*}
y^{1-\sigma}-x^{1-\sigma}
=(y-x)\int_0^1\frac{(1-\sigma)\mathrm{d}t}{(x+t(y-x))^\sigma}\geq(1-\sigma) y^{-\sigma}(y-x).
\end{equation*}
Assume that, in addition, $0<x$. Then
\begin{equation*}
\ln{y}-\ln{x}
=(y-x)\int_0^1\frac{\mathrm{d}t}{x+t(y-x)}\geq y^{-1}(y-x).
\end{equation*}
\end{proof}

\end{document}